\newcounter{licznik}[section]
\newtheorem{Definition}[licznik]{Definition}
\newtheorem{Proposition}[licznik]{Proposition}
\newtheorem{Theorem}[licznik]{Theorem}
\newtheorem{Lemma}[licznik]{Lemma}
\newtheorem{Corollary}[licznik]{Corollary}
\newtheorem{Remark}[licznik]{Remark}
\newtheorem{anz}[licznik]{Ansatz}
\def\osta{\frac{1}{\sqrt{2 \alpha}}}
\def\dmla{\Big(\delta - \fla \Big)}
\newcommand{\fla}{\frac{\lambda}{\alpha}}
\newcommand{\las}{\frac{\lambda}{\alpha^2}}
\newcommand{\atl}{\frac \alpha {2\lambda}}
\newcommand{\ftla}{\frac{2\lambda}{\alpha}}
\def\flas{\frac \lambda {\alpha^2}}
\def\fltas{\frac \lambda {2 \alpha^2}}
\def\hd{\frac \delta 2}
\def\ha{\frac 1 2}
\newcommand{\R}{\mathbb{R}}
\newcommand{\F}{\mathbb{F}}
\newcommand{\e}{\epsilon}
\newcommand{\ol}{\overline}
\newcommand{\ul}{\underline}
\newcommand{\oc}{K}
\newcommand{\uc}{k}
\newcommand{\gcm}{greatest non-positive convex minorant of $H(\,\cdot\,;c)$}
\def\hye{y_r}
\def\hyc{y_m}
\def\fotd{\frac 1 {2\d}}
\def\a{\alpha}
\def\la{\lambda}
\def\sta{\sqrt{2\alpha}}
\def\cL{\mathcal{L}}
\def\cH{\mathcal{H}}
\def\ha{\frac 1 2 }
\def\lb{\left(}
\def\rb{\right)}
\def\Fb{\overline F}
\def\Gb{\overline G}
\def\hyo{\hat y_1}
\def\hyt{\hat y_2}
\def\hyoo{\hat y_3}
\def\hytt{\hat y_4}
\newcommand{\tild}{\widetilde}
\newcommand{\pd}[2]{\frac{\partial#1}{\partial#2}}
\def\beas{\begin{align*}}
\def\eeas{\end{align*}}
\def\cF{\mathcal{F}}
\def\d{\delta}
\def\e{\epsilon}
\def\t{\tau}
\def\cS{\mathcal{S}}
\def\cT{\mathcal{T}}
\def\cA{\mathcal{A}}
\def\tx{\tau}
\def\RR{\mathbb R}
\def\EE{\mathsf E}
\def\PP{\mathsf P}
\def\cF{{\cal F}}
\def\cA{{\cal A}}
\def\cS{{\cal T}}
\numberwithin{equation}{section}
\renewcommand{\bar}{\overline}
\renewcommand{\hat}{\widehat}
\renewcommand{\epsilon}{\varepsilon}
\title{The one-shot problem: Solution to an open question of \\ finite-fuel singular control with discretionary stopping}
\author{John Moriarty\thanks{School of Mathematical Sciences, Queen Mary University of London, Mile End Road,
London E1 4NS, United Kingdom; \texttt{j.moriarty@qmul.ac.uk}} 
\ and 
Neofytos Rodosthenous\thanks{Department of Mathematics, University College London, 
Gower Street, London WC1E 6BT, United Kingdom; \texttt{n.rodosthenous@ucl.ac.uk}}}
\date{14th July 2026}
\begin{document}
\maketitle

\begin{abstract} 
We resolve a long-standing open question posed by Karatzas, Ocone, Wang, and Zervos (Stochastics, 2000) on finite-fuel singular stochastic control with discretionary stopping. 
Our approach introduces a novel “one-shot” solution technique, based on reduction to an auxiliary optimal stopping problem. We prove that, despite the convexity of the objective function, the waiting region of the optimal strategy need not be connected. Furthermore, we identify a qualitative transition in the optimal policy: even with arbitrarily small positive fuel, the solution can differ from the zero-fuel (pure optimal stopping) limit. These results reveal a fundamental non-robustness in such control problems, showing that zero-fuel models need not approximate behaviour under small but positive fuel constraints.    
\end{abstract}

\section{Introduction}
\label{sec:intro}

\subsection{Setup and related work}\label{sec:setup}

Monotone follower-type problems of singular stochastic control with discretionary stopping are motivated by a number of applications. In target tracking problems~\cite{Bather67}, for example, the challenge is to decide when a controlled diffusion is `sufficiently close' to a target such as a landing site. In consumption/investment problems of financial economics under transaction costs~\cite{DavisNorman90}, they are able to model the optimal exercise of an American option within a portfolio. In a variant of the goodwill problem~\cite{LonZervos11}, the question is to optimally raise a new product's image through advertising, while determining the best time to launch the product into the market. 

In the present paper, the problem (formally defined in Section~\ref{sec:method} below) has state space $\{(x,c):x \in \R, c \geq 0\}$ where $x$ represents a {\em position} while $c$ is the finite {\em fuel level} at the disposal of a decision maker. 
A controlled Brownian motion $(X_t)_{t \geq 0}$ started from position $x$ is pushed in both the left and right (that is, respectively, decreasing and increasing) directions by a bounded variation, adapted process $\xi:= \xi^+ - \xi^-$, where the increasing processes $\{\xi_t^-\}_{t\geq 0}$, $\{\xi_t^+\}_{t\geq 0}$ are the total push to the left and right, respectively, on $X_t$ up to time $t$ and should satisfy $\xi^+_t + \xi^-_t \in [0,c]$ for all $t \geq 0$. The remaining amount of fuel available to the decision maker at each given time $t\geq 0$ is given by the process $(C_t)_{t \geq 0}$, where $C_t = c - (\xi^+_t + \xi^-_t)$.
The aim is to keep the controlled Brownian motion as close to zero as possible, in the sense that there are  running and terminal costs which are quadratic in $X_t$, the former being proportional to $\lambda>0$ and the latter proportional to $\delta>0$. Given that controlling the Brownian motion is also costly, the decision maker must find a balance between acting (that is, expending fuel), waiting, and stopping the system at a stopping time $\tau$. The aim is to minimise the total expected cost discounted at rate $\a>0$, that is, the {\em performance function}
\begin{equation} \label{OCP}
\EE\bigg[ \int_0^{\tx}e^{-\alpha t}\lambda X^2_t dt + \int_{[0,\tx]} e^{-\alpha t} d \xi^+_t + \int_{[0,\tx]} e^{-\alpha t} d \xi^-_t + e^{-\alpha\t}\delta X^2_{\t}\cdot 1_{\{\tau < \infty\}} \bigg], 
\end{equation} 
over all possible singular controls $\xi$ and stopping times $\tau$. Note that \eqref{OCP} is evenly symmetric in $x$, which allows the problem to be solved for nonnegative $x$ using monotone decreasing controls, and finally extended by even symmetry.

This problem was formulated by Karatzas et al.~\cite{KOWZ00} and solved for relatively large and small values of the running cost coefficient $\lambda$ (that is, $\lambda \in [\alpha\delta, \infty)$ and $\lambda \in [0, \lambda^*]$ respectively, for the constant $\lambda^* < \alpha\delta$ defined by \eqref{lambda*} below). These solutions are represented respectively in Figures \ref{fig:boundaries0} and \ref{fig:boundaries1}. 
In contrast, the authors propose the intermediate regime $\lambda \in (\lambda^*, \alpha\delta)$ as an open problem (we refer to the latter as the {\em control problem} in the present paper). 
Davis and Zervos~\cite{DavisZervos94} had previously solved the special case when the decision maker has infinite fuel at their disposal, and Chen and Yi~\cite{ChenYi12} studied the finite time-horizon version of the infinite-fuel problem using PDE methods. 

Our new solutions are represented in Figures \ref{fig:boundaries4} and \ref{fig:boundaries2}.  
It is surprising that, even in a `nice' convex control problem such as \eqref{OCP}, the waiting region can be disconnected. 
It may be speculated that, for this reason, this control problem has remained open for more than two decades, despite the solution in the meantime of several interesting related problems.
The novel solution features uncovered in the present paper are present only for relatively small fuel levels and, because of this, they do not feature in the infinite-fuel studies \cite{ChenYi12, DavisZervos94}. 
Related problems have also been studied for other stochastic processes. For geometric Brownian motion, Morimoto \cite{Morimoto10} used a penalisation method to solve an infinite-fuel problem with one-sided singular control and discretionary stopping. A general one-dimensional driving diffusion process was considered by Lon and Zervos~\cite{LonZervos11}, and the case of drifted Brownian motion plus a compound Poisson process was analysed by Bayraktar and Egami~\cite{BayraktarEgami08}.

\begin{figure}[tb]
\begin{center}
\vspace{-1cm}
\begin{tikzpicture}
\node[anchor=south west,inner sep=0] (image) at (0,0) {\includegraphics[width=\textwidth]{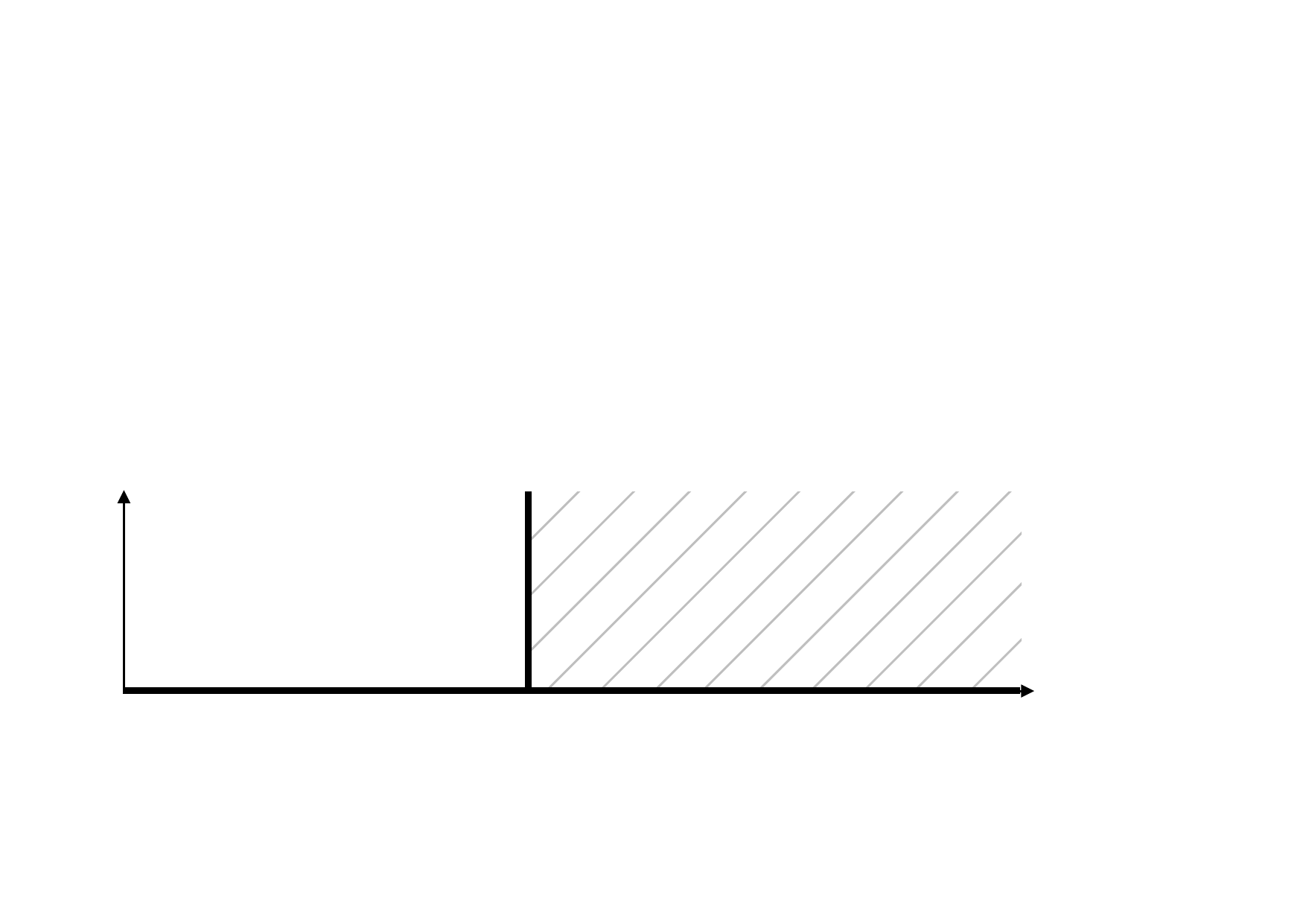}};
\begin{scope}[x={(image.south east)},y={(image.north west)}]
 \node[scale=1.2] at (0.98,0.27) {$x$};
 \node[scale=1.2] at (0.453,0.18) {$\fotd$};
 \node[scale=1.2] at (0.03,0.765) {$c$};
 \node[scale=1.2] at (0.7,0.55) {$A$};
 \node[scale=1.2] at (0.3,0.55) {$S$};
\end{scope}
\end{tikzpicture}
\vspace{-15mm}
\caption{Moving boundaries of the control problem when $\la \geq \a \d$ (obtained in \cite{KOWZ00}). In the stopping region $S$ the process is absorbed. The direction of control is south-west (grey lines). In the action region $A$, fuel is expended to drive the process in this direction towards its boundary $\partial A$, where it is then absorbed. There is no waiting region.}
\label{fig:boundaries0}
\end{center}
\end{figure}

\begin{figure}[tb]
\begin{center}
\vspace{-1cm}
\begin{tikzpicture}
\node[anchor=south west,inner sep=0] (image) at (0,0) {\includegraphics[width=\textwidth]{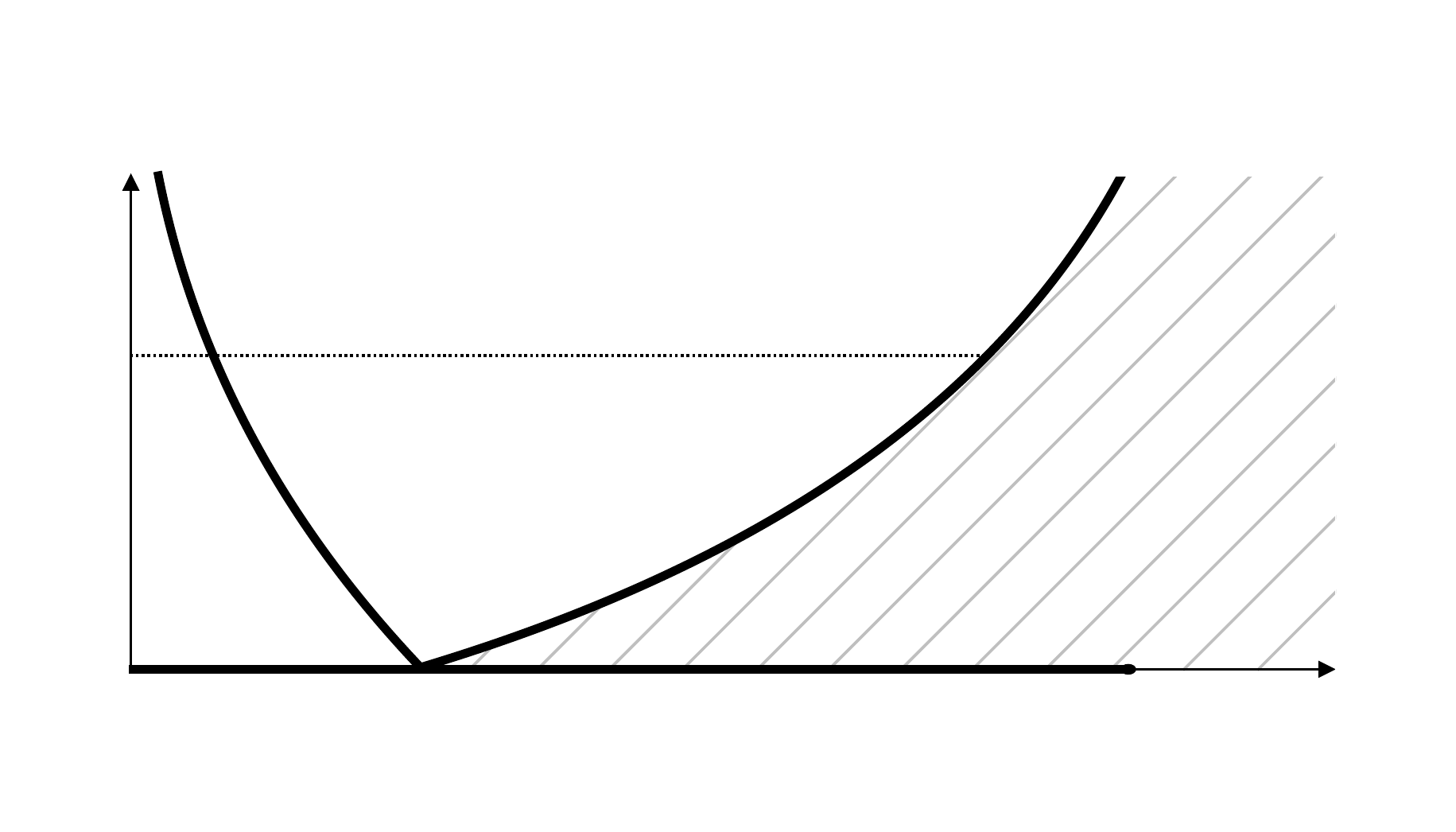}};
\begin{scope}[x={(image.south east)},y={(image.north west)}]
 \node[scale=1.2] at (0.95,0.198) {$x$};
 \node[scale=1.2] at (0.292,0.14) {$\fotd$};
 \node[scale=1.2] at (0.6,0.14) {$\atl$};
 \node[scale=1.2] at (0.055,0.75) {$c$};
 \node[scale=1.2] at (0.685,0.45) {$A$};
 \node[scale=1.2] at (0.35,0.45) {$W_1$};
 \node[scale=1.2] at (0.14,0.45) {$S$};
 \node[scale=1.2] at (0.055,0.58) {$\bar c$};
 \node[scale=1] at (0.15,0.82) {$x = F(c)$};
 \node[scale=1] at (0.78,0.82) {$x = G(c)$};
 \node[scale=1] at (0.78,0.14) {$F(0)\mathbin{=}f_0$};
\end{scope}
\end{tikzpicture}
\vspace{-17mm}
\caption{Moving boundaries of the control problem when $\la \in [\la^\dagger, \a \d)$, obtained in Section~\ref{sec:cpos} ($\la^\dagger$ is characterised by \eqref{ldag}). When the fuel level is $C_t>0$, the absorbing boundary is located at $X_t = F(C_t) < \fotd$. However when the fuel level $C_t = 0$, the absorbing boundary is located at $X_t = F(0) = f_0 > \fotd$, i.e.~$F$ is discontinuous. The right boundary $G$ is reflecting for $c>\bar c$ and repelling for $c \in (0,\bar c]$, and $W_1$ is a waiting region. The nature of the stopping region $S$ and action region $A$ are as in Figure~\ref{fig:boundaries0}.}
\label{fig:boundaries4}
\end{center}
\end{figure}

\begin{figure}[tb]
\begin{center}
\vspace{-1cm}
\begin{tikzpicture}
\node[anchor=south west,inner sep=0] (image) at (0,0) {\includegraphics[width=\textwidth, height=8cm]{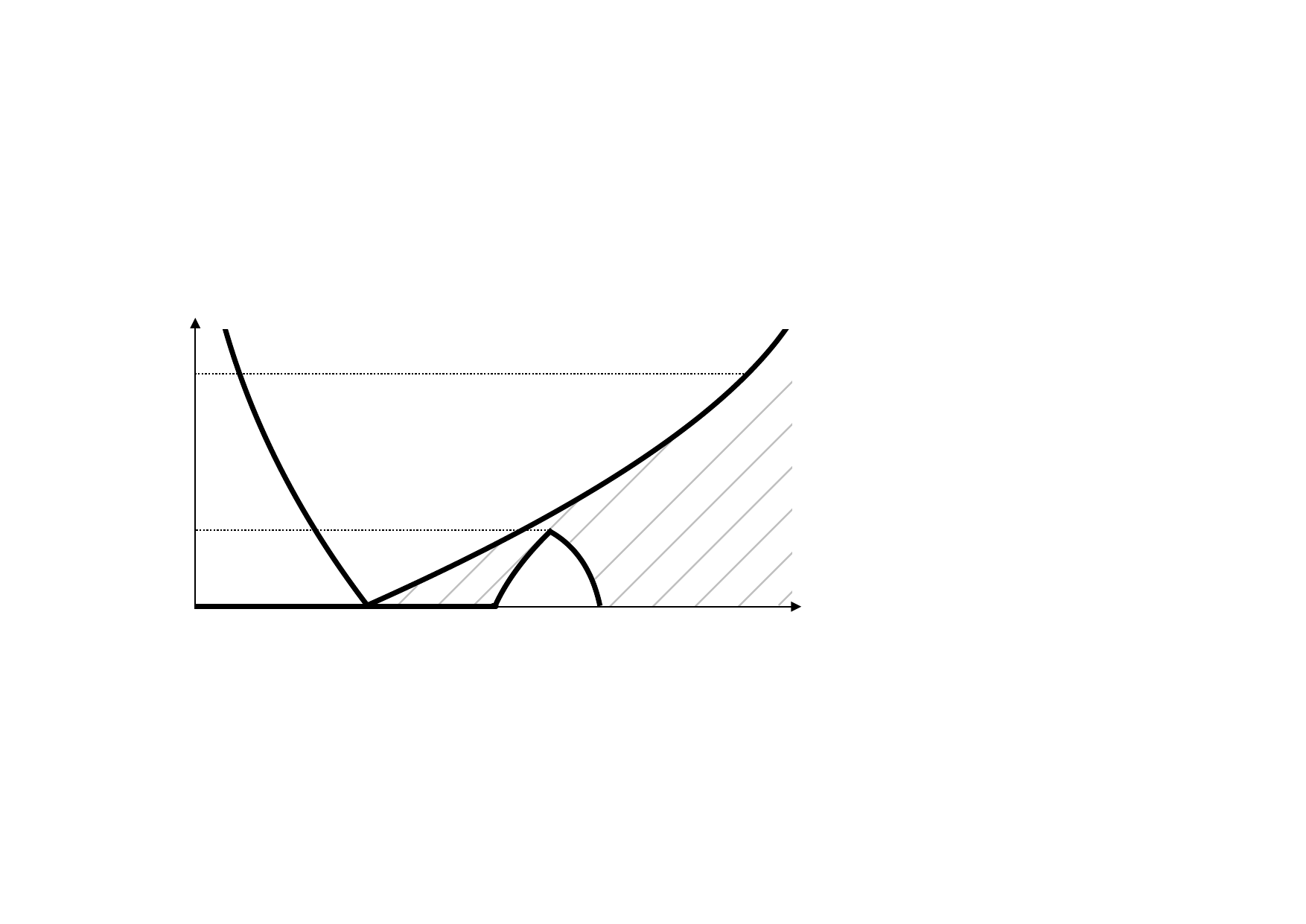}};
\begin{scope}[x={(image.south east)},y={(image.north west)}]
 \node[scale=1.2] at (0.04,0.37) {$c_\mathcal{I}$};
 \node[scale=1.2] at (0.95,0.205) {$x$};
 \node[scale=1.2] at (0.315,0.14) {$\fotd$};
 \node[scale=1.2] at (0.575,0.14) {$\atl$};
 \node[scale=1.2] at (0.488,0.14) {\footnotesize $F(0)\mathord{=}f_0$};
 \node[scale=1.2] at (0.04,0.83) {$c$};
 \node[scale=1.2] at (0.35,0.45) {$W_1$};
 \node[scale=1.2] at (0.58,0.31) {$W_2$};
 \node[scale=1.2] at (0.14,0.45) {$S$};
 \node[scale=1.2] at (0.04,0.75) {$\bar c$};
 \node[scale=1] at (0.15,0.88) {$x = F(c)$};
 \node[scale=1] at (0.89,0.88) {$x = G(c)$};
\node[scale=1] at (0.565,0.24) {$x = \bar F(c)$};
\node[scale=1] at (0.7,0.24) {$x = \bar G(c)$};
\node[scale=1.2] at (0.805,0.45) {$A$};
\end{scope}
\end{tikzpicture}
\vspace{-17mm}
\caption{Moving boundaries of the control problem when $\la \in (\la^*, \la^\dagger)$, obtained in Section~\ref{sec:newsol}.  Whereas the boundaries $F$ and $G$ border a component $W_1$ of the waiting region as in Figure~\ref{fig:boundaries4}, now an additional  repelling boundary $\bar F$ and reflecting boundary $\bar G$ create a separate waiting region $W_2$. The nature of the stopping region $S$ and action region $A$ are as in Figure~\ref{fig:boundaries0}.}
\label{fig:boundaries2}
\end{center}
\end{figure}

\begin{figure}[tb]
\begin{center}
\vspace{-1cm}
\begin{tikzpicture}
\node[anchor=south west,inner sep=0] (image) at (0,0) {\includegraphics[width=\textwidth]{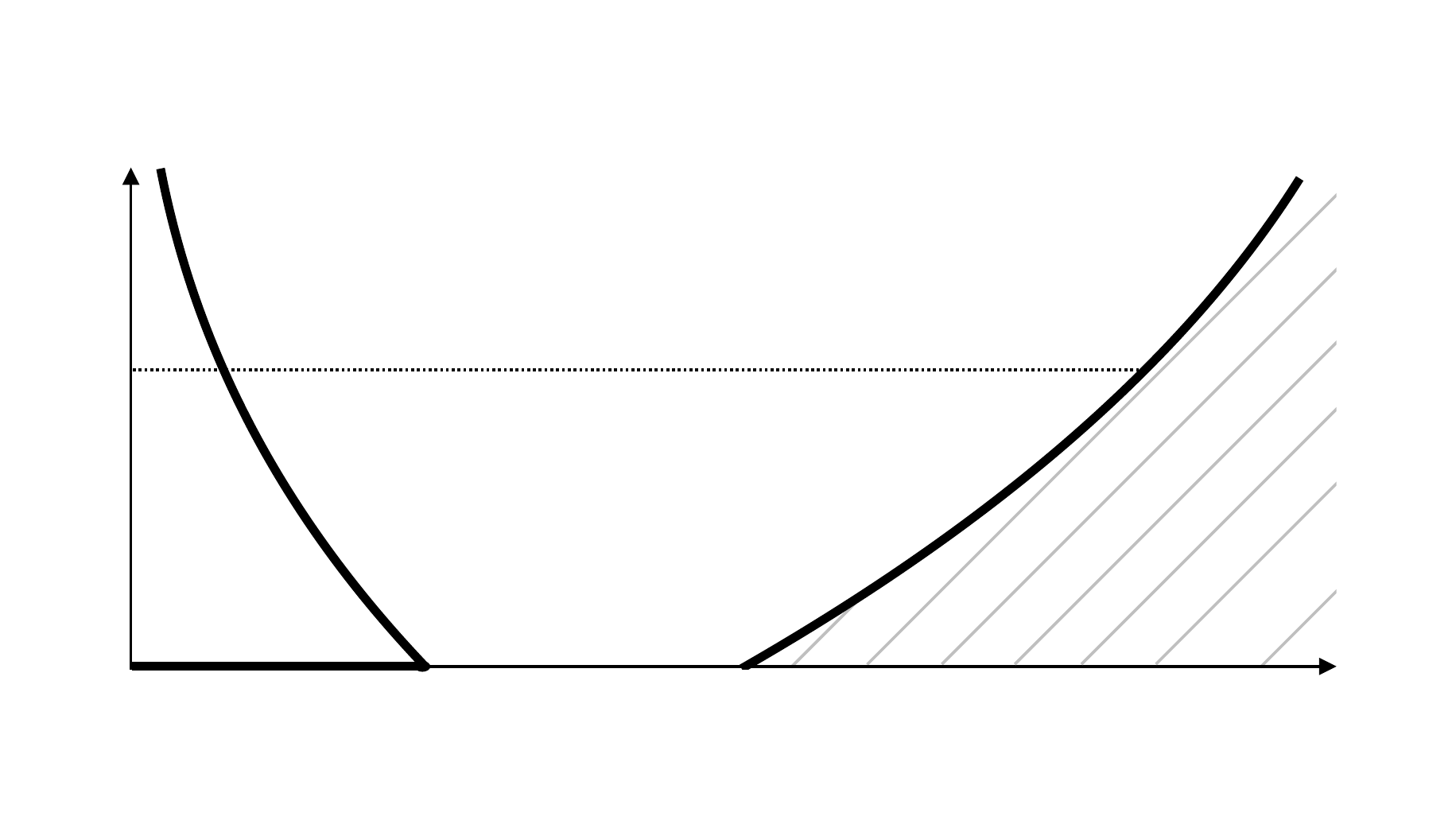}};
\begin{scope}[x={(image.south east)},y={(image.north west)}]
 \node[scale=1.2] at (0.95,0.198) {$x$};
 \node[scale=1.1] at (0.292,0.14) {\small $F(0)\mathord{=}f_0$};
 \node[scale=1.2] at (0.4,0.14) {$\fotd$};
 \node[scale=1.2] at (0.47,0.14) {$\atl$};
 \node[scale=1.2] at (0.055,0.75) {$c$};
 \node[scale=1.2] at (0.81,0.45) {$A$};
 \node[scale=1.2] at (0.35,0.45) {$W_1$};
 \node[scale=1.2] at (0.14,0.45) {$S$};
 \node[scale=1.2] at (0.055,0.56) {$\bar c$};
 \node[scale=1] at (0.15,0.82) {$x = F(c)$};
 \node[scale=1] at (0.88,0.82) {$x = G(c)$}; 
\end{scope}
\end{tikzpicture}
\vspace{-17mm}
\caption{Moving boundaries of the control problem when $f_0 \leq \fotd$ and $\la \in (\la_*,\la^*]$,  (obtained in \cite{KOWZ00}). In this case the absorbing boundary $F$ is continuous at $0$. The nature of the stopping region $S$ and action region $A$ are as in Figure~\ref{fig:boundaries0}.}
\label{fig:boundaries1}
\end{center}
\end{figure}

\subsection{The one-shot problem}
 
\label{sec:1s}

In this paper, we introduce a ``one-shot'' solution technique capturing the idea that, when fuel is scarce, the control problem may effectively reduce to a timing decision: choosing when to intervene rather than how to do so gradually. This leads to the associated one-shot problem, in which all remaining fuel is expended at a single stopping time and which is therefore essentially an optimal stopping problem. In this sense, the one-shot problem provides a natural approximation to the original control problem as the fuel level tends to zero. This reveals a fundamental non-robustness in the control problem: zero-fuel models need not approximate behaviour under small but positive fuel constraints, see Figure \ref{fig:boundaries4}. 

The solution structure of the one-shot problem serves as the key building block for tackling the unresolved cases of the finite-fuel problem. Specifically, the stopping region of the one-shot problem guides the construction of the intervention region in the finite-fuel setting. In one parameter regime this approach yields a direct solution (Section~\ref{sec:cpos}), while in another it provides the structural ansatz for the value function and free boundaries (Section~\ref{sec:newsol}). 
The primary contribution of this work is to introduce the technique and demonstrate its effectiveness in resolving the previously open cases in \cite{KOWZ00}. We expect the one-shot technique to extend naturally to more general methodology and to apply to a wider range of finite-fuel singular control 
problems with discretionary stopping; such developments are deferred to future work.

\subsection{Boundary types}\label{sec:btypes}

This section provides an overview of the boundary types found in the solutions below.
(Note that expending fuel acts on the process $(X_t,C_t)_{t \geq 0}$ in the south-west direction $(-1,-1)$ when we consider only monotone decreasing controls and $x \geq 0$.)

From Figure~\ref{fig:boundaries0}, when $\lambda \in [\alpha\delta, \infty)$ the optimal strategy has the vertical boundary $\{(\fotd, c) :c \geq 0\}$ which splits the state space into a stopping region $S$ on the left in which the problem is stopped, and an action region $A$ on the right. In region $A$, if $x \geq c + \fotd$ then all fuel is expended in a single shot and the problem is stopped, while if $x \in (\fotd, \fotd + c)$ then a `partial shot' $x - \fotd$ of fuel is expended, propelling the process to the vertical boundary, where it is  stopped (it is never optimal to wait). 
The boundary may be called {\em absorbing}, since the problem ends when it is reached. 

From Figure~\ref{fig:boundaries1}, for $\lambda \in (\lambda_*,\lambda^*]$
the optimal strategy divides the state space into regions $S$ (left), $W$ (centre) and $A$ (right) in which we respectively end the problem, do nothing, and expend fuel. 
The left moving boundary $F$ separates $S$ and $W$ and so is absorbing, since when reached (due in this case to the random fluctuations of $X$ while waiting) the problem ends. The right moving boundary $G$ has gradient 1 at the critical fuel level $\bar c$. In region $A$, below the critical fuel level $\bar c$, all fuel is expelled in a single shot and this part of $G$ is termed a {\em repelling} boundary. 
(More generally, a boundary point is called repelling if, whenever it is reached by the process $(X_t,C_t)_{t \geq 0}$, the fuel level coordinate $C$ jumps at that time; a boundary portion is repelling if all of its points are repelling.)
Above the critical fuel level $\bar c$, for $x \in (G(c),G(\bar c) + c - \bar c)$ the process is first moved to $G$ by expending a partial shot of fuel and the process then performs Skorokhod reflection at $G$ to remain in the closure $\bar{W}$ of $W$.  
Thus the part of $G$ above $\bar c$ is termed a {\em reflecting} boundary. These intuitive definitions of repelling and reflecting boundaries will be made precise in Definition \ref{def:rr} below.

\subsection{Main results}\label{sec:mainres}
Our main results are as follows. We obtain explicit analytical expressions for the value function of the control problem and completely characterise the decision maker's optimal control strategy in the unsolved parameter range. Compared to the existing results (Figures \ref{fig:boundaries0} and \ref{fig:boundaries1}), we discover two qualitatively different new parameter regimes, with waiting regions which may be called V-shaped and V--$\Lambda$-shaped, due to their two moving boundaries (Figure~\ref{fig:boundaries4}) and four moving boundaries (Figure~\ref{fig:boundaries2}) respectively.
The absorbing boundary is discontinuous, an interesting new feature which appears in both regimes. 
The additional $\Lambda$-shaped region in Figure~\ref{fig:boundaries2} is surprising, and can be understood as arising from the complex form of the gain function in the one-shot problem.

It is instructive to consider the transitions between the known and new parameter regimes. In the following we have $0 < \la_* \leq \la^* < \la^\dagger < \a\d$, where the constants $\lambda^*$ and $\la^\dagger$ are defined uniquely in Section~\ref{sec:method} and $\lambda_*$ is defined at the start of Section~\ref{sec:rewrite}.

As the quadratic running cost parameter $\lambda$ decreases from the known regime $\la \in [\a \d, \infty)$ of Figure~\ref{fig:boundaries0} to the new regime $\la \in [\la^\dagger, \a \d)$ of Figure~\ref{fig:boundaries4}, we see that:
\begin{enumerate}
\item[(a)] the single vertical boundary of Figure~\ref{fig:boundaries0} splits along its length into the two moving boundaries $F$ and $G$, creating a V-shaped waiting region $W_1$, as the cost $\lambda$ associated with waiting becomes smaller;
\vspace{-2.5mm}
\item[(b)] as in Figure~\ref{fig:boundaries0}, there is a jump in the absorbing boundary $F$ at $c=0$. However the jump is now finite, and its size is an increasing function of $\lambda$;
\vspace{-2.5mm}
\item[(c)] the boundary $G$ is reflecting for $c>\bar{c}$ and repelling for $c\leq \bar{c}$. This repulsion is full, i.e.~all fuel is expelled, and is immediately followed by absorption.
\end{enumerate}
Then as $\lambda$ decreases from the new regime $\la \in [\la^\dagger, \a \d)$ of Figure~\ref{fig:boundaries4} to the new regime $\la \in (\la^*, \la^\dagger)$ of Figure~\ref{fig:boundaries2}, we observe that:
\begin{enumerate}
\item[(a)] an additional, $\Lambda$-shaped component $W_2$ of the waiting region appears, formed by an additional repelling boundary $\Fb$ (where repulsion is full, and immediately followed by absorption) and reflecting boundary $\Gb$;
\vspace{-2.5mm}
\item[(b)] the jump in the absorbing boundary $F$ is still present but reduces in size;
\vspace{-2.5mm}
\item[(c)] the boundary $G$ is again reflecting for $c>\bar{c}$ and repelling for $c\leq \bar{c}$. Depending on the value of $c \in (0,\bar{c}]$, repulsion at $G$ may be either 
\begin{enumerate}
\vspace{-2.5mm}
\item [(i)] full, and immediately followed by absorption, or
\vspace{-1mm}
\item [(ii)] partial, and followed by waiting. More precisely, the process moves from the boundary $G$ of the V-shaped component $W_1$ of the waiting region to the boundary $\Gb$ of the $\Lambda$-shaped component $W_2$ of the waiting region. After this, the process is governed by the boundaries $\Fb, \Gb$. 
\end{enumerate}
\end{enumerate}
Finally, as $\lambda$ decreases from the new regime $\la \in (\la^*, \la^\dagger)$ of Figure~\ref{fig:boundaries2} to the known regime $\la \in (\la_*,\la^*]$ of Figure~\ref{fig:boundaries1}, we see that:
\begin{enumerate}
\item[(a)] the jump in the absorbing boundary vanishes, so that the absorbing boundary $F$ is now continuous at $c=0$;

\vspace{-2.5mm}
\item[(b)] the $\Lambda$-shaped waiting region disappears, leaving only the  boundaries $F$ and $G$; 

\vspace{-2.5mm}
    \item[(c)] the moving boundary $G$ is again reflecting for $c>\bar{c}$ and repelling for $c\leq \bar{c}$. Repulsion is again full, but is now followed by waiting until the process is absorbed at $X = f_0$;

\vspace{-2.5mm}
    \item[(d)] the boundaries $F$ and $G$ no longer converge as $c \to 0$. 
\end{enumerate}

\subsection{Structure of the paper} 
Following the background presented in Section~\ref{sec:method}, the one-shot problem is formulated in Section~\ref{sec:methods} as a family of optimal stopping problems parametrised by the initial fuel level $c$. 
To solve these stopping problems via the method of \cite{Dayanik2003}, we then present their relevant geometry, both with and without fuel.
Section~\ref{sec:cpos} carries out their solution in the open parameter regime $\la\in [\la^\dagger,\a\d)$, then uses it to characterise and develop an explicit candidate solution to the control problem for all fuel levels (V-shaped waiting region, Figure~\ref{fig:boundaries4}). 
Section~\ref{sec:newsol} performs the same steps in the remaining open regime $\la\in (\la^*, \la^\dagger)$, where the solution is more complex	 (V-$\Lambda$ shaped waiting region, Figure~\ref{fig:boundaries2}). 
Section~\ref{Sec:verification} establishes a verification theorem, applies it in both new regimes, and describes the optimal strategies. For convenience, three appendices contain a number of technical proofs, and lists of figures and symbols.

\section{Background and parameter regimes}
\label{sec:method}

For convenience we recall here the setup of the control problem. Consider a probability space $(\Omega, \cF, \PP)$ equipped with a filtration $\mathbb{F}=\{\cF_t, 0 \leq t < \infty \}$ satisfying the usual conditions of right continuity and augmentation by null sets, and let $\cT$ be the set of all $\F$-stopping times. 
Denote by $\cA$ the class of $\F$-adapted, right-continuous processes $\xi=\{\xi_t, 0 \leq t < \infty\}$ with finite total variation on any compact interval and with $\xi_{0-}=0$. 
A process $\xi \in \cA$ is considered in its minimal decomposition 
$$\xi_t=\xi^+_t-\xi^-_t, \qquad t \in [0,\infty),$$
as the difference of two non-decreasing processes $\xi^\pm \in \cA$, so that its total variation on the interval $[0,t]$ is 
$$\check \xi_t = \xi_t^+ + \xi_t^-, \qquad t \in [0,\infty],$$
and for $c \in [0,\infty)$ we write 
$$\mathcal{A}(c) = \{\xi \in \cA:\check \xi_\infty \leq c, \text{ a.s.}\}.$$ 
We assume also that $(\Omega, \cF, \PP)$ supports the $\F$-adapted Wiener process $W=\{W_t, 0 \leq t < \infty\}$. Given an {\em initial position} $x \in \R$, {\em initial fuel level} $c \geq 0$ and {\em control process} $\xi \in \cA(c)$, we define the {\em state process}
\begin{align*} 
(X_t, C_t) = (x + W_t + \xi_t, c-\check \xi_t), \quad t \geq 0.
\end{align*}
The control problem is then defined by the value function
\begin{align}
Q(x;c) 
:= \inf_{\xi \in \cA(c),\t \in \cT} \EE\left[ \int_0^{\tx}e^{-\alpha t}\lambda X^2_t dt + \int_{[0,\tx]} e^{-\alpha t} d \check \xi_t + e^{-\alpha\t}\delta X^2_{\t}\cdot 1_{\{\tau < \infty\}}
\right], \label{eq:defV}
\end{align}
where $\la$, $\a$ and $\d$ are positive constants. 

Further details on the interpretation and context of the control problem are given in \cite{KOWZ00}. Therein, solutions are obtained by constructing evenly symmetric candidate value functions using the associated variational inequality and applying a verification theorem.
This procedure is carried out in the cases $\la \in (0, \la^*]$ and $\la \in [\a \d, \infty)$, where
\begin{equation} \label{lambda*}
\la^* = \frac{\a \d}{1 + \frac{\d/\a}{(1/4\d)+(1/\sta)}} < \a \d.
\end{equation}
It is convenient to note that $f_0 \leq \frac 1 {2\d}$ holds in the previously solved case $\la \in (0,\la^*]$, while in the open case $\la \in (\la^*,\a\d)$ we have $f_0 > \frac 1 {2\d}$.
Here $f_0$ is the unique positive solution of the equation $\rho(f_0)=0$, where
\begin{align}
\rho(x) := x^2+\frac{ 2 x}{\sqrt{2\a}} -\frac{\la}{\a(\a\d-\la)} \qquad \begin{cases}
<0, &\text{for } x \in [0,f_0), \\
>0, &\text{for } x > f_0,
\end{cases}
\label{eq:defrho}
\end{align}
so that
\begin{align} \label{eq:fodef}
f_0 \equiv f_0(\la) = \frac 1 \sta \bigg(\sqrt{\frac{\a \d + \la}{\a \d - \la}}-1 \bigg)>0.
\end{align} 
The level $f_0$ is also the free boundary for the problem without fuel (that is, when $c=0$; see Theorem \ref{cor:czerosol}) and it plays a key role below. It may be checked that for fixed $\a>0$ and $\d>0$, 
\begin{equation} \label{ldag}
\text{there exists a unique value } \la^\dagger \in (\la^*,\a\d) 
\text{ satisfying } f_0(\la^\dagger)=\frac{\alpha}{2 \lambda^\dagger} 
\end{equation} 
and we therefore have the following two equivalences: 
\begin{align}
\la \in [\la^\dagger, \a \d) \quad \Leftrightarrow \quad \frac 1 {2\d} < \atl \leq f_0,
\label{eq:longineq1}\\
\la \in (\la^*, \la^\dagger) \quad \Leftrightarrow \quad \frac 1 {2\d} < f_0 < \atl .   
\label{eq:longineq2}
\end{align}

\begin{Remark}
\label{rem:halfspace}
It is not difficult to see that if $X_t = 0$ then it is optimal to immediately stop the problem, and that it is strictly suboptimal to expend fuel to make $X$ `jump across' zero. As also justified in \cite{KOWZ00} (and see \cite{ChenYi12} for a related problem), we aim to solve the control problem on the `half' state space $\{(x,c):x, c \geq 0\}$, which may be extended by even symmetry in $x$. 
We thus consider only starting values $x\geq 0$ and monotone controls by taking $\xi^+ \equiv 0$ so that $(X_t, C_t) =(x + W_t - \xi^-_t, c- \xi^-_t)$ for $x\geq 0$ and $t \geq 0$.
\end{Remark}

We conclude this section with the special case $c=0$, for which we necessarily have $\check \xi \equiv 0$ almost surely (thanks to the admissibility condition in $\mathcal{A}(0)$). 
In this case control cannot be exercised and the only available intervention is ending the problem at time $\tau$. The control problem $Q(x;0)$ in \eqref{eq:defV} then reduces to the one-dimensional optimal stopping problem
\begin{align}
Q(x,0) = \, \tild V_0(x) &:= \inf_{\t \in \cS} \EE\bigg[\int_0^{\t}e^{-\alpha t}\lambda X^2_t dt + e^{-\alpha\t} \delta X^2_{\t}\cdot 1_{\{\tau < \infty\}}\bigg]. \label{eq:defq0}
\end{align}

\section{The one-shot problem} 
\label{sec:methods}

In this section, our goals are to:
\begin{enumerate}
\item Outline the intuition behind the one-shot approach and define the one-shot problem (see Section~\ref{sec:Heuristics});
\vspace{-2mm}
\item Confirm that the small-fuel solution from \cite{KOWZ00} in the parameter range $\la \in (\la_*,\la^*]$ has this form (see Section~\ref{sec:rewrite});
\vspace{-2mm}
\item Provide preliminary results for the one-shot problem for all $\lambda$ using the method of \cite{Dayanik2003}: the complete solution without fuel, and the problem geometry with fuel (see Section~\ref{sec:anosp}). 
These will be used in the rest of the paper to solve the small-fuel problem.
\end{enumerate}

\subsection{One-shot problem definition}
\label{sec:Heuristics}

As noted in Section \ref{sec:1s},
we specify a candidate value function for the control problem by solving a related optimal stopping problem, that we call the ``one-shot problem". In the one-shot problem, all fuel is expended at the same time. Thus at any instant the decision maker can either (a) end the problem without expending any further fuel or, alternatively, they can (b) expend all fuel at the same time (choosing to end the problem sometime after that). The associated optimal stopping cost function is therefore the pointwise minimum of these two alternative costs, which may not be a smooth function.

More precisely, in the context of the control problem \eqref{eq:defV} and Remark \ref{rem:halfspace}, we consider a point $(x,c)$ lying in the waiting region $W$ with $x \geq 0$ and notice that: 
(a) ending the problem at time $\tau$ costs $\delta X_\tau^2$, 
whereas (b) expending all fuel $c$ at time $\tau$ and subsequently proceeding optimally without fuel has expected cost $c+ Q(X_\tau-c, 0)$. 
As there is also a running cost $\lambda X_t^2$ incurred during the time period $t \in [0, \tau]$, the one-shot problem $\tild V(x;c)$ is defined by 
\begin{align}
\tild V(x;c) &:= \inf_{\t \in \cS} \EE\bigg[\int_0^{\t}e^{-\alpha t}\lambda X^2_t dt + e^{-\alpha\t} \min \Big\{ \delta X^2_{\t},\, c+ \tild V_0(X_\t-c) \Big\}\cdot 1_{\{\tau < \infty\}}\bigg], \label{eq:defqc}
\end{align}
for all $\alpha, \lambda, \delta > 0$ and $c \geq 0$, where $\tild V_0$ is the value function without fuel defined in \eqref{eq:defq0}.

\subsection{Rewriting a known solution}
\label{sec:rewrite}

In this section, we rewrite the small-fuel part $c \in (0,\bar c)$ of a known solution from \cite[Section 10]{KOWZ00} for the subcase $\la \in (\la_*,\la^*]$, where $\la_* \in (0,\la^*]$ is uniquely defined in \cite[Proposition 8.7]{KOWZ00}. 
Our aim is to confirm that it has the form of the one-shot problem \eqref{eq:defqc}. 
The solution involves a unique fuel level $\bar c$ (cf.~Figure~\ref{fig:boundaries1}) such that 
\[
G'(c) > 1 \text{ for } c \in (0,\bar c) \text{ and }
G'(c) \in (0,1] \text{ for } c \in [\bar c, \infty).
\]
In light of Remark~\ref{rem:halfspace}, consider a point $(x,c)$ lying in the waiting region $W$ with $x \geq 0$ and $c \in (0,\bar c)$. The optimal policy is to wait until either the problem ends with absorption at the left boundary or until all fuel is expended instantaneously, in a single shot, upon reaching the right boundary. 
This suggests that for such initial values $(x,c)$ the control problem is in fact one of optimally stopping either at the left or right boundary. 
If the right boundary is hit first then the process is instantaneously `repelled' from the state $(X_\tau,c)$ to $(X_\tau-c, 0)$, after which the controller acts optimally without fuel. 
By the principle of optimality, the corresponding total expected cost from this repulsion action is $c+ \tild V_0(X_\tau-c)$, where $\tild V_0$ is the value function with no fuel \eqref{eq:defq0}.

In particular, the corresponding optimal stopping rule then has no direct dependence on the optimal policy for other fuel levels $\tild c \in (0,c) \cup (c,\bar c)$. 
This implies that the solution is derived by solving a family of optimal stopping problems parametrised by $c \in (0,\bar c)$, each taking the form of the one-shot problem $\tild V(x;c)$ in \eqref{eq:defqc}, and which may be solved independently of each other. 
We remark that this formulation as a family of optimal stopping problems, with $c$ acting only as a parameter, is consistent with the assumption made in \cite{KOWZ00} in the subcase $\la \in (\la_*,\la^*]$, that $Q$ is continuously differentiable in $x$ across both the right- and left-hand boundaries.

\subsection{Preliminaries to the one-shot approach} \label{sec:anosp}

In this section we rewrite problem \eqref{eq:defq0} for $c=0$, and the problems \eqref{eq:defqc} for $c > 0$, as a family of  parameter-dependent optimal stopping problems with terminal costs. 
As the driving process is a regular one-dimensional diffusion, the stopping problems can be solved constructively by the method of Dayanik and Karatzas \cite{Dayanik2003}, as we do below, or by any preferred method.
Then, in Sections \ref{sec:nofuel} and \ref{sec:h} 
we provide their associated geometry. Proofs of results not included in the section can be found in Appendix \ref{AppA3}.

Integrating the `running cost' term $\int_0^{\t}e^{-\alpha t}\lambda X^2_t dt$ by parts gives 
\begin{align}
V(x;c) &:= \inf_{\t \in \cT} \EE\big[ e^{-\alpha\tau}h(X_{\tau};c)\cdot 1_{\{\tau < \infty\}}\big] 
= \tild V(x;c) - \fla x^2 - \flas, \label{def-V} \\
V_0(x) &:= \inf_{\t \in \cT} \EE\big[ e^{-\alpha\tau}h_l(X_{\tau})\cdot 1_{\{\tau < \infty\}}\big]
= \tild V_0(x) - \fla x^2 - \flas, \label{eq:protonofuel}
\end{align}
where the obstacles $h$ and $h_l$ are respectively given by
\begin{align}
h(x;c) &:= h_l(x) \wedge h_r(x;c), \label{eq:hdef} \\
h_l(x) &:= \Big( \d - \fla \Big) x^2 -\flas, \label{eq:hldef} \\
h_r(x;c) &:= \tild V_0(x-c) + c - \fla x^2 -\flas.
\label{def-G0}
\end{align}
The equivalent forms \eqref{def-V}--\eqref{eq:protonofuel} enable the solution of problems \eqref{eq:defq0} and \eqref{eq:defqc} using the characterisation via concavity of excessive functions. To account for exponential discounting we appeal to the generalisation of this method  presented in \cite{Dayanik2003}, as follows. Define 
$$\phi_{\a}(x):=e^{-\sqrt{2\a}x} \quad \text{and} \quad 
\psi_{\a}(x):=e^{\sqrt{2\a}x}$$ 
to be the decreasing and increasing solutions respectively of the characteristic equation $(\cL - \a)u =0$, where $\cL:= \frac{1}{2}\frac{d^2}{dx^2}$ is the infinitesimal generator of Brownian motion. 
As in \cite[Eq.~(4.6)]{Dayanik2003}, we set
\begin{equation}
\label{def-F}
\Psi(x):=\frac{\psi_{\a}(x)}{\phi_{\a}(x)} = e^{2\sqrt{2\a}x}, \qquad x \in \RR.
\end{equation}
With an obvious terminology we will refer to the point $x$ as being in the {\em natural scale} and the point $y=\Psi(x)$ as being in the {\em transformed scale}.
Given a function $g: \R \times [0, \infty) \to \R$, define the transformation $\Phi$ by\footnote{With a slight abuse of notation, if the function $g$ does not depend on the $c$ variable then we omit the $c$ variable wherever it occurs in \eqref{def-H}.}
\begin{align}\label{def-H}
\Phi(g)(y;c):=
\left\{
\begin{array}{ll}
\frac{g(\Psi^{-1}(y);c)}{\phi_\a(\Psi^{-1}(y))}, & y>0,\\[+4pt]
0, & y=0,
\end{array}
\right.
\end{align}
and define the {\em transformed obstacles} (where $h_l(x;c) \equiv h_l(x)$)
\[
H = \Phi(h), \qquad H_r = \Phi(h_r), \qquad H_l = \Phi(h_l). 
\]

Recalling Remark \ref{rem:halfspace}, similarly, we study the optimal stopping problems \eqref{def-V}--\eqref{eq:protonofuel} on the domain $[0,\infty)$, with absorption at $x=0$. 
In view of \eqref{def-F}, this is equivalent to taking $y \in [1,\infty)$ in the transformed scale and the subsequent analysis focuses on this part of the state space. 
Since the optimal stopping problem is one of minimising costs, solutions are characterised via convexity, as follows. 

\vspace{3mm}
\begin{Proposition}[\cite{Dayanik2003}]\label{prop:DayKar} 
Fix $c > 0$ and let $W(\,\cdot\,;c), \; W_0(\cdot):[1,\infty) \to \R$ be the greatest non-positive convex minorants of $H(\,\cdot\,;c), \; H_l(\cdot):[1,\infty) \to \R$, respectively. 
Then
\begin{itemize}
\vspace{-1mm}
\item[\rm (i)] problem \eqref{def-V} has value function $V(x;c)=\phi_\alpha(x) W(\Psi(x);c)$ for all $x\in [0,\infty)$ and optimal stopping region $\cS_c=\Psi^{-1}(\cS^{W}_c)$, where $\cS^{W}_c:=\{y>1:W(y;c)=H(y;c)\}$; 

\vspace{-2mm}
\item[\rm (ii)] problem \eqref{eq:protonofuel} has value function $V_0(x)=\phi_\alpha(x) W_0(\Psi(x))$ for all $x\in [0,\infty)$ and the optimal stopping region is $\cS_0=\Psi^{-1}(\cS^{W}_0)$, where $\cS^{W}_0:=\{y>1:W_0(y)=H_l(y)\}$.
\end{itemize}
\end{Proposition}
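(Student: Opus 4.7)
Since the statement is attributed to \cite{Dayanik2003}, my plan is to invoke the general characterisation of optimal stopping for one-dimensional diffusions developed there, specialised to Brownian motion absorbed at $0$ and discounted at rate $\alpha$, and then to verify its hypotheses for the obstacles $h(\,\cdot\,;c)$ and $h_l$ of the present paper. First I would note that $\phi_\alpha$ and $\psi_\alpha$ form a fundamental pair of decreasing/increasing solutions of $(\cL - \alpha)u = 0$, so $\Psi = \psi_\alpha/\phi_\alpha$ from \eqref{def-F} is precisely the scale function of the $\alpha$-subprocess in the transformed coordinate. Absorption of $X$ at $x=0$ corresponds to absorption at $y=\Psi(0)=1$, which explains the choice of domain $[1,\infty)$ for the minorants, while the restriction to $x \geq 0$ is justified by Remark~\ref{rem:halfspace}. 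The non-positivity constraint on the minorants reflects the fact that $\tau = \infty$ is admissible and contributes $0$ through the indicator $1_{\{\tau<\infty\}}$, so that both $V(\cdot;c)$ and $V_0(\cdot)$ are necessarily non-positive.

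I would then verify the regularity and growth conditions required, treating part (ii) first since it is self-contained. The obstacle $h_l(x) = (\delta - \lambda/\alpha)x^2 - \lambda/\alpha^2$ is continuous and grows only quadratically, while $\psi_\alpha(x) = e^{\sqrt{2\alpha}x}$ grows exponentially; a direct computation from $\Psi^{-1}(y) = (2\sqrt{2\alpha})^{-1}\log y$ and $\phi_\alpha(\Psi^{-1}(y)) = y^{-1/2}$ shows that $H_l(y)$ grows like $y^{1/2}(\log y)^2$, so $\lim_{y \to \infty} H_l(y)/y = 0$. Combined with continuity at $y=1$, this places $H_l$ in the setting for which \cite{Dayanik2003} guarantees finiteness of the value function together with the convex minorant characterisation. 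Part (ii) follows, and yields as a by-product continuity and a quadratic-type upper bound for $\tild V_0$ on $[0,\infty)$.

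Armed with these properties of $\tild V_0$, the obstacle $h_r(\,\cdot\,;c)$ defined in \eqref{def-G0} is continuous and obeys the same asymptotic domination by $\psi_\alpha$, so $h(\,\cdot\,;c) = h_l \wedge h_r(\,\cdot\,;c)$ is continuous and admissible for the same result. Applying \cite{Dayanik2003} to \eqref{def-V} then delivers the representation $V(x;c) = \phi_\alpha(x)W(\Psi(x);c)$ together with the identification of the stopping region as $\{y > 1 : W(y;c) = H(y;c)\}$ in the transformed scale, which translates via $\Psi$ to the claim in (i). The step I expect to require the most care is the verification of the growth and integrability condition for $h(\,\cdot\,;c)$, since its definition is coupled through $h_r$ to the value function $\tild V_0$ of a separate stopping problem; this coupling is resolved cleanly by proving (ii) first, thereby obtaining the required control on $\tild V_0$ before deducing (i).
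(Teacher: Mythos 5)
Your proposal is correct and follows essentially the same route as the paper, which offers no proof of this proposition but simply quotes it from Dayanik and Karatzas (2003): you invoke the same general characterisation via the transformed scale $\Psi=\psi_\alpha/\phi_\alpha$ and the greatest non-positive convex minorant, and your added verifications (absorption at $y=\Psi(0)=1$, non-positivity from the admissibility of $\tau=\infty$, and the sub-linear growth $H_l(y)\sim \sqrt{y}(\ln y)^2$ so that $H_l(y)/y\to 0$, with the analogous control on $H$ via $\tild V_0$) are exactly the hypotheses the paper leaves implicit when citing that reference. Treating (ii) before (i) to control $h_r$ through $\tild V_0$ matches the paper's own ordering in Sections \ref{sec:nofuel}--\ref{sec:h}.
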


These results will be used as follows. 
In Section~\ref{sec:nofuel} we write down the geometry of the transformed obstacle $H_l$, which is used to solve \eqref{eq:protonofuel}. 
In Section~\ref{sec:h} the value function $V_0$ is then used to study the obstacle $h$ (cf.\ \eqref{eq:hdef}--\eqref{def-G0}) and the geometry of its transformation $H$. 
This enables the solution of the one-shot problem \eqref{def-V}, which is used to create candidate solutions to the control problem in Sections \ref{sec:cpos} and \ref{sec:newsol}.

\subsubsection{Geometry without fuel}
\label{sec:nofuel}

As noted above, when the fuel level is 0, stopping the problem is the only available action, and the control problem reduces to the optimal stopping problem $\tild V_0$ of \eqref{eq:defq0}. Its solution can be obtained by solving problem $V_0$ of  \eqref{eq:protonofuel}, whose obstacle is the absorption cost $h_l$ specified in \eqref{eq:hldef}.
From \eqref{def-H}, its transformation $H_l = \Phi(h_l)$ is given by 
\begin{align}\label{eq:Hl}
H_l(y)=\sqrt{y}\Big(-\flas + \frac{\alpha\delta - \lambda}{8\alpha^2}(\ln y)^2\Big), & \quad y \geq 1.
\end{align} 
We present its geometry in the following result.

\begin{Lemma}[cf.~Figure~\ref{fig:1}]\label{lem:Hlprops} 
For any $\la \in (0,\a \d)$ 
the function $y \mapsto H_l(y)$ is:
\begin{enumerate}
\item[\rm (i)] continuous on $[1,\infty)$ with $H_l(1)=-\flas$ and $H_l'(1)=-\frac{\la}{2\a^2}$,

\vspace{-2mm}
\item[\rm (ii)] 
strictly convex on $\Big[1, \Psi\Big(\sqrt{\frac{\d}{\a\d-\la}}\Big) \Big]$ and strictly concave on $\Big[\Psi\left(\sqrt{\frac{\d}{\a\d-\la}}\right), \infty\Big)$,

\vspace{-1mm}
\item[\rm (iii)] \label{lempart:413} strictly decreasing on $[1,\Psi(f_0)]$ and strictly increasing on $[\Psi(f_0),\infty)$.
\end{enumerate}
\vspace{-1mm}
Note that $\sqrt{\frac{\d}{\a\d-\la}} > f_0(\la)$, for all $\la \in (0,\a \d)$.
\end{Lemma}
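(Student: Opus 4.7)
The plan is to establish all three parts, plus the appended inequality, by direct calculation on the explicit formula \eqref{eq:Hl}. Writing $u := \ln y$, $A := \la/\a^2$, and $K := (\d - \la/\a)/(8\a) = (\a\d-\la)/(8\a^2)$ (both strictly positive since $\la \in (0,\a\d)$), the obstacle takes the compact form $H_l(y) = \sqrt{y}\,(K u^2 - A)$, which is visibly $C^\infty$ on $[1,\infty)$, giving the continuity half of (i). Using $du/dy = 1/y$, two short differentiations then yield
\begin{align*}
H_l'(y) \;=\; \frac{Ku^2 + 4Ku - A}{2\sqrt{y}}, \qquad H_l''(y) \;=\; \frac{A + 8K - Ku^2}{4y^{3/2}}.
\end{align*}
Substituting $y = 1$ (so $u = 0$) into $H_l$ and $H_l'$ completes (i).

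For part (iii), the sign of $H_l'(y)$ coincides with that of the strictly convex quadratic $q(u) := Ku^2 + 4Ku - A$ on $u \geq 0$, for which $q(0) = -A < 0$ and $q(u) \to +\infty$ as $u \to \infty$. Its unique positive root is $u^\star = -2 + 2\sqrt{1 + A/(4K)}$. Since $A/K = 8\la/(\a\d - \la)$, a routine simplification gives $1 + A/(4K) = (\a\d + \la)/(\a\d - \la)$, and hence
\begin{align*}
u^\star \;=\; -2 + 2\sqrt{(\a\d+\la)/(\a\d-\la)} \;=\; 2\sqrt{2\a}\,f_0,
\end{align*}
by the definition \eqref{eq:fodef} of $f_0$; equivalently $y = \Psi(f_0)$. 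Thus $H_l'$ is strictly negative on $[1,\Psi(f_0))$ and strictly positive on $(\Psi(f_0),\infty)$, as required.

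For part (ii), $H_l''(y)$ changes sign when $u^2 = 8 + A/K = 8\a\d/(\a\d-\la)$, i.e.\ at $u = 2\sqrt{2\a\d/(\a\d-\la)}$. Since $\ln \Psi\bigl(\sqrt{\d/(\a\d-\la)}\bigr) = 2\sqrt{2\a}\,\sqrt{\d/(\a\d-\la)} = 2\sqrt{2\a\d/(\a\d-\la)}$, this is exactly the stated inflection point; and because $H_l''(1) = A + 8K > 0$, the function is strictly convex to the left and strictly concave to the right. Finally, the appended inequality $\sqrt{\d/(\a\d-\la)} > f_0$ is equivalent (after multiplying by $\sqrt{2\a}$ and rearranging via \eqref{eq:fodef}) to $\sqrt{2\a\d/(\a\d-\la)} + 1 > \sqrt{(\a\d+\la)/(\a\d-\la)}$, which is immediate from $2\a\d > \a\d + \la$ on $(0,\a\d)$.

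No real obstacle is anticipated: the whole argument is a one-variable differentiation exercise. The only mildly delicate point is the algebraic bookkeeping needed to recognise the ratios $(\a\d \pm \la)/(\a\d - \la)$ in the critical-point formulas and thereby connect them back to the definition \eqref{eq:fodef} of $f_0$.
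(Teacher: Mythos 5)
Your proof is correct, and every computation checks out: the formulas for $H_l'$ and $H_l''$ in the variable $u=\ln y$, the identification of the root of $Ku^2+4Ku-A$ with $2\sqrt{2\a}f_0$ via \eqref{eq:fodef}, the inflection point at $u^2 = 8\a\d/(\a\d-\la)$, and the final inequality. The route, however, differs from the paper's. The paper does not differentiate $H_l$ in the transformed variable at all: it invokes the general sign lemma (Lemma \ref{lem:geomlemma}), by which the convexity of $H_l=\Phi(h_l)$ is read off from the sign of $(\cL-\a)h_l(x) = \d-(\a\d-\la)x^2$ and the monotonicity from the sign of $\pd{}{x}\big(h_l(x)/\phi_\a(x)\big)$, which is a positive multiple of $\rho(x)$; the last assertion of the lemma is then absorbed into checking $(\cL-\a)h_l(f_0)>0$, i.e.\ that $f_0$ lies in the convex region. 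Your calculation is in fact the same algebra in disguise — under $u = 2\sqrt{2\a}\,x$ your quadratic $Ku^2+4Ku-A$ is a positive multiple of $\rho(x)$ and $A+8K-Ku^2$ is a positive multiple of $\d-(\a\d-\la)x^2$ — but you obtain it by direct one-variable calculus on \eqref{eq:Hl}, making the argument self-contained, whereas the paper's appeal to Lemma \ref{lem:geomlemma} keeps the computation in the natural scale and fits the systematic Dayanik--Karatzas framework that is reused for the later obstacles $h_{r1}$, $h_{r2}$ (Lemma \ref{lem:Hconvexity}), where explicit differentiation in $y$ would be messier. Your direct proof of $\sqrt{\d/(\a\d-\la)}>f_0$ from $2\a\d>\a\d+\la$ is also a clean alternative to the paper's implicit verification.
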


\begin{figure}[tb]
\begin{center}
\vspace{-1cm}
\begin{tikzpicture}
\node[anchor=south west,inner sep=0] (image) at (0,0) {\includegraphics[width=\textwidth]{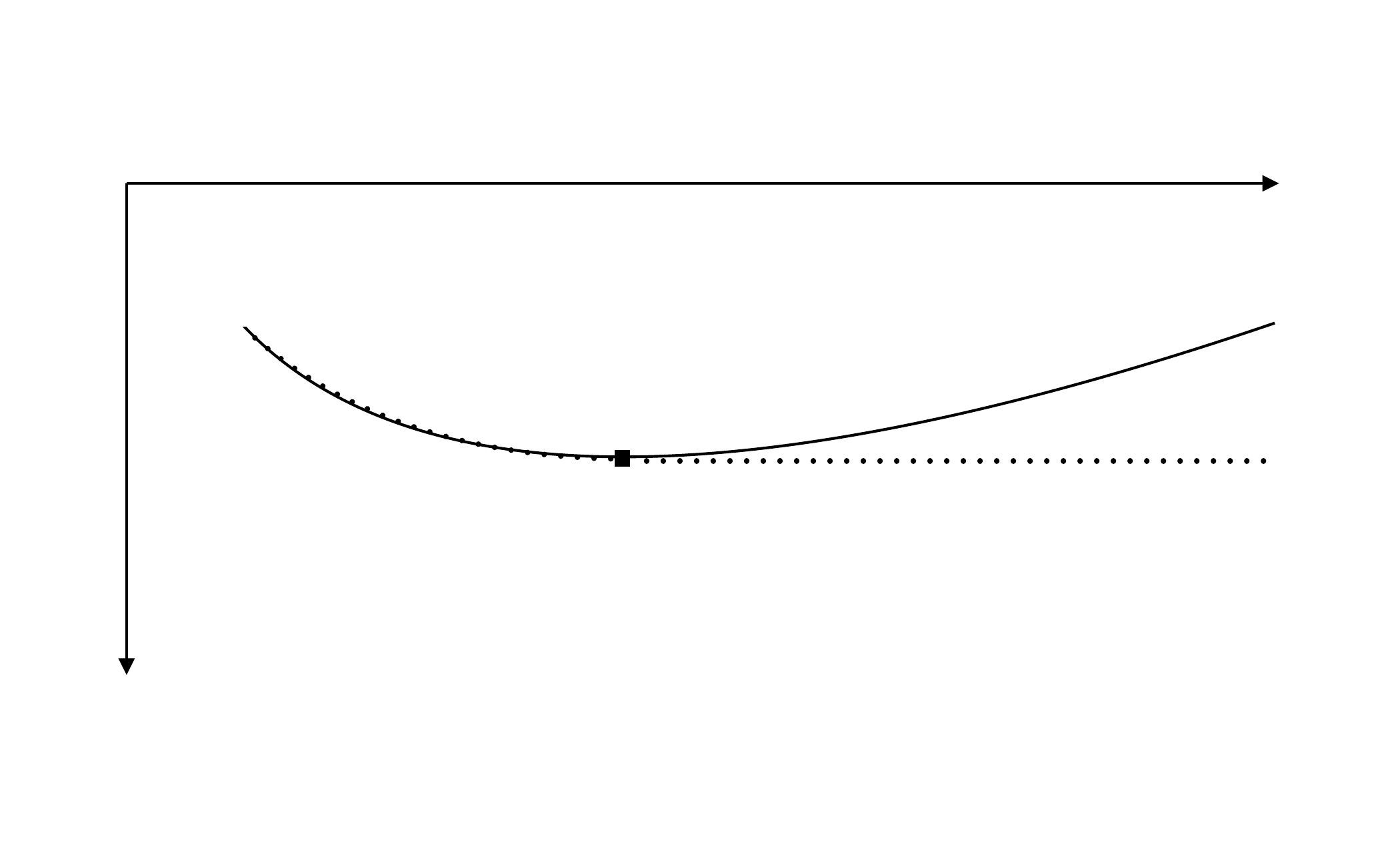}};
\begin{scope}[x={(image.south east)},y={(image.north west)}]
\draw (0.17,0.77) -- (0.17,0.79);
\draw (0.35,0.77) -- (0.35,0.79);
\draw (0.44,0.77) -- (0.44,0.79);
\draw (0.095,0.46) -- (0.085,0.46);
 \node[scale=1] at (0.06,0.46) {$B_0$};
 \node[scale=1] at (0.05,0.22) {$H_l,W$};
 \node[scale=1] at (0.17,0.73) {$1$};
 \node[scale=1] at (0.17,0.83) {$0$};
 \node[scale=1] at (0.35,0.83) {$\fotd$};
 \node[scale=1] at (0.44,0.83) {$f_0$};
 \node[scale=1] at (0.92,0.83) {$x$};
 \node[scale=1] at (0.92,0.73) {$y = \Psi(x)$};
\end{scope}
\end{tikzpicture}
\vspace{-25mm}
\caption{A geometric representation of the optimal stopping problem $V_0$ of \eqref{eq:protonofuel} when $\lambda \in (\lambda^*, \a \d)$, showing the transformed obstacle $H_l$ on $[1,\infty)$ (solid curve) and its greatest non-positive convex minorant $W$ (dotted curve). The two curves coincide for $y \in [1,\Psi(f_0)]$ and the tangency point is shown (square marker). For convenience the natural scale $x=\Psi^{-1}(y)$ is also given, and the stopping region in the natural scale is $[0,f_0]$.}
\label{fig:1}
\end{center}
\end{figure}

We may now solve the control problem without fuel. 

\begin{Theorem} \label{cor:czerosol}
For any $\la \in (0,\alpha\delta)$ the value function without fuel of \eqref{eq:defq0} is the continuously differentiable function 
\begin{align}\label{eq:Vtild0}
Q(x,0) = \tild V_0(x) &= \begin{cases}
\d x^2, & 0 \leq x \leq f_0, \\[+1mm]
B_0 e^{-x\sta} + \fla x^2 + \flas, & x > f_0,
\end{cases}
\end{align}
where the optimal stopping time is $\tau^* = \inf\{t \geq 0 : X_t \leq f_0\}$ and 
\begin{align}\label{eq:B0}
B_0 :=- \frac{2f_0}{\a \sta}(\a \d - \la)e^{f_0\sta} < 0.
\end{align}
\end{Theorem}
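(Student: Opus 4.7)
The strategy is to apply Proposition \ref{prop:DayKar}(ii) to the obstacle $H_l$ whose geometry is given by Lemma \ref{lem:Hlprops}, then translate back via \eqref{eq:protonofuel}. Explicitly, I would introduce the candidate
\[
W_0(y) := \begin{cases} H_l(y), & y \in [1, \Psi(f_0)], \\ B_0, & y > \Psi(f_0), \end{cases}
\]
where $B_0$ is as in \eqref{eq:B0}, and prove that $W_0$ is the greatest non-positive convex minorant of $H_l$ on $[1,\infty)$.

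First I would verify the matching value $H_l(\Psi(f_0)) = B_0$. From \eqref{eq:Hl} and $\ln \Psi(f_0) = 2\sqrt{2\alpha}\,f_0$, one obtains
\[
H_l(\Psi(f_0)) = e^{f_0\sta}\Big[(\delta-\tfrac{\lambda}{\alpha})f_0^2 - \flas\Big],
\]
and the equation $\rho(f_0)=0$ from \eqref{eq:defrho} rearranges exactly to give the bracketed quantity equal to $-\frac{2f_0(\a\d-\la)}{\alpha\sta}$, yielding $H_l(\Psi(f_0))=B_0$. In particular $B_0<0$ since $\la<\a\d$ and $f_0>0$.

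Next I would check the three required properties of $W_0$. Convexity: on $[1,\Psi(f_0)]$, strict convexity of $H_l$ is part of Lemma \ref{lem:Hlprops}(ii) (noting $\Psi(f_0) < \Psi(\sqrt{\delta/(\a\d-\la)})$); on $[\Psi(f_0),\infty)$ the candidate is constant; the pieces match in value by the computation above and in derivative because Lemma \ref{lem:Hlprops}(iii) implies $H_l'(\Psi(f_0))=0$, making $W_0$ globally $C^1$ and hence convex. Non-positivity: on $[1,\Psi(f_0)]$, Lemma \ref{lem:Hlprops}(iii) together with $H_l(1)=-\lambda/\alpha^2<0$ gives $H_l\leq H_l(1)<0$, and on $[\Psi(f_0),\infty)$ we have $W_0\equiv B_0<0$. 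Minorant property: on $[1,\Psi(f_0)]$ by construction, and on $[\Psi(f_0),\infty)$ because $H_l$ is (strictly) increasing there by Lemma \ref{lem:Hlprops}(iii).

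The main delicate point is to show that $W_0$ is the \emph{greatest} such minorant. Here I would use that $H_l'(\Psi(f_0))=0$: any convex minorant $\widetilde W$ with $\widetilde W(\Psi(f_0))=B_0$ and a strictly positive right derivative at $\Psi(f_0)$ would lie above the horizontal tangent to $H_l$ at $\Psi(f_0)$, hence above $H_l$ itself on some right-neighbourhood of $\Psi(f_0)$ (since $H_l$ is strictly convex there with horizontal tangent), contradicting $\widetilde W \leq H_l$. Thus the right derivative at $\Psi(f_0)$ must be $\leq 0$, forcing $\widetilde W(y) \leq B_0$ for $y\geq \Psi(f_0)$ by convexity. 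On $[1,\Psi(f_0)]$, strict convexity of $H_l$ gives $\widetilde W \leq H_l = W_0$ trivially.

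Finally, Proposition \ref{prop:DayKar}(ii) yields $V_0(x) = \phi_\alpha(x) W_0(\Psi(x))$ with stopping region $\mathcal S_0 = \Psi^{-1}([1,\Psi(f_0)]) = [0,f_0]$, so that $\tau^* = \inf\{t\geq 0: X_t \leq f_0\}$. Unpacking: for $x\in[0,f_0]$, $V_0(x) = \phi_\alpha(x) H_l(\Psi(x)) = h_l(x)$ by the definition \eqref{def-H}, and for $x>f_0$, $V_0(x) = B_0\phi_\alpha(x) = B_0 e^{-x\sta}$. Inverting the relation $V_0 = \tild V_0 - \fla x^2 - \flas$ from \eqref{eq:protonofuel} gives the two branches of \eqref{eq:Vtild0}. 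Continuous differentiability of $\tild V_0$ at $f_0$ follows from the $C^1$ pasting of $W_0$ at $\Psi(f_0)$ established above, which is precisely what selects the constants $f_0$ (via $\rho(f_0)=0$) and $B_0$ (via \eqref{eq:B0}).
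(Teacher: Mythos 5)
Your proposal is correct and takes essentially the same route as the paper's own proof, which likewise reads off the geometry of $H_l$ from Lemma \ref{lem:Hlprops}, identifies the greatest non-positive convex minorant as $H_l$ on $[1,\Psi(f_0)]$ pasted with the constant $B_0$ (obtained from $\rho(f_0)=0$, i.e.\ \eqref{eq:defrho}), and concludes via Proposition \ref{prop:DayKar}(ii) and \eqref{eq:protonofuel}; you merely spell out the verification details. One small tightening in your maximality step: convexity alone does not force $\widetilde W\leq B_0$ to the right of $\Psi(f_0)$ (a convex function with non-positive right derivative there may still rise later), so argue instead that any \emph{non-positive} convex function on the unbounded interval $[1,\infty)$ must be non-increasing, whence $\widetilde W(y)\leq \widetilde W(\Psi(f_0))\leq H_l(\Psi(f_0))=B_0$ for all $y\geq \Psi(f_0)$.
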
 

\begin{proof}
Figure~\ref{fig:1} illustrates the geometry of the transformed obstacle $H_l:[1,\infty) \to \R$, which was obtained in Lemma \ref{lem:Hlprops}. Its greatest non-positive convex minorant is the continuously differentiable function
\begin{equation*} 
W(y) := 
\left\{
\begin{array}{ll}
H_l(y), & y \in [1,\Psi(f_0)], \\[+1mm]
B_0, & y > \Psi(f_0),
\end{array}
\right. 
\end{equation*}
where it follows from \eqref{eq:defrho} that $B_0$ is given by \eqref{eq:B0} and is strictly negative.
The required result now follows from Proposition \ref{prop:DayKar}.(ii) and \eqref{eq:protonofuel}.
\hfill$\Box$ 
\end{proof}

\vspace{3mm}
The no-fuel problem was originally solved analytically in \cite[Section 6]{KOWZ00} via a free-boundary problem. We provide this geometric solution for completeness, and because its geometry will be referenced below.

\subsubsection{Geometry with fuel}
\label{sec:h}

Recall now the one-shot problem $V$ of \eqref{def-V} with obstacle $h = h_l \wedge h_r$.
In this section, we study the geometry of its transformed obstacle $H=\Phi(h)=H_l \wedge H_r$. 
This will serve as the fundamental building block to specify the new moving boundaries $F$ and $G$ used in the candidate control problem solutions of Sections \ref{sec:cpos} and \ref{sec:newsol}.
As the geometry of $H_l$ was presented in Lemma \ref{lem:Hlprops}, we focus on $h_r$ and $H_r$ in what follows.

\begin{Lemma} \label{lem:littleh}
Fix $c>0$. The obstacle $x \mapsto h_r(x;c)$ is continuously differentiable and divides at $x=f_0+c$ into two parts:
\begin{align} 
h_r(x;c) = \left\{
\begin{array}{ll}
h_{r1}(x;c):=  \d c(c-2x) + \frac{\a\d - \la}{\a} x^2 + c -\flas, & x \in [0,f_0+c],\\
h_{r2}(x;c):= \fla c(c-2x) +  c+ B_0 e^{-(x-c) \sta}, & x \geq f_0+c.
\end{array}
\right.
\label{def-G1}
\end{align}
The obstacle $h(\cdot;c)$ of $V(\cdot;c)$ in \eqref{def-V} is given by
\begin{align}
h(x;c) = \left\{
\begin{array}{ll}
h_l(x) & \text{ for } x \leq x_c, \\
h_{r1}(x;c) & \text{ for } x \in [x_c, f_0 + c], \\
h_{r2}(x;c) & \text{ for } x \geq f_0 + c, \\
\end{array}
\right. \label{eq:hbigf0}
\end{align}
where
\begin{equation} \label{x_c}
x_c:=\frac 1 {2\d}+\frac c 2 \in \big(0, f_0+c \big),\quad \text{for every $c>0$}. 
\end{equation}
\end{Lemma}

Using this result we may establish the relevant geometry of the transformed obstacle $H_r(\cdot;c)$. Recalling the parameter ranges identified in \eqref{eq:longineq1}--\eqref{eq:longineq2}, this is illustrated in Figure~\ref{fig:Fig1synth} for $\la\in[\la^\dagger, \a\d )$ and in Figure~\ref{fig:Fig1synth2} for $\la\in(\la^*,\la^\dagger)$. It will be useful to define the critical $y$-values
\begin{equation} \label{ys}
y_c := \Psi(x_c) , \quad \hye(c) := \Psi(f_0+c) 
\quad \text{and} \quad 
\hyc(c) := \Psi\Big(\atl + \frac c2\Big), 
\end{equation}
and the critical fuel $c$-values
\begin{equation} \label{Kk}
\oc := 2 \Big(\sqrt{\frac{\d}{\a\d-\la}} - \frac 1{2\d} \Big) > 0
\quad \text{and} \quad 
\uc := 2 \Big(\atl - f_0 \Big).
\end{equation}
From \eqref{eq:longineq1}--\eqref{eq:longineq2}, $\uc$ is positive and becomes relevant only in the case $\la \in (\la^*, \la^\dagger)$. 
In this case, using \eqref{eq:defrho}--\eqref{eq:fodef} one can also show that $\uc < \oc$.

\begin{figure}[tb]
\begin{center}
\vspace{-1cm}
\begin{tikzpicture}
\node[anchor=south west,inner sep=0] (image) at (0,0) {\includegraphics[width=\textwidth]{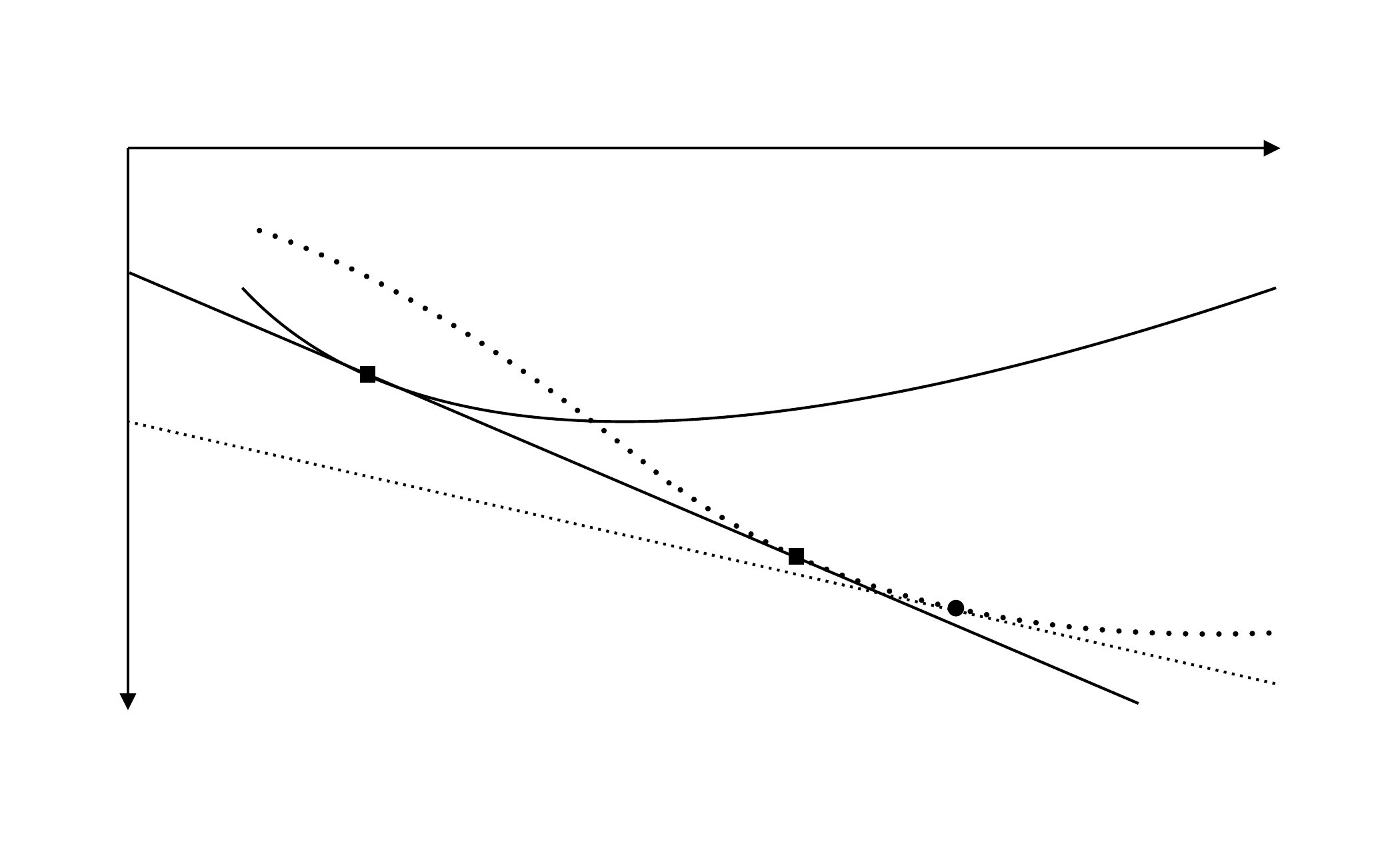}};
\begin{scope}[x={(image.south east)},y={(image.north west)}]
\draw (0.58,0.815) -- (0.58,0.835);
\draw (0.26,0.815) -- (0.26,0.835);
\draw (0.42,0.815) -- (0.42,0.835);
\draw (0.68,0.815) -- (0.68,0.835);
 \node[scale=1] at (0.02,0.675) {$I(\hat{y}_2(c),c)$};
 \node[scale=1] at (0.02,0.505) {$I(\hat{y}_r(c),c)$};
 \node[scale=1] at (0.26,0.775) {$\hat y_1(c)$};
 \node[scale=1] at (0.26,0.875) {$F(c)$};
  \node[scale=1] at (0.575,0.775) {$\hat y_2(c)$};
 \node[scale=1] at (0.575,0.875) {$G(c)$};
 \node[scale=1] at (0.68,0.875) {$f_0+c$};
 \node[scale=1] at (0.68,0.775) {$y_r(c)$};
 \node[scale=1] at (0.42,0.875) {$x_c = \fotd +\frac c 2$};
 \node[scale=1] at (0.42,0.775) {$y_c$};
  \node[scale=1] at (0.92,0.875) {$x$};
 \node[scale=1] at (0.92,0.775) {$y = \Psi(x)$};
 \node[scale=1] at (0.03,0.18) {$H_l, H_r, W$};
\end{scope}
\end{tikzpicture}
\vspace{-2cm}
\caption{A geometric representation of the optimal stopping problem $V(x;c)$ of \eqref{def-V} for $\la \in [\la^\dagger, \a\d)$ and $c>0$ fixed and sufficiently small. 
The transformed obstacle $y \mapsto H_r(y;c)$ is indicated by the dotted curve. 
To the left (resp.~right) of the circular marker at $y=y_r(c)$, $H_r$ is given by $H_{r1}$ (resp.~$H_{r2}$). 
Its common tangent $r_{\hyt(c)}$ (solid line) with the transformed obstacle $H_l$ (solid curve) is shown together with the points of common tangency at $x=F(c)$ and $x=G(c)$ (square markers). 
Intercepts at the vertical axis are labelled as in Definition \ref{def:tangents}. 
As $c \to 0$, $H(\cdot;c):=H_l(\cdot) \wedge H_r(\cdot;c)$, and hence also its greatest non-positive convex minorant, converge pointwise to $W(\cdot)$ shown in Figure~\ref{fig:1}.}
\label{fig:Fig1synth}
\end{center}
\end{figure}

\begin{figure}[tb]
\begin{center}
\vspace{-1cm}
\begin{tikzpicture}
\node[anchor=south west,inner sep=0] (image) at (0,0) {\includegraphics[width=\textwidth]{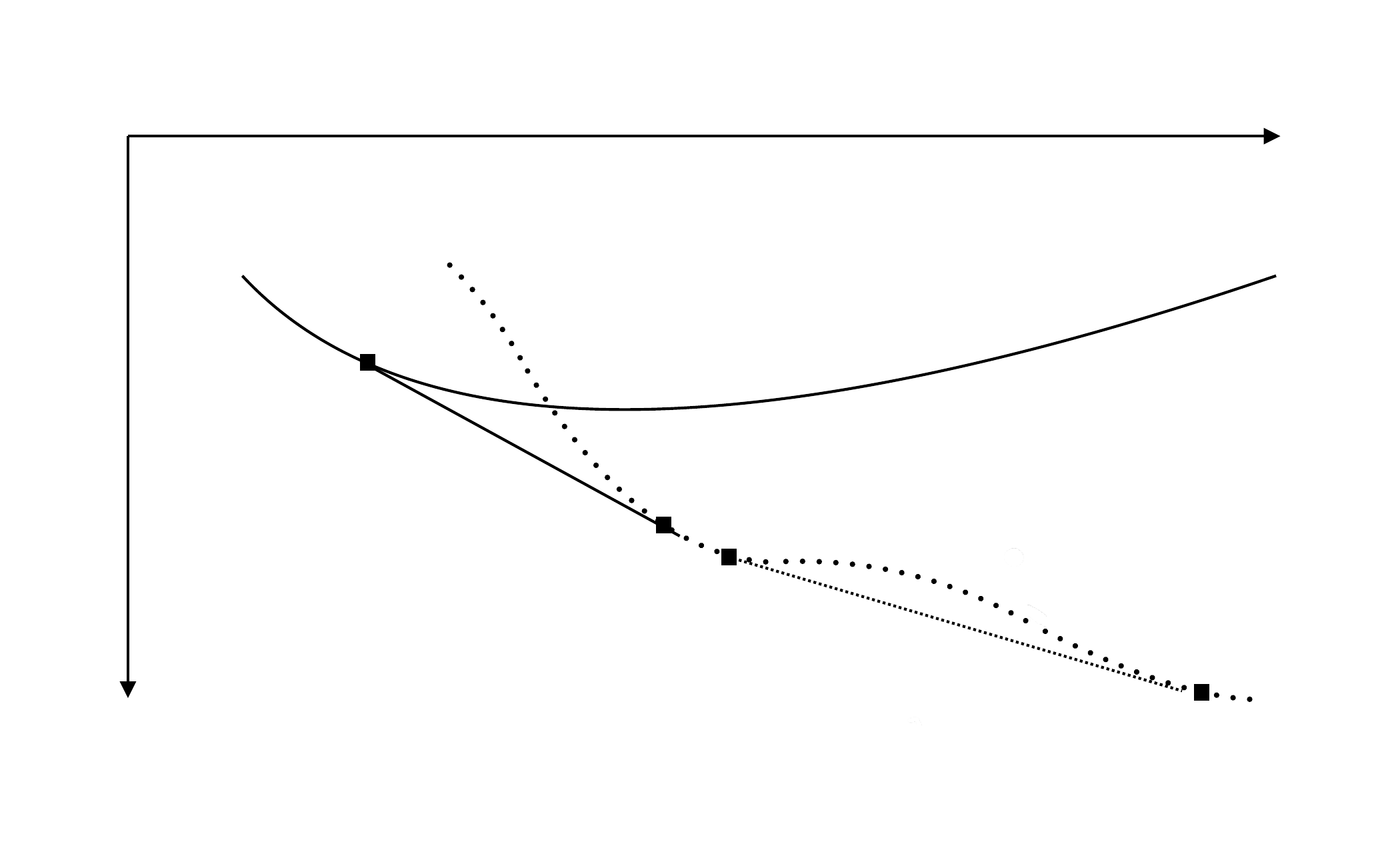}};
\begin{scope}[x={(image.south east)},y={(image.north west)}]
\draw (0.52,0.83) -- (0.52,0.85);
\draw (0.26,0.83) -- (0.26,0.85);
\draw (0.472,0.83) -- (0.472,0.85);
\draw (0.86,0.83) -- (0.86,0.85);
 \node[scale=1] at (0.26,0.79) {$\hat y_1(c)$};
 \node[scale=1] at (0.26,0.89) {$F(c)$};
  \node[scale=1] at (0.54,0.79) {$\hat{y}_3(c)$};
 \node[scale=1] at (0.85,0.79) {$\hat{y}_4(c)$};
 \node[scale=1] at (0.465,0.89) {$G(c)$};
 \node[scale=1] at (0.465,0.79) {$\hat y_2(c)$};
  \node[scale=1] at (0.03,0.19) {$H_l, H_r, W$};
    \node[scale=1] at (0.95,0.89) {$x$};
 \node[scale=1] at (0.95,0.79) {$y=\Psi(x)$};
 \draw[fill] (0.56,0.341) circle [radius=0.09cm];
\end{scope}
\end{tikzpicture}
\vspace{-2cm}
\caption{A geometric representation of the optimal stopping problem $V(x;c)$ of \eqref{def-V} for $\la \in (\la^*,\la^\dagger)$ and $c>0$ fixed and sufficiently small. The transformed obstacle $y \mapsto H_r(y;c)$ is indicated by the dotted curve.
To the left (resp.~right) of the circular marker, $H_r$ is given by $H_{r1}$ (resp.~$H_{r2}$). The common tangent between $H_l$ (solid curve) and $H_{r1}$, with tangency points $x=F(c)$ and $x=G(c)$, is shown by a solid line. The common tangent between $H_{r1}$ and $H_{r2}$, with tangency points $x=\Psi^{-1}(\hat{y}_3(c))$ and $x=\Psi^{-1}(\hat{y}_4(c))$, is shown by a fine dotted line.}
\label{fig:Fig1synth2}
\end{center}
\end{figure}

\begin{Lemma}\label{lem:Hconvexity}
The transformed obstacle $y \mapsto H_r(y;c)$ is continuously differentiable. 
\begin{itemize}
\vspace{-1mm}
\item[\rm (I)] For all $\la \in [\la^\dagger, \a\d)$, 
\begin{itemize}
\vspace{-2mm}
\item[\rm (i)] if $c>\oc$, there exists $y_v(c) \in [y_c, \hye(c))$ such that the function $y \mapsto H_r(y;c)$ is strictly concave on $(y_c,y_v(c))$ and is strictly convex on $(y_v(c),\infty)$,   

\item[\rm (ii)] if $c\leq\oc$, then the function $y \mapsto H_r(y;c)$ is strictly convex on $(y_c,\infty)$.
\end{itemize}

\vspace{-2mm}
\item[\rm (II)] For all $\la \in (\la^*,\la^\dagger)$, 
\begin{itemize}
\vspace{-2mm}
\item[\rm (i)] if $c>\oc$, there exists $y_v(c) \in [y_c, \hye(c))$ such that the function $y \mapsto H_r(y;c)$ is strictly concave on $(y_c,y_v(c))$ and is strictly convex on $(y_v(c),\infty)$,   

\item[\rm (ii)] if $c\in[\uc,\oc]$, the function $y \mapsto H_r(y;c)$ is strictly convex on $(y_c,\infty)$,

\item[\rm (iii)] if $c<\uc$, then the function $y \mapsto H_r(y;c)$ is strictly convex on $(y_c, \hye(c))$,  strictly concave on $(\hye(c), \hyc(c))$, and strictly convex on $(\hyc(c), \infty)$.  
\end{itemize}
\end{itemize}
\vspace{-2mm}
(In cases {(I).(ii)} and {(II).(ii)--(iii)} we simply set $y_v(c):=y_c$).
Further,
\begin{equation} \label{eq:Hneglim}
\lim_{y \to \infty}H_r(y;c) = -\infty, 
\quad \text{and} \quad 
\lim_{y \to \infty}\pd{H_r}{y}(y;c) = 0.
\end{equation}
\end{Lemma}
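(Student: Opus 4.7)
\emph{Continuous differentiability and the key convexity criterion.} Since $h_r(\,\cdot\,;c)$ is $C^1$ by Lemma \ref{lem:littleh}, and the transformations $\Psi^{-1}$ and division by $\phi_\alpha$ are smooth on their respective domains, $H_r(\,\cdot\,;c)=\Phi(h_r)(\,\cdot\,;c)=\sqrt{y}\,h_r(\Psi^{-1}(y);c)$ is $C^1$ on $[1,\infty)$. For the convexity analysis, I will use the classical Dayanik--Karatzas identity: for any $C^2$ function $g$, direct differentiation (using $\phi_\alpha$, $\psi_\alpha$ $\in\ker(\mathcal{L}-\alpha)$ and the constant Wronskian $W=2\sqrt{2\alpha}$) gives
\[
\frac{\partial^2}{\partial y^2}\Phi(g)(y) \;=\; \frac{2\,\phi_\alpha(x)^3}{W^2}\,(\mathcal{L}-\alpha)g(x),\qquad x=\Psi^{-1}(y),
\]
so that $\Phi(g)$ is strictly convex (resp.\ concave) on $(y_1,y_2)$ iff $(\mathcal{L}-\alpha)g>0$ (resp.\ $<0$) on $(\Psi^{-1}(y_1),\Psi^{-1}(y_2))$. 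I will apply this criterion on each of the two branches of $h_r$ from \eqref{h_r}.

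\emph{Explicit computation of $(\mathcal{L}-\alpha)h_r$.} A direct calculation using the explicit forms of $h_{r1}$ and $h_{r2}$, together with $\mathcal{L}e^{-(x-c)\sqrt{2\alpha}}=\alpha e^{-(x-c)\sqrt{2\alpha}}$, yields the two compact expressions
\[
(\mathcal{L}-\alpha)h_{r2}(x;c) \;=\; 2\lambda c\Big(x - \tfrac{c}{2}-\tfrac{\alpha}{2\lambda}\Big),
\qquad
(\mathcal{L}-\alpha)h_{r1}(x;c) \;=\; \delta-(\alpha\delta-\lambda)x^2+2\alpha\delta c x-\alpha\delta c^2-\alpha c.
\]
The first is linear in $x$, with unique zero at $x=\Psi^{-1}(\hyc(c))=\tfrac{\alpha}{2\lambda}+\tfrac c2$, positive precisely beyond this point. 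The second is a concave quadratic in $x$ (since $\lambda<\alpha\delta$). The crucial observation is that $h_{r1}(\,\cdot\,;c)-h_l(\,\cdot\,)$ is affine (cf.\ the proof of Lemma \ref{lem:littleh}), so $h_{r1}''=h_l''$ and $h_{r1}(x_c;c)=h_l(x_c)$; therefore
\[
(\mathcal{L}-\alpha)h_{r1}(x_c;c) \;=\; (\mathcal{L}-\alpha)h_l(x_c)\;=\;\delta-(\alpha\delta-\lambda)x_c^2,
\]
which is positive iff $x_c=\tfrac1{2\delta}+\tfrac c2<\sqrt{\tfrac{\delta}{\alpha\delta-\lambda}}$, i.e.\ iff $c<\oc$. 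This immediately supplies the threshold at $\oc$ in both Case (I) and Case (II). A similar identity at $x=f_0+c$, where $h_{r1}=h_{r2}$ but their second derivatives differ, gives after substitution of \eqref{eq:B0} and use of $\rho(f_0)=0$,
\[
(\mathcal{L}-\alpha)h_{r1}(f_0+c;c)-(\mathcal{L}-\alpha)h_{r2}(f_0+c;c)
\;=\; \tfrac1\alpha\sqrt{(\alpha\delta-\lambda)(\alpha\delta+\lambda)}\;>\;0,
\qquad
(\mathcal{L}-\alpha)h_{r2}(f_0+c;c)=\lambda c(c-\uc).
\]

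\emph{Case analysis.} Using these three evaluations together with the concavity of $(\mathcal{L}-\alpha)h_{r1}$ (so that its sign on $[x_c,f_0+c]$ is determined by its values at the endpoints), the cases in the statement follow. In Case (I) we have $\uc\leq 0$, so $(\mathcal{L}-\alpha)h_{r2}(f_0+c;c)>0$ and hence $(\mathcal{L}-\alpha)h_{r2}>0$ on all of $(f_0+c,\infty)$; splitting by $c\lessgtr\oc$ yields (I)(i)--(ii). In Case (II), $\uc>0$ and (using the definitions \eqref{eq:fodef}, \eqref{Kk}) one checks $\uc<\oc$. If $c>\oc$ or $c\in[\uc,\oc]$ the argument of Case (I) carries through verbatim, since $(\mathcal{L}-\alpha)h_{r2}(f_0+c;c)=\lambda c(c-\uc)\geq 0$ already. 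The only new situation is (II)(iii), $c<\uc$: then $(\mathcal{L}-\alpha)h_{r2}$ changes sign at $\Psi^{-1}(\hyc(c))>f_0+c$, giving concavity of $H_{r2}$ on $(\hye(c),\hyc(c))$ and convexity on $(\hyc(c),\infty)$; on the left-hand branch $(\mathcal{L}-\alpha)h_{r1}(x_c;c)>0$ since $c<\uc<\oc$, and the identity above forces
\[
(\mathcal{L}-\alpha)h_{r1}(f_0+c;c) \;=\; \lambda c(c-\uc)+\tfrac1\alpha\sqrt{(\alpha\delta-\lambda)(\alpha\delta+\lambda)} \;>\; 0
\]
for all $c\in(0,\uc)$ (the minimum of $c\mapsto\lambda c(c-\uc)$ on $[0,\uc]$ is $-\lambda\uc^2/4$, which is dominated by the positive constant in the regime $\lambda\in(\lambda^*,\lambda^\dagger)$ after rewriting $\uc$ using \eqref{Kk} and \eqref{eq:fodef}). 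Hence $(\mathcal{L}-\alpha)h_{r1}>0$ throughout $(x_c,f_0+c)$, proving (II)(iii).

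\emph{Asymptotics.} As $y\to\infty$, $x=\Psi^{-1}(y)=\tfrac{\ln y}{2\sqrt{2\alpha}}\to\infty$, and from \eqref{h_r} we have $h_{r2}(x;c)\sim -\tfrac{2\lambda c}{\alpha}\,x$, so
\[
H_r(y;c) \;=\; \sqrt{y}\,h_{r2}(x;c) \;\sim\; -\tfrac{\lambda c}{\alpha\sqrt{2\alpha}}\sqrt{y}\,\ln y \;\to\; -\infty.
\]
For the derivative, using $\partial_y\Phi(g)(y)=(g'\phi_\alpha-g\phi_\alpha')/W$ at $x=\Psi^{-1}(y)$: $h_{r2}'(x;c)$ is bounded and $h_{r2}(x;c)$ grows only linearly, whereas both $\phi_\alpha(x)=1/\sqrt{y}$ and $\phi_\alpha'(x)=-\sqrt{2\alpha}/\sqrt{y}$ decay exponentially in $x$, so each term tends to $0$ and hence $\partial H_r/\partial y\to 0$.

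\emph{Main obstacle.} The delicate point is sub-case (II)(iii), where one must verify that the concave parabola $(\mathcal{L}-\alpha)h_{r1}$ remains \emph{strictly positive} throughout the entire interval $(x_c,f_0+c)$ for every $c\in(0,\uc)$. Everything else is essentially bookkeeping once the two sign identities at $x=x_c$ and $x=f_0+c$ are in hand.
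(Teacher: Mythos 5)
Your plan is correct and follows essentially the same route as the paper's own proof: the same second-derivative criterion identifying the sign of $\pd{^2H_r}{y^2}$ with that of $(\cL-\alpha)h_{r1}$ and $(\cL-\alpha)h_{r2}$ (the paper's Lemma \ref{lem:geomlemma}), the same explicit expressions for these two quantities, the same case split driven by the sign of the concave quadratic at $x_c$ (threshold $\oc$) and of the linear term at $f_0+c$ (threshold $\uc$), and the same linear-growth argument for the limits \eqref{eq:Hneglim}. The one step you flag as delicate, namely $(\cL-\alpha)h_{r1}(f_0+c;c)=\la c(c-\uc)+\frac1\a\sqrt{(\a\d)^2-\la^2}>0$ for $c\in(0,\uc)$, does go through (since $f_0>\frac1{2\d}$ gives $\uc<\frac{\a\d-\la}{\la\d}$, and then $\la>\la^*$ with the explicit formula \eqref{lambda*} yields $\la \uc^2/4<\frac1\a\sqrt{(\a\d)^2-\la^2}$ after a short computation), and in fact your decomposition of this quantity into $\la c(c-\uc)$ plus a positive constant is a slightly cleaner bookkeeping than the paper's unproved assertion that $(\cL-\alpha)h_{r1}(f_0+c;c)>0$ for all $c>0$.
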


\section{Analysis of the case $\la\in [\la^\dagger,\a\d)$}
\label{sec:cpos}

In this section we solve the one-shot problem \eqref{def-V} for $\la\in [\la^\dagger,\a\d)$. This yields a candidate solution to the control problem, which will be verified in Section~\ref{Sec:verification}. 
Proofs of results not included below can be found in Appendix \ref{AppA4}.
\smallskip 

We apply Proposition \ref{prop:DayKar}.(i), constructing appropriate tangents to the transformed obstacles $H_l$ and $H_r$. The following definition will be useful:

\begin{Definition} \label{def:tangents}
For $y \neq y_c$, let $r_y(\,\cdot\, ; c)$ be the straight line tangent to $H(\cdot;c)$ at $y$, and let $r_{y_c}(\,\cdot\, ; c)$ be the straight line tangent to $H_r(\cdot;c)$ at $y_c$:\,\footnote{Note that since the obstacle $H_l$ does not depend on the fuel level $c$, neither does its tangent $r_y(\cdot;c)$ when  $y < y_c$; thus for $y < y_c$ we will simply write the tangent as $z \mapsto r_y(z)$.}
\begin{align*} 
r_y(z;c)&= \begin{cases} 
\pd H y (y;c)(z-y)+H(y;c), \quad y \neq y_c, \\[+5pt]
\pd {H_r} y (y;c)(z-y)+H(y;c), \quad y = y_c. 
\end{cases}
\end{align*}
Let $I(y;c)=r_y(0;c)$ be its intercept at the vertical axis, and let $P_r(y;c)$ be the following signed distance between $r_y(\,\cdot\, ; c)$ and $H_l(\cdot)$:
\begin{align} \label{def-P_r}
P_r(y;c)&=\sup_{z \in [1, y_c]}\big(r_y(z;c)-H_l(z)\big) .
\end{align} 
\end{Definition}

In the present section, for each $c>0$ the transformed obstacle $H$ has one doubly tangent line. 
As illustrated in Figure~\ref{fig:Fig1synth}, for small $c$ one of the tangency points lies on $H_l$ and the other on $H_{r1}$. 
This is the key difference with the known solution discussed in Section \ref{sec:rewrite}, where the right-hand tangency point instead lies on $H_{r2}$ for small $c$;  
this geometry being forced by an ansatz in \cite{KOWZ00} for the boundary of the action region (see Remark \ref{rem:noapply} for details). 
This leads to new equations for the moving boundaries of the control problem, and while it remains true that one boundary is absorbing and the other repelling for small $c$, the boundaries now converge at $x=\fotd$ as $c \downarrow 0$ (contrast Figures \ref{fig:boundaries4}--\ref{fig:boundaries2} with Figure~\ref{fig:boundaries1}). 
Recalling from Theorem \ref{cor:czerosol} that $f_0$ is the absorbing boundary in the no-fuel problem and that $\fotd < f_0$ in this regime, the absorbing boundary is therefore now discontinuous at $c=0$ in the sense that $\lim_{c \downarrow 0}F(c) = \fotd < f_0 =: F(0)$. 

We have referred above to `reflecting' and `repelling' boundaries, and now make this language more precise.
\begin{Definition}\label{def:rr}
Suppose $\mathcal{I} \subseteq [0,\infty)$. 
A moving boundary $c \mapsto R(c)$ between regions $W$ and $A$ of a control policy in problem \eqref{OCP} for $x \geq 0$ is called
\begin{itemize}
\vspace{-2mm}
\item {\em reflecting} for fuel levels $c\in\mathcal{I}$ if $(R(c) - \epsilon,c - \epsilon) \in W$ for all sufficiently small $\epsilon > 0$;
\vspace{-7mm}
\item {\em repelling} for fuel levels $c\in\mathcal{I}$, if $(R(c) - \epsilon,c - \epsilon) \in A$ for all sufficiently small $\epsilon > 0$.
\end{itemize}  
\end{Definition}

In Section \ref{sec:y12} we study the one-shot problem \eqref{def-V} in the unsolved parameter range $\la\in (\la^*,\a\d)$, using it to specify new moving boundary functions $F$ and $G$. 
Section \ref{sec:solV} presents the solution to the one-shot problem in the case $\la\in [\la^\dagger,\a\d)$. These new boundaries, which will be used in the candidate control problem solutions of both Sections \ref{sec:cpos} and \ref{sec:newsol}, are studied in Section~\ref{sec:FG}. 
In Section~\ref{Sec:verification2} they are used to construct the candidate value function for the control problem in the open regime $\la\in [\la^\dagger,\a\d)$.

\subsection{Small fuel: Construction of new boundaries} 
\label{sec:y12}

In this section we specify new candidate boundaries $F$ and $G$ using the obstacle geometry studied in Section \ref{sec:h}. 
It can be seen from Figures \ref{fig:Fig1synth}--\ref{fig:Fig1synth2} (and confirmed using Lemma \ref{lem:littleh}) that the function $y \mapsto P_r(y;c)$ in \eqref{def-P_r} of Definition \ref{def:tangents} satisfies
\begin{equation} \label{eq:Pyc}
P_r(y_c;c) > 0 , \quad \text{for all $\la\in(\la^*,\a\d)$ and $c>0$}. 
\end{equation} 
Moreover, from \eqref{def-H} and \eqref{def-G1} we obtain 
\begin{equation*}
H_{r2}(y;c)= \sqrt y \Big( \fla c\Big( c-\frac {\ln y}{\sta} \Big) +  c+ B_0 e^{c \sta}y^{-1/2}\Big),
\end{equation*}
so that the intercept $I(y;c)$ satisfies
\begin{equation*} 
I(y;c)=-\frac{\sqrt y \la c}{(2\a)^{3/2}}\ln y+ O(\sqrt y) \to -\infty, \quad \text{ as } y \to \infty.
\end{equation*} 
From Figures \ref{fig:Fig1synth}--\ref{fig:Fig1synth2} (see also Lemma \ref{lem:Hconvexity}) it is now clear that 
\begin{equation} \label{eq:Pinfty}
\lim_{y \to \infty}P_r(y;c)=-\infty .
\end{equation} 
These properties are used below to analyse the zeroes of $P_r(\cdot\,;c)$ which, for $y > y_c$, characterise lines tangent to both $H_l$ and $H_r$. 
The following result is also used several times to establish a separating line in Figures \ref{fig:Fig1synth}--\ref{fig:Fig1synth2}.

\begin{Lemma} \label{lem:minorant}
Let $1 < y_u < y_w \leq \Psi(f_0)$ with $\fotd \leq y_w$. Then for all $\la \in (\la^*, \a\d)$ and $c>0$ sufficiently small, the tangent $r_{y_u}(\cdot;0)$ to $H_l(\cdot)$ at $y_u$ lies strictly below $H(\cdot;c)$ on $[y_w,\infty)$.
\end{Lemma}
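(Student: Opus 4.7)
The plan is to establish the strict inequality $\ell < H(\cdot;c) = H_l \wedge H_r(\cdot;c)$ on $[y_w,\infty)$, where $\ell := r_{y_u}(\cdot;0)$ is the tangent line to $H_l$ at $y_u$, by separately comparing $\ell$ with $H_l$ (a $c$-independent bound) and with $H_r(\cdot;c)$ (a small-$c$ perturbation of the minorant $W$ from Theorem \ref{cor:czerosol}). Since $y_u \in (1,\Psi(f_0))$ lies in the strictly decreasing and strictly convex region of $H_l$ (Lemma \ref{lem:Hlprops}(ii)--(iii)), the slope $b := H_l'(y_u)$ is strictly negative, so $\ell$ is a line with negative slope.

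To show $\ell < H_l$ on $[y_w,\infty)$, I would first apply strict convexity of $H_l$ on $[1,\Psi(\sqrt{\delta/(\alpha\delta-\lambda)})]$ together with $y_u \neq y_w$ to obtain $\ell < H_l$ throughout that range. Beyond $\Psi(\sqrt{\delta/(\alpha\delta-\lambda)})$, $H_l$ is strictly increasing (Lemma \ref{lem:Hlprops}(iii)) to $+\infty$ (cf.~\eqref{eq:Hl}, using $\delta - \lambda/\alpha > 0$) while $\ell$ is linearly decreasing, so the strict inequality persists on the whole ray.

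For the comparison with $H_r(\cdot;c)$, I would first identify the limit $H_r(\cdot;0) = W$. Using Theorem \ref{cor:czerosol} together with the definition \eqref{def-G0} of $h_r$, one finds that $h_r(x;0) = h_l(x)$ on $[0,f_0]$ and $h_r(x;0) = B_0 e^{-x\sta}$ on $[f_0,\infty)$; passing through the transformation \eqref{def-H} then gives $H_r(\cdot;0) \equiv H_l$ on $[1,\Psi(f_0)]$ and $H_r(\cdot;0) \equiv B_0$ beyond, which is precisely the minorant $W$. Combined with the previous paragraph, this yields $\ell < W$ on $[y_w,\infty)$ with a strictly positive continuous gap. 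Continuity of $h_r$ in $c$ then promotes this bound to $\ell < H_r(\cdot;c)$ on every compact subinterval $[y_w,N]$ for $c$ sufficiently small.

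The main obstacle is the tail $y \geq N$. Since $H_r(y;c) \to -\infty$ as $y \to \infty$ for each fixed $c>0$ (cf.~\eqref{eq:Hneglim}), pointwise convergence to $W$ cannot be upgraded to uniform convergence on the whole of $[y_w,\infty)$. The resolution is an explicit asymptotic comparison: the formula \eqref{def-G1} for $h_{r2}$ passed through \eqref{def-H} shows that $H_{r2}(y;c)$ decays to $-\infty$ at rate proportional to $c\sqrt{y}\ln y$, whereas $-\ell(y)$ grows linearly with coefficient $|b|$ independent of $c$. Hence $|b|y$ dominates the decay of $-H_{r2}(y;c)$ uniformly for $c$ in a fixed neighbourhood of the origin, and $N$ can be chosen uniformly in $c$ so that $\ell < H_r(\cdot;c)$ on $[N,\infty)$. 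Combining this with the compact estimate completes the proof.
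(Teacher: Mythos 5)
Your proposal is correct, and its skeleton coincides with the paper's: both establish a uniform positive gap between the tangent and the limiting minorant $W$ on $[y_w,\infty)$, pass to $H(\cdot;c)$ on a compact interval by convergence as $c \to 0$, and then treat the unbounded tail by a separate argument. The difference lies in the tail and in how the minimum $H = H_l \wedge H_r(\cdot;c)$ is handled. The paper works with $H(\cdot;c)$ directly: it fixes $y_z = \Psi(\max\{f_0,\atl\}+1)$, notes that $H(\cdot;c)=H_{r2}(\cdot;c)$ is convex beyond $y_z$ for small $c$ (Lemma \ref{lem:Hconvexity}), bounds the slope of $H_{r2}(\cdot;c)$ at $y_z$ from below by $-c\la/(\a\sta y_z)$, and uses the tangent to $H_{r2}(\cdot;c)$ at $y_z$ as a separating straight line between $r_{y_u}(\cdot;0)$ and $H(\cdot;c)$. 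You instead split the comparison into a $c$-independent bound $r_{y_u}(\cdot;0) < H_l$ on $[y_w,\infty)$ (convexity of $H_l$, then monotonicity beyond the inflection point, which lies to the right of $\Psi(f_0)$) and a bound against $H_{r}(\cdot;c)$, identifying $H_r(\cdot;0)=W$ via Theorem \ref{cor:czerosol} and resolving the tail by the growth comparison: the negative part of $H_{r2}(y;c)$ is of order $c\sqrt{y}\ln y$ uniformly for $c$ in a bounded set, which is $o(y)$, while the tangent decreases linearly with a fixed slope $b=H_l'(y_u)<0$; hence a threshold $N$ can be fixed uniformly in small $c$, after which the compact estimate on $[y_w,N]$ finishes the proof. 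What your route buys is that it dispenses with the convexity input from Lemma \ref{lem:Hconvexity} on the tail and avoids having to identify where the minimum in $H$ is attained, at the cost of two small points you should make explicit: (i) on $[N,\infty)$ one indeed has $H_r(\cdot;c)=H_{r2}(\cdot;c)$, which requires $N \geq \Psi(f_0+c)$ and is arranged by taking, say, $N \geq \Psi(f_0+1)$ and $c \leq 1$; and (ii) the constant term $B_0 e^{c\sta}$ in $H_{r2}$ (with $B_0<0$) is bounded uniformly for $c\leq 1$, so it is absorbed into the linear domination. With these remarks your argument is complete and fully rigorous.
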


As a key step towards solving \eqref{def-V} we next verify that, for sufficiently small $c>0$, there exists a tangent to both $H_l$ and $H_{r1}$.

\begin{Proposition} \label{existencegeometricy2}
Suppose that $\la \in (\la^*, \a\d)$. 
There exists a unique couple $(\hat{y}_1(c),\hat{y}_2(c))$ with $1 \leq \hat{y}_1(c) < y_c \leq y_v(c) < \hat{y}_2(c) < \hye(c)$ solving the system:
\begin{equation} \label{eq:system-y1y2}
\left\{
\begin{array}{lr}
\pd {H_l}{y}(y_1)= \pd{H_{r1}} y (y_2;c), \\[+4pt]
H_l(y_1) - \pd {H_l} y (y_1)y_1 = H_{r1} (y_2;c) -\pd {H_{r1}} y (y_2;c)y_2 ,
\end{array}
\right. 
\end{equation}
for each fixed $c \in (0, c_1)$, where 
\begin{align}
c_1 &:= \hat{c} \wedge c_m, 
\label{eq:c2def}\\
\hat{c} & := \inf\{c \in (0,\infty): \hyt(c) \geq \hye(c) \} > 0, \label{eq:c2defb}\\
c_m & := \inf\{c \in (0,\infty): \hyo(c) = 1 \} > 0, \label{eq:cm}
\end{align}
and satisfying 
\begin{equation} \label{cor:1/2d0}
\lim_{c \to 0}\hyo(c)=\lim_{c \to 0}\hyt(c)=\Psi\Big(\frac 1 {2\d}\Big) 
\quad \text{and} \quad \hyo(c) < \Psi(f_0). 
\end{equation}
Also, $H_l$ is strictly convex at $\hyo(c)$ and $H_{r1}$ is strictly convex at $\hyt(c)$.
\end{Proposition}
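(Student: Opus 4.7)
The plan is to interpret system \eqref{eq:system-y1y2} as the condition for a single straight line to be simultaneously tangent to $H_l$ at $y_1$ and to $H_{r1}$ at $y_2$, and to use the signed-distance function $P_r$ of Definition \ref{def:tangents} to reduce the existence-uniqueness question to that of a unique zero of $P_r(\cdot;c)$ on an interval. To set up the geometry, I would first note from Lemma \ref{lem:Hlprops} that $H_l$ is strictly convex on $[1,\Psi(\sqrt{\d/(\a\d-\la)})]$, which contains $[1,y_c]$ for all sufficiently small $c$, since $y_c \to \Psi(\fotd) < \Psi(f_0) < \Psi(\sqrt{\d/(\a\d-\la)})$ as $c \downarrow 0$. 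Similarly, Lemma \ref{lem:Hconvexity} gives $y_v(c) = y_c$ and strict convexity of $H_{r1}$ on $(y_c,\hye(c))$ for $c$ small in both subregimes. Tangent lines to either obstacle in these convex regions are therefore bijectively parametrised by their tangency points.

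For each $y_2 \in (y_c,\hye(c))$, let $z^*(y_2) \in [1,y_c]$ be the unique maximiser of $z \mapsto r_{y_2}(z;c) - H_l(z)$, which exists uniquely by strict convexity of $H_l$ on $[1,y_c]$. The system \eqref{eq:system-y1y2} is then equivalent to $P_r(y_2;c)=0$ together with $z^*(y_2) \in (1,y_c)$, and I would set $\hyo(c) := z^*(\hyt(c))$. By the envelope theorem,
$$\frac{d}{dy_2} P_r(y_2;c) = \frac{\partial r_{y_2}}{\partial y_2}(z^*(y_2);c) = \frac{\partial^2 H_{r1}}{\partial y^2}(y_2;c)\,\big(z^*(y_2)-y_2\big) < 0,$$
since $z^*(y_2) \leq y_c < y_2$ and $H_{r1}$ is strictly convex at $y_2$; the calculation remains valid at the boundary case $z^*(y_2)=1$ (relevant near $c_m$), where the expression reduces to $(\partial^2 H_{r1}/\partial y^2)(y_2;c)(1-y_2) < 0$ directly. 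Combined with $P_r(y_c;c) > 0$ from \eqref{eq:Pyc}, the limit $P_r(y;c) \to -\infty$ from \eqref{eq:Pinfty}, and the continuity of $P_r$, the intermediate value theorem together with this strict monotonicity yields a unique $\hyt(c)$ with $P_r(\hyt(c);c)=0$ in the convex branch. The definitions \eqref{eq:c2defb}--\eqref{eq:cm} are precisely the thresholds at which the boundary constraints $\hyt(c)=\hye(c)$ or $\hyo(c)=1$ become active, so for $c \in (0,c_1)$ the solution lies strictly interior on both sides, giving $1 \leq \hyo(c) < y_c < \hyt(c) \leq \hye(c)$. Strict convexity of $H_l$ at $\hyo(c)$ and of $H_{r1}$ at $\hyt(c)$ is then inherited from Lemmas \ref{lem:Hlprops}--\ref{lem:Hconvexity} via the established positions.

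The limit \eqref{cor:1/2d0} follows from continuous dependence of the system on $c$ together with the observation that $h_{r1}(x;c)-h_l(x) = c(2\d x-\d c-1) \to 0$ as $c\downarrow 0$, which forces the two tangency points to coalesce at $y_c\to\Psi(\fotd)$. The most delicate point will be confirming that, across both subregimes $\la \in [\la^\dagger,\a\d)$ and $\la \in (\la^*,\la^\dagger)$, the unique zero of $P_r(\cdot;c)$ starting from $y_c$ genuinely falls within the first convex branch $(y_c,\hye(c)]$ of $H_r$, because in case (II).(iii) of Lemma \ref{lem:Hconvexity} an additional concave–convex pattern of $H_r$ reappears past $\hye(c)$. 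This is exactly what the auxiliary thresholds $\hat c$ and $c_m$ of \eqref{eq:c2defb}--\eqref{eq:cm} are designed to quantify, and it is what permits the envelope-theorem argument above to be carried out using only the first-branch convexity of $H_{r1}$.
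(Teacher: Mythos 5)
Your reduction of \eqref{eq:system-y1y2} to a zero of $P_r(\cdot;c)$, together with the envelope-theorem computation showing that $P_r(\cdot;c)$ is strictly decreasing wherever $H_{r1}(\cdot;c)$ is strictly convex, reproduces the existence-and-uniqueness half of the paper's argument (the paper asserts the same monotonicity directly from Lemma \ref{lem:Hconvexity}, and in addition treats $c>\oc$, where $H_r$ has a concave piece $(y_c,y_v(c))$ so that $P_r$ first increases then decreases; your restriction to ``sufficiently small $c$'' with $y_v(c)=y_c$ silently drops this case, even though the range $(0,c_1)$ is not known a priori to exclude it, and the statement explicitly allows $y_c\le y_v(c)$).

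The genuine gap is at the two claims that carry the real content of the proposition: the limit \eqref{cor:1/2d0} and the strict positivity of $\hat c$ and $c_m$ in \eqref{eq:c2defb}--\eqref{eq:cm}. You cannot get the positivity ``by definition'': $\hat c$ and $c_m$ are defined through the solution $(\hyo,\hyt)$ itself, and the proposition asserts they are positive, i.e.\ that for all small $c$ the zero of $P_r(\cdot;c)$ you select genuinely lies in $(y_c,\hye(c))$ (tangency with $H_{r1}$, not $H_{r2}$) and the associated maximiser satisfies $\hyo(c)>1$. In the regime $\la\in(\la^*,\la^\dagger)$ with $c<\uc$, $P_r(\cdot;c)$ may have up to three zeros, one in each of $(y_c,\hye(c))$, $[\hye(c),\hyc(c)]$ and $(\hyc(c),\infty)$, and a priori the operative double tangent could be the one touching $H_{r2}$ --- which is exactly what happens for $\la\le\la^*$ in \cite{KOWZ00}, where the right-hand tangency point lies on $H_{r2}$ for small $c$ and the boundaries do not converge to $\fotd$. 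Consequently the branch identification and the limit cannot follow from ``continuous dependence'' plus $h_{r1}-h_l=O(c)$: as $c\to0$ the obstacle degenerates to the convex function $W$ of Theorem \ref{cor:czerosol}, the doubly tangent segment disappears, and soft continuity does not decide whether $\hyt(c)$ collapses to $\Psi(\fotd)$ or escapes towards $\Psi(f_0)$ and beyond along the $H_{r2}$ branch. The paper closes precisely this gap with Lemma \ref{lem:minorant} (uniform convergence of $H(\cdot;c)$ to $W$ on compacts plus explicit control of the tail slope of $H_{r2}$, giving a separating tangent line), applied twice in contradiction arguments to force $\liminf_{c\to0}\hyo(c)\ge\Psi(\fotd)\ge\limsup_{c\to0}\hyt(c)$; only then do $\hyo(c)>1$ and $\hyt(c)<\hye(c)$ for small $c$ follow, i.e.\ $c_m>0$, $\hat c>0$ and $c_1>0$. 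Your sketch invokes neither Lemma \ref{lem:minorant} nor any quantitative substitute exploiting $\fotd<f_0$, so this step is missing rather than merely compressed. (Minor: the sign in your expression for $h_{r1}(x;c)-h_l(x)$ is reversed; by Lemma \ref{lem:littleh} it equals $c(1+\d(c-2x))$.)
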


Henceforth, for $c\in(0,c_1)$ we define the boundaries $F$ and $G$ in the natural scale by  
\begin{equation} \label{eq:FGdef}
F(c):=\Psi^{-1}(\hyo(c)) \quad \text{and} \quad 
G(c):= \Psi^{-1}(\hyt(c)),
\end{equation} 
where $\hyo(c)$ and $\hyt(c)$ are the tangency points given in Proposition \ref{existencegeometricy2}. 
This is illustrated in Figure~\ref{fig:Fig1synth}.

\subsection{Solution to the one-shot problem $\tild V$ for small fuel} 
\label{sec:solV}

In this section we confirm that, in the case $\la\in[\la^\dagger,\a\d)$, the boundaries $F$ and $G$ constructed in Section~\ref{sec:y12} are sufficient to solve \eqref{def-V}.

\begin{Theorem}
\label{existencegeometric}
If $\la \in [\la^\dagger, \a\d)$ then, for each fixed $c \in (0, c_1)$ (where $c_1$ is defined by \eqref{eq:c2def}), the function $y \mapsto W(y;c)$ given by
\begin{equation} \label{eq:Wdef}
W(y;c) := 
\left\{
\begin{array}{ll}
H_l(y), & 1 \leq y \leq \hyo(c),\\[+3pt]
A(c)y + B(c),
& \hyo(c) < y < \hyt(c), \\[+3pt]
H_{r}(y), & y \geq \hyt(c),\\
\end{array}
\right. 
\end{equation}
where $A(c)=\pd {H_l}{y}(\hyo(c))=\pd {H_{r1}}{y}(\hyt(c);c)$ 
and $B(c)=I(\hyo(c);c)=I(\hyt(c);c)$ (see Definition \ref{def:tangents} and Proposition \ref{existencegeometricy2}), is the greatest non-positive convex minorant of $H(\cdot;c)$ on $[1,\infty)$ and is of class $C^1(1,\infty)$.
The value function $\tild V$ of \eqref{eq:defqc} is then given by  
\begin{equation} \label{eq:VWdef}
\tild V(x,c) = \phi_\alpha(x) W(\Psi(x);c) + \fla x^2 + \flas, \quad (x,c) \in \R \times (0, c_1),
\end{equation}
and the optimal stopping time is $\tau^* = \inf\{t \geq 0 : X_t \in [0,F(c)] \cup [G(c),\infty)\}$. 
\end{Theorem}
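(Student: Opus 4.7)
The plan is to verify that the piecewise function $W(\,\cdot\,; c)$ defined in \eqref{eq:Wdef} is the greatest non-positive convex minorant of $H(\,\cdot\,; c)$ on $[1, \infty)$ and is $C^1(1,\infty)$; the identity \eqref{eq:VWdef} and the form of the stopping time then follow from Proposition \ref{prop:DayKar}.(i) and \eqref{def-V}. Existence and uniqueness of the tangency points $\hyo(c), \hyt(c)$ with the ordering $1 \leq \hyo(c) < y_c \leq y_v(c) < \hyt(c) \leq \hye(c)$ are supplied by Proposition \ref{existencegeometricy2}, so the linear interpolant $L(y) := A(c) y + B(c)$ is well defined and $C^1$-matches $H_l$ at $\hyo(c)$ and $H_{r1}$ at $\hyt(c)$ directly from the tangency system \eqref{eq:system-y1y2}; $C^1$-smoothness of $H_r$ at the internal point $\hye(c)$ comes from Lemma \ref{lem:Hconvexity}. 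For convexity of $W$ on the three pieces: on $[1,\hyo(c)]$, the strict convexity of $H_l$ at $\hyo(c)$ from Proposition \ref{existencegeometricy2} combined with Lemma \ref{lem:Hlprops}.(ii) places $\hyo(c)$ strictly below the inflection point $\Psi(\sqrt{\d/(\a\d-\la)})$ of $H_l$; the middle piece is linear; on $[\hyt(c),\infty)$, Lemma \ref{lem:Hconvexity}.(I) yields convexity of $H_r$ on $(y_c,\infty)$ when $c \leq \oc$ and on $(y_v(c),\infty)$ when $c > \oc$, the latter interval containing $[\hyt(c),\infty)$ thanks to $\hyt(c) > y_v(c)$.

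The main technical step is the minorant property $W \leq H$ on $[\hyo(c), \hyt(c)]$, since on the outer intervals $W$ coincides with $H$ (using $H = H_l$ on $[1,y_c]$ and $H = H_r$ on $[y_c,\infty)$, which follows from Lemma \ref{lem:littleh} and the definition of $x_c$). On $[\hyo(c), y_c]$, convexity of $H_l$ together with $L$ being tangent to $H_l$ at $\hyo(c)$ yields $L \leq H_l = H$. On $[y_c, \hyt(c)]$, the delicate subcase is $c > \oc$, where $H_{r1}$ is concave on $[y_c, y_v(c)]$ and convex on $[y_v(c), \hyt(c)]$: on the latter sub-interval tangency of $L$ at $\hyt(c)$ gives $L \leq H_{r1}$; on the former the function $H_{r1} - L$ is concave and non-negative at both endpoints (at $y_c$ by the continuity $H_{r1}(y_c;c) = H_l(y_c)$ and the previous sub-step, at $y_v(c)$ by the convex sub-interval), hence non-negative throughout. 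I expect this concave-portion argument to be the main obstacle, since it is where the fuel-dependent obstacle most departs from the convex setting of~\cite{KOWZ00}.

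Finally, non-positivity of $W$ follows because $H_l(\hyo(c)) \leq 0$ (since $\hyo(c) < \Psi(f_0)$ and $H_l \leq 0$ on $[1,\Psi(f_0)]$ by Lemma \ref{lem:Hlprops}), a corresponding bound $H_{r1}(\hyt(c); c) \leq 0$ is verified directly from Lemma \ref{lem:littleh}, and linear interpolation preserves non-positivity. Maximality among non-positive convex minorants $\tilde W$ is standard: $\tilde W \leq W = H$ on the outer intervals, and on $[\hyo(c), \hyt(c)]$ convexity of $\tilde W$ bounds it above by the chord joining $(\hyo(c), \tilde W(\hyo(c)))$ and $(\hyt(c), \tilde W(\hyt(c)))$, which in turn lies below the segment $L$ since its endpoint values are no higher than $L$'s. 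Applying Proposition \ref{prop:DayKar}.(i) combined with \eqref{def-V} then delivers \eqref{eq:VWdef}, and the optimal stopping region in the natural scale is read off as $[0, F(c)] \cup [G(c), \infty)$ via $\Psi^{-1}$.
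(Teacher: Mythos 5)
Your proposal is correct and takes essentially the same route as the paper's proof: both start from the tangency pair of Proposition \ref{existencegeometricy2}, use the convexity/concavity structure of Lemmas \ref{lem:Hlprops} and \ref{lem:Hconvexity} to show the doubly tangent line stays below $H(\cdot;c)$ on $(\hyo(c),\hyt(c))$ (the paper via a short contradiction at $y_c$, you directly via the concave-difference argument on $[y_c,y_v(c)]$, which rests on the same facts), and then conclude with Proposition \ref{prop:DayKar}.(i) and the double-tangency system for the $C^1$ property. Your added non-positivity and maximality verifications are details the paper asserts without proof, and are fine modulo routine tightening (e.g.\ non-positivity of $H_r$ on all of $[\hyt(c),\infty)$, not just at $\hyt(c)$).
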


\begin{proof} 
Fixing $c \in (0,c_1)$, define $\hyo(c)$ and $\hyt(c)$ as in Proposition \ref{existencegeometricy2} and, hence, $A(c)$ and $B(c)$. 
We then know from Lemma \ref{lem:Hconvexity}.(I).(i)--(ii) that the strict convexity of $H_r$ further implies that 
\begin{equation*}
H(y;c)-r_{\hyt(c)}(y;c) > 0 \quad \text{ for all } y>\hyt(c).
\end{equation*}
On the other hand, by the convexity of $H_l$ from Lemma \ref{lem:Hlprops}, we have 
\begin{equation*} 
H_l(y)-r_{\hyt(c)}(y;c) > 0 \quad \text{ for all } y<\hyo(c).
\end{equation*}
We wish to conclude that the doubly tangent straight line $y \mapsto A(c)y + B(c)$ lies below $H(\cdot;c)$ for all $y>1$ and, hence, that the function $W(\cdot;c)$ of \eqref{eq:Wdef} is the greatest non-positive convex minorant of $H(\cdot;c)$ on $[1,\infty)$. For this, it remains only to check that it lies below $H(y;c)$ for $y \in (\hyo(c),\hyt(c))$.

Suppose for a contradiction that the common tangent coincides with $H(\cdot;c)$ when $y=y_a \in (\hyo(c),\hyt(c))$. It is obvious from the geometry of $H_l$ that then $y_a>y_c$; also $H_r$ must be concave at $y_a$. This in turn forces $H(\cdot;c)$ to lie strictly below the common tangent when $y=y_c$, a contradiction.

Applying Proposition \ref{prop:DayKar} we obtain the solution $V$ to \eqref{def-V} and conclude that~\eqref{eq:VWdef} solves~\eqref{eq:defqc}. 
The double tangency equations \eqref{eq:system-y1y2} confirm that $W(\cdot;c)$ is of class $C^1(1,\infty)$.~\hfill $\Box$
\end{proof} 

\vspace{3mm}
For $c\in(0,c_1)$, Theorem \ref{existencegeometric} establishes that the waiting region in the one-shot problem $\tild V$ of \eqref{eq:defqc} has a single connected component $(F(c),G(c))$, where 
$0 \leq F(c) < G(c) \leq f_0+c$.
Then heuristically, in order for the value function $\tild V$ to be an appropriate candidate value function in the control problem \eqref{eq:defV}, the boundary $G$ should be repelling on $(0,c_1)$.

\subsection{Monotonicity}
\label{sec:FG}

In this section we study properties of the moving boundaries $F$ and $G$ for all $\la\in(\la^*,\a\d)$, since they are used both when $\la\in[\la^\dagger,\a\d)$ in Section~\ref{Sec:verification2} (Figure~\ref{fig:boundaries4}), and  when $\la\in(\la^*,\la^\dagger)$ in Section~\ref{sec:newsol} (Figure~\ref{fig:boundaries2}).

We first show that they are continuously differentiable on $(0,c_1)$. This follows from the smoothness of the transformation $\Phi$ of \eqref{def-H} and the following result. 

\begin{Proposition}\label{cor:1/2d}
Suppose that $\la\in(\la^*,\a\d)$. 
The functions $\hyo(c)$, $\hyt(c)$ defined in Proposition \ref{existencegeometricy2} are both of class $C^1(0,c_1)$.
\end{Proposition}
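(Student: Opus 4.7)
The plan is to apply the Implicit Function Theorem (IFT) to the defining system \eqref{eq:system-y1y2}, viewed as $F(y_1,y_2;c) = (F_1,F_2) = 0$ with
\[
F_1(y_1,y_2;c) = \pd{H_l}{y}(y_1) - \pd{H_{r1}}{y}(y_2;c), \qquad
F_2(y_1,y_2;c) = \bigl(H_l(y_1) - y_1\pd{H_l}{y}(y_1)\bigr) - \bigl(H_{r1}(y_2;c) - y_2\pd{H_{r1}}{y}(y_2;c)\bigr).
\]
From the explicit formula \eqref{eq:Hl} for $H_l$ and from Lemma \ref{lem:littleh} together with \eqref{def-H} for $H_{r1}$, both functions are $C^\infty$ in their arguments on the domain considered, so $F$ is $C^\infty$ jointly in $(y_1,y_2,c)$. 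By Proposition \ref{existencegeometricy2}, for each $c \in (0,c_1)$ the pair $(\hyo(c),\hyt(c))$ solves $F = 0$ uniquely.

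The key step will be to verify invertibility of the Jacobian in $(y_1,y_2)$ at this solution. A direct calculation, using the cancellation that occurs in the intercept equation, gives
\[
J(y_1,y_2;c) := \begin{pmatrix}
\frac{\partial F_1}{\partial y_1} & \frac{\partial F_1}{\partial y_2} \\[2pt]
\frac{\partial F_2}{\partial y_1} & \frac{\partial F_2}{\partial y_2}
\end{pmatrix}
=
\begin{pmatrix}
H_l''(y_1) & -\partial_{yy} H_{r1}(y_2;c) \\[2pt]
-y_1 H_l''(y_1) & y_2\, \partial_{yy} H_{r1}(y_2;c)
\end{pmatrix},
\]
with determinant
\[
\det J(y_1,y_2;c) = H_l''(y_1)\,\partial_{yy} H_{r1}(y_2;c)\,(y_2-y_1).
\]
Evaluating at $(\hyo(c),\hyt(c))$, Proposition \ref{existencegeometricy2} supplies exactly the three facts we need: $\hyo(c) < \hyt(c)$, strict convexity of $H_l$ at $\hyo(c)$, and strict convexity of $H_{r1}$ at $\hyt(c)$. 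All three terms are therefore strictly positive, hence $\det J > 0$ and $J$ is invertible along the solution branch.

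The IFT now yields, in a neighbourhood of each $c_0 \in (0,c_1)$, a unique $C^1$ (in fact $C^\infty$) local solution curve $c \mapsto (y_1(c),y_2(c))$ of $F = 0$; by the uniqueness part of Proposition \ref{existencegeometricy2} this local curve must coincide with $(\hyo,\hyt)$, giving $C^1$ regularity on all of $(0,c_1)$. I do not anticipate a serious obstacle: the main work — establishing existence, uniqueness, strict separation $\hyo(c) < \hyt(c)$, and strict convexity of $H_l$, $H_{r1}$ at the respective tangency points — is already done in Proposition \ref{existencegeometricy2}, so only the routine Jacobian computation and IFT invocation remain.
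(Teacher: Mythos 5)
Your proposal is correct and follows essentially the same route as the paper: the paper also applies the Implicit Function Theorem to the system \eqref{eq:system-y1y2}, computes the same Jacobian determinant $(\hyt(c)-\hyo(c))\,\pd{^2H_l}{y^2}(\hyo(c))\,\pd{^2H_{r1}}{y^2}(\hyt(c);c)$, and deduces its strict positivity from the separation and strict convexity statements of Proposition \ref{existencegeometricy2}. Your explicit Jacobian computation and the appeal to uniqueness to identify the local IFT curve with $(\hyo,\hyt)$ simply spell out steps the paper leaves implicit.
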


Next we establish the existence of $c_0>0$ such that the functions $F$ and $G$, which are obtained from the tangency points $(\hyo(c), \hyt(c))$ via \eqref{eq:FGdef}, are monotone and continuously differentiable on $(0,c_0)$, with $G'(c)>1$ and $\lim_{c\to 0} F(c) \neq f_0$. 
For consistency with \cite{KOWZ00} we employ the functions $h_1$ to $h_4$, $q$ and $L$, which were defined in the latter paper, and whose definitions are reported again below, as appropriate.

It is straightforward to check from \eqref{eq:Hl} that the straight line tangent to $H_l(\cdot)$ at $y=\Psi(x)$ has gradient
\begin{align}
\label{eq:hidefs1}
h_1(x) &:= \frac{\a \d -\la}{2\a}e^{-x\sta}\rho(x) , 
\quad \text{with $\rho$ defined as in \eqref{eq:defrho}},  
\end{align}
and it intercepts the vertical axis (see e.g.\ Figure~\ref{fig:Fig1synth}) at the value 
\begin{align}\label{eq:hidefs2}
h_2(x) &:= \frac{\a \d -\la}{2\a}e^{x\sta}\Big(\rho(x) - \frac{4x}{\sta} \Big).
\end{align}
Recalling Lemma \ref{lem:Hlprops}, the function $h_1$ restricted to $[0,f_0]$ is non-positive and strictly increasing, and therefore has a well-defined inverse 
\begin{align}\label{eq:h1inv}
    h_1^{-1}:[-\frac{\la}{2\a^2},0]\to[0,f_0].
\end{align}
Moreover, for $y=\Psi(x)$ the transformation $\Phi$ of \eqref{def-H} gives $H_{r1}(y;c)=e^{x\sta} h_{r1}(x;c)$. It thus follows that the line tangent to $H_{r1}(\cdot;c)$ at $y=\Psi(x)$ has gradient $\tild \cH_3(x,c)$, where
\begin{align} \label{eq:th3}
\tild \cH_3(x,c)
&:= \pd {H_{r1}}y (y;c)\Big|_{y=\Psi(x)} 
=\frac{e^{-x\sta}}{2\sta} \Big( \pd {h_{r1}}x (x;c) + \sta {h_{r1}}(x;c)\Big) \\
&= e^{-x\sta} \lb \hd c^2 + c \lb \ha - \delta \Big( \osta + x \Big) \rb + \dmla x \Big( \frac x 2 + \osta \Big) - \fltas \rb \nonumber \\
&= ch_3(x) + e^{-x\sta} \lb \hd c^2 + \dmla \Big( \ha \rho(x) - c \Big( \osta + x \Big) \Big) \rb, 
\nonumber \\
\label{eq:h3}
h_3(x) 
&:= \fla \Big( \atl - \Big( x+\frac 1 \sta \Big) \Big) e^{-x\sta},
\end{align}
while its intercept at the vertical axis is 
\begin{align} \label{eq:th4}
\hspace{-3mm}\tild \cH_4(x,c)
&:= \Big( H_{r1}- y \pd {H_{r1}}y \Big)(y;c)\Big|_{y=\Psi(x)} 
= \frac{e^{x\sta}}{2\sta}\Big( -\pd {h_{r1}}x (x;c) + \sta {h_{r1}}(x;c)\Big) \\
&= e^{x\sta} \lb \hd c^2 + c \lb \ha + \delta \Big(\osta - x \Big) \rb + \dmla x \Big(\frac x 2 - \osta \Big) - \fltas \rb \nonumber\\
&= -ch_4(x) + e^{x\sta} \lb \hd c^2 + \dmla \Big( \ha \rho(x) - \frac{2x}{\sta} + c \Big(\osta - x \Big) \Big) \rb, \nonumber\\ 
\label{eq:h4}
h_4(x) &:= \fla \Big( x - \Big( \atl +\frac 1 \sta \Big) \Big) e^{x\sta}. 
\end{align}
Since $G(c) < f_0+c$ for all $c\in(0,c_1)$ (cf.~Proposition \ref{existencegeometricy2} and \eqref{eq:FGdef}), the common tangent line between $H_l$ and $H_r$ (Theorem \ref{existencegeometric}) is tangent to $H_{r1}$ rather than $H_{r2}$ (the latter was the case in \cite{KOWZ00}, see Remark \ref{rem:noapply}). We may therefore substitute \eqref{eq:hidefs1} and \eqref{eq:th3} into the first line of \eqref{eq:system-y1y2} to give (cf.\ Theorem \ref{existencegeometric}) that
\begin{equation} \label{H3=h1}
A(c) = h_1(F(c)) = \tild \cH_3(G(c),c),
\end{equation}
while substituting \eqref{eq:hidefs2} and \eqref{eq:th4} into the second line of \eqref{eq:system-y1y2} gives 
\begin{equation} \label{H4=h2}
B(c) = h_2(F(c)) = \tild \cH_4(G(c),c). 
\end{equation}
These quantities are respectively the gradient and intercept of the common tangent line, which can be seen in Figures \ref{fig:Fig1synth} and \ref{fig:Fig1synth2}.

Some useful relationships for the forthcoming analysis are obtained by differentiating both \eqref{eq:th3} and \eqref{eq:th4}, which yield
\begin{align} 
\label{H3H4}
e^{x\sta} \, \pd {\tild \cH_3} x (x,c) 
&= - e^{-x\sta} \, \pd {\tild \cH_4} x (x,c), \\
\label{eq:hall}
\begin{split}
\Big( \pd {\tild \cH_3} x + \pd {\tild \cH_3} c \Big)(x,c) &= h_3(x) - \sta \tild \cH_3 (x,c), \\
\Big( \pd {\tild \cH_4} x + \pd {\tild \cH_4} c \Big)(x,c) &=  \sta \tild \cH_4 (x,c)-h_4(x).
\end{split}
\end{align}

A combination of \eqref{cor:1/2d0}
with \eqref{eq:FGdef} implies that $F(c) = \Psi^{-1}(\hyo(c)) \leq f_0$. It then follows from \eqref{eq:h1inv} and \eqref{H3=h1} that for each $c \in (0,c_1)$ 
we have $\tild \cH_3(G(c),c) \in [-\frac{\la}{2\a^2},0]$. 
Thus, the system \eqref{H3=h1}--\eqref{H4=h2} gives
\begin{equation} \label{LG=0}
L(G(c),c) = 0, \quad \text{for $c \in (0,c_1)$},
\end{equation}
where we make the formal definition
\begin{equation} \label{eq:Ldot}
L(x,c):=
\tild \cH_4(x,c) - h_2\circ h_1^{-1} \big(\tild \cH_3(x,c)\big),
\end{equation} 
noting that the value $L(G(c),c)$ is well defined. 

\begin{Remark}\label{rem:noapply}
In the case $\la \in (\la_*,\la^*]$, the system of equations corresponding to \eqref{H3=h1}--\eqref{H4=h2} is \cite[Eq.~(10.8)]{KOWZ00}:
\begin{align} \label{eq:ifuel}
h_1(F(c)) = \cH_3(G(c),c)\quad \text{ and } \quad 
h_2(F(c)) = \cH_4(G(c),c) , 
\end{align}
where $\cH_3$ and $\cH_4$ are defined in \cite[Eq.~(10.9)]{KOWZ00}: 
\begin{equation} \label{(10.9)}
\begin{split}
\cH_3(x,c) &:= c\, h_3(x) + \frac{\lambda}{2\alpha} c^2 \, e^{-x\sqrt{2\alpha}} , \\
\cH_4(x,c) &:= -c\, h_4(x) + \frac{\lambda}{2\alpha} c^2 \, e^{x\sqrt{2\alpha}} 
+ h_2(f_0) \, e^{c\sqrt{2\alpha}}.
\end{split}
\end{equation} 
This system (which differs from \eqref{H3=h1}--\eqref{H4=h2} since $\tild \cH_3$ and $\tild \cH_4$ defined in \eqref{eq:th3} and \eqref{eq:th4} are different from \eqref{(10.9)}) was established via the ansatz $G(c)>\atl + c$, which implies that $G(c)>f_0 + c$ since $F(c) \leq f_0 \leq \fotd < \atl$ when $\la \leq \la^*$ (see \cite[Sec.~10]{KOWZ00} for more details). In the geometric perspective of the current paper, this would correspond to tangency points lying on $H_l$ and $H_{r2}$ (cf.~Figures \ref{fig:Fig1synth}--\ref{fig:Fig1synth2}). The system \eqref{eq:ifuel} is studied in \cite[Prop.~10.2]{KOWZ00} and characterises the repelling boundary $\{G(c):c \in (0,\bar c]\}$ illustrated in Figure~\ref{fig:boundaries1}, via 
\begin{equation} \label{eq:Ldotold}
L_{old}(G(c),c) = 0,  
\quad \text{where} \quad 
L_{old}(x,c) := \cH_4(x,c) - h_2\circ h_1^{-1} \big(\cH_3(x,c)\big) ,
\end{equation} 
which implies in turn that $G$ satisfies the ODE
\begin{align} \label{G'c0}
\hspace{-1cm}
G'(c) = 1 - \frac{q(G(c);F(c))}{\pd{}{x} L_{old}(G(c),c)}, 
\end{align}
and \eqref{eq:ifuel} specifies $F$ as the following transformation of $G$:
\begin{align}\label{eq:Funi}
F(c) = h_1^{-1} \big(\cH_3(G(c),c)\big). 
\end{align}

In contrast, in the present case $\la\in(\la^*, \alpha\delta)$, the tangency points lie on $H_l$ and $H_{r1}$, and we have $G(c) < \atl + c$ for small $c>0$, hence the ansatz of \cite{KOWZ00} does not apply.
\end{Remark}

The function $L$ is used repeatedly in the analysis below, together with the function
\begin{align} 
\label{eq:qdef}
q(x;z) 
&:= \sta \big(h_2(z)-h_1(z)e^{2z\sta}\big) + h_3(x)e^{2z\sta} - h_4(x) \\
&= \fla e^{x\sta} \Big[\Big(\atl -x -\frac 1 \sta \Big)e^{2(z-x)\sta} 
+ \Big(\atl -x +\frac 1 \sta \nonumber \Big) \Big] - 2z\frac{\a\d-\la} \a e^{z\sta}, \nonumber
\end{align}
which was defined in \cite[Eq.~(8.22)]{KOWZ00} for $0 < z \leq x < \infty$, and which satisfies 
\begin{align}
q(z;z)&=e^{z\sta}(1-2\d z), \label{eq:qprop1}\\
\pd{q}{x}(x;z)&= \la \sqrt{\frac 2 \a}\Big( \atl - x \Big)e^{x\sta}B \big(1-e^{2(z-x)\sta} \big), \quad z \leq x < \infty. \label{eq:qprop2}
\end{align}

\begin{Lemma} \label{derivL}
On its domain of definition, the function $L(\cdot,\cdot)$, defined formally in \eqref{eq:Ldot} satisfies
\begin{align} 
&\pd {L}{x} (x,c) = \big(e^{2z\sta}-e^{2x\sta} \big) \, \pd {\tild \cH_3} x (x,c), \quad \text{where } z:=h_1^{-1}\big(\tild \cH_3(x,c) \big), \label{eq:lx}\\
&\Big( \pd {L}{x}+\pd {L}{c} \Big)(G(c),c) = q\big(G(c);F(c) \big), \quad \text{where $q$ is defined by \eqref{eq:qdef}.} 
\label{eq:hall3}
\end{align}
\end{Lemma}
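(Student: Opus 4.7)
The plan is to establish \eqref{eq:lx} by direct differentiation of \eqref{eq:Ldot} with respect to $x$, then derive \eqref{eq:hall3} by combining this with the corresponding $c$-derivative and specialising at $(G(c), c)$ so as to invoke \eqref{H3=h1}--\eqref{H4=h2}. The central algebraic identity that makes everything collapse neatly is
\[
\frac{h_2'(z)}{h_1'(z)} = -e^{2z\sta},
\]
which I would verify as a preliminary step. Starting from \eqref{eq:hidefs1} and \eqref{eq:hidefs2}, a short computation gives
\[
h_1'(z) = -\frac{\a\d-\la}{2\a}e^{-z\sta}\bigl(\sta\rho(z)-\rho'(z)\bigr),
\qquad
h_2'(z) = \frac{\a\d-\la}{2\a}e^{z\sta}\bigl(\sta\rho(z)-\rho'(z)\bigr),
\]
from which the identity is immediate.

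For \eqref{eq:lx}, differentiating \eqref{eq:Ldot} and using the chain rule together with $(h_1^{-1})'(\tild \cH_3(x,c)) = 1/h_1'(z)$ yields
\[
\pd{L}{x}(x,c) = \pd{\tild \cH_4}{x}(x,c) - \frac{h_2'(z)}{h_1'(z)}\pd{\tild \cH_3}{x}(x,c)
= -e^{2x\sta}\pd{\tild \cH_3}{x}(x,c) + e^{2z\sta}\pd{\tild \cH_3}{x}(x,c),
\]
where the second equality combines \eqref{H3H4} with the preliminary identity. This is exactly \eqref{eq:lx}.

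For \eqref{eq:hall3}, the same calculation with $c$ in place of $x$ gives
\[
\pd{L}{c}(x,c) = \pd{\tild \cH_4}{c}(x,c) + e^{2z\sta}\pd{\tild \cH_3}{c}(x,c),
\]
so that
\[
\Big(\pd{L}{x}+\pd{L}{c}\Big)(x,c) = -e^{2x\sta}\pd{\tild \cH_3}{x} + \pd{\tild \cH_4}{c} + e^{2z\sta}\Big(\pd{\tild \cH_3}{x}+\pd{\tild \cH_3}{c}\Big).
\]
Using \eqref{eq:hall} to rewrite the bracketed sum as $h_3(x)-\sta\tild \cH_3(x,c)$, and to rewrite $\pd{\tild \cH_4}{c} = \sta\tild \cH_4(x,c) - h_4(x) - \pd{\tild \cH_4}{x}$ before applying \eqref{H3H4} once more to cancel the $e^{2x\sta}\pd{\tild \cH_3}{x}$ terms, I obtain
\[
\Big(\pd{L}{x}+\pd{L}{c}\Big)(x,c) = e^{2z\sta}h_3(x) - h_4(x) + \sta\bigl(\tild \cH_4(x,c) - e^{2z\sta}\tild \cH_3(x,c)\bigr).
\]
Finally, specialising at $x=G(c)$ and using \eqref{H3=h1}--\eqref{H4=h2} replaces $z$ by $F(c)$ and turns $\tild \cH_3$, $\tild \cH_4$ into $h_1(F(c))$, $h_2(F(c))$; comparing the resulting expression with \eqref{eq:qdef} identifies it as $q(G(c);F(c))$, yielding \eqref{eq:hall3}.

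No step here is genuinely hard: the argument is a bookkeeping exercise in the chain rule combined with the algebraic identities \eqref{H3H4}, \eqref{eq:hall}, \eqref{H3=h1}--\eqref{H4=h2}, and the preliminary ratio formula for $h_2'/h_1'$. The one place requiring mild care is the verification of that ratio formula, since it is the sign structure there that guarantees the $-e^{2x\sta}$ term in $\pd{L}{x}$ cancels cleanly against the $e^{2x\sta}\pd{\tild \cH_3}{x}$ produced later by \eqref{H3H4}; without this cancellation, the final expression would not reduce to the compact form in \eqref{eq:qdef}.
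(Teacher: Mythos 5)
Your proof is correct and takes essentially the same route as the paper's: the key identity $h_2'(z)=-e^{2z\sta}h_1'(z)$, the chain rule applied to \eqref{eq:Ldot} combined with \eqref{H3H4} to get \eqref{eq:lx}, then \eqref{eq:hall} and the specialisation $x=G(c)$, $z=F(c)$ via \eqref{H3=h1}--\eqref{H4=h2} to recognise $q(G(c);F(c))$. The only ingredient the paper adds is a brief preliminary check that $L$ is well defined and differentiable near $(G(c),c)$ — i.e.\ that $h_1'>0$ on the strictly convex part of $H_l$ where $F(c)$ lies, so $h_1^{-1}$ is differentiable there — which your division by $h_1'(z)$ implicitly uses.
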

Combining the property \eqref{LG=0} of $L$ with Lemma \ref{derivL}, we obtain the following expression for the slope of the moving boundary $G$. 
\begin{Lemma}\label{cor:FGC1}
For all $\la\in(\la^*,\a\d)$ and $c \in (0,c_1)$ we have $\pd{L}{x}(G(c),c)<0$ and 
\begin{equation}\label{eq:Gderiv}
G'(c) = 1 - \frac{q(G(c);F(c))}{\pd{L}{x}(G(c),c)}.
\end{equation}
\end{Lemma}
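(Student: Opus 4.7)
The plan is to combine the property $L(G(c),c)=0$ from \eqref{LG=0} with the implicit function theorem, using \eqref{eq:hall3} to rewrite $\partial_c L$ in terms of $\partial_x L$ and $q(G(c);F(c))$. The regularity of $G$ required to differentiate is already supplied by Proposition \ref{cor:1/2d} combined with the smoothness of the transformation $\Psi$. The only non-formal step is to establish the strict sign $\partial_x L(G(c),c)<0$, which is needed both to invoke the implicit function theorem and to write the quotient in \eqref{eq:Gderiv}.

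First I would differentiate the identity $L(G(c),c)=0$ with respect to $c$ to obtain
\begin{equation*}
\pd{L}{x}(G(c),c)\,G'(c)+\pd{L}{c}(G(c),c)=0 .
\end{equation*}
Using \eqref{eq:hall3}, I can substitute $\partial_c L(G(c),c)=q(G(c);F(c))-\partial_x L(G(c),c)$, and assuming for a moment that $\partial_x L(G(c),c)\neq 0$, dividing yields exactly \eqref{eq:Gderiv}. Thus the content of the lemma reduces to the strict inequality $\partial_x L(G(c),c)<0$.

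For this sign, I would use formula \eqref{eq:lx}: at $x=G(c)$, the defining relation $\tild\cH_3(G(c),c)=h_1(F(c))$ from \eqref{H3=h1} forces $z=h_1^{-1}(\tild\cH_3(G(c),c))=F(c)$, so that
\begin{equation*}
\pd{L}{x}(G(c),c)=\bigl(e^{2F(c)\sta}-e^{2G(c)\sta}\bigr)\,\pd{\tild\cH_3}{x}(G(c),c) .
\end{equation*}
Since $F(c)<G(c)$ by Proposition \ref{existencegeometricy2}, the exponential factor is strictly negative. It therefore suffices to show $\partial_x\tild\cH_3(G(c),c)>0$. From the definition \eqref{eq:th3}, $\tild\cH_3(x,c)=\partial_y H_{r1}(y;c)|_{y=\Psi(x)}$, so differentiating in $x$ gives $\partial_x\tild\cH_3(x,c)=\partial_y^2 H_{r1}(\Psi(x);c)\,\Psi'(x)$. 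Proposition \ref{existencegeometricy2} asserts that $H_{r1}$ is strictly convex at $\hyt(c)=\Psi(G(c))$, i.e.\ $\partial_y^2 H_{r1}(\Psi(G(c));c)>0$, and $\Psi'>0$ everywhere, giving the desired strict positivity.

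The main (and only) obstacle is this sign analysis for $\partial_x\tild\cH_3(G(c),c)$; once this is in place the rest is a one-line implicit-differentiation argument closing the proof. Note that the strict convexity of $H_{r1}$ at $\hyt(c)$ also legitimises application of the implicit function theorem to the equation $L(G(c),c)=0$, which is consistent with the $C^1$ regularity of $G$ furnished by Proposition \ref{cor:1/2d}.
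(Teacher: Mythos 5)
Your proposal is correct and follows essentially the same route as the paper's proof: implicit differentiation of $L(G(c),c)=0$ combined with \eqref{eq:hall3} to obtain \eqref{eq:Gderiv}, and the sign $\pd{L}{x}(G(c),c)<0$ obtained from \eqref{eq:lx} together with $z=h_1^{-1}(\tild \cH_3(G(c),c))=F(c)<G(c)$ and the strict convexity of $H_{r1}$ at $\hyt(c)$ from Proposition \ref{existencegeometricy2}. The only difference is presentational (you spell out the chain rule for $\pd{\tild\cH_3}{x}$ explicitly), so no further comment is needed.
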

 Thus information on the derivative of the boundary $G$ can be obtained from the sign of $q(G(c);F(c))$ via Lemma \ref{cor:FGC1},  
and the next result locates the boundary $F$.
\begin{Lemma} \label{lem:Fcsmall}
For all $\la\in(\la^*,\a\d)$ and sufficiently small $c>0$ we have $F(c)<\frac 1 {2\d}$.
\end{Lemma}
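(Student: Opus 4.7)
The plan is to carry out a first-order asymptotic expansion of $F$ and $G$ about $c=0$, using the limits $F(c),G(c)\to \fotd$ from Proposition \ref{existencegeometricy2}. Writing $\phi(c):=F(c)-\fotd$ and $\gamma(c):=G(c)-\fotd$, I aim to show $\phi(c)<0$ for all sufficiently small $c>0$.

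First I would establish a leading-order relation between $f:=\lim_{c\to 0^+}\phi(c)/c$ and $g:=\lim_{c\to 0^+}\gamma(c)/c$ from the first tangency condition \eqref{H3=h1}. From \eqref{eq:th3} one checks that $\tild \cH_3(\,\cdot\,,0)\equiv h_1(\,\cdot\,)$, and a direct computation yields
\[
D_3:=\pd{\tild \cH_3}{c}(\fotd,0)=-\frac{\d}{\sta}e^{-\sta/(2\d)}<0.
\]
Since $\fotd<\sqrt{\d/(\a\d-\la)}$, Lemma \ref{lem:Hlprops} places $\fotd$ in the strict convexity region of $H_l$, so $h_1'(\fotd)>0$. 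Taylor expansion of \eqref{H3=h1} around $(\fotd,0)$ then gives $h_1'(\fotd)(f-g)=D_3$, equivalently
\[
g-f=K:=\frac{\d\,e^{-\sta/(2\d)}}{\sta\,h_1'(\fotd)}>0.
\]

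Next I would extract a second relation via the expression $G'(c)=1-q(G;F)/\pd{L}{x}(G,c)$ from Lemma \ref{cor:FGC1}, which has a $0/0$ indeterminacy at $c=0^+$: the numerator vanishes because $q(\fotd;\fotd)=0$ (see \eqref{eq:qprop1}), while the denominator vanishes because $\tild\cH_3(\,\cdot\,,0)\equiv h_1$ forces $z:=h_1^{-1}(\tild \cH_3(G,c))=F$, so that $\pd{L}{x}(G,c)=(e^{2F\sta}-e^{2G\sta})\pd{\tild \cH_3}{x}(G,c)\to 0$ as $F,G\to\fotd$. Expanding both numerator and denominator to leading order in $c$ (using $\pd{q}{x}(z;z)\equiv 0$ from \eqref{eq:qprop2}, $\tfrac{d}{dz}q(z;z)\big|_{\fotd}=-2\d e^{\sta/(2\d)}$, and the leading orders already obtained), and identifying $\lim_{c\to 0^+}G'(c)=g$, I expect the ratio to collapse to
\[
g-1=-\frac{K f}{g-f}=-f,
\]
i.e.\ $f+g=1$. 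Combined with $g-f=K$, this gives $f=(1-K)/2$.

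Finally, a direct algebraic simplification of $K$, using \eqref{eq:hidefs1}, $\sta^2=2\a$, and the definition of $\rho$ in \eqref{eq:defrho}, reduces the inequality $K>1$ to $\la<\a\d$, which holds throughout our parameter regime. Hence $f<0$, and therefore $F(c)=\fotd+\phi(c)<\fotd$ for all sufficiently small $c>0$.

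The main obstacle will be the $0/0$ indeterminacy in $G'(c)$ at $c=0^+$, caused simultaneously by the vanishing of $q$ on the diagonal $F=G$ and by the collapse of the factor $e^{2F\sta}-e^{2G\sta}$ in $\pd{L}{x}$ as $F,G\to\fotd$. Resolving it requires a consistent matched expansion of both numerator and denominator to first order in $c$, exploiting the structural identities $\tild\cH_3(\,\cdot\,,0)\equiv h_1$ and $q(\fotd;\fotd)=0$ to read off the clean relation $f+g=1$. A related subtlety is to justify the existence of the leading-order coefficients $f,g$ themselves, which may first require showing that $\gamma(c)/c$ admits a limit at $c=0^+$ (for instance by L'H\^opital applied to the explicit formula for $G'(c)$).
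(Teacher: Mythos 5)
Your route is genuinely different from the paper's, and its formal content checks out: expanding the tangency condition \eqref{H3=h1} about $(\fotd,0)$ does give $G(c)-F(c)=Kc+o(c)$ with your $K=\d e^{-\sta/(2\d)}/\big(\sta\, h_1'(\fotd)\big)$, the $0/0$ form in \eqref{eq:Gderiv} does formally collapse to $f+g=1$, and the algebra gives $K=\d\big(\d-\tfrac{\a\d-\la}{4\d^2}\big)^{-1}>1$ exactly because $\la<\a\d$ (the denominator is positive since $\fotd<f_0<\sqrt{\d/(\a\d-\la)}$ in this regime, which also gives $h_1'(\fotd)>0$). The genuine gap is the one you flag yourself: everything is conditioned on the existence of $f=\lim_{c\to0}\phi(c)/c$ and $g=\lim_{c\to0}\gamma(c)/c$, and your proposed remedy is circular. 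By your own leading-order computation the numerator of the fraction in \eqref{eq:Gderiv} is $-2\d e^{\sta/(2\d)}\phi(c)+O(c^2)$ and the denominator is $-2\d e^{\sta/(2\d)}c\,(1+o(1))$, so $G'(c)=1-\phi(c)/c\,(1+o(1))+O(c)$; hence $\lim_{c\to0}G'(c)$ exists (finitely) precisely when $\lim_{c\to0}\phi(c)/c$ does, and L'H\^opital applied to $G'$ cannot break that loop. As written you have two linear equations for quantities not yet known to exist, so the proof is incomplete.

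The gap is repairable, but by a different mechanism: expand \emph{both} tangency conditions \eqref{H3=h1} and \eqref{H4=h2} to second order about $(\fotd,\fotd,0)$, using only Taylor's theorem together with the limits \eqref{cor:1/2d0}. Since $\tild\cH_3(\cdot,0)\equiv h_1$, $\tild\cH_4(\cdot,0)\equiv h_2$, $h_2'(x)=-e^{2x\sta}h_1'(x)$ and the mixed $xc$-derivatives cancel via \eqref{H3H4}, the two expansions combine to give $\phi(c)-\gamma(c)=-Kc+o(c)$ and, at second order, $(\phi-\gamma)(\phi+\gamma)=-Kc^2+o(c^2)$, hence $\phi(c)+\gamma(c)=c+o(c)$ and $\phi(c)=\tfrac{1-K}{2}c+o(c)<0$ — no differentiability of $F,G$ at $c=0$ and no use of \eqref{eq:Gderiv} is needed. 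By contrast, the paper proves the lemma by a much softer contradiction argument: assuming $F(c)\geq\fotd$, the tangent to $H_l$ at $\Psi(\fotd)$ minorizes the convex minorant $W(\cdot;c)$ (via Theorem \ref{existencegeometric}, respectively Proposition \ref{existencegeometricy4}), and evaluating at $x=\fotd+c$ yields $V(\fotd+c;c)-h(\fotd+c;c)\geq\tfrac{\a\d-\la}{2\d^2}c^2+O(c^3)>0$, contradicting $V\leq h$. That argument needs no boundary asymptotics at all; yours, once the existence issue is closed as above, proves more (the slope $\tfrac{1-K}{2}<0$ of $F$ at $0+$), but it is not a complete proof as proposed.
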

Finally, recalling the constant $c_1$ from \eqref{eq:c2def}, we define 
\begin{equation}\label{lem:Gderv}
c_0:= c_1 \wedge \ol{c} 
\quad \text{and} \quad 
\ol{c}:= \inf \{c \in (0,c_1) : G'(c) \leq 1\}, 
\end{equation} 
and establish bounds on the derivatives of the moving boundaries for all $c\in(0,c_0)$.

\begin{Proposition} \label{prop:monotonicity}
For all $\la\in(\la^*,\a\d)$, we have $c_0>0$ and the moving boundaries $F$ and $G$ satisfy $F'(c)<0$ and $G'(c)>1$, respectively, for all $c \in (0,c_0)$.
\end{Proposition}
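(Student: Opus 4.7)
The plan has two parts: first establish $G'(c) > 1$ on a right-neighbourhood of $c = 0$, which gives $c_0 > 0$; second, prove $F'(c) < 0$ on all of $(0, c_0)$.

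For $G'(c) > 1$, Lemma \ref{cor:FGC1} provides $G'(c) = 1 - q(G(c); F(c))/\pd{L}{x}(G(c), c)$ with $\pd{L}{x}(G(c), c) < 0$, so it suffices to show $q(G(c); F(c)) > 0$. Lemma \ref{lem:Fcsmall} gives $F(c) < \fotd$ for small $c$, whence $q(F(c); F(c)) = e^{F(c)\sta}(1 - 2\d F(c)) > 0$ by \eqref{eq:qprop1}. Since $\la < \a\d$ forces $\fotd < \atl$ and $G(c) \to \fotd$ by \eqref{cor:1/2d0}, we have $F(c) < G(c) < \atl$ for small $c$. Then \eqref{eq:qprop2} yields $\pd{q}{x}(\,\cdot\,; F(c)) > 0$ on $[F(c), \atl)$, so $q(G(c); F(c)) \geq q(F(c); F(c)) > 0$. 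This gives $\bar c > 0$, and combined with $c_1 > 0$ from Proposition \ref{existencegeometricy2}, $c_0 > 0$.

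For $F'(c) < 0$, implicit differentiation of \eqref{H3=h1} combined with \eqref{eq:hall} and \eqref{eq:lx} yields
\[
F'(c) \,=\, \frac{1}{h_1'(F(c))}\Big[\frac{q(G(c); F(c))}{e^{2G(c)\sta} - e^{2F(c)\sta}} \,+\, h_3(G(c)) - \sta\,h_1(F(c))\Big].
\]
For $F(c) < \fotd < f_0$ we have $\rho(F(c)) < 0$ and $\rho'(F(c)) > 0$, so $h_1'(F(c)) > 0$. A direct computation using $\rho(\fotd) = \tfrac{1}{4\d^2} + \tfrac{1}{\d\sta} - \tfrac{\la}{\a(\a\d - \la)}$ gives
\[
h_3(\fotd) - \sta\,h_1(\fotd) \,=\, -\,e^{-\fotd\sta}\,\frac{\sta(\a\d-\la)}{8\a\d^2} \,<\, 0.
\]
Since $\pd{q}{x}(z; z) = 0$ by \eqref{eq:qprop2}, the numerator $q(G; F)$ vanishes to first order in $(G - F)$ near the diagonal, so the indeterminate ratio has a finite limit as $c \to 0$; a Taylor expansion shows that this limit is strictly dominated in magnitude by $|h_3(\fotd) - \sta\,h_1(\fotd)|$, producing a strictly negative bracket for small $c$ and hence $F'(c) < 0$. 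This bound extends to all of $(0, c_0)$ by continuity (Proposition \ref{cor:1/2d}): on the maximal subinterval $(0, c^*)$ of $(0, c_0)$ with $F' < 0$, strict monotonicity of $F$ together with $F(c) \to \fotd$ gives $F(c^*) < \fotd$, so the signs in the bracket are preserved and $F'(c^*) < 0$, contradicting the maximality of $c^*$ unless $c^* = c_0$.

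The main technical obstacle is the asymptotic analysis of the indeterminate form $q(G;F)/(e^{2G\sta}-e^{2F\sta})$ at $c = 0$: both numerator and denominator vanish linearly in $c$, and computing the finite limit and comparing it to $\sta\,h_1(\fotd) - h_3(\fotd) > 0$ requires matching the Taylor coefficients of $F(c)$ and $G(c)$ at $c = 0$ through the double tangency system \eqref{H3=h1}--\eqref{H4=h2}. A key algebraic coincidence driving the analysis is the identity $e^{2\fotd\sta}h_1'(\fotd) + h_2'(\fotd) = 0$, reflecting the collapse of the common tangent line onto $H_l$ as $c \downarrow 0$.
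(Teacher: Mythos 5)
Your first half is fine and is essentially the paper's own argument: $F(c)<\fotd$ (Lemma \ref{lem:Fcsmall}), $G(c)<\atl$ for small $c$ (from \eqref{cor:1/2d0} and $\la<\a\d$), then $q(F(c);F(c))>0$ and $\pd{q}{x}(\cdot\,;F(c))>0$ on $(F(c),G(c))$ via \eqref{eq:qprop1}--\eqref{eq:qprop2}, so $q(G(c);F(c))>0$ and Lemma \ref{cor:FGC1} gives $G'>1$ near $0$, hence $c_0>0$ and $G'>1$ on $(0,c_0)$.

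The proof of $F'(c)<0$ has a genuine gap. Your exact identity
\[
h_1'(F(c))\,F'(c) \;=\; \frac{q(G(c);F(c))}{e^{2G(c)\sta}-e^{2F(c)\sta}} \;+\; h_3(G(c)) - \sta\, h_1(F(c))
\]
is correct (it follows from differentiating \eqref{H3=h1} and using \eqref{eq:hall}, \eqref{eq:lx} and \eqref{eq:Gderiv}), and so is the value $h_3(\fotd)-\sta h_1(\fotd)=-e^{-\fotd\sta}\sta(\a\d-\la)/(8\a\d^2)<0$. But the first term is strictly positive, and the whole argument rests on the claim that its limit as $c\to0$ is strictly smaller in magnitude than $|h_3(\fotd)-\sta h_1(\fotd)|$, which you only assert. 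This is not a routine expansion: $q(G;F)\sim q(F;F)\sim 2\d e^{\fotd\sta}\big(\fotd-F(c)\big)$ while $e^{2G\sta}-e^{2F\sta}\sim 2\sta e^{2\fotd\sta}\big(G(c)-F(c)\big)$, so the limit of the ratio is governed by $\lim_{c\to0}\big(\fotd-F(c)\big)/\big(G(c)-F(c)\big)$, i.e.\ by the very derivatives $F'(0+)$ and $G'(0+)$ you are trying to control; the required domination is essentially equivalent to the conclusion $F'(0+)<0$, so as written the step is circular unless you actually extract the first-order coefficients from the double-tangency system (and justify differentiability of $F,G$ at the endpoint $c=0$). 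The identity $e^{2\fotd\sta}h_1'(\fotd)+h_2'(\fotd)=0$ cannot supply the missing input, since $h_1'(x)=-e^{-2x\sta}h_2'(x)$ holds identically in $x$. The continuation to all of $(0,c_0)$ is also unjustified: on a maximal interval where $F'<0$, knowing $F(c^*)<\fotd$ only ensures the positive ratio term is positive (which works against you) and says nothing about whether $h_3(G(c^*))-\sta h_1(F(c^*))$ still dominates it once $G(c^*)$ has moved away from $\fotd$; ``the signs in the bracket are preserved'' does not follow.

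For comparison, the paper's proof of $F'<0$ avoids this delicate balance entirely with a comparative-statics argument: fix $\tild c\in(0,c_0)$ and the right tangency point $\tild y=\hyt(\tild c)$. Since $G'>1$ on $(0,\tild c)$ and $G(0+)=\fotd$, one has $G(\tild c)>\fotd+\tild c$, hence $\pd{h_{r1}}{c}(G(\tild c);\tild c)=1-2\d(G(\tild c)-\tild c)<0$, so $H_{r1}(\tild y;\cdot)$ is strictly decreasing at $\tild c$. If $\hyo'(\tild c)\ge0$, then by convexity of $H_l$ (which does not depend on $c$) the tangent value $r_{\hyo(c)}(\tild y;c)$ is nondecreasing at $\tild c$, and for $c$ slightly above $\tild c$ the common tangent would lie strictly above $H_{r1}(\tild y;c)\ge H(\tild y;c)$, contradicting that it minorizes the obstacle. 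If you wish to keep your analytic route, you must actually carry out the first-order expansion of $(F,G)$ at $0$ from \eqref{H3=h1}--\eqref{H4=h2} and then give a separate argument valid on all of $(0,c_0)$; otherwise the geometric argument is what closes the claim.
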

The next result simplifies the definition \eqref{lem:Gderv} of $c_0$.

\begin{Corollary} \label{cor:c1}
For all $\la\in(\la^*,\a\d)$, the constant $c_0$ defined by \eqref{lem:Gderv} takes the form
$$
c_0 = \bar c \wedge \hat{c} 
=\bar c \wedge \inf\{c \in (0,\infty): G(c) \geq f_0+c \}.
$$
\end{Corollary}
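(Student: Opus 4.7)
Combining \eqref{lem:Gderv} with \eqref{eq:c2def} gives $c_0 = \hat c \wedge c_m \wedge \bar c$; the second equality in the statement is immediate from \eqref{eq:c2defb} together with \eqref{eq:FGdef}. It therefore suffices to show $c_m \geq \bar c \wedge \hat c$, which collapses $c_0$ to $\bar c \wedge \hat c$.

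Dispatch the easy case first. If the set $\{c \in (0, c_1) : G'(c) \leq 1\}$ appearing in \eqref{lem:Gderv} is non-empty, then $\bar c \leq c_1 = \hat c \wedge c_m$, whence $\bar c \leq \hat c$ and $\bar c \leq c_m$, so $c_0 = c_1 \wedge \bar c = \bar c = \bar c \wedge \hat c$. Otherwise this set is empty, so $\bar c = +\infty$ by convention, $c_0 = c_1 = \hat c \wedge c_m$, and I need to show $c_m \geq \hat c$.

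For this I would argue by contradiction. Assume $c_m < \hat c$, so that $c_1 = c_m$. By Proposition \ref{cor:1/2d}, $\hyo$ is $C^1$ on $(0, c_1)$ and, by the definition of $c_m$, $\lim_{c \to c_m^-}\hyo(c) = 1$; hence $\lim_{c \to c_m^-} F(c) = \Psi^{-1}(1) = 0$. Passing to the limit $c \to c_m^-$ in the tangency identities \eqref{H3=h1}--\eqref{H4=h2} and using $h_1(0) = h_2(0) = -\fltas$, which follow from \eqref{eq:hidefs1}--\eqref{eq:hidefs2} combined with \eqref{eq:defrho}, yields
\[
\tild\cH_3(G(c_m), c_m) \;=\; -\fltas \;=\; \tild\cH_4(G(c_m), c_m).
\]

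The pivotal step is to extract a useful scalar relation from this pair. Forming the combination $e^{G(c_m)\sta}\tild\cH_3(G(c_m),c_m) - e^{-G(c_m)\sta}\tild\cH_4(G(c_m),c_m)$ and expanding via the explicit formulae \eqref{eq:th3}--\eqref{eq:th4}, the quadratic and constant terms in the brackets cancel, leaving after simplification the identity
\[
G(c_m) - c_m \;=\; \frac{\la}{\a\d}\left(G(c_m) - \frac{\sinh(G(c_m)\sta)}{\sta}\right).
\]
Because $\sinh(z) > z$ for all $z > 0$, the right-hand side is strictly negative, forcing $G(c_m) < c_m$. However, Proposition \ref{prop:monotonicity} integrated with $G(0^+) = \fotd$ from \eqref{cor:1/2d0} gives $G(c_m) \geq c_m + \fotd > c_m$, producing the required contradiction. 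The main obstacle in this plan is spotting the useful linear combination of the two tangency identities; once found, the rest is elementary.
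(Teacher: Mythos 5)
Your proposal is correct, but it takes a genuinely different route from the paper. The paper's proof is short and structural: since $\tild V(\,\cdot\,;c)\geq 0$ and $\tild V(0;c)=0$, the point $x=0$ always lies in the stopping region of the one-shot problem, so $F$ cannot vanish at any $c'<\bar c\wedge\hat c$ — if it did, the strict decrease of $F$ (Proposition \ref{prop:monotonicity}) would push the waiting region across $x=0$, a contradiction; hence $c_m\geq \bar c\wedge\hat c$ and $c_0=\bar c\wedge\hat c$. You instead work analytically with the double-tangency system \eqref{H3=h1}--\eqref{H4=h2}: letting $F\to 0$ (so $h_1,h_2\to-\frac{\la}{2\a^2}$) and forming $e^{G\sta}\tild\cH_3-e^{-G\sta}\tild\cH_4$, the explicit formulae \eqref{eq:th3}--\eqref{eq:th4} do collapse (I checked the algebra) to $G(c_m)-c_m=\frac{\la}{\a\d}\bigl(G(c_m)-\frac{\sinh(G(c_m)\sta)}{\sta}\bigr)<0$, which contradicts $G(c)>\fotd+c$ coming from $G'>1$ and \eqref{cor:1/2d0}; your initial case split on whether $\{c\in(0,c_1):G'(c)\leq1\}$ is empty is also sound. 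What the two approaches buy: the paper's argument is more robust and essentially model-free (it uses only nonnegativity of the one-shot value and monotonicity of $F$, so it survives changes to the cost structure), whereas yours yields an explicit quantitative identity but leans on the specific exponential/quadratic form of $\tild\cH_3,\tild\cH_4$ and on passing to the limit $c\to c_m^-$ in the tangency identities — for full rigour you should note that $F(c)\to0$ follows from the monotone decrease of $F$ plus the defining property of $c_m$, and that $G(c)$ is bounded in $(\fotd, f_0+c_m)$ so a convergent subsequence suffices; these are minor and comparable to the level of detail in the paper's own proof.
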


\subsection{A candidate value function for the control problem $Q$} 
\label{Sec:verification2}

In this section we construct a candidate value function $(x,c) \mapsto \tild{Q}(x, c)$ for $(x,c)\in[0,\infty)^2$ and $\la\in [\la^\dagger,\a\d)$. Considering first the case $c=0$, Theorem \ref{cor:czerosol} and \eqref{eq:defq0} imply that the value of the absorbing boundary when $c=0$ is $F(0) := f_0$ and the value function is the function $x \mapsto Q(x,0)$ from \eqref{eq:Vtild0}.
Thus it remains to consider fuel levels $c>0$.

\subsubsection{Small fuel: $c \in (0,c_0]$} 
\label{Sec:verification2a}

Our candidate value function $\tild{Q}(x;c)$ for all $(x,c)\in [0,\infty) \times (0,c_0]$ will be the value function of the one-shot problem $\tild V(\cdot,c)$ of \eqref{eq:defqc}, which is an optimal stopping problem parametrised by the initial fuel level $c$. 
Thus from Theorem \ref{existencegeometric} and the fact that $c_0 \leq c_1$ (definition \eqref{lem:Gderv}), for all $c \in (0,c_0]$ we have
\begin{equation} \label{eq:tildQ}
\tild{Q}(x;c) := 
\left\{
\begin{array}{ll}
\delta x^2, & 0 \leq x \leq F(c),\\[+3pt]
{h_1(F(c))} e^{x\sta} + {h_2(F(c))} e^{-x\sta} + \fla x^2 + \flas, & F(c) < x < G(c), \\[+3pt]
\tild{V}_0(x-c) + c, & x \geq G(c).\\
\end{array}
\right. 
\end{equation}

Recalling Definition \ref{def:rr}, this candidate is reasonable since Proposition~\ref{prop:monotonicity} and Corollary~\ref{cor:c1}
establish that the boundary $G$ is well-defined and repelling for all $c\in (0,c_0]$. 
We have from \eqref{cor:1/2d0}--\eqref{eq:FGdef} that $\lim_{c\to 0}F(c) = \fotd$. However, as noted at the start of Section~\ref{Sec:verification2}, the value of the absorbing boundary when $c=0$ is $F(0) = f_0$. 
Since $\fotd < f_0$ (cf.~\eqref{eq:longineq1}), the absorbing boundary is therefore discontinuous at $c=0$.
Next we extend this candidate to all fuel levels, proceeding in two stages (intermediate and large fuel). 

\subsubsection{Intermediate fuel: $c \in (c_0, \ol{c}]$} 
\label{Sec:verification2b}

This fuel regime appears only
when $c_0 = \hat{c} < \ol{c}$ (with $\bar c$ defined in \eqref{lem:Gderv}).
In this case, we have from Section~\ref{Sec:verification2a} that $G(c_0) = f_0+c_0$ (Corollary \ref{cor:c1}) and $G'(c_0-)>1$. 

Our candidate value function for $c \in (c_0, \ol{c}]$ is
\begin{equation} \label{eq:tildQc0}
\tild{Q}(x;c) := \left\{
\begin{array}{ll}
\delta x^2, & 0 \leq x \leq F(c),\\[+3pt]
{h_1(F(c))} e^{x\sta} + {h_2(F(c))} e^{-x\sta} + \fla x^2 + \flas, & F(c) < x < G(c), \\[+3pt]
\tild{V}_0(x-c) + c, & x \geq G(c),
\end{array}
\right. 
\end{equation}
where the moving boundaries $F$ and $G$ are now specified by the equations~\eqref{eq:ifuel} of Remark~\ref{rem:noapply}.
For all $c \in (c_0,\bar c]$ the moving boundary $G$ is thus uniquely characterised by the boundary condition $G(c_0) = f_0 + c_0$ and the ODE \eqref{G'c0} for $c \in (c_0, \bar c]$. This extension of $G$ is continuous at $c=c_0$. 
The moving boundary $F$, which is then specified uniquely by \eqref{eq:Funi} for $c \in (c_0, \ol{c}]$, is also continuous at $c=c_0$; 
this follows since $F(c_0)= h_1^{-1} \big(\cH_3(f_0+c,c)\big)= h_1^{-1} \big(\tild \cH_3(f_0+c,c)\big))$ 
by Lemma~\ref{lem:littleh}, \eqref{eq:th3}, \eqref{(10.9)} and Remark \ref{rem:noapply}.
Note also that in the case $\la\in [\la^\dagger,\a\d)$ we have
\begin{itemize}
\vspace{-1mm}
\item[(i)] $q(G(c_0),F(c_0)) > 0$, which follows from Lemma~\ref{cor:FGC1} and the fact that $G'(c_0-)>1$;
\vspace{-2mm}
\item[(ii)] $G(c_0) \geq \atl+ \frac{c_0}{2}$, since \eqref{eq:longineq1} guarantees that $\atl + c_0 \leq f_0 +c_0$.
\end{itemize}
These properties ensure that the proof of \cite[Prop.~10.2]{KOWZ00}, whose results are summarised for convenience in Remark~\ref{rem:noapply}, remains valid in the present regime $\la\in [\la^\dagger,\a\d)$. 
\subsubsection{Large fuel: $c \in (\ol{c}, \infty)$}
\label{Sec:verification2c}

In both cases (that is, when $c_0 = \hat{c}< \ol{c}$ or when $c_0 = \ol{c}$), it follows from the regularity properties of $G$ obtained above that $G'(\ol{c}) = 1$ 
and $q(G(\ol{c}), F(\ol{c})) = 0$ (cf.~\eqref{G'c0} and~\eqref{eq:Gderiv}). Our candidate value function for $c \in (\ol{c}, \infty)$ is
\begin{equation} \label{eq:tildQolc}
\tild{Q}(x;c) := 
\left\{
\begin{array}{ll}
\delta x^2, & 0 \leq x \leq F(c),\\[+3pt]
{h_1(F(c))} e^{x\sta} + {h_2(F(c))} e^{-x\sta} + \fla x^2 + \flas, & F(c) < x < G(c), \\[+3pt]
\tild Q(x-\zeta, c-\zeta) + \zeta, & G(c) < x < G(\bar c) + (c- \bar c),
\\
\tild{V}_0(x-c) + c, & x \geq G(\bar c) + (c- \bar c),
\end{array}
\right. 
\end{equation}
where $\zeta = \zeta(x,c)$ is defined uniquely by $x-\zeta = G(c-\zeta)$ 
and $F$ solves the ODE of \cite[Eq.~(10.16)]{KOWZ00}, namely
\begin{align} \label{F'olc}
\begin{split}
&\begin{cases}
F'(c) = \frac{h_3(\chi(F(c))) - \sta h_1(F(c))}{h_1'(F(c))}, \qquad c > \ol{c}, \\
F(\ol{c}) = h_1^{-1} \big(\cH_3(G(\ol{c}),\ol{c})\big) 1_{\{c_0 = \hat{c}\}} + \Psi^{-1}(\hyo(\ol{c})) 1_{\{c_0 = \ol{c}\}},
\end{cases} \\
&\text{ and} 
\quad G(c) = \chi(F(c)), \text{ for all } c > \bar c,
\end{split}
\end{align}
where $\chi:[0,f_0] \to (\atl,\infty)$ is defined implicitly via $q(\chi(z),z)=0$, giving also
\begin{align} \label{eq:longineq}
F(c) < \frac1{2\d} < \atl < G(c) < \atl + \frac1{\sqrt{2\a}}, \qquad c\in [\ol{c}, \infty),
\end{align}
thanks to \cite[Lem.~8.2]{KOWZ00}, whose proof applies also in this case since $F(\ol{c}) < \fotd$.  (When $c_0 = \ol{c}$, the inequality $F(\ol{c}) < \fotd$ follows from Lemma \ref{lem:Fcsmall} and Proposition~\ref{prop:monotonicity}; when $c_0 = \hat{c}$, it follows from the construction of $F$ in Sections \ref{Sec:verification2a}--\ref{Sec:verification2b}).
The continuity of $F$ and $G$ at $c=\bar c$ follows by construction. 
We conclude by observing that the argument that $G'(c)<1$ for $c \in (\ol{c},\infty)$ found in the proof of \cite[Thm.~10.6]{KOWZ00} does not depend on the particular value of $\lambda$, 
so we may apply it in the present regime to conclude that $G$ is reflecting on $[\ol{c}, \infty)$ (cf.~Definition~\ref{def:rr}).

\section{Analysis of the case $\la\in (\la^*, \la^\dagger)$}
\label{sec:newsol}

We begin this section by solving the one-shot problem \eqref{def-V} in the case $\la\in (\la^*, \la^\dagger)$. This motivates the following ansatz, which we use below to develop a candidate solution to the control problem. The method contrasts with Section~\ref{sec:cpos}, where the one-shot solution itself was the candidate solution to the small-fuel control problem. Proofs of results in this section which are not included below can be found in Appendix \ref{AppA5}.

\begin{anz}\label{bigass}
For $\la\in (\la^*, \la^\dagger)$, the strategy has four continuous moving boundaries, namely $F,G:(0,\infty) \mapsto (0,\infty)$ and $\Fb,\Gb: \mathcal{I} \mapsto (0,\infty)$
where $\mathcal{I} = (0,c_\mathcal{I}]$ for some $c_\mathcal{I} > 0$, 
satisfying 
$G(c) < \Fb(c) < \Gb(c)$ for all $c \in (0,c_\mathcal{I})$ and
\begin{equation} \label{olF00}
\lim_{c\to 0} F(c) = 
\lim_{c\to 0} G(c) = \frac1{2\d} <  
\lim_{c\to 0} \Fb(c) < \lim_{c\to 0} \Gb(c). 
\end{equation} 
Note that, for all $C_t>0$, such a strategy prescribes discretionary stopping for $X_t \in [0,F(C_t)]$, waiting for $X_t \in (F(C_t), G(C_t)) \cup (\Fb(C_t),\Gb(C_t))$, and action for all other pairs $(X_t,C_t)$.
\end{anz}

\begin{Remark} \label{rem:Ansatz}
In Ansatz \ref{bigass}, the interval $(\Fb(C_t),\Gb(C_t))$ is understood to be empty for $C_t \geq c_\mathcal{I}$. Also, as noted in Section~\ref{Sec:verification2}, when $C_t=0$ the absorbing boundary is $F(0) := f_0$, so it is optimal to stop when $X_t \in [0,F(0)]$ and wait when $X_t \in (F(0), \infty)$.
The continuity of $F$ and $G$ on $(0,\infty)$ and their limiting values as $c \to 0$ were established in Section~\ref{Sec:verification2} (showing also that $F$ is discontinuous at $c=0$). The remaining parts of Ansatz \ref{bigass} are verified in Remark \ref{rem:a1ver}, after the construction of the candidate value function.
\end{Remark}

The motivation for Ansatz \ref{bigass} is provided in Section~\ref{sec:sf}, which suggests the existence of an additional connected component of the waiting region, bordered by two new boundaries $(\Fb,\Gb)$. 
As shall be seen, the reason for the failure of the approach of Section~\ref{sec:cpos} (which had only absorbing and repelling boundaries) in the present regime $\la\in (\la^*, \la^\dagger)$ is that the new boundary $\Gb$ will be reflecting, no matter how small the fuel level. 
Interestingly, although the one-shot problem $\tild V$ of \eqref{eq:defqc} (equivalently, the optimal stopping problem $V$ of \eqref{def-V}) is no longer a direct solution to the control problem, nevertheless it will play an important indirect role in the novel methodological step performed in Section~\ref{sec:sf}. 
We therefore also provide its solution in the present regime.

\begin{Proposition} \label{existencegeometricy4}
Suppose that $\la \in (\la^*, \la^\dagger)$. Then recalling $\hat{y}_2(c)$ from Proposition \ref{existencegeometricy2}, for sufficiently small $c>0$ there exists a unique couple $(\hat{y}_3(c),\hat{y}_4(c))$ with $y_c < \hat{y}_2(c) < \hyoo(c) < \hye(c) \leq \hyc(c) < \hytt(c)$ that solves the system:
\begin{equation*} 
\left\{
\begin{array}{lr}
 \pd{H_{r1}} y (y_3;c) = \pd{H_{r2}} y (y_4;c), \\[+4pt]
 H_{r1} (y_3;c) - \pd {H_{r1}} y (y_3;c) y_3 = 
 H_{r2} (y_4;c) - \pd {H_{r2}} y (y_4;c) y_4,
\end{array}
\right. 
\end{equation*}
and satisfies $\lim_{c \to 0}\hyoo(c)= \Psi(f_0)$. Thus the \gcm \, has the geometry of Figure~\ref{fig:Fig1synth2}.
\end{Proposition}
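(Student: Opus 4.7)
The plan is to mirror the proof of Proposition \ref{existencegeometricy2}, but now look for a double tangent entirely within the convex--concave--convex structure of $H_r(\cdot;c)$ which, for $\la \in (\la^*, \la^\dagger)$ and $c < \uc$, is guaranteed by Lemma \ref{lem:Hconvexity}(II)(iii): $H_r$ is strictly convex on $(y_c,\hye(c))$, strictly concave on $(\hye(c),\hyc(c))$, and strictly convex on $(\hyc(c),\infty)$. Since $H_r$ coincides with $H_{r1}$ on the first convex branch and with $H_{r2}$ on the third, the sought common tangent has tangency points $\hyoo(c) \in (\hyt(c),\hye(c))$ and $\hytt(c) \in (\hyc(c),\infty)$.

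First I would parameterise candidate tangents to $H_{r2}$. By strict convexity of $H_{r2}(\cdot;c)$ on $(\hyc(c),\infty)$ and \eqref{eq:Hneglim}, the slope $\pd{H_{r2}}{y}(\cdot;c)$ is strictly increasing and ranges between $\pd{H_{r2}}{y}(\hyc(c);c)$ (the local minimum slope at the inflection) and $0$. By strict convexity of $H_{r1}(\cdot;c)$ on $(y_c,\hye(c))$ and by continuous differentiability of $H_r$ across $\hye(c)$, for every $y_4$ in a suitable subinterval of $(\hyc(c),\infty)$ there is a unique $y_3(y_4)\in (\hyt(c),\hye(c))$ with $\pd{H_{r1}}{y}(y_3;c) = \pd{H_{r2}}{y}(y_4;c)$; this encodes the first equation of the system.

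Next I would introduce the intercept-gap
\[
\Delta(y_4;c) := \bigl(H_{r2} - y\,\pd{H_{r2}}{y}\bigr)(y_4;c) - \bigl(H_{r1} - y\,\pd{H_{r1}}{y}\bigr)\bigl(y_3(y_4);c\bigr),
\]
so the second equation of the system reads $\Delta(y_4;c)=0$. Existence follows from the intermediate value theorem: as $y_4\downarrow \hyc(c)$ the tangent to $H_{r2}$ at $y_4$ lies strictly above the obstacle on the adjacent concave stretch $(\hye(c),\hyc(c))$ and hence (by continuity across $\hye(c)$ and strict convexity of $H_{r1}$ on the left of $\hye(c)$) one shows $\Delta(y_4;c)>0$; as $y_4 \to \infty$, \eqref{eq:Hneglim} forces the tangent line to flatten with vertical intercept tending to $-\infty$, giving $\Delta(y_4;c)<0$. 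Uniqueness is inherited from the strict convexity of the two branches, exactly as in the argument used to prove that the doubly tangent line of Theorem \ref{existencegeometric} lies below $H(\cdot;c)$: any two distinct common tangents with the prescribed ordering would force the obstacle into a configuration inconsistent with Lemma \ref{lem:Hconvexity}(II)(iii). A short auxiliary verification, using the explicit formulae of Lemma \ref{lem:littleh} and~\eqref{def-H}, is needed to confirm that the solution satisfies the strict ordering $\hyt(c)<\hyoo(c)<\hye(c)\leq \hyc(c)<\hytt(c)$.

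For the limit $\lim_{c\to 0}\hyoo(c)=\Psi(f_0)$, I would pass to the limit $c\downarrow 0$ in the transformed obstacle: from \eqref{h_r} one has $h_{r1}(x;c)\to h_l(x)$ uniformly on compacts, so $H_{r1}(\cdot;c)\to H_l(\cdot)$ on $[1,\Psi(f_0)]$ (with convergence of first derivatives), while $h_{r2}(x;c)\to B_0 e^{-x\sta}$, so by \eqref{def-H}, $H_{r2}(\cdot;c)\to B_0$, the constant function. The common tangent therefore converges to the horizontal line at height $B_0$, which by Theorem \ref{cor:czerosol} and Figure \ref{fig:1} is tangent to $H_l$ precisely at $\Psi(f_0)$; hence $\hyoo(c)\to \Psi(f_0)$. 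The final assertion about the geometry of the greatest non-positive convex minorant in Figure \ref{fig:Fig1synth2} then follows by combining this second double tangent with the first one of Proposition \ref{existencegeometricy2} and applying Lemma \ref{lem:Hconvexity}(II)(iii) branch by branch.

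The main obstacle I anticipate is the sign analysis of $\Delta(y_4;c)$ near the left endpoint $y_4\downarrow \hyc(c)$ together with the verification that the slope-matched point $y_3(y_4)$ genuinely lies in $(\hyt(c),\hye(c))$ throughout the relevant range of $y_4$; these amount to careful bookkeeping on how the slope of $H_{r2}$ at the inflection point $\hyc(c)$ compares with the slope of $H_{r1}$ at $\hyt(c)$ (already determined in Proposition \ref{existencegeometricy2}), and would be handled via the explicit expressions \eqref{eq:th3}--\eqref{eq:h4} for the natural-scale slopes.
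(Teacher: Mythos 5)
Your IVT construction of a common tangent bridging the concave dip of $H_r(\cdot;c)$, and your degenerate-limit argument for $\lim_{c\to 0}\hyoo(c)=\Psi(f_0)$, are workable in outline (the paper instead defines $\hytt(c)$ as the last tangent to $H_{r2}$ rolling left from the would-be $H_l$--$H_{r2}$ tangency point that stays below $H_r$, and gets the limit cheaply from $\hyoo(c)<\hye(c)=\Psi(f_0+c)$ together with Lemma \ref{lem:minorant}). The genuine gap is that you treat the ordering $\hyt(c)<\hyoo(c)$ --- and hence the claim that the \gcm\ really has the two-segment geometry of Figure \ref{fig:Fig1synth2} --- as ``a short auxiliary verification'' by slope bookkeeping at $\hyc(c)$ and $\hyt(c)$. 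That step is the heart of the proposition and cannot follow from Lemma \ref{lem:Hconvexity}(II)(iii) plus generic convexity/tangency arguments alone: the same convex--concave--convex structure of $H_r$, and a double tangent bridging its dip, are also present in the regime $\la\in(\la_*,\la^*]$, where the conclusion is false --- there the concave region lies \emph{inside} $(\hyo(c),\hyt(c))$ and the minorant has a single linear piece tangent to $H_l$ and $H_{r2}$ (Remark \ref{rem:l<la*}). Any correct proof must therefore use the hypothesis $\la>\la^*$ (equivalently $\fotd<f_0$) at precisely this point, and your sketch never does; the local comparison of $\pd{H_{r2}}{y}$ at $\hyc(c)$ with $\pd{H_{r1}}{y}$ at $\hyt(c)$ is not shown to encode it.

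For comparison, the paper resolves the dichotomy head-on: it considers the would-be single tangent with tangency points $y_a(c)$ on $H_l$ and $y_b(c)$ on $H_{r2}$, shows $y_a(c)\to\Psi(f_0)$ via Lemma \ref{lem:minorant} while $\hyt(c)\to\Psi(\fotd)$ by \eqref{cor:1/2d0}, so that for small $c$ one has $y_a(c)>\hyt(c)>y_c$ and hence, by \eqref{eq:hbigf0}, $H_r(y_a(c);c)<H_l(y_a(c))=r_{y_b(c)}(y_a(c);c)$; the single tangent is thus crossed by the obstacle, which forces the two-tangent geometry and yields $\hyt(c)<\hyoo(c)$. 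Without an argument of this type, combining your second double tangent with that of Proposition \ref{existencegeometricy2} ``branch by branch'' does not show that the glued piecewise function is the \gcm. Secondary, fixable issues in your sketch: the slope-matched point $y_3(y_4)$ need not exist at either endpoint of your IVT range (neither $\pd{H_{r2}}{y}(\hyc(c);c)$ nor slopes near $0^-$ as $y_4\to\infty$ need lie in the range of $\pd{H_{r1}}{y}$ on $(y_c,\hye(c))$); uniqueness of the common tangent needs more than ``strict convexity of the two branches''; and passing the tangency points to the limit $c\downarrow 0$ from pointwise convergence of the obstacles requires a uniformity/compactness argument of the kind used in Lemma \ref{lem:minorant}.
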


The optimal stopping problem $V$ of \eqref{def-V} now differs qualitatively from that in the regime $\la\in [\la^\dagger,\a\d)$, as can be seen by comparing Figures \ref{fig:Fig1synth} and \ref{fig:Fig1synth2}. 
More precisely, for all $c<\uc$ the obstacle $H_{r2}(\cdot\,;c)$ now has an additional concave region $(\hye(c), \hyc(c))$ (see Lemma \ref{lem:Hconvexity}.(II).(iii)). 
This additional concave region does not fall inside the waiting region $(\hyo(c),\hyt(c))$ in \eqref{eq:Wdef} since, for $c\in(0,c_1)$, we have $\hyt(c) \in (y_c, \hye(c))$ (Proposition \ref{existencegeometricy2}). Thus in view of Proposition \ref{prop:DayKar}, the solution presented in Theorem \ref{existencegeometric} cannot hold. Nevertheless, the one-shot problem will be used in Section~\ref{sec:sf} to obtain bounds on the limiting values of $\Fb(c)$ and $\Gb(c)$ as $c\to 0$ in equation \eqref{olF00} of Ansatz \ref{bigass}.

\begin{Remark} \label{rem:l<la*} 
Note that this concave region $(\hye(c), \hyc(c))$ also exists in the regime $\la \in (\la_*,\la^*]$ with $c \in [0,\bar c]$. 
However, as shown in \cite{KOWZ00}, it then lies {\em inside} the region $(\hyo(c),\hyt(c))$ so that the form of the value function remains consistent with that of Section~\ref{Sec:verification2a}, with moving boundaries characterised as in Remark \ref{rem:noapply}.
\end{Remark}

\subsection{A new waiting region}\label{sec:sf}

Our aim in this section is to motivate the presence of boundaries $\Fb$ and $\Gb$ in Ansatz \ref{bigass}.
For $x, \epsilon > 0$ define the punctured neighbourhood $N_x(\epsilon)$ by
\[
N_x(\epsilon) := \{(u,c) \in \R \times (0,\infty): 0 < |(x,0)-(u,c)| < \epsilon\}.
\]
The next result establishes a useful relationship between the waiting region of the one-shot problem and that of the control problem.

\begin{Proposition} \label{prop:waitbel}
Recalling Ansatz \ref{bigass}, suppose that the point $(x,0)$ is not the limit as $c \to 0$ of any boundary of the solution to the control problem.
Let $\e>0$ be small enough that the neighbourhood $N_x(\epsilon)$ is contained in a single region of the solution to the control problem (waiting, action or stopping). Further, suppose that $N_x(\epsilon)$ is contained in the waiting region of the one-shot problem $\tild V$ of \eqref{eq:defqc}. 
Then $N_x(\epsilon)$ is contained in the waiting region of $Q$ from \eqref{eq:defV}. 
\end{Proposition}

\begin{proof}
Let $c \in (0,\epsilon)$ and consider the following two cases:

(a) {\it $N_x(\epsilon)$ is contained in the stopping region of $Q$.} 
In this case we have $Q(x,c) = \delta x^2$. 
Since $Q \leq \tild V$ (suboptimality), combining this equality with the definition \eqref{def-V} gives 
$$
Q(x,c) \leq \tild V(x,c) \leq \delta x^2.
$$ 
We conclude that $\tild V(x,c) = \delta x^2$, so that $(x,c)$ belongs to the stopping region of $\tild V$. This is a contradiction, as $N_x(\epsilon)$ is contained in the waiting region of $\tild V$.

(b) {\it $N_x(\epsilon)$ is contained in the action region of $Q$.} 
In this case we have $Q(x,c) = \tild V(x-c,0)+c$. Combining this with \eqref{def-V} gives 
$$
Q(x,c) \leq \tild V(x,c) \leq \tild V(x-c,0)+c.
$$ 
We conclude that $\tild V(x,c) = \tild V(x-c,0)+c$, so that $(x,c)$  belongs to the stopping region of $\tild V$. As above, this is a contradiction.

We conclude that $(x,c)$, and hence the whole neighbourhood $N_x(\epsilon)$, is contained in the waiting region of $Q$.
\hfill $\Box$
\end{proof} 

\vspace{3mm}
The following corollary then motivates Ansatz \ref{bigass}, and also provides bounds on the limiting values of $\Fb(c)$ and $\Gb(c)$ as $c\to 0$. 
\begin{Corollary}\label{cor:oset}
The waiting region of the control problem $Q$ contains an open neighbourhood of the set $(f_0, \atl) \times \{0\}$. 
In particular, 
$\lim_{c \to 0}\Fb(c) \leq f_0$ and $\lim_{c \to 0}\Gb(c) \geq \atl.$
\end{Corollary}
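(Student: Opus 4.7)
The plan is to apply Proposition~\ref{prop:waitbel} at a dense set of points $x \in (f_0, \atl)$, then pass to the limit $c\downarrow 0$ in the waiting-region description provided by Ansatz~\ref{bigass}. The key geometric input is the second pair of tangency points $(\hyoo(c),\hytt(c))$ from Proposition~\ref{existencegeometricy4}, which yield a second component of the waiting region of the one-shot problem that asymptotically covers $(f_0,\atl)$.

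First I would show that for every fixed $x \in (f_0,\atl)$ there exist $\epsilon, c_0 > 0$ such that $N_x(\epsilon) \cap \{c<c_0\}$ lies in the waiting region of the one-shot problem $\tild V$ of \eqref{eq:defqc}. In the regime $\la \in (\la^*,\la^\dagger)$, Proposition~\ref{existencegeometricy4} exhibits a line doubly tangent to $H_r(\cdot;c)$ at $\hyoo(c)$ and $\hytt(c)$ for small $c$, above which $H_r$ lies strictly on $(\hyoo(c),\hytt(c))$; arguing as in the proof of Theorem~\ref{existencegeometric} this produces a waiting sub-region with natural-scale expression $(\Psi^{-1}(\hyoo(c)),\Psi^{-1}(\hytt(c)))$. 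Since $\hyoo(c) \to \Psi(f_0)$ (Proposition~\ref{existencegeometricy4}) and $\hytt(c) > \hyc(c) = \Psi(\atl + c/2)$, a continuity argument gives $\Psi^{-1}(\hyoo(c)) < x < \Psi^{-1}(\hytt(c))$ for all $c$ sufficiently small, hence the desired $\epsilon$ exists.

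Next I would apply Proposition~\ref{prop:waitbel}. Under Ansatz~\ref{bigass} the only candidate limits of boundaries of $Q$ as $c\to 0$ are the three values $\tfrac{1}{2\d}$, $\lim_{c\to 0}\Fb(c)$ and $\lim_{c\to 0}\Gb(c)$, so for all but at most three points $x \in (f_0,\atl)$ the hypotheses of Proposition~\ref{prop:waitbel} are met and $N_x(\epsilon)$ lies in the waiting region of $Q$. Combined with Theorem~\ref{cor:czerosol}, which gives no-fuel waiting region $(f_0,\infty)$ and hence places $(x,0)$ itself in the waiting region of $Q$, and using openness of the waiting region (as the complement of the closed stopping and action regions), this extends to an open neighbourhood of the full segment $(f_0,\atl) \times \{0\}$.

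Finally, to extract the bounds on $\Fb$ and $\Gb$, I would use that by Ansatz~\ref{bigass} the waiting region of $Q$ for $c>0$ is $(F(c),G(c)) \cup (\Fb(c),\Gb(c))$, and that the results of Section~\ref{sec:cpos} give $F(c), G(c) \to \tfrac{1}{2\d} < f_0$. Thus for every $x \in (f_0,\atl)$ and all sufficiently small $c$, $x \notin (F(c),G(c))$ forces $x \in (\Fb(c),\Gb(c))$, i.e.\ $\Fb(c) < x < \Gb(c)$; passing to the limit $c\downarrow 0$ and then taking the infimum and supremum over $x \in (f_0,\atl)$ yields the two claimed inequalities. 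The main obstacle is Step~1: confirming that the second tangent line of Proposition~\ref{existencegeometricy4} genuinely supplies a waiting sub-region of $\tild V$ and that this sub-region spreads to cover $(f_0,\atl)$ in the limit; the remaining steps are essentially a combination of Proposition~\ref{prop:waitbel}, openness, and limit-taking inside the Ansatz.
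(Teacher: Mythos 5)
Your proposal is correct and follows essentially the same route as the paper: the second tangency pair $(\hyoo(c),\hytt(c))$ of Proposition \ref{existencegeometricy4} shows that points near $(f_0,\atl)\times\{0\}$ lie in the waiting region of the one-shot problem $\tild V$, Proposition \ref{prop:waitbel} transfers this to the control problem $Q$, and the bounds on $\lim_{c\to 0}\Fb(c)$ and $\lim_{c\to 0}\Gb(c)$ follow. Your extra care about the finitely many possible exceptional points in the hypothesis of Proposition \ref{prop:waitbel}, and the explicit derivation of the two inequalities from $F(c),G(c)\to \fotd < f_0$, are refinements of steps the paper leaves implicit rather than a different argument.
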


\begin{proof}
It follows from Proposition \ref{existencegeometricy4} and \eqref{ys} that the region $\{(x,c): c \in (0,k), \; x \in [f_0+c, \atl + \frac c2]\}$ belongs to the waiting region of $\tild V$. 
Hence for $x \in (f_0, \atl)$ and $\epsilon>0$ sufficiently small, the neighbourhood $N_x(\epsilon)$ belongs to the waiting region of $\tild V$.

Then, by Proposition \ref{prop:waitbel}, the neighbourhood $N_x(\epsilon)$ is contained in the waiting region of the control problem $Q$, and taking the union of these sets establishes the first claim of the corollary. This implies the second claim.
\hfill $\Box$
\end{proof}

\subsection{Additional boundaries $\Fb$, $\Gb$}
\label{sec:topo}

In this section we write down a system of equations for a repelling boundary $\Fb$ and reflecting boundary $\Gb$, which will feature in addition to the boundaries $F$ and $G$ obtained above, and obtain bounds useful for their solution.

In the waiting region, the value function $Q$ is expected to solve the Feynman-Kac equation 
\begin{align}\label{eq:F-K}
(\mathcal{L} - \a) Q(x,c) + \la x^2 = 0,
\end{align}
thus we aim to construct a candidate $\tild Q$ solving \eqref{eq:F-K}, that is,
\begin{equation} \label{eq:VstarRR}
\tild{Q}(x,c) = \tild A(c) e^{x\sta} + \tild B(c) e^{-x\sta} + \fla x^2 + \flas ,
\quad x \in (\bar F(c), \bar G(c)), 
\end{equation}
for differentiable functions $\tild A$ and $\tild B$. 
Since each unit of fuel instantaneously costs one unit (cf.~\eqref{OCP}) and also instantaneously moves the process towards the origin by one unit, the action region should be characterised by the equation $U(x,c) = 1$, where 
\begin{equation}\label{eq:UUdef}
U(x,c):=\Big( \pd{\tild Q}{x}  + \pd{\tild Q}{c} \Big)(x,c) .
\end{equation}

\begin{Remark}\label{rem:sf}
In the sequel we say that a function has {\rm smooth fit} in some neighbourhood of a boundary if it is continuously differentiable in that neighbourhood. 
\end{Remark}

We will construct a candidate value function $\tild Q$ with, in addition to the boundaries $F$ and $G$ introduced and studied in Section~\ref{sec:cpos}, a repelling boundary $\Fb$ and reflecting boundary $\Gb$ satisfying the following smooth-fit conditions between the waiting and action regions:
\begin{enumerate}
\vspace{-2mm}
\item[(i)] smooth fit in the $x$ variable for $\tild Q$ across repelling free boundary points $\ol{F}(c) \in (\frac1{2\d}+\frac c2, f_0+c)$, so that $\tild Q(\Fb(c),c) = \tild V_0(\Fb(c) - c) + c$ and $\pd{\tild Q}{x}(\Fb(c),c) = \tild V_0'(\Fb(c) - c)$;

\vspace{-2mm}
\item[(ii)] smooth fit in the $x$ variable for $U$ across reflecting free boundary points $\ol{G}(c) \in (\atl, \infty)$, so that $U(\Gb(c),c)=1$ and $\pd{U}{x}(\Gb(c),c)=0$.
\end{enumerate}
\vspace{-1mm}
It then follows from conditions (i)--(ii) and straightforward manipulations that (cf.~\eqref{eq:th3} and \eqref{eq:th4})  
\begin{equation}\label{tildeAB}
\tild A(c) = \tild \cH_3(\ol{F}(c),c)
\quad \text{and} \quad 
\tild B(c) = \tild \cH_4(\ol{F}(c),c),
\end{equation}
and that these coefficient functions satisfy the ordinary differential equations 
\begin{equation}\label{tildeA'B'}
\tild A'(c) + \sqrt{2\a} \tild A(c) = h_3(\ol{G}(c))
\quad \text{and} \quad 
\tild B'(c) - \sqrt{2\a} \tild B(c) = - h_4(\ol{G}(c)) , 
\quad c > 0. 
\end{equation}
These four equations will characterise the unknowns $\tild A(c)$ and $\tild B(c)$ of \eqref{eq:VstarRR} as well as $\Fb(c)$ and $\Gb(c)$. We will establish the existence of a unique pair $(\Fb,\Gb)$ of moving boundaries satisfying this characterisation, provide their explicit construction, and prove that they satisfy sufficient range constraints (see Propositions~\ref{prop:olF'}--\ref{prop:olG'}). 
This will ensure both the feasibility of Ansatz \ref{bigass} and the regularity needed for the  verification arguments of Section~\ref{Sec:verification}.

\begin{Remark} \label{rem:AB}
A similar derivation was performed in \cite[Eqs.~(8.14)--(8.17)]{KOWZ00}, but for different coefficient functions $A$ and $B$, which were respectively equal to $\cH_3(G(\cdot),\cdot)$ and $\cH_4(G(\cdot),\cdot)$ defined by \eqref{(10.9)}.
\end{Remark}

The next result identifies a necessary and sufficient condition for the continuity of the candidate value function.

\begin{Proposition} \label{prop:newbdrs} 
Under Ansatz \ref{bigass}, the candidate $\tild Q$ constructed in \eqref{eq:F-K}--\eqref{tildeA'B'} is continuous as $c \to 0$ if and only if the boundary $\Fb$ satisfies 
$\lim_{c \to 0}\Fb(c)= f_0.$
\end{Proposition}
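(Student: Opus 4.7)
The plan is to compute the limits $\lim_{c\to 0}\tild A(c)$ and $\lim_{c\to 0}\tild B(c)$ in terms of $L:=\lim_{c\to 0}\Fb(c)$ using \eqref{tildeAB}, and then to match these against the coefficients of $\tild V_0$ on the limiting second waiting interval via linear independence of exponentials. By Ansatz \ref{bigass} the limit $L$ exists and satisfies $L<M:=\lim_{c\to 0}\Gb(c)$; by Corollary \ref{cor:oset} combined with \eqref{eq:longineq2} we have $L\leq f_0<\atl\leq M$, so the open interval $(f_0,M)$ is non-empty and lies in the second waiting region $(\Fb(c),\Gb(c))$ for all sufficiently small $c>0$.

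Inserting $c\to 0$ into \eqref{tildeAB} using the explicit forms \eqref{eq:th3}--\eqref{eq:th4} and the definition \eqref{eq:defrho} of $\rho$, a direct calculation would yield
\[
\lim_{c\to 0}\tild A(c)=\tfrac12\dmla\,e^{-L\sta}\rho(L),\qquad
\lim_{c\to 0}\tild B(c)=\dmla\,e^{L\sta}\Big(\tfrac12\rho(L)-\tfrac{2L}{\sta}\Big).
\]
For any $x_0\in(f_0,M)$ and all sufficiently small $c>0$, the point $x_0$ lies in the second waiting region so $\tild Q(x_0,c)=\tild A(c)e^{x_0\sta}+\tild B(c)e^{-x_0\sta}+\fla x_0^2+\flas$, while by Theorem \ref{cor:czerosol} we have $\tild V_0(x_0)=B_0 e^{-x_0\sta}+\fla x_0^2+\flas$. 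Requiring $\tild Q(x_0,c)\to \tild V_0(x_0)$ for every $x_0$ in this open interval and invoking linear independence of the functions $x\mapsto e^{x\sta}$ and $x\mapsto e^{-x\sta}$ forces
\[
\lim_{c\to 0}\tild A(c)=0 \qquad\text{and}\qquad \lim_{c\to 0}\tild B(c)=B_0.
\]

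From the first equation I would conclude $\rho(L)=0$ (using that $\dmla=(\a\d-\la)/\a>0$ in the present regime), which by the unique-positive-root property stated in \eqref{eq:defrho}--\eqref{eq:fodef} forces $L=f_0$. For the converse, assuming $L=f_0$: the identity $\rho(f_0)=0$ immediately gives $\lim_{c\to 0}\tild A(c)=0$, and substitution into the second limit reduces it to $-\dmla(2f_0/\sta)e^{f_0\sta}$, which matches $B_0$ exactly by \eqref{eq:B0}. Thus both limit conditions collapse to the single boundary condition $\lim_{c\to 0}\Fb(c)=f_0$.

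The heart of the argument is this algebraic coincidence: $\rho(f_0)=0$ simultaneously characterises $f_0$ and, through \eqref{eq:B0}, defines the no-fuel coefficient $B_0$, so the two a priori independent continuity conditions on $\tild A$ and $\tild B$ merge into one. I expect the main obstacle to be the careful bookkeeping needed to confirm that continuity on the second waiting region suffices: one must check that on the other pieces, continuity as $c\to 0$ is automatic. Specifically, $\tild Q=\delta x^2$ on the stopping region; in the action regions the one-shot principle gives $\tild Q(x,c)=c+\tild V_0(x-c)\to \tild V_0(x)$; and the first waiting region $(F(c),G(c))$ collapses to the single point $\{\fotd\}$ by \eqref{cor:1/2d0}--\eqref{eq:FGdef}, so every fixed $x\neq \fotd$ eventually lies in one of these already-verified pieces.
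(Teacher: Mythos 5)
Your proposal is correct and follows essentially the same route as the paper's proof: both evaluate \eqref{tildeAB} in the limit $c\to 0$ (giving $\tild A(0)=\tild\cH_3(L,0)$, $\tild B(0)=\tild\cH_4(L,0)$ with $L=\lim_{c\to 0}\Fb(c)$), match the limiting exponential form on the second waiting region against $\tild V_0$ for $x>f_0$ via Corollary \ref{cor:oset} and \eqref{eq:Vtild0}, and use that $\tild A(0)=0$ is equivalent to $\rho(L)=0$, i.e.\ $L=f_0$, with $\tild B(0)=B_0$ then holding automatically by \eqref{eq:B0}. The only cosmetic differences are your use of linear independence of $e^{\pm x\sta}$ at two points in $(f_0,\lim_{c\to0}\Gb(c))$ in place of the paper's direct comparison, and your slightly more explicit bookkeeping of the remaining pieces in the sufficiency direction, which the paper dispatches by appeal to the structure of the strategy in Ansatz \ref{bigass}.
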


We show below that the new component $(\Fb, \Gb)$ of the waiting region does not communicate with the existing component $(F,G)$. 
That is, if the process starts inside the region $(\Fb, \Gb)$ then at the right boundary $\Gb$ it is reflected back into this region while, at the left boundary $\Fb$, all fuel is expended. Either way, the process never subsequently enters the region $(F, G)$. 
Conversely, however, the existing component $(F,G)$ of the waiting region may or may not communicate with the new component $(\Fb, \Gb)$, and both possibilities are addressed in Section~\ref{Sec:verification3bb} below.

Fixing $c\in(0,K)$, the function $H_{r1}(\cdot;c)$ is strictly convex on $(y_c, \hye(c))$ (Lemma \ref{lem:Hconvexity}.(II).(ii)--(iii)). It follows from the definition \eqref{eq:th3} of $\tild \cH_3$ that 
\begin{equation} \label{dxH3}
\pd{\tild \cH_3}{x}(x,c) = \Psi'(x) \,
\pd {^2 H_{r1}}{y^2}(\Psi(x);c) > 0, \quad  \Psi(x) \in (y_c, \hye(c)),
\end{equation}
i.e.~$\tild \cH_3(\cdot,c)$ is invertible on $(\frac1{2\d}+\frac c2, f_0+c)$. 
Then since $\ol{F}(c) \in (\frac1{2\d}+\frac c2, f_0+c)$ it follows that there can be at most one value $\ol{F}(c)$ satisfying
\eqref{tildeAB}.
Substituting \eqref{tildeAB} into \eqref{tildeA'B'} (and appealing again to \eqref{dxH3}) yields
\begin{align*}
&\pd{\tild \cH_3}{x}(\ol{F}(c),c) \Big(\sqrt{2\a} \tild \cH_4(\ol{F}(c),c) - \pd{\tild \cH_4}{c}(\ol{F}(c),c) - h_4(\ol{G}(c)) \Big) \\ 
&+\pd{\tild \cH_4}{x}(\ol{F}(c),c) \Big(\sqrt{2\a} \tild \cH_3(\ol{F}(c),c) + \pd{\tild \cH_3}{c}(\ol{F}(c),c) - h_3(\ol{G}(c)) \Big) = 0.
\end{align*}
Substituting the identities \eqref{H3H4}--\eqref{eq:hall} into the above equation (and recalling \eqref{eq:qdef}), we have
\begin{align} \label{eq:tildeqdef}
\tild q(\ol{G}(c); \ol{F}(c)) = 0,  
\quad \text{where} \quad 
\tild q(x;z) := q(x;z) - q(z;z) , \quad z \leq x < \infty.
\end{align}

\begin{Lemma} \label{lem:soltildeq}
For each $z \in (\frac1{2\d}, \atl)$, the function $x \mapsto \tild q(x,z)$ of \eqref{eq:tildeqdef} has a unique root $\mathcal{X}(z) \in (z,\infty)$. 
The function $z\mapsto \mathcal{X}(z)$ is of class $C^1(\frac1{2\d}, \atl)$ and satisfies
\begin{align} \label{eq:2ub}
\atl < \mathcal{X}(z) < \min\Big\{ \atl+\frac1{\sqrt{2\a}} , \frac{\a}{\la}-z \Big\}
\qquad \text{and} \qquad 
\pd{\tild q}{x}(\mathcal{X}(z); z) < 0 .
\end{align} 
Moreover, for every $z \in [\atl, \infty)$, the unique solution to $\tild q(x; z) = 0$ is $x = z$.
\end{Lemma}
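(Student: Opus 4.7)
The plan is to reduce the analysis to a monotonicity study of $q(\cdot;z)$ driven by the factorisation in \eqref{eq:qprop2}. Observing that $\partial_x \tild q = \partial_x q$, and that for $x \geq z$ both $e^{x\sta}$ and $1 - e^{2(z-x)\sta}$ are nonnegative (the latter strictly so for $x > z$), equation \eqref{eq:qprop2} shows $\partial_x q(x;z)$ inherits the sign of $\atl - x$. Combined with the trivial identity $\tild q(z;z) = 0$ from the definition \eqref{eq:tildeqdef}, this sign structure drives the entire argument.

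I would first dispose of the case $z \in [\atl, \infty)$: on $[z, \infty)$ one has $\atl - x \leq 0$ with strict inequality for $x > z$, so $\tild q(\cdot;z)$ is strictly decreasing on $[z,\infty)$, and $\tild q(z;z)=0$ forces $\tild q(x;z) < 0$ for all $x > z$; hence $\mathcal{X}(z) = z$ is the unique root. For $z \in (\fotd, \atl)$, the same sign analysis shows $\tild q(\cdot;z)$ is strictly increasing on $[z, \atl]$ and strictly decreasing on $[\atl, \infty)$, so $\tild q(\atl;z) > 0$ rules out roots in $(z, \atl]$. The dominant term $-\fla\, x\, e^{x\sta}$ in the explicit representation \eqref{eq:qdef} gives $\tild q(x;z) \to -\infty$ as $x \to \infty$, and the intermediate value theorem delivers a unique root $\mathcal{X}(z) \in (\atl, \infty)$, with $\partial_x \tild q(\mathcal{X}(z); z) < 0$ by the strict monotonicity past $\atl$. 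The $C^1$-regularity of $\mathcal{X}(\cdot)$ on $(\fotd, \atl)$ then follows from the implicit function theorem applied to $\tild q(\mathcal{X}(z); z) = 0$, using this nonvanishing partial.

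The two upper bounds in \eqref{eq:2ub} are the technical heart of the lemma. By the strict decrease of $\tild q(\cdot;z)$ past $\atl$, each bound $\mathcal{X}(z) < x^*$ reduces to $\tild q(x^*;z) < 0$. The key simplification, obtained by subtracting $q(z;z)$ from \eqref{eq:qdef}, is
\[
\tild q(x;z) = \big[h_3(x) - h_3(z)\big]\,e^{2z\sta} - \big[h_4(x) - h_4(z)\big] .
\]
At $x^* = \atl + \tfrac{1}{\sta}$, equation \eqref{eq:h4} gives $h_4(x^*) = 0$ and direct substitution into the above reduces $\tild q(x^*;z)$ to a sum of manifestly negative terms. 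For the sharper bound $x^* = \frac{\alpha}{\lambda} - z$, I would exploit the algebraic symmetries $h_3(2\atl - z) = h_4(z)\, e^{-2\atl\sta}$ and $h_4(2\atl - z) = h_3(z)\, e^{2\atl\sta}$ (both verified directly from \eqref{eq:h3}--\eqref{eq:h4}), which collapse $\tild q(2\atl - z;\, z)$ into an expression of the form $-C(z)\, T(\atl - z)$ with $C(z) > 0$ and
\[
T(u) := \Big(u + \tfrac{1}{\sta}\Big)e^{-u\sta} + \Big(u - \tfrac{1}{\sta}\Big)e^{u\sta} .
\]
A short calculation gives $T(0) = 0$ and $T'(u) = 2\sta\, u\, \sinh(u\sta) > 0$ for $u > 0$, whence $T(\atl - z) > 0$ and the bound $\mathcal{X}(z) < 2\atl - z$ follows. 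Identifying these symmetry relations between $h_3$ and $h_4$ is the key observation that converts the bounds into elementary inequalities, and I expect it to be the main obstacle to finding a clean argument.
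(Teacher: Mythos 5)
Your proposal is correct and follows essentially the same route as the paper: the same sign analysis of $\partial_x\tild q$ via \eqref{eq:qprop2} together with $\tild q(z;z)=0$ and $\tild q(\infty;z)=-\infty$ gives existence, uniqueness, the lower bound $\atl$, the negative partial and the IFT regularity, and the two upper bounds are obtained by evaluating $\tild q$ at exactly the same points. Your "symmetry" computation for $x^*=\frac{\a}{\la}-z$ reproduces the paper's factorisation $\tild q(\frac\a\la - z;z)=(1+e^{2(z-\atl)\sta})\big(h_4(z)-h_3(z)e^{2\atl\sta}\big)$, with your function $T(\atl-z)$ being (up to a positive constant) the paper's $-\phi(z)$ after the substitution $u=\atl-z$, so the monotonicity argument is identical.
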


Recall from \eqref{eq:longineq2} that $\frac 1 {2\d} < \atl$.
Then for each $\Fb(c) \in (\frac 1 {2\d}, \atl)$ we set $\ol{G}(c) := \mathcal{X}(\ol{F}(c))$, obtaining from Lemma \ref{lem:soltildeq} that 
\begin{equation} \label{olF<olG}
\frac 1 {2\d} < \; \ol{F}(c) < \atl < \ol{G}(c) := \mathcal{X}(\ol{F}(c)) < \min\Big\{ \atl+\frac1{\sqrt{2\a}} , \frac{\a}{\la} - \ol{F}(c) \Big\}.
\end{equation}
From the continuity of $\Fb$ and $\Gb$ (Ansatz \ref{bigass}) and in light of Proposition \ref{prop:newbdrs} we define 
$g_0 := \mathcal{X}(f_0)$, so that
\begin{equation} \label{tildeg0}
\frac1{2\d} < \lim_{c\to 0} \Fb(c) =  f_0 < \atl < \lim_{c\to 0} \Gb(c) = g_0 := \mathcal{X}(f_0) < \Big(\atl+\frac1{\sqrt{2\a}}\Big) \wedge \Big(\frac{\a}{\la}-f_0 \Big).
\end{equation}
In view of \eqref{olF<olG}, the fuel level $c_\mathcal{I}$ of Ansatz \ref{bigass} may be written as 
\begin{align} \label{cf}
c_\mathcal{I} &= \inf \Big\{c \in [0,\tild c] : \ol{F}(c) = \atl \Big\} \wedge \tild c, 
\\
\label{tildc}
\quad &\text{where} \quad \tild c := \inf\{ c \in [0, \infty) 
\,:\, G(c) = \ol{F}(c)\} > 0.
\end{align}
\begin{Remark}\label{rem:Gbar}
It will be proved in Section~\ref{Sec:cf} that the boundaries $G$ and $\ol{F}$ do not intersect, so that $\tild{c}$ defined by \eqref{tildc} satisfies $\tild c = \infty$. 
\end{Remark}

We have the following dichotomy:
\begin{equation} \label{cftildc}
\text{either } c_\mathcal{I} = \tild c \quad \text{or} \quad c_\mathcal{I} < \tild c = \infty, 
\end{equation}
It will also be convenient to define the fuel level $\ol{k}$ by 
\begin{equation} \label{olk}
\ol{k}:= \atl - \frac1{2\d} \in (0, K), 
\end{equation}
where $\ol{k} < K$ can be proved to hold true for all $\la\in(\la^*,\la^\dagger)$ via straightforward, lengthy calculations involving the definitions \eqref{lambda*} of $\la^*$ and \eqref{Kk} of $K$. 

\begin{Lemma} \label{cf<infinite}
Recall the definitions \eqref{lem:Gderv} and \eqref{olk} of $\ol{c}$ and $\ol{k}$, respectively. There exists a unique fuel level $c_g$ such that the moving boundary $G$ of \eqref{eq:FGdef} satisfies
\begin{align*}
\frac1{2\d} + c < G(c) < \atl \quad \text{for all } c \in (0,c_g); 
\quad G(c_g) = \atl; \quad \text{and} \quad 
c_g \in (0,\ol{c} \wedge \ol{k}).
\end{align*}
Then, the fuel level $c_\mathcal{I}$ of Ansatz \ref{bigass} is finite and 
$$
c_\mathcal{I} \leq c_g < \ol{c} \wedge \ol{k}.
$$
Further, when $c_\mathcal{I} < \tild{c} = \infty$ in the dichotomy \eqref{cftildc}, we have 
$\ol{F}(c_\mathcal{I}) = \atl = \ol{G}(c_\mathcal{I})$.
\end{Lemma}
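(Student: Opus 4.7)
The plan is to construct $c_g$ as the first upcrossing of $\frac{\alpha}{2\lambda}$ by the moving boundary $G$, and then to read off each claim from the regularity of $G$ and the characterisation of $\bar c$ via the sign of the function $q$ of \eqref{eq:qdef}. First I would set $c_g := \inf\{c > 0 : G(c) = \frac{\alpha}{2\lambda}\}$. The continuity of $G$ on $(0,\infty)$ follows from Proposition \ref{cor:1/2d} together with the continuous extensions performed in Sections \ref{Sec:verification2b}--\ref{Sec:verification2c}. Combining the limit $\lim_{c \to 0}G(c) = \frac{1}{2\delta}$ from \eqref{cor:1/2d0}--\eqref{eq:FGdef} with the strict inequality $\frac{1}{2\delta}<\frac{\alpha}{2\lambda}$ guaranteed for $\lambda \in (\lambda^*,\lambda^\dagger)$ by \eqref{eq:longineq2}, and with $G(\bar c) > \frac{\alpha}{2\lambda}$ from \eqref{eq:longineq}, the intermediate value theorem produces $c_g \in (0,\bar c)$; by definition of the infimum, $G(c) < \frac{\alpha}{2\lambda}$ on $(0,c_g)$, which also gives the uniqueness of $c_g$.

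Next I would prove the lower bound $G(c) > \frac{1}{2\delta} + c$ on $(0,c_g)$. Proposition \ref{prop:monotonicity} provides $G'(c) > 1$ on $(0,c_0)$, and the constructions of Sections \ref{Sec:verification2b}--\ref{Sec:verification2c}, together with Lemma \ref{cor:FGC1} and its counterpart \cite[Eq.~(10.13)]{KOWZ00}, identify $\bar c$ as exactly the smallest fuel level at which $G'(c) = 1$: indeed $\bar c$ is pinned by $q(G(\bar c),F(\bar c))=0$, while the analogous slope formula forces $q(G(c),F(c)) > 0$ on $(0,\bar c)$. Hence $G'(c) > 1$ throughout $(0,\bar c)$ and, because $c_g < \bar c$, also throughout $(0,c_g)$. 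Integrating and using $\lim_{c \to 0}G(c)=\frac{1}{2\delta}$ then yields $G(c) > \frac{1}{2\delta}+c$ on $(0,c_g)$; letting $c \uparrow c_g$ gives $\frac{\alpha}{2\lambda} \geq \frac{1}{2\delta}+c_g$, i.e.\ $c_g < \bar k$, completing the first bullet.

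For the second bullet $c_\mathcal{I} \leq c_g$, I would combine \eqref{G<Ftildc<Gtildc} with \eqref{olF<olG}: for each $c \in (0, c_\mathcal{I})$ we have $G(c) < \bar F(c) < \frac{\alpha}{2\lambda}$, so the infimum in the definition of $c_g$ cannot be attained before $c_\mathcal{I}$, giving $c_g \geq c_\mathcal{I}$; in particular $c_\mathcal{I}$ is finite. Finally, when $c_\mathcal{I} < \tilde c$, the definition \eqref{cf} directly yields $\bar F(c_\mathcal{I}) = \frac{\alpha}{2\lambda}$. Combining the identity $\bar G(c)=\mathcal{X}(\bar F(c))$ on $(0,c_\mathcal{I})$ from \eqref{eq:tildeqdef} with the continuity of $\bar F$ and $\bar G$ (Ansatz \ref{bigass}) and the one-sided limit $\lim_{z \uparrow \frac{\alpha}{2\lambda}}\mathcal{X}(z) = \frac{\alpha}{2\lambda}$, which follows by compactness from the uniqueness of the root $\frac{\alpha}{2\lambda}$ of $\tilde q(\,\cdot\,;\frac{\alpha}{2\lambda})$ on $[\frac{\alpha}{2\lambda},\infty)$ in Lemma \ref{lem:soltildeq}, forces $\bar G(c_\mathcal{I}) = \frac{\alpha}{2\lambda}$. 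The main difficulty I anticipate is cleanly justifying that the slope bound $G'>1$ persists uniformly across the regime transitions at $c_0$ and up to $\bar c$, which requires carefully linking the small-fuel formula in Lemma \ref{cor:FGC1} with its intermediate- and large-fuel counterparts from \cite{KOWZ00}.
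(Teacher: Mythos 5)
Your argument is correct and essentially reproduces the paper's proof: the paper likewise obtains $c_g$ as the first crossing of $\atl$ by $G$ (using $\lim_{c\to 0}G(c)=\frac1{2\d}$, $G'>1$ on $(0,\ol{c})$ and $G(\ol{c})>\atl$, the latter derived there from $G'(\ol{c})=1$, \eqref{eq:Gderiv}, $F(\ol{c})<\frac1{2\d}$ and \cite[Lemma 8.2]{KOWZ00} rather than by quoting \eqref{eq:longineq}), deduces $c_g<\ol{c}\wedge\ol{k}$ from $\frac1{2\d}+c<G(c)$, gets $c_\mathcal{I}\leq c_g$ from $G(c)<\Fb(c)<\atl$ on $(0,c_\mathcal{I})$, and settles the final claim via $\mathcal{X}(\atl)=\atl$ in Lemma \ref{lem:soltildeq}. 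One small point: letting $c\uparrow c_g$ only yields $c_g\leq\ol{k}$; for the strict bound apply your integration argument at $c=c_g$ itself (since $G'>1$ on all of $(0,c_g]$, as in the paper's inequality $\frac1{2\d}+c<G(c)$ on $(0,\ol{c}]$), which gives $\frac1{2\d}+c_g<G(c_g)=\atl$ and hence $c_g<\ol{k}$.
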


Next, we combine the finiteness of the fuel level $c_\mathcal{I}$ in Lemma \ref{cf<infinite} with all the previous results to obtain useful bounds for the moving boundaries. 

\begin{Lemma} \label{lem:olF>12d}
For any $\la \in (\la^*,\la^\dagger)$, the boundaries $\ol{F}$ and $\ol{G}:=\mathcal{X}(\ol F)$ obtained via Lemma \ref{lem:soltildeq} satisfy 
\begin{equation*} 
\ol{F}(c) \in \Big(\frac1{2\d}+c, \atl \Big] 
\quad \text{and} \quad 
\ol{G}(c) \in \Big[\atl, \min\Big\{\atl+\frac1{\sqrt{2\a}}, \frac{\a}{\la} - \ol{F}(c) \Big\}\Big), 
\quad \text{for } c\in (0,c_\mathcal{I}].
\end{equation*}
\end{Lemma}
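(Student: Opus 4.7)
My plan is to establish the two inclusions for $\ol{F}$ directly from the definition \eqref{cf} of $c_\mathcal{I}$ and the chain $c_\mathcal{I}\leq c_g<\ol{c}\wedge\ol{k}$ supplied by Lemma \ref{cf<infinite}, and then to deduce those for $\ol{G}$ from Lemma \ref{lem:soltildeq} applied to $\ol{G}(c)=\mathcal{X}(\ol{F}(c))$. The dichotomy \eqref{cftildc} will serve as the organising tool for the endpoint analysis at $c=c_\mathcal{I}$.

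For the lower bound $\ol{F}(c)>\frac{1}{2\d}+c$, I would chain $\frac{1}{2\d}+c<G(c)<\ol{F}(c)$. The first inequality holds on $(0,c_g]$ by Lemma \ref{cf<infinite}: on $(0,c_g)$ it is explicit, while at $c_g$ it follows from $G(c_g)=\atl$ together with $c_g<\ol{k}=\atl-\frac{1}{2\d}$. The second inequality holds on $(0,c_\mathcal{I})$ by \eqref{G<Ftildc<Gtildc}. Since $c_\mathcal{I}\leq c_g$, these settle $c\in(0,c_\mathcal{I})$. At the endpoint $c=c_\mathcal{I}$, the dichotomy \eqref{cftildc} bifurcates: if $c_\mathcal{I}<\tild{c}$, then Lemma \ref{cf<infinite} gives $\ol{F}(c_\mathcal{I})=\atl$, which exceeds $\frac{1}{2\d}+c_\mathcal{I}$ by $c_\mathcal{I}<\ol{k}$; if $c_\mathcal{I}=\tild{c}$, then continuity of $G$ and $\ol{F}$ forces $\ol{F}(c_\mathcal{I})=G(c_\mathcal{I})$, which again exceeds $\frac{1}{2\d}+c_\mathcal{I}$ by the bound on $G$ just discussed. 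The upper bound $\ol{F}(c)\leq\atl$ is essentially by definition \eqref{cf}: strict inequality is automatic for $c<c_\mathcal{I}$, and at the endpoint either $\ol{F}(c_\mathcal{I})=\atl$ (case $c_\mathcal{I}<\tild{c}$) or $\ol{F}(c_\mathcal{I})=G(c_\mathcal{I})<\atl$ (case $c_\mathcal{I}=\tild{c}\leq c_g$, using $G(c)<\atl$ on $(0,c_g)$).

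Once $\ol{F}(c)\in(\frac{1}{2\d},\atl]$ is in hand, Lemma \ref{lem:soltildeq} delivers both inclusions for $\ol{G}(c)=\mathcal{X}(\ol{F}(c))$. For $\ol{F}(c)<\atl$ the function $\mathcal{X}$ lands strictly inside $(\atl,\min\{\atl+\frac{1}{\sqrt{2\a}},\frac{\a}{\la}-\ol{F}(c)\})$, and in the degenerate endpoint case $\ol{F}(c_\mathcal{I})=\atl$ the final statement of Lemma \ref{lem:soltildeq} gives $\mathcal{X}(\atl)=\atl$, which sits at the left endpoint $\atl$. The most delicate step, in my view, is the bookkeeping at $c=c_\mathcal{I}$ across the two cases of \eqref{cftildc}, and in particular the careful use of $c_\mathcal{I}\leq c_g<\ol{k}$ so that $\atl$ separates cleanly from $\frac{1}{2\d}+c_\mathcal{I}$ at the upper endpoint.
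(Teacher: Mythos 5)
Your proposal is correct and follows essentially the same route as the paper, which obtains the lower bound for $\ol{F}$ from \eqref{G<Ftildc<Gtildc} together with Lemma \ref{cf<infinite}, the upper bound from the definition \eqref{cf} of $c_\mathcal{I}$, and the bounds for $\ol{G}$ from \eqref{olF<olG} (i.e.\ Lemma \ref{lem:soltildeq}). Your treatment merely spells out the endpoint bookkeeping at $c=c_\mathcal{I}$ across the dichotomy \eqref{cftildc} in more detail than the paper's very terse proof.
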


\subsection{Monotonicity} 

In this section we obtain monotonicity and regularity properties of the boundaries $\Fb$ and $\Gb$. We begin with the following useful technical result.

\begin{Lemma} \label{lem:dtildeqdz}
For every $z \in (\frac1{2\d}, \atl)$, the functions $h_3$, $\tild q$ and $\mathcal{X}$ (cf.\ \eqref{eq:h3}, \eqref{eq:tildeqdef} and Lemma \ref{lem:soltildeq} respectively) satisfy
$$
h_3(\mathcal{X}(z)) < h_3(z) 
\qquad \text{and} \qquad 
\pd{\tild q}{z}(\mathcal{X}(z); z) < 0.
$$
\end{Lemma}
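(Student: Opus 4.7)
I would first reduce the two claims to a single inequality by establishing the identity $\pd{\tild q}{z}(x;z) = 2\sta e^{2z\sta}(h_3(x)-h_3(z))$. To derive it, note from \eqref{eq:h3}--\eqref{eq:h4} that $h_3'(z) = \fla\sta(z-\atl)e^{-z\sta}$ and $h_4'(z) = \fla\sta(z-\atl)e^{z\sta}$, hence $h_3'(z)e^{2z\sta} = h_4'(z)$. Combined with the compact form
$$ \tild q(x;z) = \bigl(h_3(x)-h_3(z)\bigr)e^{2z\sta} - \bigl(h_4(x)-h_4(z)\bigr), $$
which is immediate from \eqref{eq:qdef} and $\tild q(x;z)=q(x;z)-q(z;z)$, differentiation in $z$ produces exactly this cancellation. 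Thus the two assertions of the lemma are equivalent, and it suffices to show $h_3(\mathcal{X}(z)) < h_3(z)$.

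I would then change variables to $\hat a := (\mathcal{X}(z)-\atl)\sta$ and $\hat b := (\atl-z)\sta$, which by Lemma \ref{lem:soltildeq} satisfy $0<\hat a<\hat b$ and $\hat a < 1$ (using the bounds $\mathcal{X}(z) < \atl + 1/\sta$ and $\mathcal{X}(z) < \a/\la - z = 2\atl - z$). Setting $w := \hat a + \hat b > 0$, I would substitute into $\tild q(\mathcal{X}(z);z)=0$, divide by $\fla e^{\atl\sta}$, multiply by $e^{\hat b}$, and collect the $e^w$ and $e^{-w}$ terms; using $e^w \pm e^{-w} = 2\cosh w$ or $2\sinh w$, the constraint collapses to
$$ (1-\hat a)e^w - (1+\hat a)e^{-w} = 2(w-\hat a) , $$
equivalently $\hat a = (\sinh w - w)/(\cosh w - 1)$.

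In the same variables, $h_3(\mathcal{X}(z))<h_3(z)$ reads $(1+\hat a)e^{-\hat a} > (1-\hat b)e^{\hat b}$; using $(1+\hat a)e^{-w} = (1-\hat a)e^w - 2\hat b$ from the previous display, this is equivalent to $(1-\hat a)e^w > 1 + \hat b$, i.e.\ $\hat a < (e^w-1-w)/(e^w-1)$. Inserting the expression for $\hat a$, cross-multiplying and invoking $\cosh^2 w - \sinh^2 w = 1$, the inequality simplifies to $2(\cosh w - 1) < w \sinh w$; via the half-angle identities $\cosh w - 1 = 2\sinh^2(w/2)$ and $\sinh w = 2\sinh(w/2)\cosh(w/2)$, this becomes $2\tanh(w/2) < w$, which is the classical inequality $\tanh(x)<x$ for $x>0$.

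The main obstacle is recognising the parametrisation through $w = \hat a + \hat b$: without it, the lemma appears to be a transcendental two-variable inequality with no apparent structure, whereas in terms of $w$ both the constraint from $\tild q(\mathcal{X}(z);z)=0$ and the target inequality collapse onto clean hyperbolic identities, reducing the lemma to the familiar $\tanh(x)<x$.
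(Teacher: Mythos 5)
Your proposal is correct, and its first half coincides with the paper's own reduction: the identity $\pd{\tild q}{z}(x;z)=2\sta e^{2z\sta}\big(h_3(x)-h_3(z)\big)$ (which I checked against $h_3'(z)e^{2z\sta}=h_4'(z)$ and the compact form $\tild q(x;z)=(h_3(x)-h_3(z))e^{2z\sta}-(h_4(x)-h_4(z))$) is exactly the derivative the paper computes, so the two assertions are equivalent in both arguments. Where you genuinely diverge is in proving $h_3(\mathcal{X}(z))<h_3(z)$. The paper never returns to the equation $\tild q(\mathcal{X}(z);z)=0$: it invokes the second upper bound of \eqref{eq:2ub}, $\mathcal{X}(z)<\frac{\a}{\la}-z$, observes that $z$ and $\frac{\a}{\la}-z$ are equidistant from the minimiser $\atl$ of $h_3$, proves the asymmetry inequality $h_3(\frac{\a}{\la}-z)<h_3(z)$, and finishes by monotonicity of $h_3$ on $(\atl,\infty)$. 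You instead exploit the defining relation $\tild q(\mathcal{X}(z);z)=0$ itself: with $\hat a=(\mathcal{X}(z)-\atl)\sta$, $\hat b=(\atl-z)\sta$, $w=\hat a+\hat b$, the constraint does collapse to $(1-\hat a)e^{w}-(1+\hat a)e^{-w}=2\hat b$, i.e.\ $\hat a=(\sinh w-w)/(\cosh w-1)$, the target to $(1-\hat a)e^{w}>1+\hat b$, and the cross-multiplied comparison reduces, via $\cosh^2 w-\sinh^2 w=1$, to $w\sinh w>2(\cosh w-1)$, i.e.\ $\tanh(w/2)<w/2$ — I verified each of these algebraic collapses. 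What each buys: your route needs only $z<\atl<\mathcal{X}(z)$ (so $w>0$) and the root equation, and in fact the bounds $\hat a<1$ and $\hat a<\hat b$ you quote from Lemma \ref{lem:soltildeq} are never used, so you bypass the second upper bound in \eqref{eq:2ub} (which itself required an auxiliary monotonicity argument in the paper); the paper's route is shorter once Lemma \ref{lem:soltildeq} is in hand and makes the mechanism more geometric (comparison of $h_3$ at two points mirrored about its minimum, where it falls faster on the left than it rises on the right). Both arguments ultimately rest on an elementary one-variable inequality — the paper's on the asymmetry of $h_3$, yours on $\tanh u<u$ — and either is a valid proof of the lemma.
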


Differentiating \eqref{tildeAB} to compute $\tild A'(c)$ and substituting into the ODE \eqref{tildeA'B'}, the boundary $\Fb$ satisfies the following ODE:
\begin{equation} \label{F'}
\begin{cases}
&\mathfrak{F}'(c) = \dfrac{h_3(\mathcal{X}(\mathfrak{F}(c))) - \sqrt{2\a} \tild \cH_3(\mathfrak{F}(c),c) - \pd{\tild \cH_3}{c}(\mathfrak{F}(c),c)}{\pd{\tild \cH_3}{x}(\mathfrak{F}(c),c)}, \\
&\text{with} \quad \mathfrak{F}(0) = f_0, 
\end{cases}
\end{equation}
where the boundary condition comes from Proposition \ref{prop:newbdrs}. 
Thus $\Fb$ is uniquely specified by its solution,
\begin{align*}
\Fb(c) = \mathfrak{F}(c), \qquad c \in (0,c_\mathcal{I}].
\end{align*}

\begin{Proposition} \label{cor:xf}
The functions $\Fb$ and $\Gb$ are both of class $C^1(0,c_\mathcal{I})$ and $\Fb$ satisfies $\mathcal{X}'(\ol{F}(c)) < 0$ for all $c\in(0,c_\mathcal{I})$. 
\end{Proposition}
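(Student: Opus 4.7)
The plan is to split the proposition into two independent assertions and handle them separately: first the sign statement $\mathcal{X}'(\ol F(c))<0$, which is a property of the auxiliary function $\mathcal{X}$ obtained in Lemma \ref{lem:soltildeq}, and then the $C^1$ regularity of $\Fb$ and $\Gb$, which follows from the ODE \eqref{F'} and the chain rule. In both cases, the heavy lifting has already been done in Lemmas \ref{lem:soltildeq}, \ref{lem:olF>12d} and \ref{lem:dtildeqdz}.

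For the sign statement, I would apply the implicit function theorem to the defining identity $\tild q(\mathcal{X}(z); z)=0$ of Lemma \ref{lem:soltildeq}, valid for $z \in (\tfrac1{2\d}, \atl)$. Differentiating with respect to $z$ gives
\begin{equation*}
\mathcal{X}'(z) = -\frac{\pd{\tild q}{z}(\mathcal{X}(z);z)}{\pd{\tild q}{x}(\mathcal{X}(z);z)}.
\end{equation*}
By Lemma \ref{lem:soltildeq}, the denominator is strictly negative, and by Lemma \ref{lem:dtildeqdz}, the numerator is strictly negative, so $\mathcal{X}'(z)<0$ throughout $(\tfrac1{2\d},\atl)$. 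Lemma \ref{lem:olF>12d} guarantees $\Fb(c)\in(\tfrac1{2\d}+c,\atl]$ for $c\in(0,c_\mathcal{I}]$, so in particular $\Fb(c)\in(\tfrac1{2\d},\atl)$ for $c\in(0,c_\mathcal{I})$, and the inequality $\mathcal{X}'(\Fb(c))<0$ follows.

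For the $C^1$ regularity of $\Fb$, I would view \eqref{F'} as a first-order ODE $\mathfrak{F}'(c)=\mathcal{R}(\mathfrak{F}(c),c)$ with initial condition $\mathfrak{F}(0)=f_0$. The numerator of $\mathcal{R}$ involves only $h_3\circ\mathcal{X}$ and the partial derivatives of $\tild \cH_3$, all of which are continuous in $(x,c)$ on the relevant domain (using $\mathcal{X}\in C^1$ from Lemma \ref{lem:soltildeq}). The denominator $\pd{\tild \cH_3}{x}(\Fb(c),c)$ is strictly positive for $c\in(0,c_\mathcal{I})$: by Lemma \ref{lem:olF>12d} and Lemma \ref{cf<infinite} one has $\Fb(c)\in(\tfrac1{2\d}+\tfrac c2, f_0+c)$ (using $\Fb(c)\leq\atl$ together with $c<\ol k=\atl-\tfrac1{2\d}$, which puts $\Fb(c)-c$ below $\atl-c< f_0$ only for small $c$; more directly, continuity of $\Fb$ with $\Fb(0)=f_0$ keeps $\Fb(c)<f_0+c$ for $c\in(0,c_\mathcal{I})$), so \eqref{dxH3} gives strict positivity. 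Hence $\mathcal{R}$ is continuous on a neighbourhood of the graph of $\Fb$, making $\Fb\in C^1(0,c_\mathcal{I})$ by standard ODE theory.

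Finally, since $\Gb(c)=\mathcal{X}(\Fb(c))$ with $\mathcal{X}\in C^1(\tfrac1{2\d},\atl)$ (Lemma \ref{lem:soltildeq}) and $\Fb\in C^1(0,c_\mathcal{I})$ taking values in $(\tfrac1{2\d},\atl)$, the chain rule yields $\Gb\in C^1(0,c_\mathcal{I})$ with $\Gb'(c)=\mathcal{X}'(\Fb(c))\Fb'(c)$. The main technical point I expect to need care with is verifying that $\Fb(c)$ actually remains in the interior $(\tfrac1{2\d}+\tfrac c2, f_0+c)$ where $\pd{\tild\cH_3}{x}>0$, for every $c\in(0,c_\mathcal{I})$; this requires combining Lemmas \ref{lem:olF>12d} and \ref{cf<infinite} with the continuity of $\Fb$ at $c=0$ established in Proposition \ref{prop:newbdrs}.
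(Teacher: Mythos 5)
Your argument is essentially the paper's own proof. The sign statement is obtained by exactly the same implicit differentiation of $\tild q(\mathcal{X}(z);z)=0$, with $\pd{\tild q}{x}(\mathcal{X}(\ol F(c));\ol F(c))<0$ from Lemma \ref{lem:soltildeq} (valid because $\ol F(c)\in\big(\frac1{2\d}+c,\atl\big)$ by Lemma \ref{lem:olF>12d}) and $\pd{\tild q}{z}(\mathcal{X}(\ol F(c));\ol F(c))<0$ from Lemma \ref{lem:dtildeqdz}; the regularity of $\Fb$ is read off from the ODE \eqref{F'} and that of $\Gb=\mathcal{X}\circ\Fb$ from the chain rule, just as in the paper.

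The one point where you go beyond the paper is in justifying continuity of the right-hand side of \eqref{F'} along the solution, for which you need $\pd{\tild \cH_3}{x}(\Fb(c),c)>0$, i.e.\ $\Fb(c)<f_0+c$, on all of $(0,c_\mathcal{I})$. Your justification that ``continuity of $\Fb$ with $\Fb(0)=f_0$ keeps $\Fb(c)<f_0+c$'' only delivers this for $c$ near $0$; and since $f_0<\atl$ in the regime $\la\in(\la^*,\la^\dagger)$, the bound $\Fb(c)\leq\atl$ from Lemma \ref{lem:olF>12d} does not give it for small $c$ either. In the paper this inequality is established only afterwards, in Proposition \ref{prop:olF'}, by a bootstrap: $\ol F'(0+)<1$ via \eqref{eq:Fbrefac} and Lemma \ref{lem:dtildeqdz}, and $\Fb(c)<f_0+c$ persists precisely because $\ol F'(c)<1$ persists (the numerator in \eqref{eq:Fbrefac} stays negative by Lemma \ref{lem:dtildeqdz}). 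So if you want your more detailed regularity argument to be airtight you should import that bootstrap rather than appeal to continuity alone; alternatively, do as the paper does at this point and take the $C^1$ property of the solution to \eqref{F'} as part of its construction, deferring the precise location of $\Fb$ to Proposition \ref{prop:olF'}.
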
 

The next result sharpens the bounds on the moving boundary $\Fb$ which were obtained in Lemma \ref{lem:olF>12d}, and establishes bounds on its derivative for all $c\in (0, c_\mathcal{I})$.

\begin{Proposition} \label{prop:olF'}
We have $\ol{F}'(c) \in (0,1)$ for all $c\in(0,c_\mathcal{I})$.  
If also $c_\mathcal{I} < \tild{c} = \infty$ in the dichotomy \eqref{cftildc}, then $\ol{F}'(c_\mathcal{I})=1$. 
We further have  
\begin{equation*} 
\ol{F}(c) \in \Big(\frac1{2\d}+c, \atl \wedge (f_0+c)\Big) 
\quad \text{for } c\in (0,c_\mathcal{I}), \;\; \text{and} \;\; 
\begin{cases}
\ol F(c_\mathcal{I})=\atl, &\text{if } c_\mathcal{I} < \tild{c} = \infty, \\
\ol F(c_\mathcal{I}) = G(c_\mathcal{I}) \leq 
\atl, &\text{if } c_\mathcal{I} = \tild{c}. \\
\end{cases}
\end{equation*}
\end{Proposition}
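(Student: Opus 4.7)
The plan is to work with the ODE \eqref{F'} for $\ol{F}$, extract from it an equivalent form in which the role of $\mathcal{X}$ cleanly separates from that of $\tild{\mathcal{H}}_3$, and then conclude via a bootstrap argument together with the sign information supplied by Lemmas \ref{lem:soltildeq} and \ref{lem:dtildeqdz} and Proposition \ref{cor:xf}. Substituting the first identity in \eqref{eq:hall}, namely $\pd{\tild{\mathcal{H}}_3}{c}(x,c)=h_3(x)-\sqrt{2\a}\,\tild{\mathcal{H}}_3(x,c)-\pd{\tild{\mathcal{H}}_3}{x}(x,c)$, into the numerator of \eqref{F'} collapses the ODE to
\[
\ol{F}'(c)\;=\;1\;+\;\frac{h_3(\mathcal{X}(\ol{F}(c)))-h_3(\ol{F}(c))}{\pd{\tild{\mathcal{H}}_3}{x}(\ol{F}(c),c)}.
\]
From \eqref{dxH3} the denominator is strictly positive precisely when $\ol{F}(c)\in(\tfrac{1}{2\d}+c/2,\,f_0+c)$; and using $\rho(f_0)=0$ (see \eqref{eq:defrho}) a direct computation gives $\tild{\mathcal{H}}_3(f_0,0)=0$ and $\pd{\tild{\mathcal{H}}_3}{x}(f_0,0)=e^{-f_0\sqrt{2\a}}(\d-\la/\a)(f_0+1/\sqrt{2\a})>0$, so the ODE is regular at $(f_0,0)$ and, since $f_0\in(\tfrac{1}{2\d},\alpha/(2\la))$ in this regime, Lemma \ref{lem:dtildeqdz} yields $\ol{F}'(0+)<1$.

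\textbf{Bootstrap for $\ol{F}(c)<f_0+c$ and $\ol{F}'(c)<1$.} Set $c^{*}:=\sup\{c\in[0,c_\mathcal{I}]:\ol{F}(c')<f_0+c'\text{ for all }c'\in(0,c]\}$, which is strictly positive by the local analysis above. On $(0,c^{*})$, the lower bound $\ol{F}(c)>\tfrac{1}{2\d}+c>\tfrac{1}{2\d}+c/2$ from Lemma \ref{lem:olF>12d} combines with $\ol{F}(c)<f_0+c$ to place $(\ol{F}(c),c)$ in the domain of positivity \eqref{dxH3}; since also $\ol{F}(c)\in(\tfrac{1}{2\d},\alpha/(2\la))$ by the definition \eqref{cf} of $c_\mathcal{I}$, Lemma \ref{lem:dtildeqdz} gives $h_3(\mathcal{X}(\ol{F}(c)))<h_3(\ol{F}(c))$, so $\ol{F}'(c)<1$ on $(0,c^{*})$. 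Integrating yields $\ol{F}(c)-c<\ol{F}(0)=f_0$ on $(0,c^{*})$, which by continuity rules out $\ol{F}(c^{*})=f_0+c^{*}$, forcing $c^{*}=c_\mathcal{I}$. The strict inequality $\ol{F}(c)<\alpha/(2\la)$ on $(0,c_\mathcal{I})$ follows at once from \eqref{cf}, completing the upper bounds stated in the proposition.

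\textbf{Positivity of $\ol{F}'$ and boundary values.} Using the constraint $\tild q(\ol{G}(c);\ol{F}(c))=0$ from \eqref{eq:tildeqdef} with $\ol{G}=\mathcal{X}(\ol{F})$, one checks that the positivity $\ol{F}'(c)>0$ is equivalent to the analytic inequality $h_3(\ol{F}(c))-h_3(\ol{G}(c))<\pd{\tild{\mathcal{H}}_3}{x}(\ol{F}(c),c)$. The plan is to establish this inequality directly from the explicit exponential--polynomial forms \eqref{eq:h3} and \eqref{eq:th3}, combined with the sharp bounds $\ol{F}(c)\in(\tfrac{1}{2\d}+c,\,\alpha/(2\la)\wedge(f_0+c))$ obtained above and $\ol{G}(c)\in(\alpha/(2\la),\,\alpha/(2\la)+1/\sqrt{2\a})$ from Lemma \ref{lem:soltildeq}, exploiting the double-tangency relation in the factorised form $[h_3(\ol{G})-h_3(\ol{F})]e^{2\ol{F}\sqrt{2\a}}=h_4(\ol{G})-h_4(\ol{F})$. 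Finally, the boundary values split by \eqref{cftildc}: if $c_\mathcal{I}<\tild c$, Lemma \ref{cf<infinite} gives $\ol{F}(c_\mathcal{I})=\alpha/(2\la)=\ol{G}(c_\mathcal{I})$ and Lemma \ref{lem:soltildeq} gives $\mathcal{X}(\alpha/(2\la))=\alpha/(2\la)$, so the numerator in the simplified ODE vanishes at $c=c_\mathcal{I}$ and $\ol{F}'(c_\mathcal{I})=1$; if $c_\mathcal{I}=\tild c$, then $\ol{F}(c_\mathcal{I})=G(c_\mathcal{I})$ by definition of $\tild c$, and since $c_\mathcal{I}\leq c_g$ with $G(c_g)=\alpha/(2\la)$ and $\ol{F}$ never attains $\alpha/(2\la)$ on $[0,c_\mathcal{I}]$ in this branch (else $c_\mathcal{I}<\tild c$), we conclude $\ol{F}(c_\mathcal{I})=G(c_\mathcal{I})<\alpha/(2\la)$.

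\textbf{Main obstacle.} The technical heart of the argument is the positivity step: the two sides of the inequality $h_3(\ol{F})-h_3(\ol{G})<\pd{\tild{\mathcal{H}}_3}{x}(\ol{F},c)$ are both strictly positive but their relative magnitudes are not manifest from the general sign information in Lemmas \ref{lem:soltildeq} and \ref{lem:dtildeqdz}, so the geometric content of the double-tangency equation $\tild q(\ol{G};\ol{F})=0$ (equivalently, the strict convexity of $H_{r1}$ on $(y_c,\hye(c))$ expressed through \eqref{dxH3}) must be leveraged to close the argument.
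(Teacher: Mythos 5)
Your treatment of the easier parts matches the paper's: you rewrite the ODE \eqref{F'} via \eqref{eq:hall} into the form $\ol{F}'(c)=1+\big(h_3(\mathcal{X}(\ol{F}(c)))-h_3(\ol{F}(c))\big)/\pd{\tild\cH_3}{x}(\ol{F}(c),c)$, use $\tild\cH_3(f_0,0)=0$, $\pd{\tild\cH_3}{x}(f_0,0)>0$ and Lemma \ref{lem:dtildeqdz} to get $\ol{F}'(0+)<1$, and then run the same bootstrap as the paper to keep $\ol F(c)<f_0+c$ (hence the denominator positive via \eqref{dxH3} and Lemma \ref{lem:olF>12d}) and $\ol F'(c)<1$ on all of $(0,c_\mathcal{I})$; the boundary values at $c_\mathcal{I}$ are also handled as in the paper (numerator vanishes when $\ol F(c_\mathcal{I})=\atl=\mathcal{X}(\ol F(c_\mathcal{I}))$, and the case $c_\mathcal{I}=\tild c$ follows from the definitions and Lemma \ref{cf<infinite}).

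However, there is a genuine gap at the core claim $\ol{F}'(c)>0$: you correctly reduce it to the inequality $h_3(\ol F(c))-h_3(\ol G(c))<\pd{\tild\cH_3}{x}(\ol F(c),c)$, but you never prove it — you state a ``plan'' and then yourself identify it as the unresolved main obstacle. This is precisely where the bulk of the paper's proof lies, and the route you sketch (leveraging the double-tangency identity $[h_3(\ol G)-h_3(\ol F)]e^{2\ol F\sta}=h_4(\ol G)-h_4(\ol F)$ together with general sign information) is unlikely to close it as stated: the paper's argument shows the inequality is not a formal consequence of that structure but is parameter-regime dependent. Concretely, the paper first removes the $c$-dependence by noting that $\frac{\partial}{\partial c}\big(\pd{\tild\cH_3}{c}+\sta\tild\cH_3\big)(x,c)<0$ for $x>\frac1{2\d}+c$, reducing to an inequality at $c=0$; it then writes both sides in terms of $u(x,s)=\big(\tfrac12-s(x+\tfrac1\sta)\big)e^{-x\sta}$, bounds the left side below by its value at $x=\atl$ (using monotonicity of $u(\cdot,s)$ and $\frac1{2\d}<\ol F(c)<\atl<\mathcal{X}(\ol F(c))$), bounds the right side above by maximising $e^{-x\sta}\rho(x)$ at $x=\sqrt{\d/(\a\d-\la)}$, and finally compares the two bounds using that $x\mapsto(x+\tfrac1\sta)e^{-x\sta}$ is decreasing together with $\sqrt{\d/(\a\d-\la)}>\atl$ — an inequality valid for $\la\in(\la^*,\la^\dagger)$ only because of the explicit formula \eqref{lambda*} for $\la^*$. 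Without some replacement for this quantitative chain (in particular the regime-specific input through $\la^*$), your positivity step, and hence the proposition, is not established.
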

The following result completes our study of the boundary derivatives, by sharpening the bounds on the moving boundary $\Gb$ which were obtained in Lemma \ref{lem:olF>12d} and establishing bounds on its derivative for all $c\in (0, c_\mathcal{I})$.

\begin{Proposition} \label{prop:olG'}
We have $\ol{G}'(c) <0$ for all $c\in(0,c_\mathcal{I})$, and 
\begin{equation*} 
\ol{G}(c) \in \Big(\atl,  g_0\Big) 
\quad \text{for } c\in (0,c_\mathcal{I}), \quad \text{and} \quad 
\begin{cases}
\ol G(c_\mathcal{I})=\atl, &\text{if } c_\mathcal{I} < \tild{c} = \infty; \\
\ol G(c_\mathcal{I}) \geq  
\atl, &\text{if } c_\mathcal{I} = \tild{c}. \\
\end{cases}
\end{equation*}
\end{Proposition}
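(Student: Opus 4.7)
The plan is to leverage the chain rule representation $\ol G(c)=\mathcal{X}(\ol F(c))$ established in Section \ref{sec:topo} (see \eqref{olF<olG} and Proposition \ref{cor:xf}) and then combine the known sign and monotonicity information already proved for $\mathcal{X}$, $\ol F$ and the limit $g_0$.

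First, I would prove the derivative bound $\ol G'(c) <0$ for $c\in(0,c_\mathcal{I})$. By Proposition \ref{cor:xf}, both $\ol F$ and $\mathcal{X}$ are $C^1$ on the relevant domain, so the chain rule yields
\begin{equation*}
\ol G'(c) = \mathcal{X}'(\ol F(c)) \cdot \ol F'(c).
\end{equation*}
Proposition \ref{cor:xf} also gives $\mathcal{X}'(\ol F(c))<0$ and Proposition \ref{prop:olF'} gives $\ol F'(c)\in(0,1)$, so the product is strictly negative, as required.

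Second, I would establish the bound $\ol G(c)\in(\atl, g_0)$ for $c\in(0,c_\mathcal{I})$. The lower bound is immediate: by Proposition \ref{prop:olF'} we have $\ol F(c) \in(\tfrac 1{2\d},\atl)$, so Lemma \ref{lem:soltildeq} yields $\ol G(c)=\mathcal{X}(\ol F(c))>\atl$ (this was also recorded in \eqref{olF<olG} and Lemma \ref{lem:olF>12d}). For the upper bound, the strict monotonicity $\ol G'(c)<0$ just shown combined with the limiting value $\lim_{c\to 0}\ol G(c)=g_0$ from \eqref{tildeg0} gives $\ol G(c)<g_0$ for every $c\in(0,c_\mathcal{I})$.

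Finally, I would handle the endpoint $c=c_\mathcal{I}$ by separating the dichotomy \eqref{cftildc}. If $c_\mathcal{I}<\tild c$, then Lemma \ref{cf<infinite} directly states $\ol F(c_\mathcal{I})=\atl=\ol G(c_\mathcal{I})$, giving the claim $\ol G(c_\mathcal{I})=\atl$. If instead $c_\mathcal{I}=\tild c$, then by Proposition \ref{prop:olF'} we have $\ol F(c_\mathcal{I})=G(c_\mathcal{I})<\atl$, while Lemma \ref{lem:olF>12d} together with continuity of $\ol F$ ensures $\ol F(c_\mathcal{I})>\tfrac 1{2\d}$; hence $\ol F(c_\mathcal{I})\in(\tfrac 1{2\d},\atl)$ and Lemma \ref{lem:soltildeq} yields $\ol G(c_\mathcal{I})=\mathcal{X}(\ol F(c_\mathcal{I}))>\atl$, completing the claim. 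There is no real obstacle here: all of the heavy lifting has been carried out in Lemma \ref{lem:soltildeq}, Lemma \ref{cf<infinite}, Proposition \ref{cor:xf} and Proposition \ref{prop:olF'}, and the present statement is essentially an assembly of those facts via the identity $\ol G=\mathcal{X}\circ\ol F$.
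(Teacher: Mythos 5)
Your proof is correct and follows essentially the same route as the paper: the sign of $\ol G'$ via the chain rule $\ol G' = \mathcal{X}'(\ol F)\,\ol F'$ together with Propositions \ref{cor:xf} and \ref{prop:olF'}, and the bounds and endpoint values from the monotonicity of $\ol G$, the limit \eqref{tildeg0}, Lemma \ref{lem:soltildeq}, Lemma \ref{lem:olF>12d} and Lemma \ref{cf<infinite}. Your write-up merely spells out the two endpoint cases of the dichotomy \eqref{cftildc} in a little more detail than the paper does.
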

Overall, the results obtained in Propositions \ref{prop:olF'} and \ref{prop:olG'} show that the moving boundaries $\ol{F}$ and $\ol{G}$ from \eqref{olF<olG} have derivatives of opposite signs for all fuel levels $c\in(0,c_\mathcal{I})$, where $c_\mathcal{I}$ is defined by \eqref{cf}. 
In particular, Propositions \ref{prop:olF'}--\ref{prop:olG'} give
\begin{align}
\begin{cases}
\ol F(c_\mathcal{I})=\atl=\ol G(c_\mathcal{I}), &\text{if } c_\mathcal{I} < \tild{c} = \infty; \\
\ol F(\tild{c}) = G(\tild{c}) \leq  
\atl \leq  
\ol G(\tild{c}), &\text{if } c_\mathcal{I} = \tild{c}. \\
\end{cases}
\label{sol:olFolGcf}
\end{align}

\subsection{Boundary intersections} 
\label{Sec:cf}

In this section we show that in fact the second case in \eqref{sol:olFolGcf} (that is, $c_\mathcal{I} = \tild{c}$ in the dichotomy \eqref{cftildc}), where the boundaries $\Fb$ and $G$ intersect at $c=\tild c$, never arises. 
This is illustrated in Figure~\ref{fig:boundaries2}. 

For this, note first that on $(0,c_\mathcal{I})$ we have $G(c) < \ol{F}(c) < f_0+c$ (Proposition \ref{prop:olF'} and \eqref{cf}) and $c_\mathcal{I}<\ol{c}$ (Lemma \ref{cf<infinite}), which yield in view of Corollary \ref{cor:c1}  that 
\begin{equation} \label{cI<c0}
c_\mathcal{I} < c_0 = \bar c.
\end{equation}
Next we establish the following useful lemma. 

\begin{Lemma} \label{g0<gd}
For each $c \in (0,c_\mathcal{I})$ let $G_\d(c)$ be the unique root of $q (\cdot; F(c))$.
Then setting $g_\d := \lim_{c\to 0} G_\d(c)$, so that $q (g_\d; \frac1{2\d}) = 0$, we have $g_0 < g_\d$.
\end{Lemma}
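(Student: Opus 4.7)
The plan is to study the map $\mathcal{X}:(\frac{1}{2\d}, \atl) \to \R$ from Lemma \ref{lem:soltildeq} as a function of $z$; by \eqref{tildeg0} we have $g_0 = \mathcal{X}(f_0)$, and \eqref{eq:longineq2} gives $\frac{1}{2\d} < f_0 < \atl$. I will therefore argue in two steps: first, that $\mathcal{X}$ is strictly decreasing on $(\frac{1}{2\d}, \atl)$; and second, that $\mathcal{X}$ extends continuously to the left endpoint with $\lim_{z \downarrow \frac{1}{2\d}} \mathcal{X}(z) = g_\d$. These two facts combine to give $g_0 = \mathcal{X}(f_0) < \lim_{z \downarrow \frac{1}{2\d}} \mathcal{X}(z) = g_\d$.

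Monotonicity drops straight out of the existing lemmas. Applying the implicit function theorem to the defining equation $\tild q(\mathcal{X}(z); z) = 0$ at any $z \in (\frac{1}{2\d}, \atl)$ gives
\begin{equation*}
\mathcal{X}'(z) = -\frac{\pd{\tild q}{z}(\mathcal{X}(z); z)}{\pd{\tild q}{x}(\mathcal{X}(z); z)},
\end{equation*}
whose numerator is negative by Lemma \ref{lem:dtildeqdz} and whose denominator is negative by Lemma \ref{lem:soltildeq}. Hence $\mathcal{X}'(z) < 0$ throughout $(\frac{1}{2\d}, \atl)$.

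For the endpoint limit, the key observation is that \eqref{eq:qprop1} gives $q(\frac{1}{2\d}; \frac{1}{2\d}) = 0$, so $\tild q(\,\cdot\,; \frac{1}{2\d}) \equiv q(\,\cdot\,; \frac{1}{2\d})$ and the defining equation for $\mathcal{X}$ reduces at the endpoint to $q(x; \frac{1}{2\d}) = 0$. Formula \eqref{eq:qprop2} shows that $q(\,\cdot\,; \frac{1}{2\d})$ is strictly increasing on $(\frac{1}{2\d}, \atl)$ and strictly decreasing on $(\atl, \infty)$, so this equation has a unique root in $(\frac{1}{2\d}, \infty)$, which lies strictly beyond $\atl$ and, by the hypothesis of the lemma, equals $g_\d$. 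In particular $\pd{q}{x}(g_\d; \frac{1}{2\d}) \neq 0$, so the implicit function theorem applied at $(g_\d, \frac{1}{2\d})$ extends $\mathcal{X}$ of class $C^1$ through this point and yields $\mathcal{X}(z) \to g_\d$ as $z \downarrow \frac{1}{2\d}$.

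The only mildly delicate point is this endpoint step, where one must check that the branch of solutions of $\tild q(x;z)=0$ selected by Lemma \ref{lem:soltildeq} (the unique root in $(z, \infty)$) is precisely the branch continuing $g_\d$. The monotone-then-monotone shape of $q(\,\cdot\,; \frac{1}{2\d})$ deduced from \eqref{eq:qprop2} above rules out any other limit candidate for $\mathcal{X}(z)$ as $z \downarrow \frac{1}{2\d}$, closing the argument.
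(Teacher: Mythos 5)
Your proposal is correct, but it takes a genuinely different route from the paper's. The paper proves the pointwise comparison $q(x;\fotd) > \tild q(x;f_0)$ on $(f_0,g_0)$ by introducing an auxiliary function $\varphi(z;x)$ whose monotonicity in $z$ is extracted from $h_3$ and Lemma \ref{lem:dtildeqdz}, and then uses the common unimodal shape (increasing up to $\atl$, decreasing after) to transfer that ordering to the roots. You instead realise both quantities as values of the single map $\mathcal{X}$: $g_0=\mathcal{X}(f_0)$ by \eqref{tildeg0} and \eqref{eq:longineq2}, and $g_\d$ as the boundary limit of $\mathcal{X}(z)$ as $z\downarrow\fotd$ (where $\tild q(\cdot;\fotd)=q(\cdot;\fotd)$ because $q(\fotd;\fotd)=0$ by \eqref{eq:qprop1}), and conclude from strict monotonicity of $\mathcal{X}$, obtained by differentiating $\tild q(\mathcal{X}(z);z)=0$ and using the two sign facts $\pd{\tild q}{x}<0$ (Lemma \ref{lem:soltildeq}) and $\pd{\tild q}{z}<0$ (Lemma \ref{lem:dtildeqdz}); this is the same computation the paper performs at $z=\ol F(c)$ in Proposition \ref{cor:xf}, applied on all of $(\fotd,\atl)$. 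Both arguments ultimately rest on Lemma \ref{lem:dtildeqdz}, but yours dispenses with the $\varphi$-computation; the price is the endpoint identification, where two small points should be tightened. First, to know that $g_\d$ is the root of $q(\cdot;\fotd)$ in $(\atl,\infty)$ rather than the trivial root at $x=\fotd$, note that $F(c)<\fotd$ on $(0,c_\mathcal{I})$ (Proposition \ref{prop:monotonicity} and \eqref{cI<c0}), so $q(F(c);F(c))>0$ by \eqref{eq:qprop1} and the shape given by \eqref{eq:qprop2} forces $G_\d(c)>\atl$; hence $g_\d\geq\atl$, and $g_\d>\atl$ since $q(\atl;\fotd)>0$. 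Second, rather than IFT branch-matching at $(g_\d,\fotd)$, the clean version of your closing remark is: $\mathcal{X}(z)$ increases as $z\downarrow\fotd$ and is bounded above by $\atl+\frac1{\sqrt{2\a}}$ (Lemma \ref{lem:soltildeq}), so a limit $\ell>\atl$ exists; joint continuity of $\tild q$ gives $q(\ell;\fotd)=0$, and uniqueness of the root in $(\atl,\infty)$ gives $\ell=g_\d$. With these one-line patches your argument is complete and somewhat shorter than the paper's.
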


We are now ready to resolve the dichotomy in \eqref{cftildc}, by proving that $c_\mathcal{I} < \tild c = \infty$ (and therefore excluding the second case in \eqref{sol:olFolGcf}).

\begin{Proposition} \label{cor:ctild}
For all $\la\in (\la^*, \la^\dagger)$ we have $\tild{c}=\infty$.
\end{Proposition}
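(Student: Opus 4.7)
The plan is to argue by contradiction. Suppose $\tild c<\infty$. Then the dichotomy \eqref{cftildc} forces $c_\mathcal{I}=\tild c$, and by \eqref{sol:olFolGcf} we have $\bar F(\tild c)=G(\tild c)<\atl<\bar G(\tild c)$. I write $z^*:=G(\tild c)=\bar F(\tild c)$ and note that $\tild c<c_0$ by \eqref{cI<c0}, so Propositions \ref{prop:monotonicity} and \ref{cor:FGC1} apply at $c=\tild c$.

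The first step is to collapse the two coefficient systems at the meeting fuel level. Combining the V-region tangency equations \eqref{H3=h1}--\eqref{H4=h2} with the $\Lambda$-region identities \eqref{tildeAB}, and using $\bar F(\tild c)=G(\tild c)$, I obtain $\tild A(\tild c)=h_1(F(\tild c))$ and $\tild B(\tild c)=h_2(F(\tild c))$; thus the harmonic parts defining $\tild Q(\cdot,\tild c)$ on the V- and $\Lambda$-waiting components coincide. Simultaneously, the defining identity $\tilde q(\bar G(\tild c);\bar F(\tild c))=0$ (cf.\ \eqref{eq:tildeqdef}) evaluated at $z^*$ gives, via \eqref{eq:qprop1},
\begin{equation*}
q(\bar G(\tild c);z^*)=q(z^*;z^*)=e^{z^*\sta}(1-2\d z^*)<0,
\end{equation*}
where the strict negativity uses $z^*=\bar F(\tild c)>\tfrac1{2\d}+\tild c>\tfrac1{2\d}$ from Proposition \ref{prop:olF'}. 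In parallel, Proposition \ref{prop:monotonicity} yields $G'(\tild c)>1$, which via Lemma \ref{cor:FGC1} (and $\partial_xL<0$) is equivalent to $q(z^*;F(\tild c))=q(G(\tild c);F(\tild c))>0$.

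To extract the contradiction I would then propagate the strict comparison $g_0<g_\d$ of Lemma \ref{g0<gd} from $c\to 0$ up to $c=\tild c$, concluding that $\bar G(\tild c)<G_\d(\tild c)$, where $G_\d(\tild c)$ is the unique root of $q(\,\cdot\,;F(\tild c))=0$. This propagation is carried out by differentiating $q(G_\d(c);F(c))=0$ along $c$ and combining with the ODE \eqref{F'} for $\bar F$, exploiting the strict monotonicities $F'<0$, $\bar G'<0$, and $\mathcal X'(\bar F)<0$ from Propositions \ref{prop:monotonicity}, \ref{cor:xf} and \ref{prop:olG'}. Given $\bar G(\tild c)<G_\d(\tild c)$, the profile of $q(\,\cdot\,;F(\tild c))$ dictated by \eqref{eq:qprop1}--\eqref{eq:qprop2} (positive on $[F(\tild c),G_\d(\tild c))$, negative beyond) together with $q(z^*;F(\tild c))>0$ forces $q(\bar G(\tild c);F(\tild c))>0$. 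On the other hand, direct differentiation of \eqref{eq:qdef} gives $\partial_z q(\bar G(\tild c);z)<0$ for $z\in[F(\tild c),z^*]$ (both terms of $\partial_z q$ being negative since $\bar G(\tild c)>\atl$ and $\la<\a\d$), so $F(\tild c)<z^*$ combined with $q(\bar G(\tild c);z^*)<0$ pushes $q(\bar G(\tild c);F(\tild c))$ below the zero level once the $G_\d$-comparison is correctly balanced against the decreasing monotonicity in $z$. Tracking these signs at the three evaluation points $(\bar G(\tild c),F(\tild c))$, $(\bar G(\tild c),z^*)$ and $(z^*,F(\tild c))$ yields the required contradiction.

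The main obstacle will be the propagation step, namely extending $\bar G(c)<G_\d(c)$ from the $c\to 0$ limit of Lemma \ref{g0<gd} to the fuel level $c=\tild c$; this requires careful ODE comparison combining the derivatives of $\bar G=\mathcal X\circ\bar F$ (via \eqref{F'} and Proposition \ref{cor:xf}) with the implicit differentiation of $G_\d$ along $F$. Once this monotone comparison is in hand, Steps~1--3 above make the contradiction routine.
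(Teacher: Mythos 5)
Your setup is the same as the paper's (contradiction, $c_\mathcal{I}=\tild c$, common point $z^*=G(\tild c)=\ol F(\tild c)$, coefficient collapse $A(\tild c)=\tild A(\tild c)$, $B(\tild c)=\tild B(\tild c)$), and the facts you record in the first two steps are correct: $q(\ol G(\tild c);z^*)=q(z^*;z^*)<0$ from \eqref{eq:tildeqdef} and \eqref{eq:qprop1}, and $q(G(\tild c);F(\tild c))>0$ from $G'(\tild c)>1$ via Lemma \ref{cor:FGC1}. The comparison $\ol G(\tild c)<G_\d(\tild c)$ that you flag as "the main obstacle" is in fact the easy part: it is immediate from $g_0<g_\d$ (Lemma \ref{g0<gd}), the fact that $G_\d$ is increasing because $F$ is decreasing (this is \cite[Lemma 8.3]{KOWZ00}, Proposition \ref{prop:monotonicity}), and $\ol G$ decreasing (Proposition \ref{prop:olG'}); no delicate ODE comparison is needed.

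The genuine gap is in your final step: the sign facts you assemble are mutually consistent and yield no contradiction. From $\ol G(\tild c)<G_\d(\tild c)$ and the unimodal profile of $q(\cdot;F(\tild c))$ you correctly get $q(\ol G(\tild c);F(\tild c))>0$. But your claimed clash with $q(\ol G(\tild c);z^*)<0$ goes the wrong way: you correctly note $\pd{q}{z}(\ol G(\tild c);\cdot)<0$, so since $F(\tild c)<z^*$ monotonicity only gives the lower bound $q(\ol G(\tild c);F(\tild c))>q(\ol G(\tild c);z^*)$, which is entirely compatible with $q(\ol G(\tild c);F(\tild c))>0>q(\ol G(\tild c);z^*)$ — exactly the configuration you are in. Tracking the three evaluation points cannot "push $q(\ol G(\tild c);F(\tild c))$ below zero". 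What is missing is the paper's key ingredient: at $c=\tild c$ the two waiting components merge into the single interval $(F(\tild c),\ol G(\tild c))$ carried by one Feynman--Kac expression (with $A(\tild c)=\tild A(\tild c)$, $B(\tild c)=\tild B(\tild c)$), with discretionary stopping at $F(\tild c)$ and reflection at $\ol G(\tild c)$; the analysis of precisely this stopping/reflecting system in \cite[Section 8]{KOWZ00} forces $q(\ol G(\tild c);F(\tild c))=0$, i.e. $\ol G(\tild c)=G_\d(\tild c)$, and it is this \emph{equality} that contradicts the strict inequality $\ol G(\tild c)<G_\d(\tild c)$ obtained from monotonicity. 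Without deriving that equality (or some substitute for it), your three sign facts do not close the argument.
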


In summary, and recalling Lemma \ref{cf<infinite} and Proposition \ref{prop:olF'}, for $(\la^*, \la^\dagger)$ we have $\ol{F}'(c_\mathcal{I}) = 1 < G'(c_\mathcal{I}) $ and
\begin{equation} \label{valuescf}
F(c_\mathcal{I}) < \frac1{2\d} < \frac1{2\d}+ c_\mathcal{I} < G(c_\mathcal{I}) < \ol{F}(c_\mathcal{I}) = \atl = \ol{G}(c_\mathcal{I}) < f_0 + c_\mathcal{I}. 
\end{equation}

\subsection{A candidate value function for the control problem $Q$} \label{sec:cpbar}

In this section we construct a candidate value function $(x,c) \mapsto \tild{Q}(x, c)$ for $(x,c)\in[0,\infty)^2$ and $\la\in (\la^*,\la^\dagger)$.
Just as in Section~\ref{Sec:verification2}, the value function for $c=0$ and $x\geq 0$ is the function $x \mapsto Q(x,0)$ from \eqref{eq:Vtild0}
and the task is to specify our candidate value function for $c>0$. 
For this, we use the boundary $G$ constructed in Section~\ref{sec:cpos} to define the fuel level $c^*$ by
\begin{equation} \label{eq:c*}
\begin{split}
c^* &:= \inf\big\{c \in [c_\mathcal{I},\infty) \,:\, G(c) = D(c) \big\}, \quad \text{where} \quad D(c) := \atl - c_\mathcal{I} + c , \quad c \geq c_\mathcal{I}.
\end{split}
\end{equation}
\begin{Remark}\label{rem:cst}
The meaning of $c^*$ is as follows. For fuel levels $c$ just above $c^*$, if $G(c)$ is repelling then repulsion is not full, but is partial, as the state then jumps to a point on the boundary $\Gb$ (cf. Figure \ref{fig:boundaries2}).
It is straightforward from \eqref{valuescf} that $c_\mathcal{I} < c^*$. 
\end{Remark}
As proven in Proposition \ref{prop:olF'} and illustrated in Figure~\ref{fig:boundaries2}, our candidate solution satisfies $\ol{F}'(c) \in (0,1)$ for all $c\in(0,c_\mathcal{I})$. Thus for all $c \geq c_\mathcal{I}$ we have
\begin{align}\label{eq:someineqs}
D(c) < f_0+c \quad \text{ and } \quad \atl + \frac c 2 < g_0 + c.     
\end{align}
Coupled with the fact that $\lim_{c\to 0}G(c)=\fotd < f_0$ for all $\la \in (\la^*,\a\d)$ and the definition \eqref{eq:c*} of $c^*$, this gives
$c^* < \inf\{c \in (0,\infty): G(c) \geq f_0+c \}$.
Then, in view of \eqref{cI<c0} we see that only the following orderings of fuel levels are possible: 
\begin{align} 
\label{pos:1c*}
&c_\mathcal{I} < c_0 = \bar c \leq c^* < \inf\{c \in (0,\infty): G(c) \geq f_0+c \}, \\
\label{pos:2c*}
&c_\mathcal{I} < c^* < c_0 = \bar c \wedge \inf\{c \in (0,\infty): G(c) \geq f_0+c \}.
\end{align}
In the case \eqref{pos:2c*}, recalling Remark \ref{rem:cst}, the boundary $G$ is repelling just above $c^*$ and so the analysis is more challenging; both cases are covered in Section~\ref{Sec:verification3bb} below. 
The consideration of small and large fuel levels (Sections \ref{Sec:verification3a} and \ref{Sec:verification3c}) is the same in both cases.

\subsubsection{Small fuel: $c \in (0,c_\mathcal{I}]$}
\label{Sec:verification3a}

In view of the properties of $F$, $G$, $\Fb$ and $\Gb$ established in Sections \ref{sec:cpos}--\ref{sec:newsol}, notably \eqref{cI<c0}, we consider the candidate value function $\tild{Q}(x;c)$ for all $(x,c)\in [0,\infty) \times (0,c_\mathcal{I}]$ defined by
\begin{equation*}
\tild{Q}(x;c) := \left\{
\begin{array}{ll}
\delta x^2, & 0 \leq x \leq F(c),\\[+3pt]
A(c) e^{x\sta} + B(c) e^{-x\sta} + \fla x^2 + \flas, & F(c) < x < G(c), \\[+3pt]
\tild{V}_0(x-c) + c, & G(c) \leq x \leq \Fb(c) ,\\[+3pt]
\tild{A}(c) e^{x\sta} + \tild{B}(c) e^{-x\sta} + \fla x^2 + \flas, & \Fb(c) < x \leq \Gb(c), \\[+3pt]
\tild{Q}(x-\zeta(x,c);c-\zeta(x,c)) + \zeta(x,c), & \Gb(c) < x < g_0 +c ,\\[+3pt]
\tild{V}_0(x-c) + c, & x \geq g_0 + c,
\end{array}
\right. 
\end{equation*}
where $A,B$ are defined by \eqref{H3=h1}--\eqref{H4=h2} and
\begin{equation} \label{zeta}
\zeta(x,c) := \inf\{u \in [0,c] \,:\, \Gb(c-u) = x-u \} \wedge c .
\end{equation}

\subsubsection{Intermediate fuel: $c \in (c_\mathcal{I}, \ol{c})$} 
\label{Sec:verification3bb}

We begin the analysis of this case with the following observation. If $c^* < \ol{c}$ and the initial fuel level is $c \in (c^*,\bar c)$ then, referring to Figure~\ref{fig:regionsnew}, we have~$G(c)>D(c)$. 
Since $G'(c) \geq 1$, the process is repelled at the boundary point $(G(c),c)$. 
In particular, if $G(c) \in (D(c), g_0 +c)$ then the process reaches the waiting region $(\Fb,\Gb)$ before expending all fuel and so it is moved from the point $(G(c),c)$ to the point $(G(c) -\zeta(G(c),c), c-\zeta(G(c),c))$ (cf.~\eqref{zeta}) which, by construction, lies on the reflecting boundary $(\Gb(\cdot),\cdot)$. 
In light of this, we define the fuel level $c^\dagger \in (c^*, \infty]$ by 
\begin{equation} \label{eq:cdag}
c^\dagger := \inf\big\{c \in (c^*,\infty) \,:\, G(c) = g_0 +c \big\}, 
\end{equation} 
so that the aforementioned repulsion from the boundary $G$ to the boundary $\Gb$ occurs precisely when $c \in (c^*, c^\dagger)$.

Taking the above into account, the candidate value function $\tild{Q}(\cdot;c)$ for $c \in (c_\mathcal{I}, \ol{c})$ should satisfy $\tild Q(G(c),c) = h_r^*(G(c);c) + \fla G^2(c) + \flas$, where we define
\begin{align} \label{def-G0*}
h_r^*(x;c) &:= \tild Q \big(x-\zeta(x,c), c-\zeta(x,c) \big) + \zeta(x,c) - \fla x^2 - \flas, 
\quad \text{for all } c > c_\mathcal{I}.
\end{align}
The definition \eqref{def-G0*} can be seen as a generalisation of the definition \eqref{def-G0} of $h_r$, since when the new waiting region ($\Fb$,$\Gb$) does not interact with the repulsion (for example, when  $x\not\in(D(c), g_0 +c)$ in the present regime) we have    
\begin{align} \label{c-z=0}
\zeta(x,c) = c 
\quad \text{and} \quad  
h_r^*(x;c) = \tild V_0(x-c) + c - \fla x^2 - \flas \equiv h_r(x;c),
\end{align}
which is precisely the expression used in Section~\ref{sec:cpos} for the construction of the boundary functions $F$ and $G$.
Conversely, for all $x \in (D(c), g_0 + c)$ we have 
\begin{align} \label{c-z<cf}
c - \zeta(x,c) \in (0, c_\mathcal{I}] 
\quad \text{and} \quad  
x-\zeta(x,c) = \Gb(c-\zeta(x,c)), 
\end{align}
in which case optimality and the strong Markov property give $h_r^*(x;c) \not= h_r(x;c)$. 
Then writing $\zeta = \zeta(x,c)$, we have 
\begin{align} \label{dxh*r}
\pd{h_{r}^*}x(x,c) 
&= \pd{\tild Q}{x}(x-\zeta, c-\zeta) - \frac{2\la}{\a} x - \pd{\zeta}x(x,c) \bigg( \Big(\pd{\tild Q}{x} + \pd{\tild Q}{c}\Big)(x-\zeta, c-\zeta) - 1 \bigg)   
\notag\\
&= \pd{\tild Q}{x}(x-\zeta, c-\zeta) - \frac{2\la}{\a} x , 
\end{align} 
where the second equality uses \eqref{c-z<cf} and the fact that $U(\Gb(c),c)=1$ (Section~\ref{sec:topo}),
further yielding $\frac{\partial^2}{\partial x^2} h_r^*(x,c) = \frac{\partial^2}{\partial x^2} \tild Q(x-\zeta, c-\zeta) - \ftla$. 
This, together with the fact that $\Gb(\cdot) \geq \atl$ (Proposition \ref{prop:olG'}), imply the following useful inequality: 
\begin{align} \label{eq:H*pos}
(\cL - \alpha) h^*_r(x;c) 
&=\frac12 \frac{\partial^2}{\partial x^2}\tild Q(x-\zeta, c-\zeta) - \alpha \tild Q(x-\zeta, c-\zeta) - \alpha \zeta + \lambda x^2 \notag\\ 
&= -\lambda(x - \zeta)^2  - \alpha \zeta + \lambda x^2 
= 2 \lambda \zeta \left(\Gb(c-\zeta) - \atl \right) + \lambda \zeta^2 > 0, 
\end{align}
where the second equality follows from the Feynman-Kac equation \eqref{eq:F-K} satisfied by \eqref{eq:VstarRR}.

In the forthcoming analysis we make the following Ansatz. 

\begin{anz}\label{ansatz2}
The candidate solution $\tild Q$ of Section~\ref{Sec:verification3a} is of class $C^1$ on $[0,\infty) \times (0,c_\mathcal{I}]$. 
\end{anz} 

\begin{Remark}
Ansatz \ref{ansatz2} is verified in the proof of Theorem \ref{thm:ver2}.     \end{Remark}

\begin{Lemma} \label{lem:Hr*}
Let $H^*_r(\cdot;c):=\Phi(h^*_r)(\cdot;c)$ be the image of $h_r^*$ from \eqref{def-G0*} under the transformation $\Phi$ of \eqref{def-H}, where $c>c_\mathcal{I}$. Then 
\begin{align} \label{def-H*}
H_r^*(y;c) = \begin{cases}
H_{r1}(y;c), & y_c < y \leq \Psi(D(c)) ,\\
\Phi(h_r^*)(y;c), & \Psi(D(c)) < y < \Psi(g_0 + c) ,\\
H_{r2}(y;c), & y \geq \Psi(g_0 + c),
\end{cases} 
\end{align}
and under Ansatz \ref{ansatz2} the function $H^*_r(\cdot;c)$ is convex and of class $C^1(y_c,\infty)$.  
\end{Lemma}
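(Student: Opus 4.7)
My plan is to prove the lemma in two stages: first establish the three-piece form \eqref{def-H*} of $H_r^*$ by geometrically characterising $\zeta(x,c)$, then verify convexity and $C^1$ regularity piece by piece.

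For the piecewise form I would identify when $\zeta(x,c) = c$. From definition \eqref{zeta}, $\zeta(x,c) < c$ iff the south-west line segment $\{(x-u, c-u) : u \in [c-c_\mathcal{I}, c]\}$ meets the graph of $\Gb$; parameterising by $v = c-u$, this becomes $x - c = \Gb(v) - v$ for some $v \in (0, c_\mathcal{I}]$. By Proposition \ref{prop:olG'} the function $v \mapsto \Gb(v) - v$ is strictly decreasing on $(0,c_\mathcal{I}]$ with range $(\atl - c_\mathcal{I},\, g_0) = (D(c)-c,\, g_0)$, so $\zeta(x,c) = c$ precisely for $x \leq D(c)$ or $x \geq g_0 + c$. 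On these outer sets, equation \eqref{c-z=0} reduces $h_r^*$ to $h_r$, and Lemma \ref{lem:littleh} then identifies the latter as $h_{r1}$ (for $x \leq D(c)$) or $h_{r2}$ (for $x \geq g_0+c$), once one observes $D(c) \leq f_0 + c$ (equivalent to $\atl \leq f_0 + c_\mathcal{I}$, which holds by Proposition \ref{prop:olF'}) and $g_0 > f_0$ (from \eqref{tildeg0}). Applying $\Phi$ yields \eqref{def-H*}.

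For convexity I would treat the three pieces separately. Strict convexity of $H_{r1}$ on $(y_c, \Psi(D(c))]$ and of $H_{r2}$ on $[\Psi(g_0+c), \infty)$ follows from Lemma \ref{lem:Hconvexity}.(II), using the inclusions $\Psi(D(c)) \leq \hye(c)$ and $\Psi(g_0+c) > \hyc(c)$ (the second from $g_0 > \atl$). For the middle piece I would invoke the standard Dayanik--Karatzas correspondence (cf.~\cite{Dayanik2003}) that $(\cL - \alpha)h > 0$ at $x$ is equivalent to strict convexity of $\Phi(h)$ at $\Psi(x)$; combined with inequality \eqref{eq:H*pos} this delivers strict convexity of $\Phi(h_r^*)$ on $(\Psi(D(c)), \Psi(g_0+c))$.

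For $C^1$ regularity, since $\Phi$ is a smooth diffeomorphism it suffices to show $x \mapsto h_r^*(x,c)$ is $C^1$ at the two junctions. A chain rule computation on \eqref{def-G0*}, using the reflection identity $U(\Gb(c'),c') = 1$ to cancel the $\pd{\zeta}{x}$ terms, yields
\begin{equation*}
\pd{h_r^*}{x}(x,c) = \pd{\tild Q}{x}(x-\zeta, c-\zeta) - \ftla x, \qquad x \in (D(c), g_0+c).
\end{equation*}
Ansatz \ref{ansatz2} supplies $C^1$ smoothness of $\tild Q$ on $[0,\infty) \times (0, c_\mathcal{I}]$, allowing the limit $x \to D(c)^+$, which drives $(x-\zeta, c-\zeta) \to (\atl, c_\mathcal{I}) = (\ol F(c_\mathcal{I}), c_\mathcal{I}) = (\ol G(c_\mathcal{I}), c_\mathcal{I})$; using the action-region form $\tild Q(x,c) = \tild V_0(x-c) + c$ together with $\atl - c_\mathcal{I} \leq f_0$ one gets $\pd{\tild Q}{x}(\atl, c_\mathcal{I}) = 2\d(\atl - c_\mathcal{I})$, and a direct substitution shows agreement with $\pd{h_{r1}}{x}(D(c),c)$. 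The analogous limit at $x = g_0 + c$, invoking $\tild V_0'(g_0)$ from Theorem \ref{cor:czerosol}, matches $\pd{h_{r2}}{x}(g_0+c,c)$. I expect the main obstacle to be this $C^1$ match at $x = D(c)$: it is the corner where the new waiting region closes, and exploiting it cleanly depends essentially on both the coalescence $\ol F(c_\mathcal{I}) = \ol G(c_\mathcal{I})$ and $C^1$ smoothness of $\tild Q$ up to $c = c_\mathcal{I}$, so that the reflection-based simplification $U = 1$ remains valid in the limit.
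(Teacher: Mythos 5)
Your proposal is correct and follows essentially the same route as the paper: the piecewise identity comes from the characterisation of $\zeta$ (i.e.\ \eqref{c-z=0}--\eqref{c-z<cf}), convexity of the outer pieces from Lemma \ref{lem:Hconvexity} with the bounds \eqref{eq:someineqs}, convexity of the middle piece from Lemma \ref{lem:geomlemma} together with \eqref{eq:H*pos}, and the global statement is then glued via $C^1$ regularity of $h_r^*$ supplied by Ansatz \ref{ansatz2}. The only difference is that you verify the derivative matching at $x=D(c)$ and $x=g_0+c$ explicitly (correctly, using \eqref{dxh*r} and the coalescence $\ol F(c_\mathcal{I})=\ol G(c_\mathcal{I})=\atl$), whereas the paper simply cites Ansatz \ref{ansatz2} and the smoothness of $\Phi$.
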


We conclude from Lemma \ref{lem:Hr*} that the obstacle $H^*_r(\cdot;c)$ is a `convexified' version of the non-convex obstacle $H_r(\cdot;c)$ illustrated in Figure~\ref{fig:Fig1synth2}. 
That is, $H^*_r$ is constructed from $H_r$ by pasting smoothly at $H_r(\Psi(D(c));c)$, where $D(c) \in (x_c, f_0+c)$, an additional convex part that covers the concave region $(y_r(c), y_m(c))$ of $H_r(\cdot;c)$ (whenever it exists), and is then pasted again smoothly at $H_r(\Psi(g_0 +c);c)$, where $g_0 +c > \atl+c$ (cf.~\eqref{eq:someineqs}).

In order to extend the boundaries $F$ and $G$ from Section~\ref{Sec:verification3a} to the intermediate fuel levels $(c_\mathcal{I},\ol{c})$, it will be convenient to extend the definition of $H^*_r(x,c)$ to all $c \in (0,\infty)$ by setting $H^*_{r}(\cdot,c) := H_r(\cdot,c)$ for each $c \in (0,c_\mathcal{I}]$ (as in Section~\ref{sec:cpos}), giving $F$ and $G$ a unified definition on $(0,\ol{c})$ in the following result. Note that by construction, the boundaries $F$ and $G$ are then of class $C^1$ at $c=c_\mathcal{I}$.

\begin{Proposition} \label{0}
Suppose that $\la \in (\la^*, \la^\dagger)$. 
Then there exists a unique couple $(y^*_1(c),y^*_2(c))$ with $1 \leq y^*_1(c) < y_c < y^*_2(c)$ solving the system:
\begin{equation} \label{eq:system-y1y20}
\left\{
\begin{array}{lr}
\pd {H_l}{y}(y_1)= \pd{H^*_{r}} y (y_2;c), \\[+4pt]
H_l(y_1) - \pd {H_l} y (y_1)y_1 = H^*_{r} (y_2;c) -\pd {H^*_{r}} y (y_2;c)y_2 ,
\end{array}
\right. 
\end{equation}
for each fixed $c \in (0, c^*_1)$, where $c^*_1 := \inf\{c \in (0,\infty): y^*_1(c) = 1 \text{ or } y^*_2(c) \geq \hye(c) \}$.
Defining $F(c):=\Psi^{-1}(y_1^*(c))$ and $G(c):= \Psi^{-1}(y_2^*(c))$, for $c\in(0,c^*_1)$, and 
\begin{align} \label{eq:FG*def}
\ol{c}&:= \inf \{c \in (0,c^*_1): G'(c) \leq 1\}, 
\end{align}
we have that $\ol{c}>0$ and that 
\begin{align} 
&(y^*_1(c), y^*_2(c)) = (\hyo(c),\hyt(c)), \quad c \in (0,c^*), \label{eq:FGeq}\\
&c_\mathcal{I} < c^* < c^*_1. \label{cf<c1} 
\end{align}
\end{Proposition}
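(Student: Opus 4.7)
The plan is to partition the fuel axis into three regimes, dictated by the piecewise structure of $H^*_r$ from Lemma \ref{lem:Hr*}. On $(0, c_\mathcal{I}]$ the construction gives $H^*_r \equiv H_r$, so the system \eqref{eq:system-y1y20} is identical to \eqref{eq:system-y1y2}, and Proposition \ref{existencegeometricy2} (applicable since $c_\mathcal{I} < c_0 \leq c_1$ by \eqref{cI<c0} and \eqref{lem:Gderv}) delivers both existence and uniqueness, with $(y^*_1, y^*_2) = (\hyo, \hyt)$. On $(c_\mathcal{I}, c^*)$ the defining property \eqref{eq:c*} of $c^*$ gives $G(c) < D(c)$, so $y^*_2(c) = \Psi(G(c))$ remains in the piece where $H^*_r(\cdot, c) = H_{r1}(\cdot, c)$. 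The system \eqref{eq:system-y1y20} therefore again coincides with \eqref{eq:system-y1y2} at any tangency in this piece, and the analytic continuation argument of Section \ref{sec:cpos} (Proposition \ref{cor:1/2d}) extends the solution and delivers \eqref{eq:FGeq}.

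For $c \in [c^*, c_1^*)$ the tangency point $y^*_2(c)$ may enter the convexified middle piece of $H^*_r$. I would mirror Proposition \ref{existencegeometricy2} by introducing $P^*_r(y; c) := \sup_{z \in [1, y_c]} \big(r^*_y(z; c) - H_l(z)\big)$, where $r^*_y(\cdot; c)$ is the line tangent to $H^*_r(\cdot, c)$ at $y$. The convexity and $C^1$ regularity of $H^*_r$ on $(y_c, \infty)$ from Lemma \ref{lem:Hr*}, combined with $P^*_r(y_c; c) > 0$ (cf.~\eqref{eq:Pyc}) and $P^*_r(y; c) \to -\infty$ as $y \to \infty$ (inherited from \eqref{eq:Hneglim} applied to the unbounded $H_{r2}$-tail of $H^*_r$), yield a zero $y^*_2(c) > y_c$ by an intermediate value argument. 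Uniqueness across the three pieces of $H^*_r$ then follows from the convexity of $H^*_r$ on $(y_c,\infty)$ together with strict convexity of $H_l$ at $y^*_1(c) < \Psi(\fotd) < \Psi(\sqrt{\d/(\a\d-\la)})$ (Lemma \ref{lem:Hlprops}), since two convex curves with these properties admit at most one common tangent in this geometry; the corresponding $y^*_1(c) \in [1, y_c)$ is pinned down by slope matching.

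For the ordering \eqref{cf<c1}, $c_\mathcal{I} < c^*$ was remarked upon immediately after \eqref{eq:c*}. To verify $c^* < c_1^*$, evaluate at $c = c^*$: by definition $y^*_2(c^*) = \Psi(D(c^*)) = \Psi(\atl - c_\mathcal{I} + c^*)$, and Proposition \ref{prop:olF'} (together with $\ol F'(c) \in (0,1)$ on $(0,c_\mathcal{I})$ and the boundary value $\ol F(0) = f_0$) yields $\atl = \ol F(c_\mathcal{I}) < f_0 + c_\mathcal{I}$. Hence $D(c^*) < f_0 + c^*$ and thus $y^*_2(c^*) < \hye(c^*)$; the identification \eqref{eq:FGeq} also gives $y^*_1(c^*) = \hyo(c^*) > 1$ since $F(c^*) > 0$. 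Continuity then gives $c_1^* > c^*$. Finally, \eqref{eq:FGeq} combined with Proposition \ref{prop:monotonicity} implies $G'(c) > 1$ on $(0, c^*)$, yielding $\bar c \geq c^* > 0$.

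The main obstacle will be the analysis for $c \in [c^*, c_1^*)$: the convexified middle piece of $H^*_r$ is defined implicitly through the candidate value function $\tild Q$ on the new waiting region $(\Fb, \Gb)$. Although Lemma \ref{lem:Hr*} supplies convexity and $C^1$ regularity globally, promoting this to the strict convexity of $H^*_r$ at the tangency point (necessary to exclude degenerate double-tangent configurations and to secure uniqueness) requires unpacking the explicit form of $\Phi(h^*_r)$ and exploiting the second derivative identity arising from \eqref{dxh*r}. Equally delicate is the smooth matching of $F$ and $G$ at the regime-switching fuel levels $c_\mathcal{I}$ and $c^*$, which will rely on the $C^1$ regularity of $H^*_r$ at $y = \Psi(D(c))$ supplied by Lemma \ref{lem:Hr*}.
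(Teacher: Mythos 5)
Your proposal is correct in substance and follows essentially the same route as the paper: on $(0,c^*)$ the system \eqref{eq:system-y1y20} is identified with \eqref{eq:system-y1y2} because $H_r^*$ coincides with $H_{r1}$ below $\Psi(D(c))$ (so Proposition \ref{existencegeometricy2} supplies the solution and \eqref{eq:FGeq}), on $[c^*,c^*_1)$ existence and uniqueness are obtained by repeating Step 1 of Proposition \ref{existencegeometricy2} with the convex obstacle $H_r^*$ of Lemma \ref{lem:Hr*} (whose proof, via \eqref{eq:H*pos}, already supplies the strict convexity you flag as the main obstacle), and \eqref{cf<c1} follows from $F>0$ together with $D(c)<f_0+c$, just as in the paper. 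The only slip is the closing claim that $G'>1$ on all of $(0,c^*)$, hence $\ol{c}\geq c^*$: Proposition \ref{prop:monotonicity} gives $G'>1$ only on $(0,c_0)$, and in the case \eqref{pos:1c*} one has $\ol{c}=c_0\leq c^*$; this is harmless, since $G'>1$ for small $c$ already yields the required conclusion $\ol{c}>0$, which is all the statement asks.
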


Since $c^* < c^\dagger$ (cf.~\eqref{eq:cdag}) we have the following trichotomy for the constant $\ol{c}$ of  \eqref{eq:FG*def}:
\begin{equation} \label{c-order}
\text{Case 1:} \;\; \bar{c} \leq c^*; \qquad  
\text{Case 2:} \;\; c^* < \bar{c} < c^\dagger; \qquad 
\text{Case 3:} \;\; c^\dagger \leq \bar{c}.
\end{equation}

Consistent with the notation in \eqref{eq:th3} and \eqref{eq:th4}, we write $\cH_3^*(x,c)$ (respectively $\cH_4^*(x,c)$) for the gradient (resp.~intercept at the vertical axis) of the line tangent to $H_r^*(\Psi(x);c)$ at $y=\Psi(x)$. 
Then \eqref{def-H*} gives, in view of the trichotomy in \eqref{c-order}, that 
\begin{align} \label{FGext}
\begin{split}
h_1(F(c)) &= \begin{cases}
\tild \cH_3(G(c),c), &\quad c \in (c_\mathcal{I},c^* \wedge \ol{c}] \;\; \Leftrightarrow \; G(c) \leq D(c), \\
\cH_3^*(G(c),c), &\quad c \in (c^* \wedge \ol{c}, c^\dagger \wedge \ol{c}) \; \Leftrightarrow \; D(c) < G(c) < g_0+c, \\
\cH_3(G(c),c), &\quad c \in [c^\dagger\wedge \ol{c}, \ol{c}) \;\, \Leftrightarrow \; G(c) \geq g_0+c,
\end{cases} \\
h_2(F(c)) &= \begin{cases}
\tild \cH_4(G(c),c), &\quad c \in (c_\mathcal{I},c^*\wedge \ol{c}] \;\; \Leftrightarrow \; G(c) \leq D(c), \\
\cH_4^*(G(c),c), &\quad c \in (c^*\wedge \ol{c}, c^\dagger \wedge \ol{c}) \; \Leftrightarrow \; D(c) < G(c) < g_0+c, \\
\cH_4(G(c),c), &\quad c \in [c^\dagger\wedge \ol{c}, \ol{c}) \;\, \Leftrightarrow \; G(c) \geq g_0+c.
\end{cases}
\end{split}
\end{align}
We now construct the candidate value function $\tild{Q}(x;c)$ for these intermediate fuel levels via the greatest non-positive convex minorant of $H^*:= H_l \wedge H_r^*$, analogous to the construction in Sections \ref{Sec:verification2a} and \ref{Sec:verification2b}. 
Recalling the discretionary stopping, waiting, and action regions illustrated in Figure~\ref{fig:regionsnew}, the fact that $G'(c) > 1$ for all $c \in (0,\bar c)$ by construction (cf.~Proposition \ref{0}), and the bounds on the gradients of $\Fb$ (Proposition \ref{prop:olF'}) and $\Gb$ (Proposition \ref{prop:olG'}), for all $(x,c)\in [0,\infty) \times (c_\mathcal{I}, \bar{c})$ we have
\begin{equation*}
\tild{Q}(x;c) := \left\{
\begin{array}{ll}
\delta x^2, & 0 \leq x \leq F(c),\\[+3pt]
A(c) e^{x\sta} + B(c) e^{-x\sta} + \fla x^2 + \flas, & F(c) < x < G(c), 
\\[+3pt]
\tild{V}_0(x-c) + c, &G(c) \leq x \leq D(c),\\[+3pt]
\tild{Q}(x -\zeta(x,c);c-\zeta(x,c)) + \zeta(x,c), &D(c) \vee G(c) \leq x < g_0+c ,\\[+3pt]
\tild{V}_0(x-c) + c, & (g_0 + c) \vee G(c) \leq x.
\end{array}
\right. 
\end{equation*} 
Here $A(c) = h_1(F(c))$ and $B(c) = h_2(F(c))$, and for each $c \in (c_\mathcal{I}, \bar c)$ the equations \eqref{FGext} define the boundaries $F$ and $G$ via Proposition \ref{0}.
 
\begin{Remark}\label{rem:oldb}
Just as the system \eqref{FGext} for fuel levels $c\in(c_\mathcal{I},c^*]$ is nothing more than \eqref{H3=h1}--\eqref{H4=h2} of Section~\ref{sec:cpos}, so also for fuel levels $c\in(c^\dagger, \ol{c}]$ it is identical to the system \eqref{eq:ifuel}. Thus for fuel levels $c\in(c^\dagger, \ol{c}]$ the boundaries $F$ and $G$ are constructed as in 
Section~\ref{Sec:verification2b}.
\end{Remark}

\subsubsection{Large fuel: $c \in [\ol{c}, \infty)$}
\label{Sec:verification3c}

We begin this section by confirming that the equation $q(G(\bar c), F(\bar c)) = 0$ holds in all three cases of the trichotomy \eqref{c-order}. 

\begin{Lemma} \label{q=0*}
Suppose that $\la \in (\la^*, \la^\dagger)$. Then the boundary functions $F$ and $G$ of \eqref{FGext} satisfy $q(G(c), F(c)) > 0$ for all $c<\ol{c}$ and $q(G(\bar c), F(\bar c)) = 0$. 
\end{Lemma}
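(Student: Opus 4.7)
The plan is to decompose the analysis of $q(G(c),F(c))$ according to the three branches of \eqref{FGext}, and in each branch derive a representation of the form
\begin{equation*}
G'(c)-1 \;=\; -\frac{q(G(c);F(c))}{M(c)} \quad \text{with} \quad M(c)<0,
\end{equation*}
so that the sign of $G'(c)-1$ coincides with that of $q(G(c);F(c))$. Once this is in place, the definition \eqref{eq:FG*def} of $\ol c$ immediately gives $q(G(c),F(c))>0$ for every $c\in(0,\ol c)$, while continuity of $F,G,G'$ at $c=\ol c$ produces $q(G(\ol c),F(\ol c))=0$.

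For the first branch $c\in (c_\mathcal{I},c^*\wedge \ol c]$ the system \eqref{FGext} reduces to \eqref{H3=h1}--\eqref{H4=h2}, so Lemma \ref{cor:FGC1} applies verbatim with $M(c)=\pd{L}{x}(G(c),c)<0$. For the third branch $c\in [c^\dagger\wedge \ol c,\ol c)$, which by Remark \ref{rem:oldb} coincides with the system \cite[Eq.~(10.8)]{KOWZ00}, the corresponding representation is \cite[Eq.~(10.13)]{KOWZ00} (alternatively, a direct adaptation of Lemma \ref{cor:FGC1} using the function $L$ formed from $\cH_3,\cH_4$ in place of $\tild\cH_3,\tild\cH_4$).

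The bulk of the work is the middle branch $c\in (c^*\wedge \ol c,\, c^\dagger \wedge \ol c)$, which is non-empty only in cases~2 and~3 of the trichotomy \eqref{c-order}. There $D(c)<G(c)<g_0+c$, so by Lemma \ref{lem:Hr*} the point $\Psi(G(c))$ lies in the strictly convex middle piece $\Phi(h_r^*)$ of $H_r^*(\cdot;c)$. Introduce
\begin{equation*}
L^*(x,c) \;:=\; \cH_4^*(x,c)\,-\,h_2\bigl(h_1^{-1}(\cH_3^*(x,c))\bigr),
\end{equation*}
so that this branch of \eqref{FGext} reads $L^*(G(c),c)=0$. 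Repeating the algebraic manipulation that produced \eqref{eq:lx} gives the identity $\pd{L^*}{x}(x,c)=(e^{2z\sta}-e^{2x\sta})\,\pd{\cH_3^*}{x}(x,c)$ with $z=h_1^{-1}(\cH_3^*(x,c))$; at $x=G(c)$ we have $z=F(c)<G(c)$ and $\pd{\cH_3^*}{x}(G(c),c)>0$ by strict convexity of $H_r^*$, whence $\pd{L^*}{x}(G(c),c)<0$, giving the required negative factor $M(c)$. It remains to prove the analogue of \eqref{eq:hall3}:
\begin{equation*}
\Bigl(\pd{L^*}{x}+\pd{L^*}{c}\Bigr)(G(c),c) \;=\; q\bigl(G(c);F(c)\bigr),
\end{equation*}
with the \emph{same} function $q$ from \eqref{eq:qdef}. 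This reduces to establishing the analogues of the identities \eqref{eq:hall} for $\cH_3^*,\cH_4^*$, which in turn follow by differentiating \eqref{dxh*r} and its counterpart for $\pd{h_r^*}{c}$, invoking the Feynman--Kac equation \eqref{eq:F-K} satisfied by $\tild Q$ on $(\Fb,\Gb)$ (this is what encodes the same quadratic running cost $\la x^2$ that produced $h_3,h_4$ in \eqref{eq:h3}--\eqref{eq:h4}), together with the reflecting identities $U(\Gb,\cdot)=1$ and $\pd{U}{x}(\Gb,\cdot)=0$ from Section \ref{sec:topo}.

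The main obstacle is verifying that the correction terms arising from the dependence of $\zeta(x,c)$ on $(x,c)$ in \eqref{def-G0*} cancel so as to leave precisely the function $q$ of \eqref{eq:qdef}, rather than a $\Fb,\Gb$-modified version; this is a careful but essentially algebraic reduction that exploits the matching of $\tild Q$ with $\tild V_0$ along $\Gb$ through the reflection mechanism encoded in \eqref{c-z<cf}. Once this identity is established, applying the chain rule to $L^*(G(c),c)=0$ yields the required representation in the middle branch, and combining the three branches with the $C^1$ pasting of $F,G$ at the thresholds $c^*$ and $c^\dagger$ (ensured by the construction of Section \ref{Sec:verification3bb}) delivers both conclusions of the lemma.
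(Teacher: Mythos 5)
Your proposal is correct and follows essentially the same route as the paper: a split according to which branch of \eqref{FGext} is active (equivalently the trichotomy \eqref{c-order}), reuse of Lemma \ref{cor:FGC1} and of \cite[Eq.~(10.13)]{KOWZ00} in the outer branches, and in the middle branch the introduction of $L^*(x,c)=\cH_4^*(x,c)-h_2\circ h_1^{-1}(\cH_3^*(x,c))$, whose $x$-derivative is negative by strict convexity of $H_r^*$ and for which the identity $\bigl(\pd{L^*}{x}+\pd{L^*}{c}\bigr)(G(c),c)=q(G(c);F(c))$ follows from \eqref{dxh*r} exactly as in Lemma \ref{derivL}, yielding $G'(c)=1-q/\pd{L^*}{x}$ and hence both assertions. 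The cancellation of the $\zeta$-dependent terms that you flag as the main obstacle is precisely what \eqref{dxh*r} (via $U(\Gb(c),c)=1$) already encodes, and the paper treats it at the same level of detail.
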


It follows from Lemma \ref{q=0*} that the boundaries $F$ and $G$ can be extended to $c\in [\ol{c}, \infty)$ via the equations~\eqref{F'olc}. Thus for $c \in [\ol{c}, \infty)$ the candidate value function $\tild{Q}(\cdot;c)$ is given by \eqref{eq:tildQ}.

\begin{Remark}[Verification of Ansatz \ref{bigass}] \label{rem:a1ver}
Since the boundaries $\Fb$ and $\Gb$ are of class $C^1$ with $\fotd < \lim_{c\to 0}\Fb(c) = f_0 < \atl < g_0 = \lim_{c\to 0} \Gb(c)$
(Proposition \ref{cor:xf} and \eqref{tildeg0}), 
they verify \eqref{olF00}. 
Thus Ansatz \ref{bigass} is verified. 
\end{Remark}

\section{Verification}
\label{Sec:verification}

In this section we verify the optimality of the candidate solutions obtained in Sections \ref{sec:cpos} and \ref{sec:newsol}. We begin with a verification theorem, stated in Theorem \ref{th:ver}. 
Naturally, this is a modification of \cite[Theorem 4.1]{KOWZ00}, where the modifications are only to allow for more than two boundaries and remove specific requirements on their monotonicity. 
To address the possible lack of $C^2$-smoothness across the boundaries $\{(b_i(c), c)\}$ indexed by $i$, it suffices to note that under our assumptions, for each $i$ the process $X - b_i(C)$ is a semimartingale. 
Thus the set of times at which $X_t - b_i(C_t)=0$ for any $i$ has Lebesgue measure zero, making it possible to use the mollification argument of \cite{KOWZ00} and references therein.
The optimality conditions stated in Theorem \ref{th:ver} can also be obtained following similar arguments as in \cite[Corollary~4.2]{KOWZ00}. 
We omit its proof for brevity. 

\begin{Theorem}
\label{th:ver}
Consider an evenly symmetric function $\tild Q: \R \times [0,\infty) \to \R$ which is continuous, continuously differentiable on $\R \times (0,\infty)$, and twice continuously differentiable in its first argument with locally bounded second derivatives for all $(x,c) \in \R \times [0,\infty)$ with $x \in \mathcal{R}_c := \R \setminus \{\pm b_i(c): i=1,\ldots, n_b\}$ 
where $n_b \in \mathbb{N}$ and $\{b_i, 1 \leq i \leq n_b\}$ is a finite set of piecewise $C^1$ boundaries. 
Suppose that $\tild Q(\cdot,c)$ satisfies the growth condition
\begin{gather}\label{eq:42}
\big| \tfrac{\partial\tild{Q}}{\partial x} (x,c) \big| \leq K(c)(1+|x|), \qquad \forall \; x \in \R \times [0,\infty), 
\end{gather}
for some continuous and increasing function $K:[0,\infty) \to (0,\infty)$, together with the following conditions: 
\begin{align}
&\hspace{-2mm}\tild Q(x,c) \leq \delta x^2, \quad x\in\R, c\geq 0,
\label{eq:43} \\
&\hspace{-2mm}\big(\big|\tfrac{\partial\tild{Q}}{\partial x}\big| + \tfrac{\partial\tild{Q}}{\partial c} \big)(x,c) \leq 1, \quad x\in\R, c > 0, 
\label{eq:44} \\ 
&\hspace{-2mm}(\cL - \a) \tild Q(x,c) + \la x^2 \geq 0, \quad x\in \mathcal{R}_c, c\geq 0,
\label{eq:45} \\
&\hspace{-2mm}[\delta x^2 - \tild Q(x,c)] \big[1 - \big|\tfrac{\partial\tild{Q}}{\partial x}\big|(x,c) - \tfrac{\partial \tild{Q}}{\partial c}(x,c) \big] [(\cL - \alpha) \tild Q(x,c) + \la x^2] = 0, \, x\in\mathcal{R}_c, \, c> 0. \label{eq:46}
\end{align}
Then $\tild Q(x,c) \leq Q(x,c)$ for all $(x, c) \in  \R \times [0,\infty)$.
Moreover, if $\tild Q(x,c)$ is the value of a strategy $(\xi,\tau) \in \mathcal{A}(c) \times \cT$ for $(x,c) \in [0,\infty)^2$, such that  
\begin{align} \label{eq:genito}
\left.\begin{cases}
\hspace{-1cm}
&\tild{Q}(X_\tau,C_\tau) = \d X_\tau^2, \qquad 
\int_0^\tau e^{-\alpha t} \big(\left(\cL - \alpha \right) \tild{Q} (X_{t-},C_{t-}) + \la X_t^2 \big) dt = 0 , \quad \\
&\int_{[0,\tau]} e^{-\alpha t} \big(1- \frac{\partial\tild{Q}}{\partial x}(X_{t-},C_{t-})-\frac{\partial\tild{Q}}{\partial c}(X_{t-},C_{t-}) \big) d\xi^-_t = 0, \quad \\
&\sum_{0 \leq t \leq \tau} e^{-\alpha t}\Big(\tild{Q}(X_{t-},C_{t-}) - \tild{Q}(X_{t-} - \triangle C_t, C_{t-} -\triangle C_t) + \triangle C_t \Big) = 0,
\end{cases}\right\} \text{a.s.,}
\end{align}
then $Q(x,c) = \tild Q(x,c)$ for all $(x,c) \in \R \times [0,\infty)$ and $(\xi,\tau)$ is optimal for problem \eqref{eq:defV}.
\end{Theorem}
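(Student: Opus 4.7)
The plan is to follow the standard mollification-and-Itô verification route for singular control, adapted to allow the piecewise-$C^{2,1}$ structure of $\tild Q$ across the finitely many $C^1$ boundaries $\{(b_i(c),c):1\leq i\leq n_b\}$. Fix $(x,c)\in\R\times[0,\infty)$ and an arbitrary admissible pair $(\xi,\tau)\in\cA(c)\times\cT$. After using even symmetry of $\tild Q$ (cf.\ Remark \ref{rem:halfspace}) to restrict to monotone controls, and localising by $\tau\wedge n\wedge\sigma_n$ with $\sigma_n:=\inf\{t\geq 0:|X_t|\geq n\}$, the growth condition \eqref{eq:42} and standard $L^2$-estimates for $X$ on finite horizons will permit the localiser to be removed at the end by dominated convergence. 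The aim is to establish
$$
\tild Q(x,c)\;\leq\;\EE\Big[\int_0^{\tau}e^{-\alpha t}\la X_t^2\,dt+\int_{[0,\tau]}e^{-\alpha t}\,d\check{\xi}_t+e^{-\alpha\tau}\delta X_\tau^2\cdot 1_{\{\tau<\infty\}}\Big].
$$

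The next step is to mollify $\tild Q$ in $x$ by convolution with a smooth approximate identity, obtaining $\tild Q^\varepsilon\in C^{2,1}$, and to apply the classical Itô formula to $e^{-\alpha t}\tild Q^\varepsilon(X_t,C_t)$ on $[0,\tau\wedge n\wedge\sigma_n]$. This produces a drift-diffusion term $\int_0^{\cdot}e^{-\alpha t}(\cL-\alpha)\tild Q^\varepsilon\,dt$, an $L^2$-martingale term $\int_0^{\cdot}e^{-\alpha t}\partial_x\tild Q^\varepsilon\,dW_t$, a continuous bounded-variation term from $\xi$ of the form $\int_{[0,\cdot]}e^{-\alpha t}\big(\partial_x\tild Q^\varepsilon\,d\xi_t+\partial_c\tild Q^\varepsilon\,dC_t\big)$, and a jump sum matching the third line of \eqref{eq:genito}. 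The limit $\varepsilon\downarrow 0$ is the delicate point: on $\{X_t\in\mathcal{R}_{C_t}\}$ dominated convergence applies thanks to \eqref{eq:42} and \eqref{eq:44}, while on the boundary set $\bigcup_i\{X_t=b_i(C_t)\}$ one uses the observation made in the statement that $X-b_i(C)$ is a semimartingale with unit Brownian diffusion, so the occupation-time formula yields $\int_0^{T}1_{\{X_t=b_i(C_t)\}}\,dt=0$ a.s. Hence the Lebesgue-measure set where $\tild Q^\varepsilon$ may differ appreciably from a genuine $C^{2,1}$ function contributes nothing, and one recovers in the limit $\int_0^{\cdot}e^{-\alpha t}(\cL-\alpha)\tild Q\,dt$ evaluated on $\mathcal{R}_{C_t}$.

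Taking expectations, rearranging, and applying the inequalities \eqref{eq:43}, \eqref{eq:44} and \eqref{eq:45} respectively to the terminal contribution, the control contribution (separately on the continuous and jump parts of $\xi$), and the integrand $(\cL-\alpha)\tild Q+\la X^2$, gives the desired inequality $\tild Q(x,c)\leq Q(x,c)$. For the second claim, the three identities recorded in \eqref{eq:genito} force the three inequalities just used to collapse to equalities $d\PP\otimes dt$-a.e.\ along the sample paths of the proposed pair $(\xi,\tau)$; hence $\tild Q(x,c)$ equals the value of that pair, which combined with $\tild Q\leq Q$ yields $\tild Q=Q$ and the asserted optimality. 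The principal obstacle is the mollification limit at the curved boundaries $b_i$: unlike in axis-aligned setups, here $\partial_{xx}\tild Q^\varepsilon$ may exhibit a distributional spike along $\{X=b_i(C)\}$, and the semimartingale remark in the statement, which delivers the vanishing of the occupation time on this set, is precisely what rescues the argument.
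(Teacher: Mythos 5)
Your proposal follows essentially the same route the paper indicates: it omits a detailed proof but explicitly states that Theorem \ref{th:ver} is obtained by the mollification--It\^o verification argument of \cite{KOWZ00}, with the lack of $C^2$-smoothness across the boundaries handled precisely by the observation that each $X-b_i(C)$ is a semimartingale, so the occupation time of the boundary set is Lebesgue-null, and with the optimality part following from the equality conditions \eqref{eq:genito} as in \cite[Corollary~4.2]{KOWZ00}. Your sketch (localisation, growth condition \eqref{eq:42} to remove the localiser, inequalities \eqref{eq:43}--\eqref{eq:45} for the upper bound, and \eqref{eq:genito} collapsing them to equalities) is the intended argument.
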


To solve the control problem for $\la\in (\la^*,\a\d)$, consider the candidate value function $\tild Q$, constructed in Section~\ref{Sec:verification2} under the regime $\la\in [\la^\dagger,\a\d)$, and that constructed in Section~\ref{sec:cpbar} under the regime $\la\in (\la^*,\la^\dagger)$, illustrated in Figures \ref{fig:boundaries4} and \ref{fig:boundaries2} respectively.
Recalling Remark \ref{rem:sf}, in both cases it is straightforward to check that, by the construction of $\tild{Q}$, the following smooth fit conditions hold:
\begin{enumerate}
\vspace{-2mm}
\item[SF1:] \label{SF1} for the function $\tild{Q}(\cdot,c)$ across $F$ and $\Fb$, and also across the restriction of $G(c)$ for $c \leq \ol{c}$, i.e.\ on $\{c \geq 0: G'(c) \geq 1\}$,

\vspace{-2mm}
\item[SF2:] \label{SF2} for the function $(\tfrac{\partial}{\partial x} + \tfrac{\partial}{\partial c})\tild{Q}(\cdot,c)$ across $\bar G(c)$, and also across the restriction of $G(c)$ for $c > \ol{c}$, i.e.\ on $\{c \geq 0: G'(c) < 1\}$.
\end{enumerate}
\vspace{-2mm}
The details are as follows. In the candidate solution of Section \ref{Sec:verification2a},
condition SF1 at both boundaries $F$ and $G$ follows from the construction of the coefficients $h_1(F)$ and $h_2(F)$ used in \eqref{eq:tildQ}. For the same reason, condition SF1 holds at $F$ and $G$ in Section \ref{Sec:verification2b} and at $F$ in Section \ref{Sec:verification2c}. Condition SF2 at $G$ follows in Section~\ref{Sec:verification2c} by direct differentiation of $\tilde Q$. 
Similarly, in the candidate solutions of Sections~\ref{Sec:verification3a}--\ref{Sec:verification3c}, conditions SF1--SF2 follow from the construction of the coefficients $A$, $B$, $\widetilde{A}$ and $\widetilde{B}$, and from direct differentiation of~$\tilde Q$. 

The fact that conditions SF1 and SF2 hold is used in Theorem \ref{thm:ver2} below to uniquely characterise $\tild Q$.

\begin{figure}[tb]
\begin{center}
\vspace{-1.5cm}
\begin{tikzpicture}
\node[anchor=south west,inner sep=0] (image) at (0,0) {\includegraphics[width=\textwidth]{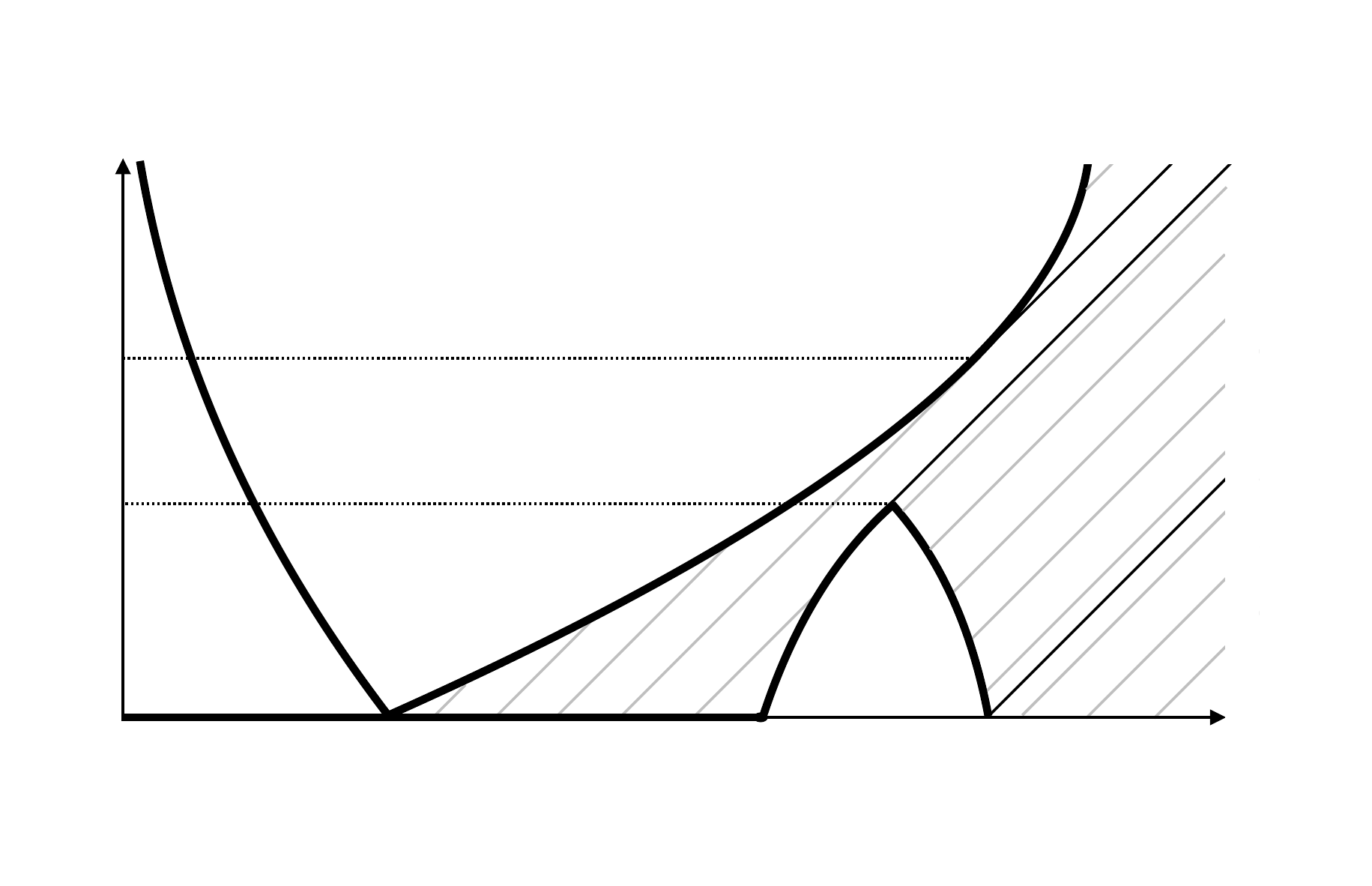}};
\begin{scope}[x={(image.south east)},y={(image.north west)}]
 \node[scale=1.2] at (0.055,0.43) {$c_\mathcal{I}$};
 \node[scale=1.2] at (0.93,0.192) {$x$};
 \node[scale=1.2] at (0.285,0.14) {$\fotd$};
 \node[scale=1.2] at (0.555,0.14) {$f_0$};
 \node[scale=1.2] at (0.65,0.14) {$\atl$};
 \node[scale=1.2] at (0.725,0.14) {$g_0$};
 \node[scale=1.2] at (0.055,0.81) {$c$};
 \node[scale=1] at (0.82,0.815) {IVa};
 \node[scale=1] at (0.815,0.6) {$x=D(c)$};
 \node[scale=1] at (0.8,0.5) {IVb};
 \node[scale=1] at (0.35,0.65) {II};
 \node[scale=1] at (0.645,0.3) {III};
 \node[scale=1] at (0.17,0.3) {I};
 \node[scale=1.2] at (0.055,0.6) {$\bar c$}; 
 \node[scale=1] at (0.85,0.3) {IVc};
 \node[scale=1] at (0.51,0.3) {IVc};
\end{scope}
\end{tikzpicture}
\vspace{-18mm}
\caption{Regions I to IV from the statement and proof of Theorem \ref{thm:ver2}, when $\la\in (\la^*, \a \d)$ and $\bar{c} < c^*$. When $\la\in (\la^*,\la^\dagger)$ we have $c_\mathcal{I}=0$, and regions III and IVb are then both empty.}
\label{fig:regions}
\end{center}
\end{figure}

\begin{figure}[tb]
\begin{center}
\vspace{-1.5cm}
\begin{tikzpicture}
\node[anchor=south west,inner sep=0] (image) at (0.4,0) {\includegraphics[width=\textwidth]{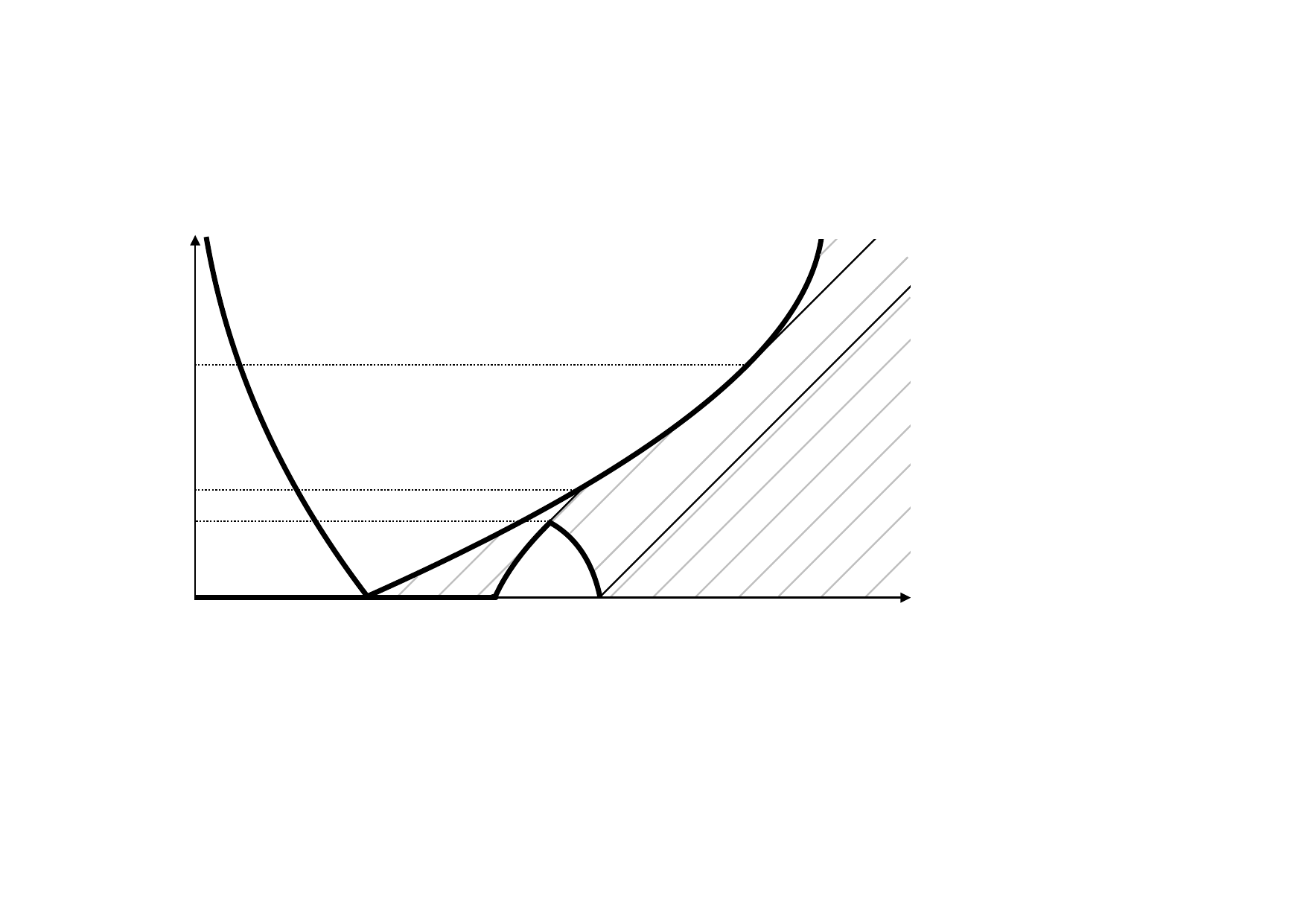}};
\begin{scope}[x={(image.south east)},y={(image.north west)}]
 \node[scale=1.2] at (0.055,0.33) {$c_\mathcal{I}$};
  \node[scale=1.2] at (0.055,0.4) {$c^*$};
 \node[scale=1.2] at (0.945,0.19) {$x$};
 \node[scale=1.2] at (0.285,0.14) {$\fotd$};
 \node[scale=1.2] at (0.435,0.14) {$f_0$};
 \node[scale=1.2] at (0.5,0.14) {$\atl$};
 \node[scale=1.2] at (0.56,0.14) {$g_0$};
 \node[scale=1.2] at (0.04,0.87) {$c$};
 \node[scale=1] at (0.835,0.87) {IVa};
 \node[scale=1] at (0.69,0.5) {IVb};
 \node[scale=1] at (0.35,0.66) {II};
 \node[scale=1] at (0.5,0.23) {III};
 \node[scale=1] at (0.17,0.27) {I};
 \node[scale=1.2] at (0.055,0.635) {$\bar c$};
 \node[scale=1] at (0.8,0.3) {IVc};
 \node[scale=1] at (0.41,0.23) {IVc};
\end{scope}
\end{tikzpicture}
\vspace{-18mm}
\caption{Regions I to IV from the statement and proof of Theorem \ref{thm:ver2}, when $\la\in (\la^*, \a \d)$ and $c^* < \bar c$. When $\la\in (\la^*,\la^\dagger)$ we have $c_\mathcal{I}=0$, and regions III and IVb are then both empty.}
\label{fig:regionsnew}
\end{center}
\end{figure}

\begin{Theorem}\label{thm:ver2} 
Let $c_\mathcal{I} \geq 0$, $\ol{c}>0$ and define the regions
\begin{align*}
    &{\rm I} = \{c>0, \; 0 \leq x \leq F(c)\}, \qquad 
    &&{\rm II} = \{c>0, \; F(c)< x < G(c)\}, \\
    &{\rm III} = \{c \in (0,c_\mathcal{I}), \; \bar F(c)< x < \bar G(c)\}, \qquad
    &&{\rm IV} = \{c>0, \; x \geq G(c)\} \setminus {\rm III},
\end{align*}
whose boundaries 
$F:[0,\infty) \to (0,\infty)$, $G:(0,\infty) \to (0,\infty)$ are of class $C^1((0,\infty)\setminus\{\ol{c}\})$ and 
$\bar{F}, \bar{G}:(0,c_\mathcal{I}] \to (0,\infty)$ are of class $C^1(0,c_\mathcal{I})$, with
\begin{align*}
&F'(c)<0 \; \forall \; c>0, 
\quad G'(c)>1 \; \forall \; c\in(0,\ol{c}), 
\quad G'(c)\in(0,1) \; \forall \; c>\ol{c}, \\ 
&\lim_{c\to 0} F(c) \leq \fotd \leq \lim_{c\to 0}G(c), 
\quad q(G(c);F(c))>0 \; \forall \; c<\ol{c},
\quad G(c) \geq \atl \; \forall \; c>\ol{c}, \\ 
&\bar{F}'(c) \in (0,1), \; \bar{G}'(c) < 0 \; \forall \; c \in (0,c_\mathcal{I}), 
\quad G(c) < \bar{F}(c) \; \forall \; c \in (0,c_\mathcal{I}],  \\
&\lim_{c\to 0}\Fb(c) = f_0,
\quad \atl <  \lim_{c\to 0}\Gb(c) < \atl+\frac1{\sqrt{2\a}},  
\quad \bar{F}(c_\mathcal{I}) = \bar{G}(c_\mathcal{I}) = \atl.
\end{align*}
Suppose that for some fuel-dependent coefficients $A(c), B(c), \tild A(c), \tild B(c)$  we have 
\begin{align}
&\tild Q(x,c) = \tild Q_1(x,c) := \delta x^2, & (x,c) \in {\rm I}, \label{vc1}\\
&\tild Q(x,c) = \tild Q_2(x,c) := A(c) e^{x\sta} + B(c) e^{-x\sta} + \fla x^2 + \flas, & (x,c) \in {\rm II}, \label{vc2}\\
&\tild Q(x,c) = \tild Q_3(x,c) := \tild A(c) e^{x\sta} + \tild B(c) e^{-x\sta} + \fla x^2 + \flas, & (x,c) \in {\rm III}, \label{vc3}\\
&\tild Q(x,c) = \tild Q_4(x,c) := \tild Q\big(x-\zeta(x,c), c-\zeta(x,c)\big) + \zeta(x,c), & (x,c) \in {\rm IV}, \label{vc4}
\end{align}
where $\zeta(x,c):= \inf\{u\in (0,c]: (x-u,c-u) \not\in IV\}$ and $\tild Q(\cdot,0) = Q(\cdot,0)$ from \eqref{eq:Vtild0}. 
Suppose that this candidate $\tild{Q}$ is continuous and satisfies the smooth fit conditions \hyperref[SF1]{SF1}--\hyperref[SF2]{SF2}.
Then the even (in $x$) extension of the function $\tild Q$  to $\R \times [0,\infty)$ and the moving boundaries $F$, $G$, $\bar F$, $\bar G$ satisfy the conditions of Theorem \ref{th:ver}.
\end{Theorem}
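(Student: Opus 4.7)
The plan is to verify each of the conditions of Theorem \ref{th:ver} for the even extension of $\tild Q$, treating the four regions I--IV separately. Even symmetry and continuity are immediate from the construction; for $C^1$-smoothness on $\R\times(0,\infty)$, the hypothesis SF1 gives $C^1$ matching of $\tild Q$ across $F$, $\Fb$ and the repelling arc of $G$ (on $\{G'\geq 1\}$), while SF2 gives $C^1$ matching of $U:=\partial_x\tild Q+\partial_c\tild Q$ across $\Gb$ and the reflecting arc of $G$ (on $\{G'<1\}$); since on the latter arcs neither $G'$ nor $\Gb'$ equals $1$ (by Proposition \ref{prop:monotonicity}, Proposition \ref{prop:olG'} and Section \ref{Sec:verification2c}), combining SF2 with continuity of $\tild Q$ forces continuity of $\partial_x\tild Q$ and $\partial_c\tild Q$ individually. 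Within each open region the formulas \eqref{vc1}--\eqref{vc4} and \eqref{eq:Vtild0} are $C^\infty$ in $x$ away from a finite family $\{b_i\}$ of moving boundaries (comprising $F,G,\Fb,\Gb$ and, where necessary, the curve $x=\pm(f_0+c)$ coming from the non-$C^2$ point of $\tild V_0$). The linear-in-$x$ growth bound \eqref{eq:42} then follows from these same formulas, with constant depending locally on $c$.

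For the three pointwise inequalities I would proceed region by region. In region I, $\tild Q=\delta x^2$ yields \eqref{eq:43} with equality, and $(\cL-\a)\delta x^2+\la x^2=\delta-(\a\delta-\la)x^2\geq 0$ on $[0,F(c)]$ because $F(c)\leq f_0<\sqrt{\delta/(\a\delta-\la)}$ by Lemma \ref{lem:Hlprops}. In regions II and III, the explicit forms \eqref{vc2}--\eqref{vc3} are Feynman--Kac solutions of $(\cL-\a)\tild Q+\la x^2=0$, while the bound $\tild Q\leq\delta x^2$ translates under the transformation \eqref{def-H} into the statement that the double-tangent line segment lies below $H_l$---the defining property of the greatest non-positive convex minorant constructed in Theorem \ref{existencegeometric} and Lemma \ref{lem:Hr*}. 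In region IV, the shifted function $\tild Q_4-\fla x^2-\flas$ equals $h_r^*(\cdot;c)$ of \eqref{def-G0*} (or $\tild V_0(\cdot-c)-\fla x^2-\flas$ when $\zeta=c$), and the identity $(\cL-\a)\tild Q+\la x^2=(\cL-\a)(\tild Q-\fla x^2-\flas)$ reduces \eqref{eq:45} to the strict positivity already proved in \eqref{eq:H*pos}, supplemented by a direct check using the Feynman--Kac equation for $\tild V_0$ when $\zeta=c$ (the sub-case $x-c\leq f_0$ with $\zeta=c$ is excluded geometrically by requiring the segment from $(x,c)$ to $(x-c,0)$ in direction $(-1,-1)$ to remain inside IV, forcing $x-c\geq f_0$). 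The bound $\tild Q\leq\delta x^2$ propagates in IV from $\tild V_0(y)+c\leq\delta(y+c)^2$ via \eqref{eq:defrho}. The complementarity \eqref{eq:46} is then automatic, with exactly one bracketed factor vanishing in each region by construction.

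The main obstacle is condition \eqref{eq:44}, $|\partial_x\tild Q|+\partial_c\tild Q\leq 1$. In region I this reduces to $2\delta|x|\leq 1$, which holds on $[0,F(c)]$ by Lemma \ref{lem:Fcsmall} and Proposition \ref{prop:olF'}. In region IV the inequality holds with equality by the very definition of $\tild Q_4$: differentiating $\tild Q_4(x,c)=\tild Q(x-\zeta,c-\zeta)+\zeta$ along $(1,1)$ produces $U=1$. The delicate part is regions II and III, for which I would differentiate \eqref{vc2}--\eqref{vc3} directly and use the ODEs \eqref{tildeA'B'} together with the analogous equations for $A,B$ derived from \eqref{H3=h1}--\eqref{H4=h2}, to verify $U=1$ at the interface with IV (by SF2) and extract the correct sign of $\partial_x U$ inside each strip, so that $U\leq 1$ throughout. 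The necessary sign information comes from the analyses of $h_1,\ldots,h_4,\tild\cH_3,\tild\cH_4$ in Section \ref{sec:FG} and Section \ref{sec:topo}, combined with the boundary monotonicity in Proposition \ref{prop:monotonicity}, Proposition \ref{cor:xf}, Proposition \ref{prop:olF'} and Proposition \ref{prop:olG'}. The hardest sub-case is the intermediate-fuel regime $c\in(c^*,\ol c\wedge c^\dagger)$, in which region II borders a portion of IV whose $\tild Q$ is defined recursively by reflection at $\Gb$; the argument there must exploit the $C^1$-smoothness and convexity of the modified obstacle $H_r^*$ (Lemma \ref{lem:Hr*}), the positivity \eqref{eq:H*pos}, and the negativity of $\mathcal X'(\Fb(c))$ (Proposition \ref{cor:xf}). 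Even symmetry then extends all four inequalities verbatim to $x<0$, completing the verification.
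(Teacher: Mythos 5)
Your overall architecture (region-by-region verification, extracting $C^1$ regularity from \hyperref[SF1]{SF1}--\hyperref[SF2]{SF2} by differentiating the value match along each boundary and using $R'<1$ on the reflecting arcs, equality of one factor per region for \eqref{eq:46}) matches the paper's proof. However, the central condition \eqref{eq:44} on regions II and III is exactly where the paper's proof does real work, and your proposal defers it to a plan (``extract the correct sign of $\partial_x U$ inside each strip''). That plan does not survive contact with the actual structure: $\partial_x U$ is \emph{not} of one sign on $(F(c),G(c))$ in general. The paper's argument is that $U(x,c)=\ftla x + C(c)e^{x\sta}+D(c)e^{-x\sta}$ with $C(c)=\sta A(c)+A'(c)<0$ and $D(c)=-\sta B(c)+B'(c)>0$ (using $A=h_1(F)$, $B=h_2(F)$, $F'<0$ and \cite[Lemma 8.1]{KOWZ00}), so that $\partial_x U(\cdot,c)$ is \emph{strictly concave}; combined with the endpoint values $U(F(c),c)=2\d F(c)<1$, $U(G(c),c)=1$ and the sign $\partial_x U(G(c),c)\geq 0$ --- obtained from \hyperref[SF2]{SF2} when $c>\ol{c}$ and, when $c<\ol{c}$, from the identity $2\sta e^{G\sta}q(G;F)=\big(e^{2G\sta}-e^{2F\sta}\big)\partial_x U(G,c)$ together with the hypothesis $q(G;F)>0$ --- it forces $U\leq 1$ on the strip (and analogously on III, using $U(\Fb(c),c)=1$, $U(\Gb(c),c)=1$, $\partial_x U(\Gb(c),c)=0$ and the signs in \eqref{tildeA'B'}). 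None of this shape argument appears in your sketch, so \eqref{eq:44} --- on which the paper's proof of \eqref{eq:43} for regions III and IV also relies --- is a genuine gap.

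Two further steps are incorrect or unsupported as written. First, in your treatment of \eqref{eq:45} on region IVc you claim that $\zeta=c$ forces $x-c\geq f_0$; this is false, since the left boundary of IV satisfies $\lim_{c\to 0}G(c)=\fotd<f_0$ and $G'>1$, so IVc contains points with $x-c\in[\fotd,f_0)$ (after expending all fuel the process is absorbed immediately). The paper handles all of IVc at once by writing $(\cL-\a)\tild Q+\la x^2=(\cL-\a)h_r$ and invoking convexity of the transformed obstacle $H_r(\cdot;c)$ on each connected component of IVc (Lemmata \ref{lem:Hconvexity} and \ref{lem:geomlemma}); your substitute (``direct check using Feynman--Kac for $\tild V_0$'') would in any case also need $x\geq \atl+\frac c2$ on the sub-case $x-c>f_0$, which you do not verify, and you do not treat region IVa (reflection at the $c>\ol{c}$ portion of $G$, where the paper uses $G(\theta)\geq\atl$). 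Second, your justification of $\tild Q\leq \d x^2$ on region III appeals to Theorem \ref{existencegeometric} and Lemma \ref{lem:Hr*}, neither of which applies there: the former concerns the regime $\la\in[\la^\dagger,\a\d)$ where III is empty, the latter concerns $c>c_\mathcal{I}$, and in region III the line $y\mapsto\tild A(c)y+\tild B(c)$ is a tangent to $H_{r1}$ at $\Psi(\Fb(c))$, not part of the greatest non-positive convex minorant, so it is not obviously below $H_l$. The paper instead proves \eqref{eq:43} on III (and IVb, IVc) by integrating $\big(\tfrac{\partial}{\partial x}+\tfrac{\partial}{\partial c}\big)\tild Q\leq 1$ along the direction $(1,1)$ from $(x-c,0)$ and using that these regions lie in $\{x\geq\fotd+c\}$ --- another place where the missing \eqref{eq:44} is essential.
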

    
\begin{proof}
Since this even extension of $\tild Q$ is of class $C^2$ across the boundary $x=0$ (where it is locally quadratic in $x$), we examine the regularity only of the function $\tild Q$. 
Firstly, it may be checked by direct calculation from \eqref{vc1}--\eqref{vc4} that $\tild Q$ is twice continuously differentiable in its first argument with locally bounded second derivatives away from the set 
\begin{align} \label{eq:cbarns}
\begin{split}
&\{(x,c): c>0, \; x \in \{F(c), G(c), \bar F(c), \bar G(c)\}\} \;\cup\; \{(x,c): c \geq \ol{c}, \; x = G(\ol{c}) + c - \ol{c}\} \qquad \\
& \quad \;\cup\; \{(x,c): x = f_0+c \} \;\cup\; \{(x,c): x = g_0 + c\} \;\cup\; \{(x,c): c \geq c_\mathcal{I}, \; x = D(c) \}. 
\end{split}
\end{align}
Then, the smooth fit conditions \hyperref[SF1]{SF1}--\hyperref[SF2]{SF2} ensure that $\tild Q$ is continuously differentiable across the boundaries and line segments making up the set \eqref{eq:cbarns}, as follows. 

\vspace{1mm}
{\it Proof that $\tild Q$ is of class $C^1$ across the boundaries $F, G, \bar F$ and $\bar G$, and across the line segments $c \mapsto G(\ol{c})+c-\ol{c}$, $c \mapsto f_0+c$, $c \mapsto g_0+c$ and $D$ on their corresponding domains.}  
Let $R$ be one of the boundaries $F$, $\Fb$ or $\{(G(c),c): G'(c) \geq 1 \}$. 
Then two surfaces meet along the boundary $R$, let these be $Q_i$, $Q_j$ in the notation of \eqref{vc1}--\eqref{vc4}, for $i \neq j$. 
Then from \hyperref[SF1]{SF1} we have
\begin{align}
\tild Q_i(R(c),c) &= \tild Q_j(R(c),c),  \label{eq:SF01} \\
\pd{\tild Q_i}{x}(R(c),c) &= \pd{\tild Q_j}{x}(R(c),c). \label{eq:SF02}   
\end{align}
Differentiating \eqref{eq:SF01} gives 
\begin{align} \label{eq:SFderiv}
\pd{\tild Q_i}{x}(R(c),c)R'(c) + \pd{\tild Q_i}{c}(R(c),c) = 
\pd{\tild Q_j}{x}(R(c),c)R'(c) + \pd{\tild Q_j}{c}(R(c),c),
\end{align}
then substituting \eqref{eq:SF02} gives $\pd{\tild Q_i}{c}(R(c),c) = \pd{\tild Q_j}{c}(R(c),c)$, so that $\tild Q$ is of class $C^1$ across the boundary $R$.

Alternatively, let $R$ be one of the boundaries $\Gb$ or $\{(G(c),c): G'(c) < 1 \}$, along which two surfaces $Q_i$, $Q_j$ meet. Then from the definition of the regions $I$ to $IV$ we have $i \in \{2,3\}$ and $j=4$, and it follows from \eqref{vc4} that \eqref{eq:SF01} again holds.
From \hyperref[SF2]{SF2} we have
\begin{align} \label{eq:SF03}
\Big(\pd{\tild Q_i}{x} + \pd{\tild Q_i}{c} \Big)(R(c),c) &= 
\Big(\pd{\tild Q_4}{x} + \pd{\tild Q_4}{c} \Big)(R(c),c) = 1, 
\end{align}
and substituting this into \eqref{eq:SFderiv} gives
\begin{align*}
(R'(c) - 1)\pd{\tild Q_i}{x}(R(c),c) = (R'(c) - 1) \pd{\tild Q_4}{x}(R(c),c).
\end{align*}
Then since $R'(c) <1$ we have $\pd{\tild Q_i}{x}(R(c),c) = \pd{\tild Q_4}{x}(R(c),c)$, so that \eqref{eq:SF03} gives $\pd{\tild Q_i}{c}(R(c),c) = \pd{\tild Q_4}{c}(R(c),c)$ and hence $\tild Q$ is of class $C^1$ across $R$.

Finally, the $C^1$ property across all line segments except $D$ can be established as in \cite{KOWZ00}, by appealing to \cite[Theorem 10.3]{KOWZ00}, straightforward differentiation, and the analysis of \cite[Theorem 9.1]{KOWZ00}, in that paper.
Although the line segment $D$ does not arise in the latter paper, it can be dealt with similarly, by noting that $\tild Q$ is of class $C^1$ on $[0,\infty) \times [0, c_\mathcal{I}]$ and then using the identity $\tild Q(x,c_\mathcal{I}+c')=\tild Q(x-c',c_\mathcal{I})+c'$, which is valid across $D$ by construction, at least for sufficiently small $c'>0$. 
For completeness we also recall that the $C^1$ property trivially also holds across these line segments wherever they lie in the interior of the domain of functions $Q_2(\cdot,c)$, $Q_3(\cdot,c)$ in \eqref{vc2} and \eqref{vc3} respectively, since $\tild Q$ is of class $C^2$ there.

\vspace{1mm}
{\it Proof that \eqref{eq:44} holds.} 
On region I, \eqref{vc1} implies that $\left(\pd{}{x} + \pd{}{c}\right) \tild Q(x,c) = 2 \delta x \leq 1$, since in this region we have $x\leq F(c) <\fotd$. 

For $(x,c)$ in the interior of region IV, for sufficiently small $\epsilon>0$ we have from \eqref{vc4}
\[
\frac{\tild Q(x + \epsilon,c + \epsilon) - \tild Q(x,c)}{\epsilon} = 1,
\]
hence $1 = \left(\pd{}{x} + \pd{}{c}\right) \tild Q(x,c) = U(x,c)$ (cf.~\eqref{eq:UUdef}). 

Then since region II shares boundary $F$ with region I and boundary $G$ with region IV, we combine the above observations with the continuous differentiability of $\tild Q$ across these boundaries (established above) to give 
\begin{align}\label{eq:FGU}
U(F(c),c) = 2 \d F(c) \in (0,1) 
\quad \text{and} \quad 
U(G(c),c) = 1 .
\end{align}
We then claim that $x \mapsto \pd{U}{x}(x,c)$ is a strictly concave function with at most one zero in $(F(c),G(c))$ and $\pd{U}{x}(G(c);c) \geq 0$. Together with \eqref{eq:FGU}, this means that $U(\cdot,c)$ is either strictly increasing or first strictly decreasing and then strictly increasing. In both cases \eqref{eq:44} holds.

To prove the above claim about $\pd{U}{x}$, note from {\eqref{vc2}} that 
\begin{align*}
U(x,c) &= \ftla x + C(c)e^{x\sta} + D(c)e^{-x\sta}, \quad x\in(F(c),G(c)).  
\end{align*}
It is straightforward to check from the equations $A(c) = h_1(F(c)), B(c) = h_2(F(c))$ (Section \ref{Sec:verification3a}) that 
\begin{align*}
C(c) &= \sta A(c) + A'(c) < 0 \quad \text{and} \quad D(c) = -\sta B(c) + B'(c) > 0, 
\end{align*}
where the inequalities follow from $F'(\cdot)<0$ and \cite[Lemma 8.1]{KOWZ00}. 
This implies that $\pd{U}{x}(\cdot,c)$ is strictly concave. 
For $c>\ol{c}$, we have from the  smooth fit assumption \hyperref[SF2]{SF2} that 
$\pd{U}{x}(G(c);c) = 0.$
For $c<\ol{c}$ it follows from \eqref{eq:FGU}, the explicit expressions for $A(c)$ and $B(c)$ in Section \ref{Sec:verification3a}, and straightforward manipulations that the pair $(F,G)$ solves
\begin{equation*} 
2 \sta e^{G(c)\sta}q\big(G(c);F(c) \big) = \big( e^{2G(c)\sta}-e^{2F(c)\sta} \big) \, \pd{U}{x}\big(G(c),c\big), 
\end{equation*} 
where $q$ is defined by \eqref{eq:qdef}. 
Given that the left-hand side is strictly positive, we get that $\pd{U}{x}(G(c);c) > 0$.

In region III, we follow similar arguments as for region II to prove that $x \mapsto \pd{U}{x}(x,c)$ is a strictly concave function on $(\bar F(c),\bar G(c))$. 
This follows since condition \hyperref[SF2]{SF2} gives $U(\bar G(c),c) = 1$ and $\pd{U}{x}(\bar G(c);c) = 0$, implying that \eqref{tildeA'B'} holds, and then the definitions of the functions $h_3$ and $h_4$ in \eqref{eq:h3} and \eqref{eq:h4} with the fact that $\bar G(\cdot) \in [\atl, \atl + \frac1{\sqrt{2\a}})$ establish the signs of the quantities in \eqref{tildeA'B'}. 
Combining this strict concavity with the equation $U(\bar F(c),c)=1$ (which follows from \eqref{vc4}, condition \hyperref[SF1]{SF1} and the continuous differentiability of $\tild{Q}$ across $\Fb$), we conclude that $\pd{U}{x}(\cdot,c)$ has exactly one zero in $(\bar F(c),\bar G(c))$. 
Hence $U(\cdot,c)$ is first strictly decreasing and then strictly increasing, thus \eqref{eq:44} holds.

\vspace{1mm}
{\it Proof that \eqref{eq:43} holds.} 
In region I this condition holds with equality. 
In region II, the claim is proved in the same way as in \cite[Theorem 9.1]{KOWZ00}. 
The condition holds in regions III, IVb and IVc since 
\begin{align*}
\tild{Q}(x,c) &= \tild{Q}(x-c,0) + \int_0^c \Big(\frac{\partial\tild{Q}}{\partial x} + \frac{\partial\tild{Q}}{\partial c}\Big)(x-s,s) ds \\
&\leq \delta (x-c)^2 + \int_0^c 1 \, ds \\
&\leq \delta (x-c)^2 + \int_0^c \Big(\frac{\partial}{\partial x} + \frac{\partial}{\partial c}\Big)(\delta (x-s)^2)ds = \d x^2,
\end{align*}
where the first line follows by integrating along the line segment in $[0,\infty)^2$ from $(x-c,0)$ to $(x,c)$; 
the second from the fact that \eqref{eq:43} holds for $\tild{Q}(\cdot,0)$ and since \eqref{eq:44} holds; 
and the third from the fact that regions III, IVb and IVc are included in $\{(x,c) \in [0,\infty)^2 : x \geq \fotd + c\}$ (cf.~Figures \ref{fig:regions}--\ref{fig:regionsnew}), and finally integrating along the same line segment as before. The proof for region IVa is analogous, except that for $c > \bar c$ we integrate along the line segment from the point $(x - \zeta(x,c), c - \zeta(x,c))$ on the boundary of region II (where $x - \zeta(x,c) = G(c - \zeta(x,c))$, and where we have already established that \eqref{eq:43} holds) to the point $(x,c)$.

\vspace{1mm}
{\it Proof that \eqref{eq:45} holds.} 
By construction the condition holds with equality in regions II and III. In region I direct calculation gives 
\begin{align*}
\frac12 \frac{\partial^2\tild{Q}}{\partial x^2}(x,c) - \alpha \tild Q(x,c) + \lambda x^2  
= \delta + (\lambda - \alpha \delta)x^2 
\geq \delta + (\lambda - \alpha \delta)f_0^2 > 0,
\end{align*}
where the first inequality follows from $x \leq \fotd < f_0$ when $\la\in (\la^*,\a\d)$ and the second from Lemma \ref{lem:Hlprops}.  

Region IV, illustrated in Figures \ref{fig:regions}--\ref{fig:regionsnew} and constructed as in \eqref{vc4}, can be conveniently subdivided into regions IVa--IVc as follows:
\begin{align} \label{partition}
\left. \begin{cases}
\text{IVa} \;:=\; \{(x,c): \tild Q(x,c) = \tild Q(G(c - \zeta), c - \zeta) + \zeta \}, \; \zeta < c,\\
\text{IVb} \;:=\; \{(x,c): \tild Q(x,c) = \tild Q(\Gb(c - \zeta), c - \zeta) + \zeta \}, \; \zeta < c,\\
\text{IVc} \;:=\; \{(x,c): \tild Q(x,c) = \tild Q(x-c,0) + c \}, \; \zeta = c. 
\end{cases}\right\} 
\end{align}
To address region IVa, write $\eta = x - \zeta$, $\theta = c - \zeta$, $G(\theta) = \eta$, and note that $\theta > \ol{c}$ (that is, the point $(\eta,\theta)$ lies on the reflecting part of the boundary $G$). 
We proceed as follows (similar arguments can be found in \cite{KOWZ00}). 
Under the conditions of Theorem \ref{thm:ver2}, the boundary $G$ is of class $C^1(\ol{c}, \infty)$ and we have condition \hyperref[SF2]{SF2}, thus $\tild Q$ is of class $C^2$ across the reflecting portion of the boundary $G$. 
In particular we have 
$\frac{\partial^2}{\partial x^2}\tild Q (x,c) = \frac{\partial^2}{\partial x^2}\tild Q(\eta,\theta)$ and since $G(\theta) \geq \atl$, for $\theta>\ol{c}$, we get
\begin{align*}
&\frac12 \frac{\partial^2}{\partial x^2} \tild Q(x,c) - \alpha \tild Q(x,c) + \lambda x^2 
=\frac12 \frac{\partial^2}{\partial x^2} \tild Q(\eta, \theta) - \alpha \big( \tild Q(\eta, \theta) + \zeta \big) + \lambda x^2 \notag\\
& \qquad = (\cL - \alpha) \tild Q(\eta, \theta) + \lambda x^2 - \alpha \zeta = \lambda \big( x^2 - (x-\zeta)^2 \big) - \alpha \zeta \notag\\
& \qquad = 2 \lambda \zeta \Big((x-\zeta) - \atl \Big) + \lambda \zeta^2
= 2 \lambda \zeta \Big(G(\theta) - \atl \Big) + \lambda \zeta^2 \geq 0.
\end{align*}
The region IVb is addressed in the same way in \eqref{eq:H*pos}.
For $(x,c) \in \text{IVc}$ note that $\tild Q(x,c) = h_r(x;c) + \fla x^2+ \las$, so that
\begin{align} \label{eq:Qhrel}
(\cL - \alpha) \tild Q(x,c) + \lambda x^2  = (\cL - \alpha) h_r(\Psi^{-1}(y);c).
\end{align}
Recalling Lemma \ref{lem:Hconvexity}, it is clear from the locations of the boundaries $G$, $\bar F$ and $\bar G$  (that is, the facts that $\frac 1 {2\d}+\frac c 2 < G(c)$ for all $c\in (0,\ol{c}]$, $G(c) < \bar F(c) < f_0+c$ for all $c\in (0,c_\mathcal{I}]$, and $\lim_{c\to 0}\Gb(c) > \atl$), that the transformed obstacle $y \mapsto H_r(y;c)$ is convex on each connected component of $\Psi\big(([0,\infty) \times \{c\}) \cap \text{IVc} \big)$. 
Lemma \ref{lem:geomlemma} and \eqref{eq:Qhrel} then give that $(\cL - \alpha) \tild Q(x,c) + \la x^2 \geq 0$.

\vspace{1mm}
{\it Proof that \eqref{eq:46} holds.} 
By the above discussion, on each region I--IV one of conditions \eqref{eq:43}--\eqref{eq:45} holds with equality, so condition \eqref{eq:46} holds. 

\vspace{1mm}
{\it Proof that \eqref{eq:42} holds.}
The growth condition \eqref{eq:42} follows from \eqref{vc1}--\eqref{vc4}, the partition \eqref{partition}, $C^1$ regularity and \eqref{eq:Vtild0}.
\hfill $\Box$ 
\end{proof} 

\vspace{3mm}
We may now conclude by confirming that the regularity needed for Theorem \ref{thm:ver2} holds for all $\la\in (\la^*,\a\d)$, i.e.~in both of the regimes that we study. 

\vspace{1mm}
{\it Regime $\la\in [\la^\dagger,\a\d)$}. 
In this case we set $c_\mathcal{I}=0$ so that the boundaries $\Fb$ and $\Gb$ play no role, and take the candidate $\tild{Q}$ with the boundaries $F$ and $G$ defined in Sections \ref{Sec:verification2a}--\ref{Sec:verification2c}. 
The conditions of Theorem \ref{thm:ver2} for $F$ and $G$ are then established as follows. For the small fuel levels of Section~\ref{Sec:verification2a}, their limits at $0$, continuous differentiability, and derivatives are given by 
\eqref{cor:1/2d0}--\eqref{eq:FGdef}, Proposition \ref{cor:1/2d}, and Proposition \ref{prop:monotonicity} respectively.
For the larger fuel levels of Sections \ref{Sec:verification2b} and \ref{Sec:verification2c}, their derivatives and slopes have already been established in \cite{KOWZ00} and their location is given in \eqref{eq:longineq}. 
The positivity property for $q$ follows from Lemma \ref{cor:FGC1}. 
The continuity of $\tild Q$ away from $c=0$ is established in the proof of Theorem \ref{thm:ver2} and its continuity at $c=0$ can easily be deduced from Lemma \ref{lem:minorant}. 

\vspace{1mm}
{\it Regime $\la\in (\la^*,\la^\dagger)$}.
The candidate $\tild{Q}$ in this regime is that described in Sections \ref{Sec:verification3a}--\ref{Sec:verification3c}.
The conditions of Theorem \ref{thm:ver2} required of $F$ and $G$ are established as in the above regime $\la\in [\la^\dagger,\a\d)$, with the addition of Proposition \ref{0}, and the positivity property for $q$ is shown in Lemma \ref{q=0*}. 
The conditions required of $\Fb$ and $\Gb$ are established as follows. 
Their limits at 0, continuous differentiability, and derivatives follow respectively from \eqref{tildeg0}; Proposition \ref{cor:xf}; and Propositions \ref{prop:olF'}--\ref{prop:olG'} together with Proposition \ref{cor:ctild} (see also \eqref{valuescf}). 
The continuity of $\tild Q$ away from $c=0$ is established in the proof of Theorem \ref{thm:ver2} and its continuity at $c=0$ is given by Proposition \ref{prop:newbdrs}.

\vspace{1mm}
We thus may now conclude that problem \eqref{eq:defV} has the value function $\tild Q$ constructed in Section~\ref{Sec:verification2} under the regime $\la\in [\la^\dagger,\a\d)$, and that constructed in Section~\ref{sec:cpbar} under the regime $\la\in (\la^*,\la^\dagger)$. The remaining details, including the associated optimal strategy, are given in Corollary \ref{cor:opts} below.

\begin{Corollary}\label{cor:opts}
If the conditions of Theorem \ref{thm:ver2} hold, then the following strategy $(\xi,\tau) \in \mathcal{A}(c) \times \cT$ is optimal in the control problem \eqref{eq:defV}:
\begin{align} \label{sol:genver}
\left.
\begin{cases}
&\text{{\rm (a)} If the fuel level is $c=0$:} \\
&\quad \text{ {\rm (i)} stop immediately if $0 \leq X_t \leq f_0$}, \\
&\quad \text{ {\rm (ii)} wait if $X_t>f_0$}. \\
&\text{{\rm (b)} If the fuel level is $c>0$:} \\ 
&\quad \text{ {\rm (i)} stop immediately if  $(X_t,C_t) \in \text{I}$}, \\
&\quad \text{ {\rm (ii)} wait without expending fuel if $(X_t,C_t) \in \text{II} \cup \text{III}$}, \\
&\quad \text{ {\rm (iii)} if $(X_t,C_t) \in \text{IV}$ then, if possible, expend just enough fuel to} \\
&\qquad \quad \text{ensure that $(X_\cdot, C_\cdot) \in \text{II} \cup \text{III}$ at almost all times; if impossible }\\
&\qquad \quad \text{then expend all fuel and proceed as in {\rm(a)}.} \\
\end{cases}
\right\}
\end{align}
\end{Corollary}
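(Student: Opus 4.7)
My plan is to invoke Theorem \ref{th:ver} by exhibiting explicitly the admissible strategy $(\xi,\tau)$ described in \eqref{sol:genver} and verifying the three optimality identities in \eqref{eq:genito}. Since Theorem \ref{thm:ver2} has already established that $\tild Q$ satisfies the variational inequalities \eqref{eq:43}--\eqref{eq:46} together with the growth and regularity conditions, only the identities \eqref{eq:genito} remain to be checked for the candidate strategy.

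I would construct the strategy by cases on the initial state $(x,c)$. If $c=0$, the control is trivial and $\tau$ is the hitting time of $[0,f_0]$ by $X$, which is exactly the optimal stopping time from Theorem \ref{cor:czerosol}. For $c>0$ and $(x,c)$ in region I, set $\xi\equiv 0$ and $\tau=0$. For $(x,c)$ in region II or III, let $\xi$ be zero until the first exit time of $(X,C)$ from $\text{II}\cup\text{III}$ (noting $C$ is then constant), and continue recursively. For $(x,c)$ in region IV, first apply an instantaneous jump of size $\zeta(x,c)$ as defined in \eqref{zeta}, which lands the process either on $G(\cdot)$ (if moving along the $(-1,-1)$ direction reaches $G$ first or exhausts the fuel while crossing), or on $\bar G(\cdot)$ (sub-region IVa of Figure \ref{fig:regionsnew} when applicable), or on the $c=0$ axis in $[0,f_0]$ or in $(f_0,\infty)$; from the landing point, restart the policy. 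When $(X,C)$ subsequently reaches a reflecting portion of $G$ or of $\bar G$, apply the Skorokhod map to drive $X+\xi$ along that boundary while inside $\text{II}\cup\text{III}$. Let $\tau$ be the first time the process enters region I.

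With this $(\xi,\tau)$ in hand, the verification of \eqref{eq:genito} splits according to the three bracketed identities. The first, $\tild Q(X_\tau,C_\tau)=\delta X_\tau^2$, holds because $\tau$ is by construction the entry time into region I, where \eqref{vc1} gives $\tild Q=\delta x^2$. The second identity follows because, away from $\tau$, the process resides either in region $\text{II}\cup\text{III}$, where $(\mathcal{L}-\alpha)\tild Q+\lambda x^2=0$ by construction of the coefficient functions in \eqref{vc2}--\eqref{vc3} (these are built from the characteristic equation of Brownian motion), or at isolated jump times of Lebesgue measure zero. The third, singular-control identity holds because the continuous part of $\xi$ acts only on the reflecting portions of $G$ and $\bar G$, where condition \hyperref[SF2]{SF2} provides $(\tfrac{\partial}{\partial x}+\tfrac{\partial}{\partial c})\tild Q=1$. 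Finally the jump sum in \eqref{eq:genito} collapses to zero because, by construction \eqref{vc4} and the definition of $\zeta$, each jump of size $\triangle C_t$ satisfies $\tild Q(X_{t-},C_{t-})=\tild Q(X_{t-}-\triangle C_t,C_{t-}-\triangle C_t)+\triangle C_t$.

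The main obstacle will be justifying admissibility and the well-posedness of this recursive construction, particularly in the V--$\Lambda$ regime $\lambda\in(\lambda^*,\lambda^\dagger)$ when the two waiting components communicate (case \eqref{pos:2c*}). There, a repulsion from the boundary $G$ may land the process on the reflecting boundary $\bar G$ rather than on the $c=0$ axis, after which further Brownian fluctuations may cause the process to return toward $\bar F$ and trigger another full shot; the argument must rule out pathological accumulation of such transitions, which I would do by noting $\bar F'\in(0,1)$ and $\bar G'<0$ (Propositions \ref{prop:olF'}--\ref{prop:olG'}), so that each transition strictly decreases $C$ by a uniformly positive amount on compact subintervals of $(0,c_\mathcal{I})$. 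A subsidiary point is that $\tau<\infty$ almost surely when $c>0$: on $\{C_t>0\}$ either $X$ eventually hits the absorbing boundary $F$ or $C_t$ reaches $0$, after which Theorem \ref{cor:czerosol} guarantees $X$ hits $[0,f_0]$ in finite time a.s.; thus the sum and integrals in \eqref{eq:genito} are also well-defined. Once admissibility is in place the verification identities reduce to bookkeeping, and Theorem \ref{th:ver} yields $Q(x,c)=\tild Q(x,c)$ together with the optimality of the constructed $(\xi,\tau)$.
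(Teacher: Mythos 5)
Your proposal follows essentially the same route as the paper's proof: construct the strategy region by region (immediate stopping in I, waiting in II\,$\cup$\,III, partial/full shots at the repelling boundaries and Skorokhod reflection along the reflecting parts of $G$ and $\bar G$ in IV), check the optimality identities \eqref{eq:genito} via $U=1$ on the reflecting boundaries and the jump structure of \eqref{vc4}, and conclude with Theorem \ref{th:ver}. The one point you flag but leave open -- well-posedness of the obliquely reflected dynamics at the moving boundaries -- is exactly what the paper settles by citing \cite{Dupuis93}, while your concern about repeated transitions between the two waiting components is moot, since hitting $\bar F$ triggers a single full shot landing at $\bar F(c)-c<f_0$ with zero fuel, so no accumulation can occur.
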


\begin{proof}
The strategy $(\xi, \tau)$ described by \eqref{sol:genver} is elementary except for the indication in (b).(iii) to `if possible, expend just enough fuel to remain in $\text{II} \cup \text{III}$ at almost all times', which applies to all regions in Figures \ref{fig:regions}--\ref{fig:regionsnew} except region I. 
To make this precise we appeal to \cite[Corollary 5.2]{Dupuis93}, which gives the existence and uniqueness of a strong solution to the Skorokhod reflection problem. 

For initial states $(x,c)$ lying in region IVc of Figures \ref{fig:regions}--\ref{fig:regionsnew}, it is clearly impossible for $(X_t,C_t)$ to lie in $\text{II} \cup \text{III}$ at almost all times, so all fuel is immediately expended. 

For initial states $(x,c)$ lying in region III $\cup$ IVb of Figures \ref{fig:regions}--\ref{fig:regionsnew} we have 
$$
x>\bar F(c) {\bf 1}_{\{c\leq c_\mathcal{I}\}} + D(c){\bf 1}_{\{c>c_\mathcal{I}\}}, 
\quad \text{where $D(c)$ is defined by \eqref{eq:c*}}.
$$ 
Take $(X_t,C_t)$ to be the solution to the Skorokhod reflection problem for the Brownian motion process $(x + W_t,c)$ with boundary $\bar{G}$ and reflection in the direction $(-1,-1)$. 
This reflected process immediately enters region III and, almost surely, subsequently it either expends all of its fuel by repeated reflection in $\Gb$, or it hits the boundary $\bar F$. 
In the latter case, at that time it becomes impossible to remain in region III and so all remaining fuel is immediately expended.

For initial states $(x,c)$ lying in region IVa of Figures \ref{fig:regions}--\ref{fig:regionsnew} we take $(X_t,C_t)$ to be the solution to the Skorokhod reflection problem for the Brownian motion process $(x + W_t,c)$ with the restriction of the boundary $G$ to $\{c \geq 0: G'(c) < 1 \}$ and reflection in the direction $(-1,-1)$. 
This reflected process immediately enters region II and, almost surely, subsequently hits either $F$ or the restriction of the boundary $G$ to $\{c \geq 0: G'(c) \geq 1 \}$. 
If the former occurs first then, according to (b).(i) of \eqref{sol:genver}, discretionary stopping happens at that time; 
if the latter occurs first then it becomes impossible to remain in region II and so all remaining fuel is immediately expended.
        
Then by the strong Markov property we have:
\begin{enumerate}
\vspace{-2mm}
\item the time spent by $(X_t,C_t)$ outside II $\cup$ III has Lebesgue measure 0, and in II $\cup$ III the expressions \eqref{vc2}--\eqref{vc3} satisfy $(\cL - \alpha) \tild Q(x,c) + \lambda x^2 = 0$;

\vspace{-2mm}
\item fuel is expended only when $(X_t,C_t) \in$ IV (acting on the process $(X_t,C_t)$ in the direction $(-1,-1)$);
    
\vspace{-2mm}
\item whenever $\xi$ increases via a jump, the value of $\tild Q$ decreases via a jump of equal magnitude, so that the final line of \eqref{eq:genito} holds.
\end{enumerate}
\vspace{-2mm}
This implies that the conditions in \eqref{eq:genito} hold almost surely, and, in view of Theorem \ref{th:ver}, the strategy $(\xi, \tau)$ of \eqref{sol:genver} is optimal for the control problem \eqref{eq:defV}. 
\hfill $\Box$ 
\end{proof}

\begin{appendix}

\section{Proofs of results in Sections \ref{sec:methods}--\ref{sec:newsol}}
\label{AppA}

We first recall the following useful result (see, e.g.\ \cite[Appendix A.1]{DeAFeMo15b}).

\begin{Lemma} \label{lem:geomlemma}
Let $g: \R \times [0, \infty) \to \R$. For values of $y$ and $c$ such that the following derivatives exist, we have: 
\begin{itemize}
\vspace{-1mm}
\item[\rm (i)] $\pd{\Phi(g)}{y}(y;c)$ has the same sign as $\pd{}{x}\big(\frac{g(x;c)}{\phi(x)} \big) \big|_{x=\Psi^{-1}(y)}$;

\vspace{-2mm}
\item[\rm (ii)] $\pd{^2 \Phi(g)}{y^2}(y;c)$ has the same sign as $(\cL - \alpha) g(\Psi^{-1}(y);c)$,
\end{itemize}
\vspace{-1mm}
where the sign of $0$ is defined to be $0$.
\end{Lemma}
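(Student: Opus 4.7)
The plan is to apply the chain rule and exploit the fact that $\phi_\alpha$ (and $\psi_\alpha$) solves the characteristic ODE $(\cL-\alpha)u=0$. Throughout, write $x = \Psi^{-1}(y)$ and $v(y):=\Phi(g)(y;c) = g(x;c)/\phi_\alpha(x)$, so that equivalently $g(x;c) = \phi_\alpha(x)\,v(\Psi(x))$. Since $\Psi(x)=e^{2\sta x}$ is strictly increasing with $\Psi'(x)>0$, we have $(\Psi^{-1})'(y)>0$.

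For part (i), the chain rule gives
\begin{equation*}
\pd{\Phi(g)}{y}(y;c) \;=\; (\Psi^{-1})'(y)\cdot\pd{}{x}\!\left(\frac{g(x;c)}{\phi_\alpha(x)}\right)\bigg|_{x=\Psi^{-1}(y)},
\end{equation*}
and positivity of $(\Psi^{-1})'(y)$ forces the two quantities to share the same sign.

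For part (ii), I would differentiate the identity $g(x;c)=\phi_\alpha(x)v(\Psi(x))$ twice in $x$, form $(\cL-\alpha)g = \tfrac12 g_{xx} - \alpha g$, and collect terms according to $v(\Psi(x))$, $v'(\Psi(x))$, $v''(\Psi(x))$. This yields
\begin{equation*}
(\cL-\alpha)g(x;c) \;=\; v(\Psi(x))\,(\cL-\alpha)\phi_\alpha(x) \;+\; v'(\Psi(x))\!\left[\phi_\alpha'(x)\Psi'(x)+\tfrac12\phi_\alpha(x)\Psi''(x)\right] \;+\; \tfrac12\phi_\alpha(x)(\Psi'(x))^2\,v''(\Psi(x)).
\end{equation*}
The coefficient of $v$ vanishes because $(\cL-\alpha)\phi_\alpha=0$. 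The key step (the main obstacle, such as it is) is showing that the coefficient of $v'$ also vanishes. For this, I would use $\Psi=\psi_\alpha/\phi_\alpha$ to compute $\Psi'=W/\phi_\alpha^{2}$, where $W:=\psi_\alpha'\phi_\alpha-\psi_\alpha\phi_\alpha'$ is the Wronskian; it is constant because both $\psi_\alpha$ and $\phi_\alpha$ satisfy the same linear ODE $(\cL-\alpha)u=0$. Differentiating gives $\Psi''=-2W\phi_\alpha'/\phi_\alpha^{3}$, so that $\phi_\alpha'\Psi'+\tfrac12\phi_\alpha\Psi'' = W\phi_\alpha'/\phi_\alpha^{2} - W\phi_\alpha'/\phi_\alpha^{2} = 0$, as required.

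Substituting back leaves $(\cL-\alpha)g(\Psi^{-1}(y);c) = \tfrac12\phi_\alpha(x)(\Psi'(x))^{2}\,\pd{^2 \Phi(g)}{y^2}(y;c)$, with $x=\Psi^{-1}(y)$. Since $\phi_\alpha(x)>0$ and $(\Psi'(x))^{2}>0$, the two sides have the same sign (both zero together, in particular). The overall strategy is thus just the Liouville-type change of variables that turns $\cL-\alpha$ into a pure second derivative in the transformed scale (up to a strictly positive factor); beyond the one line needed to check the $v'$-coefficient, everything is bookkeeping via the chain rule.
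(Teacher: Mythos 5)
Your proof is correct. The paper does not actually reproduce an argument for this lemma — it is recalled as a known result with a citation to De Angelis, Ferrari and Moriarty (Appendix A.1) — but your computation is exactly the standard one behind that reference: the chain rule plus positivity of $(\Psi^{-1})'$ for (i), and for (ii) the change of scale $g=\phi_\alpha\cdot v(\Psi)$ under which $(\cL-\alpha)g$ reduces to $\tfrac12\phi_\alpha(\Psi')^2\,v''(\Psi)$, the $v$-coefficient dying by $(\cL-\alpha)\phi_\alpha=0$ and the $v'$-coefficient by the constancy of the Wronskian (here explicitly $W=2\sqrt{2\alpha}$). Your Wronskian argument is in fact slightly more general than needed, since for this paper one could also just verify the cancellation directly from $\phi_\alpha(x)=e^{-\sqrt{2\alpha}x}$ and $\Psi(x)=e^{2\sqrt{2\alpha}x}$, but it buys the statement for an arbitrary regular one-dimensional diffusion at no extra cost.
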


\subsection{Proofs of results in  Section~\ref{sec:methods}}
\label{AppA3}

\noindent {\bf Proof of Lemma \ref{lem:Hlprops}.}
Part $\rm (i)$ follows from \eqref{eq:Hl}, and part $\rm (ii)$ from Lemma \ref{lem:geomlemma} and the straightforward calculation
\begin{align*} 
(\cL - \alpha) h_l(x) 
&= \d - (\a \d - \la)x^2.
\end{align*}
Note that this quadratic in $x$ satisfies $(\cL - \alpha) h_l(0) = \d > 0$ and 
\begin{align*} 
(\cL - \alpha) h_l(f_0) &= (\a \d -\la) \Big(\frac 1 \a+2\frac{f_0}{\sta}\Big)>0.
\end{align*}
Similarly, part $\rm (iii)$ follows from 
\begin{equation*} 
\pd{}{x} \Big(\frac{h_l(x)}{\phi(x)}\Big) \frac {e^{-x\sta}} {\sta (\d-\frac{\la}{\a})}=
\rho(x),
\end{equation*}
with $\rho(x)$ as in \eqref{eq:defrho}. 
\hfill$\Box$

\vspace{3mm}
\noindent {\bf Proof of Lemma \ref{lem:littleh}.}
The continuous differentiability of $x \mapsto h_r(x;c)$ follows from that of $V_0$, and \eqref{def-G1} is obtained from \eqref{eq:Vtild0}. To determine the minimum $h_l(x) \wedge h_r(x;c)$ in \eqref{eq:hdef}, it is easily checked from \eqref{def-G1} that the continuously differentiable function $x \mapsto h_r(x;c) - h_l(x)$ is concave. Also for $ x < f_0+c$ we have
\begin{equation*}
h_r(x;c) - h_l(x) = h_{r1}(x;c) - h_l(x) 
= c(1+\d(c-2x)), 
\end{equation*}
which is strictly decreasing in $x$ with root $x_c$ defined by \eqref{x_c}. 
\hfill$\Box$ 

\vspace{3mm}
\noindent {\bf Proof of Lemma \ref{lem:Hconvexity}.}
Fixing $c>0$, the first claim on the smoothness of $H_r$ follows from the smoothness of the transformation $\Phi$ of \eqref{def-H}. 
We have
\begin{align}
(\cL - \alpha) h_{r1}(x;c) &=
\d - (\a \d - \la)x^2 +  c \a ( \d (2 x -c ) - 1), \label{hr1cc} \\
(\cL - \alpha) h_{r2}(x;c) &= c(\la (2 x -c ) - \a). \label{hr2cc}
\end{align}
The expression \eqref{hr1cc} is concave with maximum value attained at 
$$x_{\rm max} \equiv x_{\rm max}(c) = \frac{\a\d c}{\a\d-\la}.$$ 
Using \eqref{eq:defrho}--\eqref{eq:fodef} and the fact that $f_0(\la^*) = \fotd$, we can show that $(\cL - \alpha) h_{r1}(x_{max};c)>0$ and $(\cL - \alpha) h_{r1}(f_0+c;c)>0$ for all $c>0$ and all $\la\in(\la^*,\a\d)$, while 
$$ (\cL - \alpha) h_{r1}(x_c;c) \begin{cases} 
> 0 , & \forall \; 0<c<\oc , \\
< 0 , & \forall \; c >\oc . \end{cases}
$$
Hence, in view of Lemma \ref{lem:geomlemma} and the above calculations, we conclude that:

(i) if $c>\oc$, there exists a unique $x=x_v(c)$ in $[0, f_0+c)$ such that the function $y \mapsto H_r(y;c)$ is strictly convex on $(y_v(c) \vee y_c,\hye(c))$, where $y_v(c) := \Psi(x_v(c))$ (and strictly concave on $(y_c, y_v(c))$ if $y_c < y_v(c)$); 

(ii) if $c\leq\oc$, then the function $y \mapsto H_r(y;c)$ is strictly convex on the whole of $(y_c,\hye(c))$.
\noindent 
We therefore define
$$y_v(c):= \begin{cases} \Psi(x_v(c)) , &\text{for all } c > K , \\ 
y_c , &\text{for all } c \leq K . \end{cases} $$
To complete the proof, we examine the geometry of $(\cL - \alpha) h_{r2}(\cdot;c)$:
 
In case $\rm (I)$, it is straightforward (using \eqref{eq:longineq1}) to see that \eqref{hr2cc} is strictly positive for all $x \geq f_0+c$, hence the required results follow from Lemma \ref{lem:geomlemma}.

In case $\rm (II)$ we use the fact that  
$$ (\cL - \alpha) h_{r2}(f_0+c;c) \begin{cases} 
> 0 , & \forall \; c > \uc ,\\
< 0 , & \forall \; 0<c<\uc,  \end{cases} 
$$
the fact that $(\cL - \alpha) h_{r2}(\cdot;c)$ is linearly increasing, and Lemma \ref{lem:geomlemma} to draw the required conclusion.
\hfill$\Box$

\subsection{Proofs of results in  Section~\ref{sec:cpos}}
\label{AppA4}

\noindent {\bf Proof of Lemma \ref{lem:minorant}.}
The tangent $r_{y_u}(\cdot;0)$ has strictly negative gradient. Also, from Figure~\ref{fig:1}, there exists $\epsilon > 0$ such that 
\begin{align} \label{eq:uncon}
r_{y_u}(y;0) < W(y) - \epsilon, \quad \forall \; y \in [y_w, \infty).    
\end{align}
We first restrict attention to a finite interval, and appeal to the uniform convergence of the functions $H(\cdot;c)$ to the function $W(\cdot)$ as $c \to 0$. 
For this, denote respectively by $H^I(\cdot;c)$, $W^I(\cdot)$ the restrictions of these functions to the compact domain $I:=[y_w, y_z]$, where $y_z:=\Psi(\max\{f_0, \atl\}+1)$. 
Then for any $c'>0$, the functions $\{H^I(\cdot;c): c \in [0,c']\}$ are equicontinuous, and they also converge pointwise to $W^I(\cdot)$ as $c \to 0$. 
This convergence is therefore uniform on $I$. It follows from \eqref{eq:uncon} that for sufficiently small $c$, we have $r_{y_u}(\cdot;0) < H(\cdot;c)$ on the interval $I$, and it remains only to prove this on the interval $J:=(y_z, \infty)$. 

To that end, we construct a straight line $\tild l$ with 
\begin{align} \label{eq:sepli}
r_{y_u}(\cdot;0) < \tild l < H(\cdot;c) \quad \text{on } J,
\end{align} 
as follows. By the above (uniform) convergence, for sufficiently small $c$ we have 
$$
H(y_z;c) > W(y_z) - \epsilon > r_{y_u}(y_z;0).
$$
For small $c$, by the definition of $H_{r2}$ we have $H(y_z;c) = H_{r2}(y_z;c)$. Also, it is straightforward to check from Lemma \ref{lem:Hconvexity} that, for sufficiently small $c$, the function $H(\cdot;c)$ is convex on $J$. It is also easy to check from \eqref{def-H} and \eqref{def-G1} that 
\[\pd{}{y} H_{r2}(y_z;c) \geq -c \, \frac{\la}{\a\sta y_z}.\]
Taking $\tild l$ to be the tangent to $H_{r2}(\cdot; c)$ at $y=y_z$, it follows that \eqref{eq:sepli} holds for sufficiently small $c$.
\hfill$\Box$

\vspace{3mm}
\noindent {\bf Proof of Proposition \ref{existencegeometricy2}.}
The proof is split into several steps.  
\vspace{1mm}

{\it \ul{Step 1:} Conditions for the existence of a unique couple $(\hyo(c),\hyt(c))$ for $0 \vee \uc \leq c \leq \oc$.}
From Lemma \ref{lem:Hconvexity}, we have that on $(y_c, \infty)$, $H_r(\cdot;c)$ is strictly convex and so $P_r(\cdot;c)$ is strictly decreasing. For all $c>\oc$, similarly $P_r(\cdot;c)$ increases strictly on $(y_c,y_v(c))$ and decreases strictly on $(y_v(c),\infty)$.
From the continuity of the function $y \mapsto P_r(y;c)$ and \eqref{eq:Pyc}--\eqref{eq:Pinfty} we conclude that there exists a unique point $\hat{y}_2(c) \in (y_c \vee y_v(c),\infty)$ satisfying $P_r(\hat{y}_2(c);c)=0$. By the convexity of $H_l$ from Lemma \ref{lem:Hlprops}, the supremum in \eqref{def-P_r} must be uniquely achieved at some point $\hyo(c) \in [1,y_c)$. 

\vspace{1mm}
{\it \ul{Step 2:} Existence of at least one couple $(\hyo(c),\hyt(c))$ for $\la \in (\la^*,\la^\dagger)$, $c \in(0, \uc)$}. 
In this case we have from Lemma \ref{lem:Hconvexity} that $P_r(\cdot\,;c)$ is strictly decreasing on
$(y_c, \hye(c)) \cup (\hyc(c), \infty)$ and strictly increasing on $(\hye(c), \hyc(c))$.
We conclude as in step 1, except that now there exist at least one and at most three roots of $P_r(\cdot;c)$, with the sets $(y_c,  \hye(c))$, $[\hye(c), \hyc(c)]$ and $(\hyc(c), \infty)$ each containing at most one root. 

\vspace{1mm}
{\it \ul{Step 3:} Tangent between $H_l$ and $H_{r1}$ for sufficiently small $c>0$ and its uniqueness}. 
The above construction gives
\[
\limsup_{c \to 0} \hyo(c) \leq \Psi \Big( \frac 1 {2\d} \Big) \leq \liminf_{c \to 0} \hyt(c).
\] 

If $\liminf_{c \to 0} \hyo(c) = y^* < \Psi \lb \frac 1 {2\d} \rb$ then take $y_u \in (y^*,\Psi(\fotd))$ and $y_w=\Psi(\fotd)$ in Lemma \ref{lem:minorant}.  For sufficiently small $c$, the tangent $r_{y_u}$ to $H_l$ at $y_u$ is then a separating line between the tangent $r_{\hyo(c)}$ to $H_l$ at $\hyo(c)$ and the obstacle $H_r$. Thus the  tangent $r_{\hyo(c)}$ cannot also be tangent to $H_r$, which is a contradiction.

Similarly, if $\limsup_{c \to 0} \hyt(c) = y^\circ > \Psi \lb \frac 1 {2\d} \rb$ then take $y_u$, $y_w$ in Lemma \ref{lem:minorant} with $\Psi(\fotd) < y_u < y_w < \min\{y^\circ,\Psi(f_0)\}$.
Then for sufficiently small $c$, the tangent $r_{y_u}$ to $H_l$ at $y_u$ is a separating line between the tangent $r_{\hyo(c)}$ to $H_l$ at $\hyo(c)$ and the point $(\hyt(c), H_r(\hyt(c);c))$. Thus the tangent $r_{\hyo(c)}$ cannot also be the tangent to $H_r$ at $y=\hyt(c)$, which is a contradiction.

We have thus established \eqref{cor:1/2d0}. In particular, $\hyo(c)>1$ for sufficiently small $c>0$, so that $c_m>0$ in \eqref{eq:cm}. 
Similarly, $\hyt(c) < \hye(c)$ for sufficiently small $c>0$, giving $\hat{c}>0$ in \eqref{eq:c2defb} and hence $c_1 > 0$. 

\vspace{1mm}
{\it \ul{Step 4:} Proof of $\hyo(c)\leq \Psi(f_0)$.}
Recall also from Lemma \ref{lem:Hlprops} that $H_l(y)$ is increasing on $(\Psi(f_0),\infty)$. 
If the tangency point $\hyo(c)$ lay in that interval then, as $y \to \infty$, the common tangent would increase to infinity, whereas $H_r$ decreases without bound thanks to \eqref{eq:Hneglim}. 
This contradiction implies that $\hyo(c)\leq \Psi(f_0)$.
\hfill $\Box$

\vspace{3mm}
\noindent {\bf Proof of Proposition \ref{cor:1/2d}.}
It is sufficient to apply the Implicit Function Theorem to system \eqref{eq:system-y1y2} with respect to the independent variable $c$ as in \cite[Appendix A]{DeAFeMo14}. Here the corresponding matrix of derivatives  
has determinant 
$$
D(\hyo(c),\hyt(c),c) 
= (\hyt(c)-\hyo(c)) \, \pd{^2H_l}{y^2}(\hyo(c)) \, \pd{^2H_r} {y^2} (\hyt(c);c).
$$ 
Recalling Proposition \ref{existencegeometricy2}, this determinant is strictly positive for $c \in (0,c_1)$.
The functions $H_l(\cdot)$, $H_r(\cdot;c)$ ($=H_{r1}(\cdot;c)$) are twice differentiable and strictly convex on these respective intervals. 
The limit established in \eqref{cor:1/2d0} completes the argument.
\hfill $\Box$

\vspace{3mm}
\noindent {\bf Proof of Lemma \ref{derivL}.}
Combining \eqref{LG=0} with the continuity of $(x,c) \mapsto \tild \cH_3(x,c)$, we conclude that the map $(x,c) \mapsto L(x,c)$ is well-defined in an open neighbourhood of $(G(c),c)$. 

Since $h_1$ is the gradient of the tangent to $H_l(\cdot)$ at $y=\Psi(x)$, its derivative $\pd {h_1}{x}(x)$ is strictly positive when $y=\Psi(x)$ lies in a strictly convex region of $H_l(\cdot)$. 
From Lemma \ref{lem:Hlprops} and \eqref{eq:hidefs1}, it has an inverse $h_1^{-1}:[-\frac{\la}{2\a^2},0] \to [0,f_0]$ which is differentiable on $(-\frac{\la}{2\a^2},0)$. 
Hence, the map $(x,c) \mapsto L(x,c)$ is also differentiable in an open neighbourhood of $(G(c),c)$. 

It is easily checked that 
$$
h_1'(x) = -e^{-2x\sta}h_2'(x) ,
\quad \text{so that} \quad 
( h_2 \circ h_1^{-1})'(w)=-e^{2h_1^{-1}(w)\sta}.$$
Using this and \eqref{H3H4} after differentiating \eqref{eq:Ldot}, we obtain \eqref{eq:lx}.

Then \eqref{eq:hall} gives
\begin{equation*} 
\Big( \pd {L}{x}+\pd {L}{c} \Big)(x,c)= \sta \big( \tild \cH_4 (x,c) - \tild \cH_3 (x,c) e^{2z \sta} \big) + h_3(x) e^{2z \sta} - h_4(x),
\end{equation*}
and by setting $x=G(c)$ and recalling \eqref{H3=h1}, we obtain \eqref{eq:hall3} as required.
\hfill$\Box$

\vspace{3mm}
\noindent {\bf Proof of Lemma \ref{cor:FGC1}.}
Recall that $\tild \cH_3(G(c),c)$ is the gradient at the point $y=\hyt(c)= \Psi(G(c))$ of the function $y \mapsto H_{r1}(y;c)$, which is strictly convex at that point (cf.~Proposition \ref{existencegeometricy2}).
Thus we have 
$$
\pd{\tild \cH_3}{x} (G(c),c) > 0 
\quad \overset{\eqref{eq:lx}}{\Rightarrow} \quad 
\pd{L}{x}(G(c),c)< 0 ,
$$ 
since $z=h_1^{-1}(\tild \cH_3(G(c),c)) = F(c) < G(c)$. Differentiation with respect to $c$ of the identity \eqref{LG=0} and using \eqref{eq:hall3} then gives \eqref{eq:Gderiv}.
\hfill$\Box$

\vspace{3mm}
\noindent {\bf Proof of Lemma \ref{lem:Fcsmall}.}
We again argue for a contradiction, assuming that $\fotd \leq F(c)$ with $c \in (0,c_1)$ (note that the relevant geometry is illustrated in Figures \ref{fig:Fig1synth} and \ref{fig:Fig1synth2}). 
Then on the domain $y \in (\hyo(c),\infty)$, due to the convexity of $H_l$ on $[1, \Psi\left(f_0\right)]$ (Lemma \ref{lem:Hlprops}), the tangent $y \mapsto r_{\Psi(\fotd)}(y;c)$ lies below the common tangent of $H_l$ and $H_{r1}$.

Next we claim that the transformed obstacle $H(y;c)$ lies above the common tangent line of $H_l$ and $H_{r1}$. When $\la\in [\la^\dagger,\a\d)$, this was proved in Theorem \ref{existencegeometric} above (cf.\ Figure~\ref{fig:Fig1synth}). 
When $\la\in (\la^*, \la^\dagger)$ it follows from Proposition \ref{existencegeometricy4} (cf.\ Figure~\ref{fig:Fig1synth2}), which is proved in Section~\ref{sec:newsol} but which may be read now and completes the claim.
Thus in both cases, the greatest non-positive convex minorant $W(\cdot;c)$ of $H(\cdot;c)$ also lies above this common tangent.

Putting together the above observations, since $\hyo(c) < y_c$ (Proposition \ref{existencegeometricy2}), then taking $x=\frac1{2\d}+c$ gives $W(x) \geq r_{\Psi(1/(2\d))}(\Psi(x);c)$. Next, recall from Proposition \ref{prop:DayKar} that the optimal stopping value function $V$ of \eqref{def-V} is given by $V(\cdot;c) = \Phi^{-1}(W)(\Psi(\cdot);c)$ (where $\Phi$ is the transformation of \eqref{def-H}).
Then the equations \eqref{eq:hidefs1}--\eqref{eq:hidefs2} for the tangent at $x=\frac1{2\d}$ give
\begin{align*}
V \Big(\fotd+c; c \Big) &\geq h_1 \Big(\fotd \Big) \, e^{( \fotd+c ) \sta} + h_2 \Big(\fotd \Big) \, e^{- ( \fotd+c ) \sta} .
\end{align*}
Since $x_c = \fotd+\frac{c}2 < \fotd+c < f_0+c$, we also have 
$$
h\Big( \fotd+c; c \Big) 
= h_{r1}\Big( \fotd+c; c \Big) 
= - \flas - \fla \, \Big( \fotd+c \Big)^2 + c + \frac 1 {4\d},
$$
which eventually gives 
\begin{align*}
&V \Big(\fotd+c; c \Big) - h \Big(\fotd+c; c \Big) \\
&\geq h_1\Big(\fotd \Big) e^{(\fotd+c)\sta} + h_2 \Big(\fotd \Big) e^{-(\fotd+c)\sta} + \flas + \fla \Big( \fotd+c \Big)^2 - c - \frac 1 {4\d}\\ 
&= \frac{\a\d-\la}{2\d^2} \, c^2 + O(c^3) , \quad \text{ as } c \to 0.
\end{align*}
For sufficiently small $c$ this inequality contradicts the fact that 
$$
V \Big(\fotd+c; c \Big) \leq h \Big(\fotd+c; c \Big) ,
$$ 
which is a direct consequence of the definition \eqref{def-V} of $V$. This completes the proof.
\hfill $\Box$

\vspace{3mm}
\noindent {\bf Proof of Proposition \ref{prop:monotonicity}.}
In view of \eqref{eq:longineq1}--\eqref{eq:longineq2} and the limits \eqref{cor:1/2d0}, as well as Lemma \ref{lem:Fcsmall}, we have established that for sufficiently small $c$ we have 
\[
F(c)<\fotd  \quad  \text{ and } \quad G(c)<\atl .
\]
Taking $z=F(c)$ with sufficiently small fixed $c$, we have from \eqref{eq:qprop1}--\eqref{eq:qprop2} that
$$
q(F(c),F(c))>0 \quad \text{ and } \quad 
\pd{q}{x}(x;F(c))>0 , 
\quad \text{for all } x \in (F(c), G(c)).
$$
Then $q(G(c);F(c))>0$, so that Lemma \ref{cor:FGC1} implies that $G'(c)>1$, for sufficiently small $c$. 
We thus conclude that $c_0>0$ and $G'(\cdot)>1$ on $(0,c_0)$. 

Finally, the monotonicity of $F(\cdot)$ follows from that of $\hyo(\cdot)$. 
More precisely, fix the pair $(\tild y, \tild c)$ by choosing $\tild c \in (0,c_0)$ and setting $\tild y:=\hyt(\tild c)$. 
By the common tangency property we also have $r_{\hyo(\tild c)}(\tild y;\tild c) = H_{r1} (\tild y, \tild c)$.
Differentiating \eqref{def-G1} and \eqref{def-H} gives
\begin{align} \label{eq:Hr1dec}
\pd {h_{r1}} c (G(\tild c); \tild c)=1-2\d(G(\tild c)-\tild c)<0 
\quad \overset{\eqref{def-H}}{\Rightarrow} \quad 
\pd {H_{r1}} c (\tild y; \tild c)<0 ,
\end{align}
where we have used the fact that $G(\tild c) > \frac 1 {2\d}+\tild c$ since $G'(\cdot)>1$ on $(0,\tild c)$.
Suppose for a contradiction that $\hyo'(\tild c) \geq 0$. 
Setting $a(c):=r_{\hyo(c)}(\tild y;c)$, we would then have $a'(\tild c) \geq 0$ 
since $H_l$ is convex and does not depend on $c$. Therefore recalling \eqref{eq:Hr1dec}, there exists $\e>0$ such that for $c \in (\tild c, \tild c + \e)$ we have
$$
H_{r1} (\tild y,c) < H_{r1} (\tild y, \tild c)=r_{\hyo(\tild c)}(\tild y;\tild c) \leq  r_{\hyo(c)}(\tild y;c),
$$
contradicting the fact that the common tangent lies below the transformed obstacle.
\hfill$\Box$

\vspace{3mm}
\noindent {\bf Proof of Corollary \ref{cor:c1}.}
Definition \eqref{eq:defqc} gives $\tild V \geq 0$ (since all its terms are non-negative) and $\tild V(0;c)=0$ (as it is optimal to stop immediately when $x=0$).
Thus no point $(0,c)$ can lie in any waiting region. 

Suppose for a contradiction that for some $c' \in (0, \bar c \wedge \hat{c})$ we have $\hyo(c') = 1$, which in view of \eqref{eq:FGdef} is equivalent to $F(c')=0$. Then by the proof of Proposition \ref{prop:monotonicity}, $F$ is strictly decreasing at $c'$. This contradicts the above observation that no point $(0,c)$ can be part of the waiting region, establishing the first equality for $c_0$ in the statement. The second equality is a matter of definitions (namely \eqref{ys}, \eqref{eq:c2def} and \eqref{eq:FGdef}), completing the proof.
\hfill$\Box$

\subsection{Proofs of results in  Section~\ref{sec:newsol}}
\label{AppA5}

\noindent {\bf Proof of Proposition \ref{existencegeometricy4}.}
The proof is split into the following steps:

\vspace{1mm}
{\it \ul{Step 1:} Existence of the unique couple $(\hyoo(c),\hytt(c))$ for sufficiently small $c>0$.}
We begin by acknowledging a dichotomy. It is not difficult to see from the geometry of $H_l$ (Lemma \ref{lem:Hlprops}) and of $H_r$ (Lemma \ref{lem:Hconvexity}.(II).(iii)) that for sufficiently small $c>0$ there are two possibilities. 
One possibility is that the \gcm \, has the geometry of Figure~\ref{fig:Fig1synth} with one linear portion, which is tangent to $H_l$ and $H_{r2}$. This corresponds to the waiting region in problem $V$ of \eqref{def-V} having a single connected component. A second possibility is that it has the geometry of Figure~\ref{fig:Fig1synth2}, with two distinct linear portions: one which is tangent to $H_l$ and $H_{r1}$, and another which is tangent to $H_{r1}$ and $H_{r2}$, and this latter case corresponds to the waiting region for $V$ being disconnected, with two connected components. The former case would resemble the regime $\la \in (\la_*,\la^*]$ (cf.~Remark \ref{rem:l<la*}), in that the concave portion of $H_r$ would then lie inside the single connected component of the waiting region for $V$. 
In contrast, in the latter case the concave portion of $H_r$ creates the two connected components of the waiting region for $V$. 

We claim, in order to resolve this dichotomy, that for sufficiently small $c>0$ the geometry is that of Figure~\ref{fig:Fig1synth2}.
Recalling the line $r_{y}(\cdot\,;c)$ and 
signed distance $P_r(\cdot\,;c)$ of Definition \ref{def:tangents}, define
\begin{equation} \label{yb}
y_b(c) := \sup \{ y \geq y_c :\, P_r(y;c) = 0 \},
\end{equation}
(it was established in Step 1 of Proposition \ref{existencegeometricy2} that the set on the right-hand side of \eqref{yb} is not empty). 
If $y_b(c) < y_r(c)$ there is no common tangent between $H_l$ and $H_{r2}$, so the first case of the above dichotomy cannot hold. 
Let us therefore suppose that  $y_b(c) \geq y_r(c)$, which means that $y_b(c)$ is the unique zero of $P_r(\cdot;c)$ on the strictly convex region of $H_{r2}$.
Writing $y_a(c)$ for the unique point of tangency on $H_l$ with $r_{y_b(c)}(\cdot\,;c)$, we see that the geometry is that of Figure~\ref{fig:Fig1synth2} if there exists a point $y^* \in (y_a(c),y_b(c))$ at which $H_r(y^*;c) < r_{y_b(c)}(y^*,;c)$; otherwise the geometry is that of Figure~\ref{fig:Fig1synth}.

As $c \to 0$ we have $y_a(c) \to \Psi(f_0)$ by Lemma \ref{lem:minorant}, and $\hyt(c) \to \Psi(\fotd)$ by \eqref{cor:1/2d0}. Let $c>0$ be sufficiently small that $y_a(c) > \hyt(c)$ holds. Then recalling \eqref{eq:hbigf0}--\eqref{x_c}, we obtain
$H_r(y_a(c)\,;c) < r_{y_b(c)}(y_a(c)\,;c) = H_l(y_a(c))$, establishing the claim.

With this claim established, it follows from the convexity of $H_{r2}(\cdot\,;c)$ on $(y_b(c), \infty)$ and \eqref{eq:Hneglim} that we may uniquely define $\hytt(c)$ by
$$
\hytt(c) := \inf \big\{u \geq y_b(c) :\; r_{u}(y;c) \leq H_r(y;c) , \;\forall \; y>1 \big\}.
$$ 
Then by strict convexity $r_{\hytt(c)}(\cdot\,;c)$ is tangent to $H_{r1}$ at a unique point $\hyoo(c) \in (y_c,\hye(c))$ (Lemma \ref{lem:Hconvexity}.(II).(iii)).

\vspace{1mm}
{\it \ul{Step 2:} The bound $\hyt(c) < \hyoo(c)$.}
Having established in Step 1 the existence of $y^* \in (y_a(c),y_b(c))$ with $H_r(y^*;c) < r_{y_b(c)}(y^*,;c)$, this bound is now obvious from the geometry established in Lemma \ref{lem:Hconvexity}.(II).(iii) and illustrated in Figure~\ref{fig:Fig1synth2}.

\vspace{1mm}
{\it \ul{Step 3:} Limit as $c \downarrow 0$.} 
From Lemma \ref{lem:minorant},  $\liminf_{c \to 0} \hyoo(c) \geq \Psi(f_0)$. 
From Step 1 we have $\limsup_{c \to 0} \hyoo(c) \leq \lim_{c \to 0} y_r(c) = \Psi(f_0)$, giving $\lim_{c \to 0} \hyoo(c) = \Psi(f_0)$.
\hfill $\Box$

\vspace{3mm}
\noindent {\bf Proof of Proposition \ref{prop:newbdrs}.} 
We first address necessity.
If the control problem \eqref{eq:defV} has continuous boundaries $F,G,\Fb,\Gb$ satisfying \eqref{olF00} then Corollary \ref{cor:oset} gives $\lim_{c \to 0}\Fb(c) \leq f_0 < \atl \leq \lim_{c \to 0}\Gb(c)$, and \eqref{eq:VstarRR} gives 
\begin{equation*}
\lim_{c \downarrow 0} \tild{Q}(x,c) = \fla x^2 + \flas + \tild A(0) e^{x\sta} + \tild B(0) e^{-x\sta} ,
\quad x \in \big(\lim_{c \to 0}\Fb(c), \lim_{c \to 0}\Gb(c) \big),
\end{equation*}
where $\tild A(0) := \tild \cH_3(\lim_{c \to 0}\Fb(c),0)$ and $\tild B(0) := \tild \cH_4(\lim_{c \to 0}\Fb(c),0)$ due to \eqref{tildeAB}. 
Further, if the candidate $\tild{Q}(x,\cdot)$ is continuous then (recalling \eqref{eq:Vtild0}) we have 
\begin{equation} \label{eq:fbgblim}
\lim_{c \downarrow 0} \tild{Q}(x,c) = Q(x,0) 
= \tild V_0(x) = \begin{cases}
\d x^2, & 0 \leq x \leq f_0, \\[+1mm]
\fla x^2 + \flas + B_0 e^{-x\sta}, & x > f_0,
\end{cases}
\end{equation}
giving $\tild{A}(0) = 0$. 
An inspection of \eqref{eq:th3} with $c=0$ shows that this is equivalent to 
$\lim_{c \to 0}\Fb(c)=f_0$ (and then $\tild{B}(0) = \tild \cH_4(f_0,0) = B_0$ {from \eqref{eq:B0}}), completing the claim.

To establish sufficiency, suppose that $\lim_{c \to 0}\Fb(c)= f_0$. Then \eqref{eq:fbgblim} gives continuity at $c=0$ for $x \in \big(\lim_{c \to 0}\Fb(c), \lim_{c \to 0}\Gb(c) \big)$. 
For $x<f_0$ and $x > \lim_{c \to 0}\Gb(c)$, the continuity of $\tild Q$ at $c=0$ follows by the construction of the strategy in Ansatz \ref{bigass}.
\hfill $\Box$

\vspace{3mm}
\noindent {\bf Proof of Lemma \ref
{lem:soltildeq}.}
Considering the function $\tild q(\cdot; z)$ on $[z,\infty)$, we have
\begin{align*}
\pd{\tild q}{x}(x;z) 
\overset{\eqref{eq:qprop2}}{=} \la \sqrt{\frac 2 \a}\Big( \atl - x \Big) e^{x\sta} \big(1-e^{2(z-x)\sta} \big), \quad z \leq x < \infty,
\end{align*}
and so 
\begin{align*}
x \mapsto \tild q(x;z) \text{ is } \begin{cases}
\text{strictly increasing on } (z,\atl), \\
\text{strictly decreasing on } (\atl, \infty). 
\end{cases}
\end{align*} 
By definition $\tild q(z; z) = 0$ and, clearly, $\tild q(\infty; z) = - \infty$ for all $z > \frac1{2\d}$. Hence for fixed $z \in (\frac1{2\d}, \atl)$, there exists a unique $\mathcal{X}(z) > z \vee\atl$ satisfying $\tild q(\mathcal{X}(z);z)= 0$, such that $\pd{\tild q}{x}(\mathcal{X}(z); z) < 0$, and $\mathcal{X}$ is of class $C^1$ on $(\frac1{2\d}, \atl)$ by the implicit function theorem.

The first upper bound for $\mathcal{X}(z)$ in \eqref{eq:2ub} follows from the observation that 
$$
\tild q\Big(\atl + \frac1{\sqrt{2\a}}; z \Big) 
= \ftla e^{z\sta} \Big[z - \atl - \frac1\sta e^{(z-\atl)\sta - 1} \Big] < 0 , 
\quad \text{for all } z\in \Big(\frac1{2\d}, \atl \Big).
$$ 
The second upper bound follows from
$$
\tild q\Big(\frac\a\la - z; z \Big) 
= \big(1+ e^{2(z-\atl)\sta} \big) \phi(z), 
\quad \text{where} \quad 
\phi(z):= h_4(z) - h_3(z) e^{2\sta\atl} ,
$$
since it is easily checked that
\[
\phi'(z) = \sta \fla \Big(z - \atl\Big) \Big(e^{z\sta} - e^{\sta(\frac{\a}{\la}-z)}\Big) > 0, \quad z\in \Big(\frac1{2\d}, \atl \Big),
\]
giving
$$
\tild q\Big(\frac\a\la - z; z \Big) 
< \big(1+ e^{2(z-\atl)\sta} \big) \phi \Big(\atl \Big) = 0 , 
\quad \text{for all } z\in \Big(\frac1{2\d}, \atl \Big),
$$
which completes the proof.
\hfill $\Box$

\vspace{3mm}
\noindent {\bf Proof of Lemma \ref{cf<infinite}.}
Since $G(0)=\frac 1{2\d}$, $G'(0+)>1$ and $G'(c)>1$ for all $c\in(0,\ol{c})$
(see Section~\ref{sec:cpos} for details), we have
\begin{equation} \label{G>12d}
\frac1{2\d} + c < G(c), \quad \text{for all } c \in (0,\ol{c}]. 
\end{equation}
Then from \eqref{eq:Gderiv}, since $G'(\ol{c})=1$ we have 
$q(G(\ol{c}), F(\ol{c})) = 0$. 
Given that $F(\ol{c}) < \fotd$ due to Proposition \ref{prop:monotonicity}, the proof of \cite[Lemma 8.2]{KOWZ00} gives $\atl < G(\ol{c})$ 
and we conclude that 
$$
\exists \, ! \; c_g \in (0,\ol{c}) \quad \text{such that} \quad G(c) < \atl \quad \text{for all } c \in (0,c_g), \quad \text{and} \quad G(c_g) = \atl.
$$
Combining this with the inequality in \eqref{G>12d}, we observe that $c_g$ also satisfies 
$c_g < \ol{c} \wedge \ol{k}$. 

Then, we have by construction that
$$
\ol{F}(c) \in \Big( G(c), \atl \Big) , \quad \text{for all } c \in (0, c_\mathcal{I}) \quad \text{and} \quad 
\begin{cases}
\ol{F}(\tild{c}) = G(\tild{c}) < \atl, &\text{if } c_\mathcal{I} = \tild{c}, \\
G(c_\mathcal{I}) < \ol{F}(c_\mathcal{I}) = \atl, &\text{if } c_\mathcal{I} < \tild{c}.
\end{cases}
$$
Thus by definition $c_\mathcal{I} \leq c_g < \ol{c} \wedge \ol{k}$. The final assertion of the lemma then follows from \eqref{eq:tildeqdef} and Lemma \ref{lem:soltildeq}.
\hfill $\Box$

\vspace{3mm}
\noindent {\bf Proof of Lemma \ref{lem:olF>12d}.}
The bounds for $\ol G$ follow directly from \eqref{olF<olG}. The upper bound for $\ol F$ follows by the definition \eqref{cf} of $c_\mathcal{I}$, while the lower bound follows from Ansatz \ref{bigass} and Lemma \ref{cf<infinite}.
\hfill $\Box$

\vspace{3mm}
\noindent {\bf Proof of Lemma \ref{lem:dtildeqdz}.}
Fix $z \in (\frac1{2\d}, \atl)$. Recalling \eqref{eq:qdef}--\eqref{eq:qprop1} and differentiating $\tild{q}(x; z)$ with respect to $z$, we obtain 
\begin{align*}
\pd{\tild q}{z}(x;z) 
= \sqrt{2\a} \ftla e^{z \sta} \Big[ 
\Big(\atl - \frac1\sta - x \Big) e^{(z-x) \sta} 
+ z - \atl + \frac1\sta \Big] . 
\end{align*}
Thus
\begin{align}
\pd{\tild q}{z}(\mathcal{X}(z); z) < 0
\quad & \Leftrightarrow \quad 
\Big(\atl - \frac1\sta - \mathcal{X}(z) \Big) e^{-\mathcal{X}(z) \sta} 
< \Big(\atl - \frac1\sta - z \Big) e^{- z \sta} \nonumber \\
\label{h3xz}
& \Leftrightarrow \quad h_3(\mathcal{X}(z)) < h_3(z). 
\end{align}
Since $z < \atl < \frac\a\la - z$, with the values $z$ and $\frac\a\la - z$   equidistant from $\atl$, the explicit expression
\begin{align*}
h_3'(x) 
= \sta \fla \Big( x-\atl \Big) e^{-x\sta} \quad \begin{cases}
<0, &\text{for } x < \atl \\
>0, &\text{for } x > \atl ,
\end{cases}
\end{align*}
gives 
$$
h_3 \Big(\frac\a\la - z\Big) < h_3(z) , \quad 
\text{for all } z \in \Big(\frac1{2\d}, \atl \Big).
$$
Combining all of the above with the inequality 
$z < \atl < \mathcal{X}(z) < \frac\a\la - z$ from Lemma \ref{lem:soltildeq} gives
$$
h_3(\mathcal{X}(z)) < h_3 \Big(\frac\a\la - z\Big) < h_3(z) , \quad 
\text{for all } z \in \Big(\frac1{2\d}, \atl \Big),
$$
and \eqref{h3xz} completes the proof.
\hfill $\Box$

\vspace{3mm}
\noindent {\bf Proof of Proposition \ref{cor:xf}.}
Fixing $c\in(0,c_\mathcal{I})$, $\Fb$ is of class $C^1$ as it is the solution to the ODE \eqref{F'}. Then $\Gb = \mathcal{X} \circ \Fb$ is also of class $C^1$ (Lemma \ref{lem:soltildeq}).
Differentiation of the equation $\tild q(\mathcal{X}(z);z) = 0$ (Lemma \ref{lem:soltildeq}) with respect to $z$ then yields
\begin{equation*}
\mathcal{X}'(\ol{F}(c)) 
= - \Big(\pd{\tild q}{z}(\mathcal{X}(\ol{F}(c));\ol{F}(c)) \Big) \Big(\pd{\tild q}{x}(\mathcal{X}(\ol{F}(c));\ol{F}(c)) \Big)^{-1} , 
\end{equation*}
and Lemma \ref{lem:soltildeq} gives
\begin{equation*} 
\pd{\tild q}{x}\big(\mathcal{X}(\ol{F}(c));\ol{F}(c) \big) < 0, 
\end{equation*}
since $\ol{F}(c) \in \big(\frac1{2\d}+c, \atl \big)$ due to Lemma \ref{lem:olF>12d}. 
Then, Lemma \ref{lem:dtildeqdz} completes the proof.
\hfill $\Box$

\vspace{3mm}
\noindent {\bf Proof of Proposition \ref{prop:olF'}.}
Recall from \eqref{dxH3} that
\begin{align} \label{dH3>0}
\pd{\tild \cH_3}{x}(\ol{F}(c), c) > 0 
\quad \text{for} \quad 
\ol{F}(c) \in \Big(\frac1{2\d}+\frac c2, f_0+c \Big). 
\end{align}
The proof has four parts. 

\vspace{1mm}
{\it(i) Proof that $\ol{F}(c) < f_0 + c$ for sufficiently small $c$.} 
Observe from \eqref{eq:th3} that 
\begin{align*}
\tild \cH_3(x,0)
&= e^{-x\sta} \Big( \frac12 \dmla x^2 + \osta \dmla x - \fltas \Big), 
\end{align*}
with derivative
\begin{align*}
\pd{\tild \cH_3}{x}(x,0)
&= \frac{\a\d-\la}{\sqrt{2\a}} e^{-x\sta} \Big( \frac\delta{\a\d-\la} - x^2 \Big) ,
\end{align*}
which in view of \eqref{eq:defrho} and Lemma \ref{lem:Hlprops} imply that 
\begin{align} \label{H3f0}
\tild \cH_3(f_0,0) = 0 
\qquad \text{and} \qquad 
\pd{\tild \cH_3}{x}(f_0,0) > 0.
\end{align}
From \eqref{F'} and \eqref{eq:hall} we have
\begin{align} \label{eq:Fbrefac}
\ol{F}'(c) = 1 + \dfrac{h_3(\mathcal{X}(\ol{F}(c))) - h_3(\ol{F}(c)) }{\pd{\tild \cH_3}{x}(\ol{F}(c),c)}.
\end{align}
Since $f_0 \in (\frac1{2\d}, \atl)$ (cf.~\eqref{eq:longineq2}), we have from Lemma \ref{lem:dtildeqdz} that 
$h_3(\mathcal{X}(f_0)) < h_3(f_0)$. 
Hence \eqref{H3f0}--\eqref{eq:Fbrefac} and continuity give $\ol{F}'(0+) < 1$ and thus
\begin{align*}
\ol F(c)<f_0+c, \quad \text{for sufficiently small $c>0$}.
\end{align*}

\vspace{1mm}
{\it (ii) Proof of the bounds for $\ol{F}$, and proof that $\ol{F}'(c) < 1$ for all $c\in(0,c_\mathcal{I})$.}  
Note first (from \eqref{dH3>0} and Lemma \ref{lem:olF>12d}) that for $c \in (0,c_\mathcal{I})$, the denominator in the last term of \eqref{eq:Fbrefac} is strictly positive provided that $\ol{F}(c) < f_0 + c$. 
By part $(i)$ this is true for any sufficiently small $c^*>0$; and as $c$ increases beyond $c^*$ it will remain true, provided that $\ol{F}'(c) < 1$ also remains true. 
The latter condition indeed remains true, since the numerator in the last term of \eqref{eq:Fbrefac} is strictly negative for all $c \in (0,c_\mathcal{I})$. Thus $\ol{F}'(c) < 1$ on the whole interval $(0,c_\mathcal{I})$.
We conclude in summary that the solution $\Fb$ to the ODE \eqref{F'} on $(0,c_\mathcal{I})$ satisfies
$$
\ol{F}(c) \in \Big(\frac1{2\d}+c, \atl \wedge (f_0+c)\Big) \quad \text{and} \quad \ol{F}'(c) < 1, 
\quad \text{for all } c\in(0,c_\mathcal{I}).
$$

\vspace{1mm}
{\it (iii) Proof that $\ol{F}'(c_\mathcal{I}) = 1$ if $c_\mathcal{I} < \tild{c}$}.  
It follows from part $(ii)$ that $\Fb(c_\mathcal{I}) < f_0 + c_\mathcal{I}$. Then recalling \eqref{dH3>0}, the limit as $c \to c_\mathcal{I}$ in \eqref{eq:Fbrefac} is well defined; by Lemma \ref{lem:soltildeq}, if $c_\mathcal{I} < \tild{c}$ then $\ol F(c_\mathcal{I}) = \atl = \mathcal{X}(\ol{F}(c_\mathcal{I}))$ and the limit is indeed 1.

\vspace{1mm}
{\it (iv) Proof that $\ol{F}'(c) >0$ for all $c\in(0,c_\mathcal{I})$.}  
Continuing the above analysis, for $c\in(0,c_\mathcal{I})$ we have from \eqref{F'} that
\begin{align} \label{F'>0}
\ol{F}'(c) > 0 
\quad &\Leftrightarrow \quad 
h_3(\mathcal{X}(\ol{F}(c))) > \sqrt{2\a} \tild \cH_3(\ol{F}(c),c) + \pd{\tild \cH_3}{c}(\ol{F}(c),c). 
\end{align}
For this, note first from \eqref{eq:th3} and \eqref{eq:qdef} that 
\begin{align*}
\frac{\partial}{\partial c} \Big(\pd {\tild \cH_3} c + \sta \tild \cH_3 \Big)(x,c) 
= \sta \d e^{-x\sta} \Big(\frac1{2\d} + c - x \Big) < 0, 
\quad \text{for} \quad x > \frac1{2\d} + c.
\end{align*}
Then if $c \in (0,c_\mathcal{I})$ we have $\ol{F}(c) > \frac1{2\d}+c$ (Lemma \ref{lem:olF>12d}) and
\begin{align*}
\Big(\pd {\tild \cH_3} c + \sta \tild \cH_3 \Big)(\ol F(c),0) 
> \Big(\pd {\tild \cH_3} c + \sta \tild \cH_3 \Big)(\ol F(c),c).
\end{align*}
Thus in order to prove the right-hand side inequality \eqref{F'>0} it suffices to show that 
\begin{align} \label{F'>02}
h_3(\mathcal{X}(\ol{F}(c))) > \sqrt{2\a} \tild \cH_3(\ol{F}(c),0) + \pd{\tild \cH_3}{c}(\ol{F}(c),0) , \quad c\in(0,c_\mathcal{I}). 
\end{align}
To that end, it follows from \eqref{eq:h3}, \eqref{eq:th3} and \eqref{eq:defrho} that 
\begin{align*}
h_3(\mathcal{X}(\ol{F}(c))) 
&= e^{- \mathcal{X}(\ol{F}(c)) \sta}\Big( \ha - \fla \Big( \mathcal{X}(\ol{F}(c)) + \frac 1 \sta \Big) \Big)  ,\\
{\tild \cH_3}(\ol F(c),0)
&= e^{- \ol F(c) \sta} \ha\Big(\d - \frac\la\a \Big) 
\Big(\Fb^2(c) + \frac{2 \Fb(c)}{\sta} - \frac{\la}{\a(\a\d-\la)}\Big),\\
\pd{\tild \cH_3}{c}(\ol F(c),0)
&= e^{- \ol F(c) \sta} \Big( \ha - \delta \Big( \ol F(c) + \osta \Big) \Big). 
\end{align*}
Then defining the function 
$$
u:(0,\infty)^2 \to \R, \quad 
u(x,s) := \Big(\frac12 - s \Big(x+\frac1\sta \Big) \Big) e^{-x \sta}, 
$$
the desired inequality in \eqref{F'>02} becomes 
\begin{align} \label{F'>03}
u\Big(\mathcal{X}(\ol{F}(c)), \frac\la\a \Big) - u(\ol{F}(c),\d) 
&> \frac{\sqrt{2\a}}{2}\Big(\d - \frac\la\a \Big) e^{- \ol F(c) \sta} \rho(\ol F(c)) , \quad c\in(0,c_\mathcal{I}). 
\end{align}
To show that this holds, we treat the two sides of the inequality separately. 
The left-hand side of \eqref{F'>03} satisfies
\begin{align} \label{F'>03lb}
u\Big(\mathcal{X}(\ol{F}(c)), \frac\la\a \Big) - u(\ol{F}(c),\d) 
&> u\Big(\atl, \frac\la\a \Big) - u\Big(\atl,\d \Big) \notag\\
&= \Big(\d - \frac\la\a \Big) \Big(\atl+\frac1\sta \Big) e^{-\atl \sta},
\end{align}
due to the monotonicity of $u(\cdot,s)$ derived from 
$$
\pd{u}{x}(x, s) 
= \sta s \Big(x - \frac1{2s} \Big) e^{-x \sta},
$$
and the fact that $\frac1{2\d} < \ol{F}(c) < \atl < \mathcal{X}(\ol{F}(c))$ for all $c\in(0,c_\mathcal{I})$ due to Lemma \ref{lem:olF>12d}. On the other hand, it follows from \eqref{eq:defrho} that 
\begin{align*}
\frac{d}{dx}\Big( e^{- x \sta} \rho(x) \Big) 
&= \sta e^{- x \sta} \Big(\frac{\d}{\a\d-\la} -x^2 \Big), 
\end{align*}
so its maximum value is attained at $x = \sqrt{\frac{\d}{\a\d-\la}}$.
Thus the right-hand side of \eqref{F'>03} satisfies
\begin{align} \label{F'>03ub}
\frac{\sqrt{2\a}}{2}\Big(\d - \frac\la\a \Big) e^{- \ol F(c) \sta} \rho(\ol F(c)) 
&< \frac{\sqrt{2\a}}{2}\Big(\d - \frac\la\a \Big) e^{- \sqrt{\frac{\d}{\a\d-\la}} \sta} \rho\Big( \sqrt{\frac{\d}{\a\d-\la}} \Big) \notag\\
&= \Big(\d - \frac\la\a \Big) \Big(\sqrt{\frac{\d}{\a\d-\la}} + \frac1{\sqrt{2\a}}\Big) e^{- \sqrt{\frac{\d}{\a\d-\la}} \sta},
\end{align}
Then \eqref{F'>03lb} and \eqref{F'>03ub} establish \eqref{F'>03}, since 
\begin{align*}
\Big(\atl+\frac1\sta \Big) e^{-\atl \sta}
\geq \Big(\sqrt{\frac{\d}{\a\d-\la}} + \frac1{\sqrt{2\a}}\Big) e^{- \sqrt{\frac{\d}{\a\d-\la}} \sta}, 
\end{align*}
which follows from the facts that 
\begin{align*}
x\mapsto \Big(x+\frac1\sta \Big) e^{-x \sta} \text{ is strictly decreasing on $\R$, and } \sqrt{\frac{\d}{\a\d-\la}} > \atl
\end{align*}
for all $\la\in(\la^*,\la^\dagger)$, which follows from the fact that
\begin{align*}
\la \mapsto \sqrt{\frac{\d}{\a\d-\la}} - \atl 
\text{ is strictly increasing when $\la < \a\d$, and } \sqrt{\frac{\d}{\a\d-\la^*}} > \frac\a{2\la^*},
\end{align*}
thanks to the explicit expression \eqref{lambda*} for $\la^*$. 
This completes the proof.~\hfill $\Box$

\vspace{3mm}
\noindent {\bf Proof of Proposition \ref{prop:olG'}.}
Let $c \in (0,c_\mathcal{I})$. The first claim follows since
\begin{equation*}
\ol{G}'(c) = \mathcal{X}'(\ol{F}(c)) \ol{F}'(c), \quad c\in(0,c_\mathcal{I}),
\end{equation*}
while $\ol{F}'(c)>0$ (Proposition \ref{prop:olF'}), and $\mathcal{X}'(\ol{F}(c)) < 0$ (Proposition \ref{cor:xf}).

The bounds then follow immediately from the above monotonicity of $\ol{G}$, the definition \eqref{tildeg0} of $g_0$, the definition \eqref{cf} of $c_\mathcal{I}$ and the fact that $\ol F(c_\mathcal{I}) = \atl = \mathcal{X}(\ol{F}(c_\mathcal{I}))$ if $c_\mathcal{I} < \tild{c}$ (cf.~Lemma \ref{lem:soltildeq} and Lemma \ref{lem:olF>12d}). 
\hfill $\Box$

\vspace{3mm}
\noindent {\bf Proof of Lemma \ref{g0<gd}.}
Fix $\la\in (\la^*, \la^\dagger)$ and recall that $F(c) \in (0,\fotd) \subset (0,f_0)$ for all $c\in (0,c_\mathcal{I}) \subset (0,c_0)$ (this follows from Proposition \ref{prop:monotonicity}, \eqref{eq:longineq2} and \eqref{cI<c0}). 
Hence, the uniqueness of the root of $q(\cdot; F(c))=0$ follows from \cite[Lemma 8.2]{KOWZ00}. 
The fact that $q (g_\d; \frac1{2\d}) = 0$ is a consequence of \eqref{cor:1/2d0}, since $\lim_{c\to 0} F(c) = \fotd$.

Given that $\lim_{c\to 0} \Fb(c) = f_0$ (Proposition \ref{prop:newbdrs}), it will suffice to prove that the functions $q(\cdot; \fotd)$ and $\tild{q}(\cdot; f_0)$ satisfy
\begin{align} \label{qtildq}
q\Big(x; \frac1{2\d}\Big) > \tild{q}\big(x; f_0 \big), 
\quad \text{for all } x \in \big(f_0, g_0 \big).
\end{align}
This is because as $x$ increases from $f_0$ to $g_0$, both functions are non-negative and increasing until $x=\atl$ and decreasing afterwards (cf.\ \cite[Lemma 8.2]{KOWZ00} and Lemma \ref{lem:soltildeq}), so \eqref{qtildq} implies the required ordering on their respective roots.

We observe from the definitions of \eqref{eq:qdef} and \eqref{eq:tildeqdef} of $q$ and $\tild{q}$ that \eqref{qtildq} is equivalent~to
\begin{align} \label{qtildq2}
&\fla e^{x\sta} \Big(\atl -x -\frac 1 \sta \Big) e^{2(\frac1{2\d}-x)\sta} - \frac{\a\d-\la}{\a\d} e^{ \frac1{2\d} \sta} \notag\\
&> \fla e^{x\sta} \Big(\atl -x -\frac 1 \sta \Big) e^{2(f_0(\la)-x)\sta} -\Big(1- \frac{2\la}{\a} f_0(\la)\Big) e^{ f_0(\la) \sta} \notag\\
\Leftrightarrow \quad 
&\Big(x - \atl + \frac 1 \sta \Big) e^{2(\frac1{2\d}-x)\sta} + 2 \Big(\frac{\a}{2\la} - \frac{1}{2\d} \Big) e^{ (\frac1{2\d}-x) \sta} \notag\\
&<\Big(x - \atl + \frac 1 \sta \Big) e^{2(f_0(\la)-x)\sta} + 2 \Big(\frac{\a}{2\la} - f_0(\la)\Big) e^{(f_0(\la)-x)\sta} .
\end{align}
Then, we define the function $\varphi(\cdot;x)$ on $(\frac1{2\d}, \atl)$ by
\begin{align*} 
\varphi(z;x) 
:= \Big(x - \atl + \frac 1 \sta \Big) e^{2(z-x)\sta} + 2 \Big(\frac{\a}{2\la} - z \Big) e^{ (z-x) \sta} 
\end{align*}
and observe after straightforward calculations that 
\begin{align*} 
\varphi'(z;x) 
&= 2\sta e^{(2z-x)\sta} \Big[ \Big(x - \atl + \frac 1 \sta \Big) e^{-x\sta} - \Big(z - \frac{\a}{2\la} + \frac1\sta \Big) e^{-z\sta} \Big] \\
&= - \frac{2\a\sta}\la e^{(2z-x)\sta} \big(h_3(x) - h_3(z)\big), 
\quad \text{for } z \in \Big(\frac1{2\d}, \atl \Big),  
\end{align*}
where $h_3$ is defined by \eqref{eq:h3}.
Combining the fact that $\varphi'(z;z) = 0$ with the derivatives of $h_3$, namely
\begin{align*} 
h_3'(x) 
= \sta \fla \Big( x-\atl \Big) e^{-x\sta} \begin{cases}
<0, &\text{for } z < x < \atl ,\\
>0, &\text{for } x > \atl ,
\end{cases}
\end{align*}
and the results of Lemma \ref{lem:dtildeqdz}, namely that for every $z \in (\frac1{2\d}, \atl)$ we have $h_3(\mathcal{X}(z)) < h_3(z)$, we conclude that   
\begin{align*}
\varphi'(z;x) > 0 , 
\quad \text{for } z \in \Big(\frac1{2\d}, \atl \Big) \text{ and } x \in \big(z, \mathcal{X}(z)\big).   
\end{align*}
Using this result we see that \eqref{qtildq2} holds true for all $x \in (z, \mathcal{X}(z))$, since $\fotd < f_0 < \atl < \mathcal{X}(z)$. 
Then taking $z = f_0 \in (\frac1{2\d}, \atl)$, we have $\mathcal{X}(z) = g_0$ by definition (cf.~\eqref{tildeg0}). 
Thus \eqref{qtildq} holds, completing the proof.
\hfill $\Box$

\vspace{3mm}
\noindent {\bf Proof of Proposition \ref{cor:ctild}.} 
Aiming for a contradiction, suppose that in the dichotomy \eqref{cftildc} we have $c_\mathcal{I} = \tild c \leq c_g < \ol{c} \wedge \ol{k}$ (cf.~Lemma \ref{cf<infinite}) and start the process $(X,C)$ with the fuel level $C_0 = \tild c$.
Then we have from Proposition \ref{prop:monotonicity} and Lemma \ref{lem:olF>12d} that 
$$
F(\tild{c}) < \frac1{2\d} < \frac1{2\d}+\tild{c} 
< G(\tild{c}) =\ol{F}(\tild{c}) \leq \atl \leq \ol{G}(\tild{c}). 
$$
Thus in this case the waiting regions $[F(\tild{c}),G(\tild{c})]$ and $[\ol{F}(\tild{c}),\ol{G}(\tild{c})]$ have a common boundary point $x = G(\tild{c}) =\ol{F}(\tild{c})$. 
Given that the candidate value function $x \mapsto \tild{Q}(x,c)$ is of class $C^1$ across $G$ and $\Fb$ (by construction), we have 
\begin{align*}
&\tild{Q}(G(\tild{c}),\tild{c}) 
= \tild{V}_0(G(\tild{c}) - \tild{c}) + \tild{c} 
= \tild{V}_0(\Fb(\tild{c}) - \tild{c}) + \tild{c} 
= \tild{Q}(\Fb(\tild{c}),\tild{c}), \\
&\pd{\tild{Q}}{x}(G(\tild{c}),\tild{c}) 
= \tild{V}_0'(G(\tild{c}) - \tild{c}) 
= \tild{V}_0'(\Fb(\tild{c}) - \tild{c}) 
= \pd{\tild{Q}}{x}(\Fb(\tild{c}),\tild{c}). 
\end{align*}
Hence, comparing \eqref{eq:tildQ} and \eqref{eq:VstarRR} (since $c_\mathcal{I} < c_0$ by \eqref{cI<c0}) gives $A(\tild c)= \tild A(\tild c)$ and $B(\tild c)= \tild B(\tild c)$.
In this case our candidate solution therefore satisfies
\begin{equation*}
\tild{Q}(x,c) := \fla x^2 + \flas + A(c) e^{x\sta} + B(c) e^{-x\sta} ,
\quad x \in (F(c), \bar G(c)), 
\end{equation*}
with boundary conditions at $F(\tild c)$ and $\Gb(\tild c)$ corresponding respectively to discretionary stopping and reflection. 
This system of equations was analysed in \cite[Section 8]{KOWZ00}, and its solution is characterised by the equation
$q(\ol{G}(\tild{c});F(\tild{c})) = 0$. 

Then by definition (cf.~Lemma \ref{g0<gd}) we have $\ol{G}(\tild{c}) = G_\d (\tild{c})$, which is impossible, as follows. 
Since $F$ is decreasing due to Proposition \ref{prop:monotonicity}, \cite[Lemma 8.3]{KOWZ00} gives that $G_\d(\cdot)$ is increasing on $(0,c_\mathcal{I})$. 
However $\ol{G}(\cdot)$ is decreasing on $(0,c_\mathcal{I})$ by Proposition \ref{prop:olG'}, hence, given that $g_0 < g_\d$ thanks to Lemma \ref{g0<gd}, we have the desired contradiction.
\hfill $\Box$

\vspace{3mm}
\noindent {\bf Proof of Lemma \ref{lem:Hr*}.}
Fixing $c>c_\mathcal{I}$, the expression \eqref{def-H*} is an immediate consequence of \eqref{c-z=0}, and next we establish the convexity of $H^*_r(\cdot;c)$. 
Firstly, the obstacle $H^*_r(\cdot;c)$ is strictly convex on $(\Psi(D(c)), \Psi(g_0 + c))$ due to Lemma \ref{lem:geomlemma} and the inequality \eqref{eq:H*pos}. 
It is also strictly convex on $(y_c, D(c))$ and on $(g_0 + c,\infty)$ by Lemma \ref{lem:Hconvexity} and the bounds established in \eqref{eq:someineqs}.
Thus to establish that $H^*_r(\cdot;c)$ is strictly convex on the whole of $(y_c,\infty)$, it suffices to show that it is of class $C^1$. 
In view of the smoothness of the transformation $\Phi$ of \eqref{def-H}, it would suffice if the function $h_r^*(\cdot;c)$ was of class $C^1$. 
The latter follows from the Ansatz \ref{ansatz2}.   
\hfill $\Box$

\vspace{3mm}
\noindent {\bf Proof of Proposition \ref{0}.}
Recalling the boundaries $F$ and $G$ of Section~\ref{sec:cpos}, note first from Figure~\ref{fig:regionsnew} that $G'(c^*) \geq 1$, and that $G(c) < D(c) < f_0+c$ on $(c_\mathcal{I},c^*)$ (the final inequality comes from \eqref{eq:someineqs}). Thus on $(c_\mathcal{I},c^*)$, the situation is that of Section~\ref{Sec:verification2a}: namely, the boundary $G$ is of repelling type and its equations are characterised by double tangency between $H_l$ and $H_{r1}$ (Proposition \ref{existencegeometricy2}).
Then since $H_r$ and $H_r^*$ coincide for $y \in (y_c,D(c)]$ (cf.~\eqref{def-H*}), they coincide in particular at $y=G(c)$ for $c \in (0,c^*)$. It follows that for $c \in (0,c^*)$, the pair $(\Psi(F(c)),\Psi(G(c)))$ is a solution to \eqref{eq:system-y1y20}. Further, the strict convexity of $H_r^*(\cdot,c)$ on $(y_c,\infty)$ (Lemma \ref{lem:Hr*}) ensures that this solution is unique, and we have established \eqref{eq:FGeq}.

Recall that the boundary $F$ of Section~\ref{sec:cpos} satisfies $F(c)>0$ for all $c > 0$, and that $D(c) < f_0+c$ from  \eqref{eq:someineqs}. 
Thus \eqref{eq:FGeq} gives $c^* < c^*_1$. Together with \eqref{pos:1c*}--\eqref{pos:2c*}, which imply that $c_\mathcal{I} < c^*$, we obtain \eqref{cf<c1}.

For $c \in [c^*,c_1)$, the existence of a unique solution to \eqref{eq:system-y1y20} follows as in Step 1 of Proposition \ref{existencegeometricy2}, after again recalling the strict convexity of $H^*_r(\cdot,c)$ on $(y_c,\infty)$. Finally, the strict positivity of $\ol{c}$ is established just as in Section~\ref{sec:cpos}. 
\hfill $\Box$

\vspace{3mm}
\noindent {\bf Proof of Lemma \ref{q=0*}.}
We prove the desired statement in each case of \eqref{c-order} separately. 

\vspace{1mm}
{\it Case 1: $\bar{c} \leq c^*$}. In this case the boundaries $F$ and $G$ are simply those of Section~\ref{sec:cpos} (see Remark \ref{rem:oldb} or \eqref{eq:FGeq}), and the claim is established as in Section~\ref{Sec:verification2c}.

\vspace{1mm}
{\it Case 2: $c^* < \bar{c} < c^\dagger$.}
In this case we have $G(c) < g_0+c$ for all $c \leq \bar c$ and $G(\ol c) \in (D(\ol c), g_0+\ol c)$. 
Analogously to \eqref{eq:th3} and \eqref{eq:th4}, we have 
\begin{align} \label{eq:th3*}
\begin{split}
\cH_3^*(x,c)
&:= \pd {H_{r}^*}y (y;c)\Big|_{y=\Psi(x)} 
=\frac{e^{-x\sta}}{2\sta}\Big( \pd {h_{r}^*}x (x;c) + \sta {h_{r}^*}(x;c)\Big) ,\\
\cH_4^*(x,c)
&:= \Big( H_{r}^* - y \pd {H_{r}^*}y \Big) (y;c)\Big|_{y=\Psi(x)} 
= \frac{e^{x\sta}}{2\sta}\Big( - \pd {h_{r}^*}x (x;c) + \sta {h_{r}^*}(x;c)\Big) .
\end{split}
\end{align}
Then, we may use in \eqref{eq:th3*} the equality \eqref{dxh*r} to conclude exactly as in Lemma \ref{derivL} (Section~\ref{sec:FG}) that the function $L^*(\cdot,\cdot)$ defined by 
\begin{equation*}
L^*(x,c):=
\cH_4^*(x,c) - h_2\circ h_1^{-1} \big(\cH_3^*(x,c)\big),
\end{equation*}
satisfies
\begin{align*}
&\pd {L^*}{x} (x,c) = \big(e^{2z\sta}-e^{2x\sta} \big) \, \pd {\cH_3^*}x (x,c), \quad \text{where } z:=h_1^{-1}\big(\cH_3^*(x,c) \big), \\
&\Big(\pd {L^*}{x}+\pd {L^*}{c} \Big)(G(c),c) = q\big(G(c);F(c)\big), \quad \text{where $q$ is defined by \eqref{eq:qdef}.} 
\end{align*} 
Hence, exactly as in Lemma \ref{cor:FGC1}, we obtain for all $c \in (c^*,c^\dagger)$ that $\pd{L^*}{x}(G(c),c)<0$ and 
\begin{equation*}
G'(c) = 1 - \frac{q(G(c);F(c))}{\pd{L^*}{x}(G(c),c)}.
\end{equation*}
For $c<\ol{c}$ we have $G'(c)>1$ by definition, giving $q(G(c), F(c)) >0$. Further, $G'(\ol c)=1$ is equivalent to $q(G(\bar c), F(\bar c)) = 0$.

\vspace{1mm}
{\it Case 3: $c^\dagger \leq \bar{c}$.}
Again recalling Remark \ref{rem:oldb}, in this case for $c \in (c^\dagger, \bar{c}]$, the boundaries $F(c)$ and $G(c)$ are simply those of \cite[Section 10]{KOWZ00}, and the claim follows from equation (10.13) of the latter paper.
\hfill $\Box$

\newpage
\section{List of figures}

\begin{table}[htp]
\centering
\begin{tabular}[!ht]{c|p{11cm}|c}
Figure~\ref{fig:boundaries0} &
Moving boundaries of the control problem when $\la \geq \a \d$ 
(obtained in \cite{KOWZ00})
& p\pageref{fig:boundaries0} \\    
Figure~\ref{fig:boundaries4} & 
Moving boundaries of the control problem when $\fotd < \atl \leq f_0$ and $\la \in [\la^\dagger, \a \d)$ &  p\pageref{fig:boundaries4}  \\
Figure~\ref{fig:boundaries2} & 
Moving boundaries of the control problem when $\fotd < f_0 < \atl$ and $\la \in (\la^*, \la^\dagger)$ &  p\pageref{fig:boundaries2}  \\
Figure~\ref{fig:boundaries1} & 
Moving boundaries of the control problem when $f_0 \leq \fotd$ and $\la \in (\la_*,\la^*]$ (obtained in \cite{KOWZ00}) 
&  p\pageref{fig:boundaries1}  \\
Figure~\ref{fig:1} & 
A geometric representation of the optimal stopping problem $V_0(\cdot)$ of \eqref{eq:protonofuel} when $\lambda \in (\lambda^*, \a \d)$
&  p\pageref{fig:1}  \\
Figure~\ref{fig:Fig1synth} & 
A geometric representation of the optimal stopping problem $V(\cdot;c)$ of \eqref{def-V} for $\la \in [\la^\dagger, \a\d)$ and $c>0$ fixed and sufficiently small &  p\pageref{fig:Fig1synth}  \\
Figure~\ref{fig:Fig1synth2} & 
A geometric representation of the optimal stopping problem $V(\cdot;c)$ of \eqref{def-V} for $\la \in (\la^*,\la^\dagger)$ and $c>0$ fixed and sufficiently small &  p\pageref{fig:Fig1synth2}  \\
Figure~\ref{fig:regions} & 
Regions I to IV from the statement and proof of Theorem \ref{thm:ver2}, when $\la \in (\la^*,\a\d)$ 
and $\bar{c} < c^*$ &  p\pageref{fig:regions}  \\
Figure~\ref{fig:regionsnew} & 
Regions I to IV from the statement and proof of Theorem \ref{thm:ver2}, when $\la \in (\la^*,\a\d)$ and $c^* < \bar c$ &  p\pageref{fig:regionsnew}  
\end{tabular}
\label{tab:my_label}
\end{table}

\newpage
\section{List of symbols}

\begin{table}[htp]
\centering
\begin{tabular}[!ht]{c|c|c}
Symbol & Definition & Reference\\ \hline 
$\tild \cH_3(x,c)$ & $\frac{e^{-x\sta}}{2\sta} \big( \pd{h_{r1}}x (x;c) + \sta {h_{r1}}(x;c) \big)$ & \eqref{eq:th3} \\
$\tild \cH_4(x,c)$ & $\frac{e^{x\sta}}{2\sta} \big( -\pd {h_{r1}}x (x;c) + \sta {h_{r1}}(x;c) \big)$ & \eqref{eq:th4} \\
$B_0$ & $\frac{2f_0}{\a \sta}(\a \d - \la)e^{f_0\sta}$ & \eqref{eq:B0} \\
$\rho(x)$ & $x^2+\frac{ 2 x}{\sqrt{2\a}} -\frac{\la}{\a(\a\d-\la)}$ & \eqref{eq:defrho} \\
$f_0$ & $\frac 1 \sta \big(\sqrt{\frac{\a \d + \la}{\a \d - \la}}-1 \big)$ & \eqref{eq:fodef} \\
$x_c$ & $\frac 1 {2\d}+\frac c 2$ & \eqref{x_c} \\
$y_c$ & $\Psi(x_c)$ & \eqref{ys} \\
$\hye(c)$ & $\Psi(f_0+c)$ & \eqref{ys} \\
$\hyc(c)$ & $\Psi\big(\atl + \frac c2\big)$ & \eqref{ys} \\
$y_v(c)$ &  Lemma \ref{lem:Hconvexity} I.(i), II.(i) & Lemma \ref{lem:Hconvexity} \\
$\oc$ & $2 \ \big(\sqrt{\frac{\d}{\a\d-\la}} - \frac 1{2\d} \big)$ & \eqref{Kk} \\
$\uc$ & $2 \ \big(\atl - f_0 \big)$ & \eqref{Kk} \\
$r_y(z;c)$, $P_r(y;c)$ & Definition \ref{def:tangents} & Definition \ref{def:tangents} \\
$\hat{c}$ & $\inf\{c \in (0,\infty): \hyt(c) \geq \hye(c) \}$ & \eqref{eq:c2def} \\
$c_m$ & $\inf\{c \in (0,\infty): \hyo(c) = 1 \}$ & \eqref{eq:cm} \\
$c_1$ & $\hat{c} \wedge c_m$ & \eqref{eq:c2def} \\
$\ol{c}$ & $\inf \{c \in (0,\infty): G'(c) \leq 1\}$ & \eqref{lem:Gderv} \\
$c_0$ & $c_0 = c_1 \wedge \ol{c} = \hat{c} \wedge \bar c$ & \eqref{lem:Gderv}, Corollary \ref{cor:c1} \\
$\tild c$ & $\inf\{ c \in [0,\infty) \,:\, G(c) = \ol{F}(c)\}$ & \eqref{tildc} \\
$g_0$ & $\mathcal{X}(f_0)$ & \eqref{tildeg0} \\
$c_\mathcal{I}$ & $\inf \{c \in [0,\tild c] : \ol{F}(c) = \atl \} \wedge \tild c$ & \eqref{cf} \\ 
$\mathcal{X}$ & Lemma \ref{lem:soltildeq} & Lemma \ref{lem:soltildeq} \\
$c_g$ & $G(c_g) = \atl$ & Lemma \ref{cf<infinite} \\
$D(c)$ & $\atl - c_\mathcal{I} + c , \quad c \geq c_\mathcal{I}$ & \eqref{eq:c*} \\
$c^*$ & $\inf\big\{c \in [c_\mathcal{I},\infty) \,:\, G(c) = D(c) \big\}$ & \eqref{eq:c*} \\
$c^\dagger$ & $\inf\big\{c \in [c^*,\infty) \,:\, G(c) = \Gb(0)+c \big\}$ & \eqref{eq:cdag} \\
$h_1(x)$ & $\frac{\a \d -\la}{2\a}e^{-x\sta}\rho(x)$ & \eqref{eq:hidefs1} \\
$h_2(x)$ & $\frac{\a \d -\la}{2\a}e^{x\sta}\big(\rho(x) - \frac{4x}{\sta} \big)$ & \eqref{eq:hidefs2} \\
$h_3(x)$ & $\fla \big( \atl - \big( x+\frac 1 \sta \big) \big) e^{-x\sta}$ & \eqref{eq:h3} \\
$h_4(x)$ & $\fla \big( x - \big( \atl +\frac 1 \sta \big) \big) e^{x\sta}$ & \eqref{eq:h4} \\
$q(x;z)$ & $\sta \big(h_2(z)-h_1(z)e^{2z\sta}\big) + h_3(x)e^{2z\sta} - h_4(x)$ & \eqref{eq:qdef} \\
$F(c)$ & $\Psi^{-1}(\hyo(c))$ & \eqref{eq:FGdef}, \text{Proposition \ref{0}} \\
$G(c)$ & $\Psi^{-1}(\hyt(c))$ & \eqref{eq:FGdef}, \text{Proposition \ref{0}} \\
\end{tabular}
\label{tab:my_label}
\end{table}

\end{appendix}

\end{document}